\numberwithin{equation}{section}
\newtheorem{theorem}{Theorem}[section]
\newtheorem{corollary}[theorem]{Corollary}
\newtheorem{lemma}[theorem]{Lemma}
\newtheorem{proposition}[theorem]{Proposition}
\newtheorem{definition}[theorem]{Definition}%
\newtheorem{remark}[theorem]{Remark}%
\newtheorem{example}[theorem]{Example}
\newcommand{\N}{\mathbb{N}} 
\newcommand{\R}{\mathbb{R}} 
\newcommand{\calT}{\mathcal{T}}
\newcommand{\calO}{\mathcal{O}}
\newcommand{\teta}{\vartheta}
\newcommand{\dd}{\mathrm{d}}
\newcommand{\rmd}{\mathrm{d}}
\newcommand{\eps}{\varepsilon}
\newcommand{\weaksto}{{\rightharpoonup^*}\,}
\newcommand{\weakto}{\rightharpoonup}
\newcommand{\Restr}[1]{\lower2pt\hbox{$|_{#1}$}}
\newcommand{\argmin}{\mathop{\mathrm{Argmin}}}
\newcommand{\EE}{\mathcal{E}}
\newcommand{\ene}[2]{\mathcal{E}_{#1}(#2)}
\newcommand{\power}[2]{\mathcal P_{#1}(#2)}
\newcommand{\pVarname}[1]{\text{\slshape Var}_{#1}}
\newcommand{\Var}[4]{\mathop{\mathrm{Var}}\nolimits_{
    {#1} }(#2;[#3,#4])}
\newcommand{\pVar}[4]{\mathop{\text{\slshape Var}}\nolimits_{#1}(#2;[#3,#4])}
\newcommand{\JVar}[4]{{\mathrm{Jmp}}_{#1}(#2;[#3,#4])}
\newcommand{\BV}{\ensuremath{\mathrm{BV}}}
\newcommand{\AC}{\mathrm{AC}}
\newcommand{\vvm}[3]{{\mathfrak{f}}_{#1}({#2};{#3})}
\newcommand{\vvmV}[3]{
  \fre_{#1}(#2)
  \ifthenelse{\equal{#3}{}} {} {\|#3\|} }
\newcommand{\vvmname}{\mathfrak{f}}
\newcommand{\vvmnametil}{\mathfrak f}
\newcommand{\vvmfulleps}[5]{\mathfrak{F}_\eps({#2},{#3};{#1},{#4})}
\newcommand{\vvmfull}[5]{{\mathfrak{F}}({#2},{#3};{#1},{#4})}
\newcommand{\vvmfullV}[5]{\mathfrak{G}({#2},{#3};{#1},{#4})}
\newcommand{\vvmfullepsname}{\mathfrak{F}_\eps}
\newcommand{\vvmfullname}{\mathfrak{F}}
\newcommand{\vvmfullVname}{\mathfrak{G}}
\newcommand{\FNormname}[1]{\mathscr{P}}
\newcommand{\Leb}[1]{\mathscr{L}^{#1}}
\newcommand{\topref}[2]{\stackrel{\eqref{#1}}#2}
\newcommand{\RIS}{\ensuremath{( \V,\mathcal{E}, \Psiz, \Psiv)}}
\newcommand{\ris}{RIS}
\newcommand{\bptname}{\mathfrak{p}}
\newcommand{\bpt}[2]{\mathfrak p(#1,#2)}
\newcommand{\Cost}[4]{\ifthenelse{
    \equal{#1}\Phi}
  {\Phi(#4-#3)}
  {\Delta_{#1_{#2}}(#3,#4)}
}
\newcommand{\foraa}{\text{for a.a.\,}}
\newcommand{\aein}{\text{a.e.\ in }}
\newcommand{\forae}{\text{for a.a.\,}}
\newcommand{\down}{\downarrow}
\newcommand{\up}{\uparrow}
\newcommand{\Banach}{B}
\newcommand{\Vanach}{V}
\newcommand{\V}{\Vanach}
\newcommand{\B}{\Banach}
\newcommand{\rmC}{\mathrm{C}}
\newcommand{\la}{\langle}
\newcommand{\ra}{\rangle}
\newcommand{\taue}{{\tau,\eps}}
\newcommand{\Utaue}[1]{\mathrm{U}_{\tau,\eps}^{#1}}
\newcommand{\Vtaue}[1]{\mathrm{V}_{\kern-1pt\tau,\eps}^{#1}}
\newcommand{\Xitaue}[1]{\Xi_{\kern-1pt\tau,\eps}^{#1}}
\newcommand{\UU}{\mathrm{U}}
\newcommand{\piecewiseConstant}[2]{\overline{\mathrm{#1}}_{\kern-1pt#2}}
\newcommand{\pwC}{\piecewiseConstant}
\newcommand{\underpiecewiseConstant}[2]{\underline{\mathrm{#1}}_{\kern-1pt#2}}
\newcommand{\piecewiseLinear}[2]{\mathrm{#1}_{\kern-1pt#2}}
\newcommand{\pwL}{\piecewiseLinear}
\newcommand{\pwM}[2]{\widetilde{\mathrm{#1}}_{\kern-1pt#2}}
 \def\trait #1 #2 #3 {\vrule width #1pt height #2pt depth #3pt}
  \newcommand{\parat}{\mathsf{t}}
\newcommand{\parau}{\mathsf{u}}
\newcommand{\sfr}{\mathsf{r}}
\newcommand{\sfM}{\mathsf{M}}
\newcommand{\sfs}{\mathsf{s}}
\newcommand{\sfu}{\mathsf{u}}
\newcommand{\sfv}{\mathsf{v}}
\newcommand{\sft}{\mathsf{t}}
\newcommand{\sfa}{\mathsf{a}}
\newcommand{\sfe}{\mathsf{e}}
\newcommand{\sfb}{\mathsf{b}}
\newcommand{\sfS}{\mathsf{S}}
\newcommand{\frf}{\mathfrak f}
\newcommand{\fre}{\mathfrak e}
\newcommand{\frF}{\mathfrak F}
\newcommand{\frG}{\mathfrak G}
\newcommand{\frk}{\mathfrak k}
\newcommand{\Diss}[2]{
  \ifthenelse      {   \equal{#1}{V}  }         {\Psiv}
  {
    \ifthenelse      {   \equal{#1}{B}  }         {\Psiz}
    {
      \ifthenelse      {   \equal{#1}{\Bo}  }     {\Psiz}
      {\Psi_{\kern-1pt#1}}
      }
    }
  ^{#2}
}
\newcommand{\Bnorm}[1]{\Psiz{(#1)}}
\newcommand{\Dnorm}[1]{\Psiz_{\kern-2pt \scriptscriptstyle\land}(#1)}
\newcommand{\Dnormname}{\Psiz_{\kern-2pt \scriptscriptstyle\land}}
\newcommand{\Vnorm}[1]{\|{#1}\|}
\newcommand{\pairing}[4]{ \sideset{_{#1 }}{_{ #2}}  {\mathop{\langle #3 , #4  \rangle}}}
\newcommand{\cE}{\mathcal{E}}
\newcommand{\cg}[1]{\mathcal{G}(#1)}
\newcommand{\domainenergy}{D}
\newcommand{\frsub}{\partial}
\newcommand{\adm}[5]{\mathscr{A}(#1,#2;[#3,#4]\times{#5})}
\def\nchi{{\raise.3ex\hbox{$\chi$}}}
\newcommand{\Bo}{{}}
\newcommand{\rmJ}{\mathrm J}
\newcommand{\rmV}{\mathrm V}
\newcommand{\scalarV}[3]{\rmV_{\!#2}
  \ifthenelse{\equal{#3}{}}{}{(#3)}}
\newcommand{\scalarmu}[2]{\mu}
\newcommand{\scalarmuco}[2]{\mu_{\rm d}}
\newcommand{\scalarmuj}[2]{\mu_{\rm J}}
\newcommand{\scalarmul}[2]{\mu_{\mathscr L}}
\newcommand{\scalarmuc}[2]{\mu_{\rm C}}
\newcommand{\scalardens}[2]{#1[#2']}
\newcommand{\Psiv}{\Phi}
\newcommand{\Psiz}{\Psi}
\newcommand{\forevery}{\text{for every }}
\newcommand{\nn}{{\mbox{\boldmath$n$}}}
\newcommand{\scrD}{\mathscr D}
\definecolor{ddmagenta}{rgb}{0.7,0,0.9}
\definecolor{ddcyan}{rgb}{0,0.2,1.0}
\newenvironment{alc}{\color{red}}{\color{black}}
\newcommand{\bealc}{\begin{alc}}
\newcommand{\ealc}{\end{alc}}
\newenvironment{ric}{\color{ddmagenta}}{\color{black}}
\newcommand{\beric}{\begin{ric}}
\newcommand{\eric}{\end{ric}}
\newenvironment{coq}{\color{ddcyan}}{\color{black}}
\newcommand{\becoq}{\begin{coq}}
\newcommand{\ecoq}{\end{coq}}
\begin{document}

\title[$\BV$ solutions to rate-independent systems]
{Balanced Viscosity ($\BV$) solutions to\\ infinite-dimensional
rate-independent systems}

\date{September 21, 2013}

\author{Alexander Mielke}
\address{Weierstra\ss-Institut,
  Mohrenstra\ss{}e 39, 10117 D--Berlin and Institut f\"ur
  Mathematik, Humboldt-Universit\"at zu
  Berlin, Rudower Chaussee 25, D--12489 Berlin (Adlershof), Germany.}
\email{mielke\,@\,wias-berlin.de}

\author{Riccarda Rossi}
\address{DICATAM -- Sezione di Matematica, Universit\`a di
  Brescia, via Valotti 9, I--25133 Brescia, Italy.}
\email{riccarda.rossi\,@\,ing.unibs.it}

\author{Giuseppe Savar\'e}
\address{Dipartimento di Matematica ``F.\ Casorati'', Universit\`a di
  Pavia.  Via Ferrata, 1 -- 27100 Pavia, Italy.}  
\email{giuseppe.savare\,@\,unipv.it} 

\thanks{A.M. has been partially
  supported by the  ERC grant no.267802 AnaMultiScale. 
  R.R. and G.S. have been partially supported by a MIUR-PRIN'10-11 grant
  for the project ``Calculus of Variations''.}

\begin{abstract}
  Balanced Viscosity solutions to rate-independent systems arise as
  limits of regularized rate-independent flows by adding a superlinear
  vanishing-viscosity dissipation.

  We address the main issue of proving the existence of such limits
  for infinite-dimensional systems and of characterizing them by a
  couple of variational properties that combine a local stability
  condition and a balanced energy-dissipation identity.

  A careful description of the jump behavior of the solutions, of
  their differentiability properties, and of their equivalent
  representation by time rescaling is also presented.

  Our techniques rely on a suitable chain-rule inequality for
  functions of bounded variation in Banach spaces, on refined lower
  semicontinuity-compactness arguments, and on new \BV-estimates that
  are of independent interest.

\end{abstract}
\maketitle

\tableofcontents

\section{Introduction}
\label{s:Intro}

This paper concerns the asymptotic behavior of the solutions
$u_\eps:[0,T]\to V$, $\eps\down0$, of singularly perturbed doubly
nonlinear evolution equations of the type 
\begin{equation}
\label{viscous-dne}
\partial \Diss{\eps}{} (\dot{u}_\eps(t))+\frsub \ene t{u_\eps(t)} \ni
0 \quad \text{in $\V^*$,} \quad t \in (0,T). 
\end{equation}
Here $(\V,\Vnorm{\cdot})$ is a Banach space satisfying the
Radon-Nikod\'ym property (e.g.\ a reflexive space, see
\cite{Diestel-Uhl77}), $\partial\cE$ is the Fr\'echet subdifferential
of a time-dependent energy functional $\cE: [0,T]\times \V \to
(-\infty,+\infty],$ and $ \Diss{\eps}{} : \V \to [0,+\infty)$ is a
family of convex and superlinear dissipation potentials; the main
coercivity and structural assumptions on $\Psiz_\eps,\cE$ will be
discussed in Section \ref{ss:2.1}.

The main feature we want to address here is the degeneration of
the superlinear character of $\Psi_\eps$ as $\eps\down0$,
approximating a degree-$1$ positively homogeneous convex potential
$\Psiz:V\to [0,+\infty),$
\begin{equation}
  \label{eq:76}
  \Psiz(\lambda v)=\lambda \Psiz(v)\quad
  \forevery v\in V,\ \lambda\ge0;\qquad
  \Psiz(v)>0\quad\text{if }v\neq 0.
\end{equation}
An important example motivating our investigation is the
vanishing quadratic approximation
\begin{gather}
  \label{def:psi-eps-intro}
  \Diss{\eps}{}(v)= \Diss{\Bo}{}(v) +\frac \eps2
  \|v\|^2
  ,\quad\text{associated with the viscous potential}
  \quad
  \Phi(v):=\frac 12\|v\|^2.
\end{gather}

\subsubsection*{\bfseries The superlinear case}
Equations of the type \eqref{viscous-dne} arise in several
contexts, ranging from thermomechanics to the modeling of rate-independent
evolution. In the realm of these applications, \eqref{viscous-dne}
may be interpreted as  \emph{generalized balance relation},
balancing viscous and potential forces. 

The analysis of \eqref{viscous-dne}
when the energy $\cE$ has the typical form
\begin{displaymath}
\ene tu = \cE(u) - \langle \ell(t),u\rangle,\quad \text{with $\ell
  :[0,T] \to \V^*$ smooth and $\cE: \V \to (-\infty,+\infty]$ \emph{convex}}
\end{displaymath}
goes back to the seminal papers \cite{ColliVisintin90,Colli92}.
Therein, the existence of absolutely continuous solutions to the Cauchy problem for
\eqref{viscous-dne} was proved by means of maximal monotone operator techniques.
Existence and approximation results for a broad class of
\emph{nonconvex}
energies, also featuring a singular dependence on time,
have been recently obtained in \cite{MRS-dne},
relying on various contributions from 
the theory of curves of Maximal Slope
\cite{DeGiorgi-Marino-Tosques80,Marino-Saccon-Tosques89}
and from the variational approach to gradient flows
\cite{DeGiorgi93,Rossi-Savare06,AGS08,RMS08}.

\subsubsection*{\bfseries Positively $1$-homogeneous dissipations:
  energetic solutions.}
Since $ \Diss{}{} $  is positively homogeneous of degree $1$,
when $\eps=0$ the formal limit of
\eqref{viscous-dne}
\begin{equation}
\label{eq:74}
\partial \Diss{}{} (\dot{u}(t))+\frsub \ene t{u(t)} \ni 0 \quad \text{in $\V^*$,} \quad t \in (0,T),
\end{equation}
describes a rate-independent evolution. In this case,
even for convex energies $\cE_t(\cdot)$
one cannot expect the existence of absolutely continuous solutions to
\eqref{viscous-dne}: in general,
they may  be only $\BV$ with respect to 
time and in fact have jumps, so that
even the precise meaning of the differential inclusion
\eqref{eq:74} is a delicate question.

This has called for \emph{weak-variational} characterization of the
solutions of \eqref{eq:74}, leading to the concept of \emph{energetic
  solution} to the rate-independent system $(V,\cE,\Psiz)$: it dates
back to \cite{Mie-Theil99} and was further developed in
\cite{Mielke-Theil04,DaFrTo05QCGN}, see also
\cite{Miesurvey,Miel08?DEMF} and the references therein.

In this setting, $u:[0,T]\to V$ is an energetic solution to equation
\eqref{eq:74}, if it complies with the \emph{global stability}
\eqref{eq:64bis-intro} and with the \emph{energy balance}
\eqref{eq:83-intro} conditions
 \begin{equation}
    \label{eq:64bis-intro}
    \tag{$\mathrm{S}$}
\forall\, v \in V\, : \quad \ene t{u(t)}\leq \ene tv + \Bnorm{v{-}u(t)} \quad \text{ for all } t\in [0,T],
  \end{equation}
  \begin{equation}
    \label{eq:83-intro}
    \Var{\Diss {\B}{}} u{0}{t}+\ene{t}{u(t)}=\ene{0}{u(0)}+
    \int_{0}^{t} \partial_t\ene s{u(s)}\,\mathrm{d}s \quad \text{ for all } t\in [0,T],
    \tag{$\mathrm{E}$}
  \end{equation}
  where $\Var\Psiz u ab$ is the total variation induced by
  $\Bnorm{\cdot}$
on the interval $[a,b] \subset
[0,T]$, viz.
\begin{equation}
  \label{eq:78}
  \Var{\Psi}uab:=
  \sup\Big\{\sum_{m=1}^M \Psi\big(u(t_m)-u(t_{m-1})\big):a=t_0< t_1<\cdots<t_{M-1}<t_M=b\Big\}.
\end{equation}
  The energetic formulation
  \eqref{eq:64bis-intro}--\eqref{eq:83-intro}
  has several strong points: it is derivative-free,  it
  bypasses all the technical differentiability
  issues on $V$ (related to the validity of the Radon-Nikod\'ym property), on
  $u$ (related to its behavior on the Cantor-Jump set), and
  on the energy $\cE$ (related to its Frech\'et subdifferential). Furthermore, 
  it provides nice existence-stability results under simple
coercivity and time-regularity assumptions.

Nonetheless, in the case of \emph{nonconvex} energies it is now well
known \cite{MRS09,MRS10,Miel08?DEMF,Mielke-Zelik,Rossi-Savare13} that
the global stability condition \eqref{eq:64bis-intro} involves a
variational characterization of the jump behavior of the system, that
is affected by the whole energetic landscape of $\cE$.

\subsubsection*{\bfseries Positively $1$-homogeneous dissipations:
     the  vanishing-viscosity  approach}
The by now well-established \emph{vanishing-viscosity} approach aims
to find good local conditions describing rate-independent evolution
(and in particular the behavior of the solutions at jumps).  It also
leads to a clarification of the  connections with the
metric-variational theory of gradient flows.

  While referring to \cite{MRS10} for a more detailed survey, here we
  recall
  the works where the  vanishing-viscosity 
  analysis is carried out
  via the \emph{reparameterization technique}
  introduced in  \cite{ef-mie06}. 
  They range from
  applicative contexts in material modeling
  (such as crack propagation \cite{KnMiZa07?ILMC,KnMiZa08?CPPM}, Cam-Clay and non-associative plasticity \cite{DalMaso-DeSimone-Solombrino2011,DalMaso-DeSimone-Solombrino2012,BabFraMor12}, and
  damage \cite{KnRoZa2011}), to
 the analysis of
 parabolic PDEs with rate-independent dissipation terms
 \cite{Mielke-Zelik}.

  Abstract
 rate-independent systems in a finite-dimensional setting
 have been studied by \cite{MRS09,MRS10}.
 In particular,
 the vanishing-viscosity limit of  gradient systems of the
 type \eqref{viscous-dne} has been studied in
 \cite{MRS10} when $V$ is a \emph{finite}-dimensional space
 and
 $\cE\in \mathrm{C}^1([0,T]\times
 V)$.
 Here we aim to generalize
the results from  \cite{MRS10}
 to the present
 \emph{nonsmooth, infinite-dimensional} setting.

A simple prototype of the situation we have in mind
(see also Section \ref{s:examples}) is
\begin{equation}
\V=L^2(\Omega),\quad
\Psi_\eps(v)=\int_\Omega |v| + \frac{\eps}{2} |v|^2 \,\dd x, \quad
\ene tu= \int_\Omega \left(\frac12 |\nabla u|^2 + W(u) -\ell(t) u
\right)\,\dd x\label{eq:83} 
\end{equation}
where $\Omega$ is a bounded open subset of $\R^d$,
$\ell\in \mathrm C^1([0,T];L^2(\Omega))$ and $W\in \mathrm C^1(\R)$
is, e.g., a double-well type nonlinearity. The abstract subdifferential inclusion 
\eqref{viscous-dne} leads to the nonlinear parabolic equation
\begin{equation}
\label{e:mie-zel-ex}
\eps \, {\partial_t} {u} + \mathrm{Sign}\big(
{\partial_t}{u}\big)-\Delta u +W'(u)= \ell \quad  \text{in }  \Omega \times (0,T),
\end{equation}
for which the vanishing-viscosity limit $\eps \down 0$ was in fact
analyzed in \cite{Mielke-Zelik},  based on the
\emph{reparameterization} technique and on the concept of
\emph{parameterized solution}, from \cite{ef-mie06}.

In this work we will propose a direct characterization of the limit
evolution, in the same spirit of conditions
\eqref{eq:64bis-intro}--\eqref{eq:83-intro}, and we will show how it
is related to a parameterized formulation.  A particular emphasis will
be on the crucial property encoded in the \emph{balanced
  energy--dissipation identities}, both in the original and in the
rescaled time variables.  The notion of \emph{Balanced Viscosity
  $(\BV)$ solution} to a rate-independent system tries to capture this
essential feature.

\subsubsection*{\bfseries Balanced Viscosity $(\BV)$ solutions}
Let us briefly describe what we mean by a
\emph{balanced viscosity $(\BV)$ solution to the rate-independent
system}   (\ris) $\RIS$, where now also the viscosity correction induced by
$\Phi$ characterizes the evolution.
To simplify the exposition in this introduction,    we suppose that
$\Psi$ is $V$-coercive, i.e.~$\Psi(v)\ge c\|v\|$ for all
$v\in V$ and
for a constant $c>0$.

A crucial role is played by the dual convex set
\begin{equation}
K^*:=\{\xi\in V^*: \langle
\xi,v\rangle\le \Psiz(v)\ \forevery v\in V\}\label{eq:79}
\end{equation}
whose support function is $\Psiz$.
Following
\cite{MRS10}, we say that a curve $u\in \BV([0,T];V)$ is a $\BV$ solution to
the \ris\
$\RIS$, if it
fulfills
the following \emph{local stability condition}
 \begin{equation}
    \label{eq:65bis-intro}
    \tag{$\mathrm{S}_\mathrm{loc}$}
     K^* + \frsub \ene t{u(t)} \ni 0 
  \quad \text{for all}\, t \in [0,T] \setminus \mathrm{J}_u,
  \end{equation}
where $\mathrm J_u$ is the jump set of $u$,
and the \emph{Energy-Dissipation Balance}
  \begin{equation}
    \label{eq:84-intro}
    \pVar{\vvmnametil}u{0}{t}+\ene{t}{u(t)}=\ene{0}{u(0)}+
    \int_{0}^{t} \partial_t\ene s{u(s)}\,\mathrm{d}s \quad \text{ for all } t\in [0,T].
    \tag{E$_{\vvmnametil}$}
  \end{equation}
Like \eqref{eq:83-intro}, \eqref{eq:84-intro} as well balances
at every evolution time $t \in [0,T]$
 the energy dissipated by the system  and the current energy, with the initial energy and the work of the external
 forces. However, in \eqref{eq:84-intro}
 dissipation is measured by  the total variation functional $\pVarname{\vvmnametil}$. While referring to
 the forthcoming
  Definition
  \ref{def:Finsler_jump} for a precise formula, we may mention here that
  the main difference of $\pVarname{\vvmnametil}$
  with respect to $\mathrm{Var}_\Psiz$ concerns the
  contribution of the jumps. In fact, in the definition of
  $\pVarname{\vvmnametil}$ the cost $\Psiz(u(t_+)-u(t_-))$ of the transition
  from the left limit $u(t_-)$ to the right limit $u(t_+)$ at a time
  $t\in \mathrm J_u$ is replaced by
  the \emph{Finsler dissipation cost}
  \begin{equation}
  \label{finsler-distance-intro}
  \begin{aligned}
   \Cost{\vvmnametil}t{u_0}{u_1}:=\inf\Big\{\int_{0}^{1}  &
   \vvm{t}{\teta}{\dot{\teta}}
      \,\mathrm{d}r:
      \vartheta\in \AC([0,1];V),\ \vartheta(0)=u(t_-),\
      \vartheta(1)=u(t_+)\Big\},
      \end{aligned}
\end{equation}
where
\begin{equation}
 \label{bipotential-functional}
  \vvm t{\vartheta}{\dot\vartheta} =
  \Diss B{} (\dot\vartheta) +   \fre_t(\vartheta)\Vnorm{\dot\vartheta},\quad
  \fre_t(\vartheta):=\inf\Big\{\|\xi-z\|_*:\xi\in -\frsub \ene
  t\vartheta,\ z\in  K^*\Big\}. 
\end{equation}
Formula \eqref{bipotential-functional} clearly shows that
the Finsler dissipation cost \eqref{finsler-distance-intro} (and thus the
 total variation $\pVarname{\vvmnametil}$) encompasses both \emph{rate-independent} effects through
 $\Bnorm{\cdot}$, and \emph{viscous} effects
through
 $\Vnorm{\cdot}$. The latter are active whenever
 $\fre_t(\vartheta)>0$, precisely when the local stability
 condition
  \eqref{eq:65bis-intro} is violated, since
  $  K^* +\frsub \ene tu \ni 0$ if and only if
  $\fre_t(u)=0$.
  Ultimately, by virtue of  \eqref{eq:84-intro},
 \emph{viscous} dissipation enters in the description of
  the energetic behavior of the system at jumps.

  The link between the particular structure of
  \eqref{bipotential-functional} and
  the vanishing-viscosity approximation \eqref{viscous-dne}
  can be better understood by recalling the strucure of the energy-dissipation
  balance satisfied by the solutions to the viscous evolution:
  \begin{equation}
    \label{eq:80}
    \ene t{u_\eps(t)}+\int_0^t \Big(\Psi_\eps(\dot
    u_\eps)+\Psi_\eps^*(\xi_\eps)\Big)\,\dd r=
    \ene 0{u_\eps(0)}+\int_0^t \partial_t \ene r{u_\eps(r)},\quad
    \xi_\eps(r)\in -\partial\ene r{u_\eps(r)}.
  \end{equation}
  It turns out that $\frf_t$ admits the variational representation
  \begin{equation}
    \label{eq:82}
    \frf_t(\vartheta,\dot\vartheta)=
    \inf\Big\{\Psi_\eps(\dot
    \vartheta)+\Psi_\eps^*(\xi):
    \xi\in -\partial \ene t{\vartheta},\ \eps>0\Big\}.
  \end{equation}
 This feature is in some sense reflected
 by
 the so-called
 \emph{optimal jump transitions} connecting
 $u(t_-)$ and $u(t_+)$:
 they are curves $\teta \in \AC([0,1];V)$ which
 attain the infimum in  formula  \eqref{finsler-distance-intro}
 and keep track of the asymptotic profile of the converging
 solutions $u_\eps$ around a jump point.
 By means of a careful rescaling technique,
 we will show that
 optimal  transitions
 fulfill
 the doubly nonlinear equation
 \begin{equation}
   \label{ojt-intro}
   \partial\Diss{\Bo}{}(\dot{\teta}(r))+\partial\Diss{\V}{}(\eps(r) \dot{\teta}(r))+ \frsub \ene t {\teta(r)} \ni 0 \quad
   \foraa\, r \in (0,1)
 \end{equation}
 for some map $r \mapsto \eps(r) \in [0,+\infty)$.

\subsubsection*{\bfseries Lack of differentiability and  non-coercive
rate-independent dissipations}
Up to now,
for the
sake of simplicity, we have overlooked one crucial issue in the analysis
of the rate-independent equation \eqref{eq:74}, namely
the lack of differentiability of the limiting solution $u$
when $\Psiz$ is not coercive with respect to the norm $\| \cdot \|$ on $V$ (as in the example \eqref{eq:83}).
Even the introduction of a weaker norm cannot  avoid
this technical issue, since in many interesting examples
norms of $L^1$-type do not comply with the Radon-Nikod\'ym property.

This fact leads to significant technical difficulties,
in that $\Psiz$-absolutely continuous curves need not be
pointwise differentiable with respect to time. Hence, for example
formulae
\eqref{finsler-distance-intro}--\eqref{bipotential-functional}
need to be carefully modified by introducing the convenient notion
of the metric $\Psiz$-derivative,
and differential inclusions like \eqref{ojt-intro}
have to be suitably interpreted.

On the other hand, we will show that under slightly stronger
assumptions
on the energy functional $\cE$,
limiting solutions still belong to $\BV([0,T];V)$ even in the case of
a degenerate rate-independent dissipation $\Psiz$.
For this class of $V$-parameterizable solutions
we can recover a more precise differential
characterization, and several  expressions take a simpler form.
\subsubsection*{\bfseries Main results and plan of the paper}
In this paper we provide existence and approximation results for 
Balanced Viscosity solutions
to the \ris\ $\RIS$
under quite general conditions on the dissipation potentials
$\Diss{\Bo}{},\,\Diss{\V}{}$ and on the energy functional $\cE$,
enlisted in
Section
\ref{ss:2.1}.
Let us mention in advance that, our
standing assumptions  on
$\cE$
guarantee  the lower semicontinuity, coercivity, uniform
subdifferentiability
of the functional $u \mapsto \ene tu$, and (sufficient) smoothness of the time-dependent
function $t \mapsto \ene tu$. In  \S \ref{s:appA} we provide some preliminary
 results on absolutely continuous
and BV curves, while the main
existence and structural properties of viscous gradient systems are recalled 
in \S\,\ref{ss:G1}--\S\,\ref{ss:G2}.

In Section \ref{ss:2.-bv} we present our main results concerning
Balanced Viscosity solutions.  The Finsler cost
\eqref{finsler-distance-intro} and its related total variation are
discussed in \S\,\ref{ss:3.1}.  In \textbf{Theorem \ref{th:1}} we
state the relative compactness of viscous solutions $(u_\eps)_\eps$ to
\eqref{viscous-dne} with respect to pointwise convergence, and we show
that any limit point as $\eps\down0$ is a $\BV$ solution.
A similar result (\textbf{Theorem \ref{th:2}}) addresses the passage
to the limit in the time-incremental minimization scheme
\cite{DeGiorgi93} for the viscous problem: given a time step $\tau>0$,
the uniform partition $t_n:=n\tau$, $n=0,\cdots, N_\tau$, of the time
interval $[0,T]$ so that $\tau(N_\tau-1)<T\le \tau N_\tau$, and an
initial datum $\Utaue0$, the scheme produces discrete sequences
$(\mathrm U^n_{\taue})$, $n\in \N,$ by solving the minimization
problem
 \begin{equation}
  \label{eq:58}\tag{IP$_{\eps,\tau}$}
  \mathrm{U}^n_\taue\in \argmin_{\mathrm{U} \in \V}  
 \left\{\tau\Diss{\eps}{}\Big(\frac{\mathrm{U}-\mathrm{U}^{n-1}_\taue}{\tau}\Big)+ 
    \ene{t_n}{\mathrm{U}}\right\}\quad \text{for }n=1,\cdots, N_\tau.
\end{equation}
As $\tau,\, \eps \down 0$ with $\tau/\eps \down0$ we will prove that
the piecewise affine interpolants (see \eqref{e:pwl}) $(\pwL
{U}{\taue})_\taue$ of the discrete values $\Utaue n$ converge (up to
subsequences) to a $\BV$ solution of the \ris\ $\RIS$.  Under slightly
stronger assumptions on the energy functional $\cE$, \textbf{Theorems
  \ref{th:3-discrete} and Corollary
  \ref{cor:3-limit}} 
show that the limits obtained by this variational scheme belong to
$\BV([0,T];V)$ and are $\V$-parameterizable, a distinguished class of
solutions studied in \S\,\ref{ss:Vparam}.  Other important properties
of \BV\ solutions are discussed in \S\,\ref{ss:BV} and
\ref{ss:opt-trans}: the latter is focused in particular on the notion
of \textbf{optimal jump transition}, a useful tool to describe the
asymptotic profile of the solution $u_\eps$ around a jump limit point.

We discuss parameterized solutions in Section \ref{s:5}:
\textbf{Theorem \ref{thm-van-param}} provides the main existence and
convergence result, the tight connections with \BV\ solutions are
clarified in \textbf{Theorem \ref{prop:bv}}, and the case of
$V$-parameterized solutions is investigated in Section \ref{ss:3.2}.

Section \ref{s:examples} is devoted to a series of examples, where we
discuss the validity of the abstract conditions on the energy
enucleated in \S\,\ref{ss:2.1}, and in particular of the chain-rule
inequality. Furthermore, Example \ref{ex:3.a} shows that there exist
\BV\ solutions which are not $\V$-parameterizable.  Most of the proofs
and of the technical tools are collected in the last three sections.
\textbf{Section \ref{s:chain}} is devoted to the main theme of the
chain-rule inequalities in the parameterized (\S\,\ref{ss:8-chain})
and BV setting (\S\,\ref{ss:chain-bv}).

\textbf{Section \ref{s:last}} contains the main stability,
compactness, and lower semicontinuity results that lie at the core of
our proofs.  In \S\,\ref{ss:last1} and \S\,\ref{ss:clsc} we alternate
the parameterized and the non-parameterized point of view to describe
the limit of various integral functionals.  The crucial lower
semicontinuity result in the \BV\ setting is \textbf{Proposition
  \ref{cor:2}}, where  we adapt ideas introduced in \cite{MRS12}. 
The proofs of the main Theorems are eventually collected in \S
\ref{ss:8-vanvisc}.  The crucial \BV\ estimate for the discrete
Minimizing Movements leading to \textbf{$V$-parameterizable solutions}
are collected in \S\,\ref{ss:5.2}.

\section{Notation, assumptions and preliminary results}
\label{s:2}

\subsection{The energy-dissipation framework}
\label{ss:2.1}

Throughout the present paper we will suppose that
\begin{equation} \label{e:top}
\begin{gathered}
  \text{$(\Vanach,\Vnorm\cdot)$ is
   a separable Banach
   space satisfying the Radon-Nikod\'ym property.}
 \end{gathered}
\end{equation}
This means that absolutely continuous curves with values in $V$ are
$\Leb 1$-a.e.~differentiable, see Section \ref{s:appA}.  This
condition is certainly satisfied if $V$ is reflexive or if it is the
dual of a separable Banach space, see \cite{Diestel-Uhl77}.
With 
$\Vnorm{\cdot}_*$ we will denote the dual norm in $\V^*$, while
$\pairing{}{}{\cdot}{\cdot}$ stands for the duality pairing between
$\V^*$ and $\V$.

\subsubsection*{\bfseries Rate-independent and viscous dissipation}
On $V$ are defined two
\begin{equation}
  \text{continuous convex dissipation potentials
    $\Psiz,\Psiv:V\to [0,+\infty),$
    strictly positive in $V\setminus\{0\}$.}\label{e:2.1}  \tag{D$.0$}
\end{equation}
The \emph{``rate-independent''} potential $\Psiz$ is positively
$1$-homogeneous (a ``gauge'' functional, \cite{Rockafellar70})
\begin{equation}
  \tag{D$.1$}
  \label{eq:33}
  \Psiz(\lambda v)=\lambda \Psiz(v)\quad
  \text{for all }\,\lambda\ge0 \text{ and }
  \ v\in V.
\end{equation}
Notice that if $\Psiz(-v)=\Psiz(v)$ for every $v\in \V$, then $\Psiz$
is a norm in $\V$; we will say that $\Psi$ is coercive if $\Psi(v)\ge
c\|v\|$ for every $v\in V$ and some constant $c>0$.  However, in
general we will not assume any coercivity on $\Psi$, so that the
sublevel sets $\{v\in \V:\Psiz(v)\le r\}$ are not bounded.

Coercivity will be recovered by the addition of a \emph{``viscous''}
dissipation potential $\Psiv$ of the form
\begin{equation}
  \label{def-psiV}\tag{D${.2}$}
  \begin{gathered}
  \Psiv(v)=F(\|v\|)\quad
   \text{for }F\in \mathrm C^1([0,+\infty)) \text{ convex, with }
   \\
   F(r) >0   \text{ for } r>0, \
   F(0)=F'(0)=0,\ \lim_{r\up+\infty}F'(r)=+\infty.
  \end{gathered}
\end{equation}
We then consider a vanishing-viscosity family
$\Psi_\eps:V\to[0,+\infty)$, $\eps>0,$ of dissipation potentials
approximating $\Psiz$:
\begin{equation}
  \label{eq:2}
  \Psi_\eps(v):=\Psiz(v)+\eps^{-1}\Psiv(\eps v) =: 
  \eps^{-1}\Psi_1(\eps v),\quad
  \Psi_0(v):=\Psiz(v)=\lim_{\eps\down0}\Psi_\eps(v)=\inf_{\eps>0}\Psi_\eps(v).
\end{equation}
Observe that the whole theory is restricted to the case $\Psi_\eps(v)
<+ \infty$. Indeed, allowing for $\Psi_\eps(v)=+\infty$ as in
unidirectional processes such as damage,  hardening, or fracture
(cf., e.g., \cite{DaFrTo05QCGN,MieRou06RIDP,MaiMie08?GERI,%
  KnRoZa2011,BabFraMor12}) would give rise to additional
complications, which we prefer not to address in this paper. Still, a
typical situation that is relevant in elastoplasticity is given by the
choices $V= L^p (\Omega;\R^m)$ for $p \in (1,\infty)$, $\| v \| =
(\int_\Omega |v(x)|^p \, \dd x )^{1/p}$, $\Psi(v) = \int_\Omega
\sigma_Y |v(x)|\, \dd x $, and $F(r) = \nu r^p$.  In particular,
$\Psi_\eps$ has the simple form $\Psi_\eps (v) =\int_\Omega \sigma_Y
|v(x)| +\eps^{p-1}\nu |v(x)|^p \, \dd x $.

\subsubsection*{\bfseries Subdifferential of the rate-independent dissipation and
the dual convex stability set}
$\Diss{\Bo}{}$ is the support function of the $w^*$-closed
and bounded convex subset of $V^*$
\begin{equation}
  \label{eq:18}
  K^*
  :=\Big\{\xi\in \Vanach^*:
  \pairing{}{}{\xi}{w} \le \Diss{}{}(w)\text{ for every }w\in \Vanach
  \Big\}\subset \V^*,\quad
  \Psiz(v)=\sup_{\xi\in K^*}\pairing{}{}\xi v,
\end{equation}
which will play a prominent role in the following.
$K^*$ is related to $\Psiz$ by two
different important relations:
first of all, it is the proper domain of the conjugate function of $\Psi^*$:
\begin{equation}
  \label{eq:59}
  \Psi^*(\xi):=\sup_{v\in V} \left( \langle \xi,v\rangle -\Psi(v)
  \right)=\mathrm I_{K^*}(\xi)= 
  \begin{cases}
    0&\text{if }\xi\in K^*,\\
    +\infty&\text{otherwise.}
  \end{cases}
\end{equation}
Second, $K^*$ can be characterized in terms of the subdifferential
$\partial\Psi:V\rightrightarrows V^*$ of $\Psi$,
defined as
\begin{equation}
  \label{eq:26}
  \xi\in \partial\Psi (v)\quad\Leftrightarrow\quad
  \langle \xi,w-v\rangle \le \Psi(w)-\Psi(v)
  \quad\forall w\in V,
\end{equation}
so that
\begin{equation}
  \label{eq:27}
  K^*=\partial\Psi(0);\qquad
  \xi\in \partial\Psi (v)\quad\Leftrightarrow \quad
  \xi\in K^* \text{ and } \langle \xi,v\rangle=\Psi(v).
\end{equation}

\subsubsection*{\bfseries The energy functional and its subdifferential}
We shall consider a time-dependent
\begin{equation}
  \tag{${\mathrm{E}.0}$} \label{Ezero}
  \text{lower semicontinuous
    energy functional $\cE: [0,T] \times D
 \to \R$, $D\subset V$.}
\end{equation}
To simplify some formulae, we will set $\cE_t(u)=+\infty$ if
$u\not\in D$ and we will assume the following properties:
\begin{description}
   \itemsep0.4em
\item[Coercivity]
  the map
  \begin{equation}
  \label{eq:17}
  \tag{E.1}
  \begin{gathered}
    u\mapsto \cg u:=\Psiz(u)+\sup_{t\in [0,T]} \cE_t(u)\quad\text{has
      compact sublevels in $\V$,}\\
    \text{i.e.~for every $E>0$ the set $D_E:=\{u\in D:\cg u\le E\}$ is compact.}
  \end{gathered}
  \end{equation}
\item[Power-control] for all $u \in \domainenergy$ the function $
  t\mapsto \ene tu$ is differentiable on $ [0,T]$ with derivative
  $\power tu:= \partial_t\ene tu$ satisfying for a constant $C_P\ge0$
 \begin{equation}
    \label{hyp:en3}
  \tag{${\mathrm{E}.2}$} 
  |\power tu|\leq C_P \big(\Psiz(u)+\ene tu\big),\quad
   \limsup_{w\to u,w\in D_E} \power t w\le \power t u
  \end{equation}
  for every $(t,u)\in (0,T)\times D$, $E>0$.
\item[$\Psiz$-uniform subdifferentiability] for every $E>0$ there
  exists an upper semicontinuous map $ \omega^E:[0,T]\times D_E\times
  D_E\to \R,$ with $\omega_\cdot^E(u,u)\equiv0$ for every $u\in D_E$,
  such that
  \begin{equation}
    \label{hyp:en-subdif}
  \tag{${\mathrm{E}.3}$}
  \begin{gathered}
  \EE_t(v)\ge \EE_t(u)+
  \la \xi,v-u\ra-
  \omega^E_t(u,v)\Dnorm{v-u} \qquad
  \forall\, t\in [0,T],\ u,v\in D_E,\quad
  \xi\in \partial\EE_t(u),
  \end{gathered}
  \end{equation}
  where
  \begin{equation}
  \label{eq:39}
  \Dnorm w:=\min\Big(\Psiz(w),\Psiz(-w)\Big).
  \end{equation}
\end{description}
Recall that the Fr\'echet subdifferential of $\cE_t$ is the possibly
multivalued map $\partial \EE_t :V \rightrightarrows V^*$ defined at
$u\in D$ by
\begin{equation}
  \label{eq:RIF2:16}
  \xi\in \partial\EE_t(u)\quad\Longleftrightarrow\quad
  \xi\in \Vanach^*,\quad
  \EE_t(v)-\EE_t(u)-\la \xi,v-u\ra\ge o(\Vnorm{v-u}) \text{ as $v\to
    u$ in $\Vanach$,} 
\end{equation}
Thus \eqref{hyp:en-subdif} prescribes a uniform and specific form for
the remainder infinitesimal term on the right-hand side of
\eqref{eq:RIF2:16}. For later use, we observe that \eqref{hyp:en3}
and the Gronwall Lemma yield
\begin{equation}
\label{gronwall-dixit}
0\le \Psiz(u)+\ene s u\le \cg u \leq \exp(C_P T)
\big(\Psiz(u)+\ene tu\big) \qquad \text{for all } s,t\in [0,T], \ u\in D.
\end{equation}
Since $\cE$ is lower semicontinuous, \eqref{gronwall-dixit} joint with
\eqref{eq:17} yields that the maps
\begin{equation}
  \label{eq:60}
  u\mapsto \Psiz(u)+ \cE_t(u)\quad\text{have
    compact sublevels in $\V$}\quad
  \forevery t\in [0,T].
\end{equation}

\begin{remark}
  \upshape
  Most of the results of the present paper could be extended to
  the cases when $\Psiz$ depends on the
  state of the system (as in \cite{MRS-dne}),
  or it is replaced by a distance on $D$
  (as in \cite{RMS08,MRS09}) and when the viscous
  correction $\Phi$ is a general convex superlinear functional
  (as in \cite{MRS10}).
  We have chosen the current simpler structure
  to focus on the main features and techniques of the vanishing-viscosity method
  in the infinite-dimensional setting.
\end{remark}

\subsection{Absolutely continuous and $\BV$ functions}
\label{s:appA}

As in Section \ref{ss:2.1} let $\Psi:V\to[0,\infty)$ be a gauge
function with $\Psi(v)>0$ if $v\neq0$ and let $Z$ a subset of $V$.
The function
\begin{equation}
  \label{eq:21}
  Z\ni u,v\mapsto \Cost\Psi{}uv:=\Psi(v-u)\quad\text{is an asymmetric continuous
    distance on }Z.
\end{equation}
We say that a curve $u:[0,T]\to Z$ is $\Psi$-absolutely continuous if
there exists  a nonnegative function $m\in L^1(0,T)$ such that
\begin{equation}
  \label{eq:13a}
  \Cost\Psi{}{u(t_0)}{u(t_1)}\le \int_{t_0}^{t_1} m(s)\,\dd s\quad
  \forevery 0\le t_0<t_1\le T.
\end{equation}
We denote by $\AC([0,T];Z,\Psi)$ the set of all $\Psi$-absolutely
continuous curves with values in $Z$.  There is a minimal function $m$
such that \eqref{eq:13a} holds \cite{AGS08,RMS08}, and with a slight abuse
of notation we denote it by $\scalardens \Psi u$, since it admits the
expression
\begin{equation}
  \label{eq:13}
  \scalardens\Psi u(t)=\lim_{h\to 0}\Psi\Big(\frac{u(t+h)-u(t)}h\Big)
  \quad\text{for $\Leb 1$-a.a.\ }t\in (0,T),
\end{equation}
so that $\scalardens\Psi u(t)=\Psi(\dot u(t))$ whenever $u$ is
differentiable at $t$.  Since $V$ has the Radon-Nikod\'ym property,
this happens at $\Leb 1$-a.a.~$t\in (0,T)$ ($\Leb 1$ denoting the
Lebesgue measure on $(0,T)$), when $\Psi$ is coercive: if this is the
case and $Z=V$, we will simply write $u\in \AC(0,T;V)$.

$ \Var{\Psi}uab$ is the
\emph{pointwise} total variation induced by $\Psi$ 
on the interval $[a,b] \subset
[0,T]$, viz.
\begin{equation}
  \label{ptwise-tvar} \!\!\!\!\!\!
  \Var{\Psi}uab:=
  \sup\Big\{\sum_{m=1}^M \Psi\big(u(t_m)-u(t_{m-1})\big):a=t_0< t_1<\cdots<t_{M-1}<t_M=b\Big\}.
\end{equation}
If $Z\subset V$, $\BV([0,T];Z,\Psi)$ will denote the set of all curves
$u:[0,T]\to Z$
with finite $\Psi$-total variation in $[0,T]$.
When $\Psi:=\|\cdot\|$ we will simply write $\BV([0,T];V)$
and we will omit the index $\Psiz$ in the symbol of the total variation.
Notice that $\BV([0,T];V)\subset \BV([0,T];V,\Psi)$ for every choice
of $\Psi$, whereas the opposite inclusion  only holds  when
$\Psi$ is coercive on $V$.

To every $u\in \BV([0,T];Z,\Psi)$
we can associate the nondecreasing scalar function
$\scalarV{}{}{} :\R\to [0,\infty)$
\begin{equation}
  \label{eq:14}
  \scalarV {}{}t:=
  \begin{cases}
    0&\text{if }t\le 0,\\
    \Var{\Psi}u0t&\text{if $t\in (0,T)$,}\\
    \Var{\Psi}u0T&\text{if $t\ge T$}
  \end{cases}
  \quad
  \text{with distributional derivative}\quad
  \scalarmu{}u =\frac{\dd}{\dd t}\scalarV {}{}{}.
\end{equation}
The finite Borel measure $\scalarmu{}u$ is supported in $[0,T]$ and it
can be decomposed into the sum
$\scalarmu{}u=\scalarmuco{}u+\scalarmuj {}u$ of a diffuse part
$\scalarmuco{}u$ (such that
$\scalarmuco{}u(\{t\})=0$ for every $t\in \R$),
and a jump part $\scalarmuj {}u$
concentrated in a countable set $\mathrm J_u\subset [0,T]$.

When $Z$ is compact (or when $\Psiz$ is coercive),
for every $\delta>0$ there exists a constant
$M_\delta>0$ such that (recall \eqref{eq:39} for the definition of $\Dnormname$)
\begin{equation}
  \label{eq:23}
  \|u-v\|\le \delta+ M_\delta\,
  \Dnorm{v-u}
  \quad
  \forevery u,v\in Z.
\end{equation}
By introducing the continuous and concave modulus of continuity
\begin{equation}
  \label{eq:121}
  \Omega_Z:[0,+\infty)\to [0,+\infty),\quad
  \Omega_Z(r):=\inf_{\delta>0}\delta+M_\delta\, r\quad
  \text{so that }
  \lim_{r\down0}\Omega_Z(r)=0,
\end{equation}
\eqref{eq:23} rewrites as 
\begin{equation}
  \label{eq:120}
  \|u-v\|\le \Omega_Z(\Dnorm{u-v}) \quad\forevery u,v\in Z.
\end{equation}
If \eqref{eq:23} holds, it is easy to show that
 a function $u\in \BV([0,T];Z,\Psi)$
is continuous in $[0,T]\setminus \rmJ_u$ and
its left and right
limits exist at every  $t\in [0,T]:$
\begin{equation}
    \label{eq:41}
    u(t_-):=\lim_{s\uparrow t}u(s),\ \
    u(t_+):=\lim_{s\downarrow t}u(s) \ \text{ with the convention }
    u(0_-):=u(0),\ u(T_+):=u(T),
\end{equation}
so that ${\mathrm{J}}_u$ admits the representation 
\begin{equation}
    \label{eq:88}
    {\mathrm{J}}_u:=\big\{t\in [0,T]:u(t_-)\neq u(t)\text{ or
    }u(t)\neq u(t_+)\big\}
\end{equation}
and
\begin{equation}
  \label{eq:19}
  \scalarmuj {}u(\{t\})=\Psi(u(t)-u(t_-))+\Psi(u(t_+)-u(t))\quad
  \forevery t\in \rmJ_u.
\end{equation}
Furthermore, 
$\scalarmuco {}u$ admits the  Lebesgue decomposition
$\scalarmuco{}u=\scalarmul{}u+\scalarmuc{}u$ with
$\scalarmul{}u\ll\Leb 1$
and $\scalarmuc{}u\perp\Leb 1$.
The density of $\scalarmul{}u$ with respect to $\Leb 1$
is provided by the same formula \eqref{eq:13} and
one has 
\begin{equation}
  \label{eq:24}
  \text{$u\in \AC([0,T];Z,\Psi)$ if and only if $\scalarmuj {}u=\scalarmuc{}u\equiv0$, with}
  \quad \Var\Psi uab=\int_a^b \scalardens \Psi u(t)\,\dd t.
\end{equation}
In this case, when $Z$ is compact or $\Psiz$ coercive, $u$ is a continuous curve.
In general we have
\begin{equation}
\label{repre-rn}
\begin{aligned}
  \Var{\Psi{}}uab&= \scalarmuco{}u([a,b])+\JVar{\Psi{}}uab,
\end{aligned}
\end{equation}
where the jump contribution $\JVar{\Psi{}}uab$ can be described
by
\begin{equation}
  \label{eq:37}
 \begin{aligned}  \JVar{\Psi}uab&:=
   \Cost{\Psi}{}{u(a)}{u(a_+)}+\Cost{\Psi}{}{u(b_-)}{u(b)}
\\ &\qquad +
    \sum_{t\in{\mathrm{J}}_u\cap
      (a,b)} \big(\Cost{\Psi}{}{u(t_-)}{u(t)} +
                  \Cost{\Psi}{}{u(t)}{u(t_+)}\big), \\
                  &\phantom{:}=
                  \Cost{\Psi}{}{u(a)}{u(a_+)}+\Cost{\Psi}{}{u(b_-)}{u(b)}+
                  \scalarmuj{}u((a,b)).
\end{aligned}
\end{equation}
\begin{remark}[Scalar vs.\  vector measures]
  \label{re:BVV}
  \upshape
  If $u\in \BV(0,T;V)$ all the previous definitions have an important
  vector counterpart in terms of the vector measure
  $u'_{\mathscr D}$ associated with  the distributional
  derivative of $u$:
$u_{\scrD}'$ is a Radon vector measure on $(0,T)$ with values in $\V$,
with finite total variation $\|u_\scrD'\|$.
The measure
$u'_\scrD$ can
be decomposed into the sum of the three mutually singular measures
\begin{equation}
  \label{eq:87}
  u_\scrD'=u'_{\mathscr{L}}+ u'_\mathrm{C} +u'_{\mathrm{J}},\quad
  \quad
  u'_\mathrm{d}:=u'_{\mathscr{L}}+ u'_\mathrm{C}, 
\end{equation}
where  $u'_{\mathscr{L}}$ is its absolutely continuous part with
respect to 
 $\Leb1$. 
 $u'_{\mathrm{J}}$
is a discrete measure concentrated on ${\mathrm{J}}_u$,
and $u'_\mathrm{C}$ is the so-called Cantor part,
still satisfying  $u'_\mathrm{C}(\{t\})=0$ for every $t\in [0,T]$.
Therefore $u'_{\mathrm{d}}=u'_{\mathscr{L}}+ u'_\mathrm{C} $   is the
diffuse part of the measure, which does not charge ${\mathrm{J}}_u$.

Since $V$ has the Radon-Nikod\'ym property,
$u$ is differentiable $\Leb 1$-a.e.~in $(0,T)$ (we denote by $\dot
u$ its derivative), and we can
express $u_{\rm d}'$ in terms of its density
$\nn$ with respect to its total variation
$\|u_{\rm d}'\|$ as
\begin{equation}
  u_{\rm d}'=\nn \|u_{\rm d}'\|\quad\text{where }
  \|\nn\|=1\ \text{$\|u_{\rm d}'\|$-a.e.},
  \quad
  u_{\mathscr L}'=\dot u\,\Leb 1,\quad
 \nn=\frac{\dot u}{\|\dot u\|}
    \quad\text{$\|u_{\mathscr L}'\|$-a.e.}.
\label{eq:11}
\end{equation}
  The relation to the previously introduced measures $\scalarmuco {}u$,  $\scalarmuc {}u$, and
  $\scalarmul {}u$ is 
  \begin{equation}
    \label{eq:15}
    \scalarmuco {}u=\Psiz(\nn) \|u_{\rm d}'\|,\quad
    \scalarmuc {}u=\Psiz(\nn) \|u_{\rm C}'\|,\quad
    \scalarmul {}u=\Psiz(\nn)\|u_{\mathscr L}'\|=
    \Psiz(\dot u)\Leb 1.
  \end{equation}
\end{remark}


\subsection{Two useful properties from the theory of gradient systems}
\label{ss:G1}
The assumptions on the dissipation potentials  $\Psiz$ and $\Psiv$  and on the energy $\cE$
stated in the previous section yield two important consequences, stated in Theorem \ref{thm-from-mrs12}
below, that
play a crucial role in the variational approach to gradient systems and
rate-independent evolutions.

Before stating them, let us recall that 
for every map
$\Lambda:\V \to (-\infty,+\infty]$ bounded from below by a
continuous and
affine function, $\Lambda^*:\V^*\to(-\infty,+\infty]$ will denote the
conjugate
\begin{equation}
  \label{eq:5}
  \Lambda^*(\xi):=\sup_{v\in \V}\pairing{}{}\xi v-\Lambda(v).
\end{equation}
For the functional $\Phi$ in \eqref{def-psiV} we have
\begin{equation}
  \label{eq:42}
  \Phi^*(\xi)=F^*\big(\|\xi\|_*\big),\quad
  \text{where}\quad
  F^*(s)=\sup_{r\ge0} rs-F(r),
\end{equation}
so that, by the inf-convolution duality formula (see
e.g. \cite[Thm.\,1, p.\,178]{ioffe-tihomirov}) and the monotonicity of
$F^*$  we find 
\begin{equation}
  \label{eq:3}
  \Diss\eps{*}(\xi)=\frac 1\eps\min_{z\in K^*}\Psiv^*(\xi-z)=
  \frac 1\eps\min_{z\in K^*}F^*\big (\|\xi-z\|_*\big)=
  \frac 1\eps F^*\Big(\min_{z\in K^*}\|\xi-z\|_*\Big).
\end{equation}
\begin{theorem}[{\cite[Prop.\,2.4]{MRS-dne}}]
\label{thm-from-mrs12}
  Under the assumptions of Section \ref{ss:2.1} the following
  properties hold.
  \begin{description}
  \item[\emph{Chain rule}] For every $u\in \AC([0,T];\V)$ and $\xi\in
    L^1(0,T;\V^*)$ with
    \begin{equation}
      \label{conditions-for-chain-rule}
      \begin{gathered}
        \sup_{t \in [0,T]} \left|\ene{t}{u(t)} \right|<+\infty, \ \
        \xi(t)\in-\frsub \ene t{u(t)} \text{ for a.a.\   $\, t \in (0,T)$, and}\\
        \int_0^T\Psi_\eps(\dot u(t))\, \dd t<+\infty,\quad
        \int_0^T\Psi_\eps^*(\xi(t))\, \dd t<+\infty,
      \end{gathered}
    \end{equation}
    the map $t \mapsto \ene t{u(t)}$ is absolutely continuous and
    \begin{equation}
      \label{eq:45tris}
      \begin{gathered}
        \frac {\mathrm{d}}{\mathrm{d}t}\ene t{u(t)} =-\la
        \xi(t),u'(t)\ra + \power t{u(t)} \quad\text{for
          a.a.}\, t \in (0,T)\,.
      \end{gathered}
    \end{equation}
  \item[\emph{Strong-Weak closedness of the graph of $(\cE,\frsub \cE)$}]
    For all sequences $(t_n)\subset [0,T]$,
    $(u_n) \subset \V$ and $(\xi_n)
    \subset \V^* $ we have the following condition:
    \begin{equation}
      \label{eq:45}
      \begin{aligned}
        &\text{if $t_n\to t$ in $[0,T]$, \ $u_n\to u $ in $\V,$ } \
        \text{$\xi_n\weakto \xi$ in 
          $\V^*$,} \ \xi_n \in \frsub \ene
        {t_n}{u_n}, 
        \\
        &\text{and if } \ene {t_n}{u_n} \to \EE \text{ in }\R,\qquad
        \text{then} \quad \xi \in \frsub \ene tu \ \text{ and } \
         \EE=\ene tu.
      \end{aligned}
    \end{equation}
\end{description}
Furthermore, \eqref{eq:45} implies that $\frsub \cE_t(u)$ is a
weakly$^*$-closed, convex subset (possibly empty) of $\V^*$. 
\end{theorem}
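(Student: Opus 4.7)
The proof splits into the chain rule and the strong-weak graph-closure; I will address them in that order.

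For the \emph{chain rule}, the initial step is to use the power-control \eqref{hyp:en3} together with the Gronwall consequence \eqref{gronwall-dixit} to confine $u(t)$ inside a fixed sublevel set $D_E$ as in \eqref{eq:17}, so that the uniform subdifferentiability \eqref{hyp:en-subdif} applies along the whole trajectory with a single modulus $\omega^E$. The plan is then to decompose the increment as
\[
\ene{t+h}{u(t+h)}-\ene{t}{u(t)}=\bigl[\ene{t+h}{u(t+h)}-\ene{t+h}{u(t)}\bigr]+\bigl[\ene{t+h}{u(t)}-\ene{t}{u(t)}\bigr],
\]
and to treat the two pieces separately. The first bracket is bounded above by applying \eqref{hyp:en-subdif} at $(t{+}h,u(t{+}h))$ with subgradient $-\xi(t{+}h)$, and bounded below by applying \eqref{hyp:en-subdif} at $(t,u(t))$ with subgradient $-\xi(t)$; the second bracket equals $\int_t^{t+h}\power{s}{u(t)}\,\dd s$ and is controlled by \eqref{hyp:en3}.

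Dividing by $h$ and letting $h\to 0^{\pm}$ at Lebesgue points of $\dot u$ and $\xi$, the remainder $\omega^E_s(u(s),u(s\pm h))\Dnorm{u(s\pm h)-u(s)}$ is $o(h)$, because $\omega^E$ is upper semicontinuous with $\omega^E_\cdot(u,u)\equiv 0$, and because the positive $1$-homogeneity of $\Psi$ together with Jensen's inequality gives
\[
\Dnorm{u(t+h)-u(t)}\le \Psi(u(t+h)-u(t))\le \int_t^{t+h}\Psi(\dot u(s))\,\dd s=O(h).
\]
This yields the pointwise identity \eqref{eq:45tris} almost everywhere. Absolute continuity of $t\mapsto\ene{t}{u(t)}$ then follows from the Fenchel--Young estimate $|\langle\xi,\dot u\rangle|\le \Psi_\eps(\dot u)+\Psi_\eps^*(\xi)$, whose right-hand side is $L^1(0,T)$ by \eqref{conditions-for-chain-rule}, together with the $L^1$ bound on $s\mapsto\power{s}{u(s)}$ coming from \eqref{hyp:en3}.

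For the \emph{strong-weak closedness} \eqref{eq:45}, the argument is to test \eqref{hyp:en-subdif} at $(t_n,u_n,\xi_n)$: for every $v$ in a common sublevel set $D_E$,
\[
\ene{t_n}{v}\ge \ene{t_n}{u_n}+\langle\xi_n,v-u_n\rangle-\omega^E_{t_n}(u_n,v)\Dnorm{v-u_n}.
\]
Passing to the limit uses the pairing of the weakly$^*$ convergent $\xi_n$ with the strongly convergent $v-u_n$, the upper semicontinuity of $\omega^E$, the continuity in time of $s\mapsto\ene{s}{v}$ from \eqref{hyp:en3}, and $\ene{t_n}{u_n}\to\mathfrak E$. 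The outcome is $\ene{t}{v}\ge\mathfrak E+\langle\xi,v-u\rangle-\omega^E_t(u,v)\Dnorm{v-u}$. Choosing $v=u$ gives $\ene{t}{u}\ge\mathfrak E$, while lower semicontinuity of $\cE$ supplies the reverse inequality, so $\mathfrak E=\ene{t}{u}$. The residual inequality then rearranges into the Fr\'echet condition \eqref{eq:RIF2:16} for $\xi\in\partial\ene{t}{u}$, using $\Dnorm{v-u}\le C\|v-u\|$ on $D_E$ (by continuity of $\Psi$ on compacts) and $\omega^E_t(u,v)\to 0$ as $v\to u$. Convexity of $\partial\ene{t}{u}$ is immediate from \eqref{eq:RIF2:16}, and weak$^*$-closedness is the same argument specialized to $t_n\equiv t$, $u_n\equiv u$.

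The principal obstacle I expect is the interplay between the gauge $\Psi$ (which need not be coercive on $V$) and the ambient norm: \eqref{hyp:en-subdif} is phrased through $\Dnormname$, while the Fr\'echet subdifferential is defined through $\|\cdot\|$. Reconciling the two relies crucially on $\Psi$ being continuous, hence locally Lipschitz on compacts, so that $\Dnorm{\cdot}=O(\|\cdot\|)$ on sublevel sets; this same ingredient is what makes the remainder estimate succeed in the chain rule above.
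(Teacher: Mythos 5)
This theorem is imported verbatim from \cite[Prop.\,2.4]{MRS-dne}; the paper cites it but does not reprove it, so there is no internal proof to compare against. I therefore evaluate your argument on its own merits.

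Your treatment of the \emph{strong-weak closedness} property \eqref{eq:45} is essentially correct. Passing to the limit in \eqref{hyp:en-subdif} using the weak-strong pairing, the upper semicontinuity of $\omega^E$, the time-continuity supplied by \eqref{hyp:en3}, and the lower semicontinuity of $\cE$, then setting $v=u$, does give $\mathfrak E = \ene tu$ and the limiting inequality $\ene tv \ge \ene tu + \la\xi,v-u\ra - \omega^E_t(u,v)\Dnorm{v-u}$ for $v\in D_E$, which upgrades to \eqref{eq:RIF2:16} because $\omega^E_t(u,v)\to0$ as $v\to u$ and $\Dnorm{\cdot}\lesssim\|\cdot\|$. (One small point worth addressing is what happens for $v$ close to $u$ but outside $D_E$; this is standard and not a serious obstruction, but it should be mentioned.)

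The \emph{chain rule} argument, however, has a genuine gap. You bound $\ene{t+h}{u(t+h)}-\ene{t+h}{u(t)}$ from above by applying \eqref{hyp:en-subdif} at $(t{+}h,u(t{+}h))$ with subgradient $-\xi(t{+}h)$, and then claim to pass to the limit as $h\to 0$ at a Lebesgue point of $\xi$. This step is not justified: $\xi\in L^1(0,T;V^*)$ has Lebesgue points in the sense that $\frac1h\int_t^{t+h}\|\xi(s)-\xi(t)\|_*\,\dd s\to0$, which gives no control on the \emph{pointwise} values $\xi(t+h)$, and in particular does not allow you to pass to the limit in $\la\xi(t+h),\,h^{-1}(u(t+h)-u(t))\ra$. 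The standard remedy is to arrange the decomposition so that the subgradient is always evaluated at the fixed central time $t$: use the right increment $e(t+h)-e(t)$ with \eqref{hyp:en-subdif} at $(t,u(t))$ to obtain
\[
\liminf_{h\to 0^+}\frac{e(t+h)-e(t)}{h}\ge \power t{u(t)}-\la\xi(t),\dot u(t)\ra,
\]
and the left increment $e(t)-e(t-h)$ with the same subgradient $-\xi(t)\in\partial\ene t{u(t)}$ and $v=u(t-h)$ to obtain
\[
\limsup_{h\to0^+}\frac{e(t)-e(t-h)}{h}\le\power t{u(t)}-\la\xi(t),\dot u(t)\ra.
\]
This avoids $\xi(t\pm h)$ entirely.

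Even after this fix there is a second gap: you assert that absolute continuity of $e(t)=\ene t{u(t)}$ follows because the candidate derivative $\power t{u(t)}-\la\xi(t),\dot u(t)\ra$ lies in $L^1(0,T)$ by Fenchel--Young. Knowing that a function's almost-everywhere pointwise derivative is $L^1$ does \emph{not} imply absolute continuity; it does not rule out a Cantor or jump part. One must first establish that $e$ is $\BV$ (and then that it has no singular part), which requires a genuine modulus estimate $|e(t)-e(s)|\le\int_s^t\psi(r)\,\dd r$ with $\psi\in L^1$, not merely the pointwise sandwich above. The quantity $\la\xi(s),u(t)-u(s)\ra=\int_s^t\la\xi(s),\dot u(r)\ra\,\dd r$ that appears in your bounds has the subgradient frozen at one endpoint, so the Fenchel--Young estimate $|\la\xi(s),\dot u(r)\ra|\le\Psi_\eps(\dot u(r))+\Psi_\eps^*(\xi(s))$ produces a factor $\Psi_\eps^*(\xi(s))$ that is finite for a.e.\ $s$ but not summable in $s$. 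This is precisely the technical delicacy that Proposition 2.4 of \cite{MRS-dne} resolves; it is not automatic, and your sketch does not close it.
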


\subsection{Variational gradient systems}
\label{ss:G2}

We recall an application of the general
existence and approximation  result
of \cite{MRS-dne} for the Cauchy problem
associated with \eqref{viscous-dne}.
\begin{theorem}[\cite{MRS-dne}]
\label{th:0}
Let us assume that
\eqref{e:2.1}--\eqref{def-psiV} and
\eqref{Ezero}--\eqref{hyp:en-subdif}
hold.
%
%
%
Then, for every $u_{0,\eps} \in \domainenergy$  there exists  a curve $ u_\eps
\in \AC ([0,T];\V)$ solving \eqref{viscous-dne} and fulfilling the
Cauchy condition $u(0)=u_{0,\eps}.$
More precisely, there exists a function $\xi_\eps
\in L^1 (0,T;\V^*)$ fulfilling
\begin{equation}
  \label{xi-selection} \xi_\eps(t) \in -\frsub \ene t{u_\eps(t)}, \quad \xi_\eps(t)
\in  \partial\Psi_\eps(\dot u_\eps(t))
\quad \foraa\, t \in (0,T),
\end{equation}
and the \emph{energy identity} for all $0 \leq s \leq t \leq T$
  \begin{equation}
    \label{eq:52bis}
    \int_{s}^{t}
    \Big(\Psi_\eps(\dot{u_\eps}(r)){+}\Psi_\eps^*(\xi_\eps(r)) \Big)\,\dd r + \ene {t}{u_\eps(t)}=
    \ene
    {s}{u_\eps(s)} + \int_s^t \power {r}{u_\eps(r)} \, \dd r.
  \end{equation}
\end{theorem}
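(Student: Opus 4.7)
The plan is to produce $u_\eps$ and $\xi_\eps$ as limits of the De Giorgi minimizing-movement scheme (the time-discretization \eqref{eq:58}) applied to the viscously regularized problem, and then to identify the limits as a solution of \eqref{viscous-dne} by invoking the chain rule and the strong--weak closedness of the graph of $(\cE,\frsub\cE)$ recalled in Theorem \ref{thm-from-mrs12}. Fix $\eps>0$, a time step $\tau\in (0,1)$ with nodes $t_n = n\tau$, and set $\Utaue{0} := u_{0,\eps}$. Define $\Utaue{n}$ inductively as a minimizer of
\[
U \mapsto \tau\,\Psi_\eps\Big(\frac{U - \Utaue{n-1}}{\tau}\Big) + \ene{t_n}{U}.
\]
Existence follows from the direct method: the functional is lower semicontinuous by \eqref{Ezero}, and its sublevels are precompact because the coercivity \eqref{eq:17} together with the superlinearity of $\Psi_\eps$ inherited from \eqref{def-psiV} confine $U$ to a fixed sublevel $D_E$ of $\cg\cdot$.

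The first-order optimality condition for $\Utaue{n}$ supplies a selection $\xi_\taue^n \in V^*$ with $\xi_\taue^n \in -\frsub\ene{t_n}{\Utaue{n}}$ and $\xi_\taue^n \in \partial\Psi_\eps(v_\taue^n)$, where $v_\taue^n := (\Utaue{n} - \Utaue{n-1})/\tau$; in particular the Fenchel equality $\Psi_\eps(v_\taue^n) + \Psi_\eps^*(\xi_\taue^n) = \langle \xi_\taue^n, v_\taue^n\rangle$ holds. Using $\Utaue{n-1}$ as a competitor and controlling the time dependence of $\cE$ via the power-control \eqref{hyp:en3} and the $\Psiz$-subdifferentiability \eqref{hyp:en-subdif}, one derives the discrete energy--dissipation inequality
\[
\sum_{j=1}^{n}\tau\bigl(\Psi_\eps(v_\taue^j)+\Psi_\eps^*(\xi_\taue^j)\bigr) + \ene{t_n}{\Utaue{n}} \le \ene{0}{u_{0,\eps}} + \int_0^{t_n}\power{s}{\pwC{U}{\taue}(s)}\,\dd s.
\]
A Gronwall argument combining this with \eqref{hyp:en3} and \eqref{gronwall-dixit} yields uniform bounds $\sup_n \cg{\Utaue{n}} \le C$ and $\sum_n \tau(\Psi_\eps(v_\taue^n)+\Psi_\eps^*(\xi_\taue^n)) \le C$ independent of $\tau$. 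Superlinearity of $\Psi_\eps$ promotes the variation bound to equiabsolute continuity of the piecewise affine interpolants $\pwL{U}{\taue}$, and the $\V$-compactness of $D_E$ permits an Ascoli--Helly-type extraction of a subsequence converging pointwise to some $u_\eps \in \AC([0,T];V)$; by Dunford--Pettis, exploiting the superlinearity of $F^*$, one may also assume $\xi_\taue \weakto \xi_\eps$ in $L^1(0,T;V^*)$ and $\dot{\pwL{U}{\taue}} \weakto \dot u_\eps$ in a suitable sense.

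The main obstacle is to upgrade the discrete energy inequality---which, by the joint lower semicontinuity of $(v,\xi)\mapsto \Psi_\eps(v) + \Psi_\eps^*(\xi)$ (Ioffe's theorem) and the lower semicontinuity of $\cE$, passes to the limit as a one-sided inequality---into the full identity \eqref{eq:52bis}, and simultaneously to secure the pointwise inclusions \eqref{xi-selection}. Here the two results collected in Theorem \ref{thm-from-mrs12} play complementary roles. The strong--weak closedness \eqref{eq:45} identifies $\xi_\eps(t) \in -\frsub\ene{t}{u_\eps(t)}$ for a.e.\ $t \in (0,T)$, which together with the summability of $\Psi_\eps^*(\xi_\eps)$ and $\Psi_\eps(\dot u_\eps)$ places the pair $(u_\eps,\xi_\eps)$ in the hypotheses of the chain rule \eqref{eq:45tris}. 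Integrating the latter yields
\[
\ene{0}{u_{0,\eps}} - \ene{t}{u_\eps(t)} + \int_0^t \power{r}{u_\eps(r)}\,\dd r = -\int_0^t \langle \xi_\eps(r), \dot u_\eps(r)\rangle\,\dd r,
\]
and combining it with the Fenchel inequality $\Psi_\eps(\dot u_\eps) + \Psi_\eps^*(\xi_\eps) \ge -\langle \xi_\eps, \dot u_\eps\rangle$ produces the reverse of the limiting energy inequality, forcing equality in \eqref{eq:52bis}. The same equality then reads, pointwise a.e., as the Fenchel equality $\Psi_\eps(\dot u_\eps(t))+\Psi_\eps^*(\xi_\eps(t)) = -\langle\xi_\eps(t),\dot u_\eps(t)\rangle$, which is equivalent to $\xi_\eps(t)\in\partial\Psi_\eps(\dot u_\eps(t))$, thereby completing \eqref{xi-selection}.
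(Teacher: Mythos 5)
Your overall architecture — time-discretization via the scheme \eqref{eq:58}, Helly/Ascoli compactness of the interpolants, strong-weak closedness \eqref{eq:45} to identify $\xi_\eps(t)\in-\frsub\ene{t}{u_\eps(t)}$, and the chain rule \eqref{eq:45tris} plus Fenchel's inequality to force the limiting one-sided energy estimate into the identity \eqref{eq:52bis} and the pointwise inclusion $\xi_\eps\in\partial\Psi_\eps(\dot u_\eps)$ — is exactly the strategy of the cited reference, which this paper only recalls (see Theorem \ref{thm:ref0} and the surrounding remark that \eqref{eq:52bis} is obtained ``by passing to the limit in the time-discretization scheme \eqref{eq:58}'').

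There is, however, a genuine gap in the step producing the discrete energy--dissipation inequality. Testing the minimization problem with $\Utaue{n-1}$ yields only
$\tau\Psi_\eps(v_\taue^n) + \ene{t_n}{\Utaue{n}} \le \ene{t_n}{\Utaue{n-1}}$,
i.e.\ only the $\Psi_\eps$ contribution; the conjugate term $\Psi_\eps^*(\xi_\taue^n)$ does not appear. You hint at recovering it through the uniform subdifferentiability \eqref{hyp:en-subdif}, but that inequality, applied with $\xi=-\xi_\taue^n\in\partial\ene{t_n}{\Utaue{n}}$, produces the correction $\omega^E_{t_n}(\Utaue{n},\Utaue{n-1})\Dnorm{\Utaue{n}-\Utaue{n-1}}$ with the wrong sign: its smallness depends on $\|\Utaue{n}-\Utaue{n-1}\|$ being small, which in turn is only available once the dissipation bound has been established, so that a bootstrap/continuation argument is unavoidable and is not supplied. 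The standard device that circumvents this and gives the clean inequality with both $\Psi_\eps$ and $\Psi_\eps^*$ (and hence the a priori bounds you then rely on) is De Giorgi's \emph{variational interpolant}, recalled in \eqref{interpmin}--\eqref{interpxi} and used in Proposition \ref{prop:first-a-priori-discrete}: minimizing over the interpolated time $r\in(0,\tau]$ and differentiating the Moreau--Yosida-type value function in $r$ produces the missing $\Psi_\eps^*$ term without any error. That ingredient is absent from your write-up, and without it the discrete inequality you display is not justified. Once this is repaired, the remaining passages to the limit are sound.
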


\subsubsection*{\bfseries Minimizing Movement solutions}
Theorem \ref{th:0} was proved in \cite[Thm.\,4.4]{MRS-dne} by passing
to the limit in the time-discretization scheme \eqref{eq:58}, see the
last paragraph of the introduction.  Here we quote the main
convergence result:

\begin{theorem}[Minimizing Movement solutions to \eqref{viscous-dne}]
  \label{thm:ref0}
  Under our standard assumptions \eqref{e:2.1}--\eqref{def-psiV} and
  \eqref{Ezero}--\eqref{hyp:en-subdif}, Problem \eqref{eq:58} has at
  least a solution $(\mathrm{U}^n_\taue)_{n=0}^{N_\tau}$.
  For every $\eps>0$
  there exist a sequence $\tau_k
  \downarrow 0$ as $k \to \infty$ and a limit solution
  $u_\eps \in \AC ([0,T];\V)$ to
  \eqref{xi-selection} and \eqref{eq:52bis}
  such that the
  piecewise affine interpolants $\pwL {U}{\tau}$
  satisfy
  \begin{equation}
    \label{conver-dne-approx} \pwL {U}{\tau_k,\eps} \to u_\eps \quad
    \text{in $\V$, uniformly in $[0,T]$.}
  \end{equation}
\end{theorem}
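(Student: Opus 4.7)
\emph{Step 1: Existence of the discrete minimizers.} First I would solve the incremental problem \eqref{eq:58} via the direct method. At each step $n$, the functional $\mathrm{U}\mapsto \tau\Psi_\eps\bigl((\mathrm{U}-\mathrm{U}^{n-1}_{\tau,\eps})/\tau\bigr)+\ene{t_n}{\mathrm{U}}$ is lower semicontinuous, and its sublevels are compact in $V$: the superlinearity of $\Psi_\eps$ built into \eqref{def-psiV} dominates the $\Psiz$-part of the coercivity functional $\cg{\cdot}$ in \eqref{eq:17}, so compactness of $\cE$-sublevels lifts to the augmented functional. This yields a minimizer $\mathrm{U}^n_{\tau,\eps}$. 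Setting $V^n_{\tau,\eps}:=(\mathrm{U}^n_{\tau,\eps}-\mathrm{U}^{n-1}_{\tau,\eps})/\tau$, convex subdifferential calculus for $\Psi_\eps$ combined with the Fr\'echet subdifferential of $\cE_{t_n}$ produces the Euler--Lagrange inclusion $\xi^n_{\tau,\eps}\in\partial\Psi_\eps(V^n_{\tau,\eps})\cap(-\partial\ene{t_n}{\mathrm{U}^n_{\tau,\eps}})$, and Fenchel--Young gives the identity $\Psi_\eps(V^n_{\tau,\eps})+\Psi_\eps^*(\xi^n_{\tau,\eps})=\langle\xi^n_{\tau,\eps},V^n_{\tau,\eps}\rangle$.

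\emph{Step 2: Discrete energy-dissipation estimate.} The crude minimality inequality $\tau\Psi_\eps(V^n_{\tau,\eps})+\ene{t_n}{\mathrm{U}^n_{\tau,\eps}}\le\ene{t_n}{\mathrm{U}^{n-1}_{\tau,\eps}}$, coupled with \eqref{hyp:en3} and a discrete Gronwall argument, yields a uniform-in-$\tau$ bound on $\sup_n\cg{\mathrm{U}^n_{\tau,\eps}}$ and hence, by \eqref{eq:17}, $V$-equicompactness of the values, together with $\int_0^T\Psi_\eps(\dot{\pwL{U}{\tau,\eps}})\,\mathrm{d}r\le C$. To recover the matching $\Psi_\eps^*$-term in a sharp discrete form of \eqref{eq:52bis}, I would introduce the De Giorgi variational interpolant $\widetilde{\mathrm{U}}_{\tau,\eps}$, defined on each subinterval $[t_{n-1},t_n]$ by a one-parameter family of auxiliary minimization problems; its Euler--Lagrange relation, together with the $\Psi$-uniform subdifferentiability \eqref{hyp:en-subdif} used to absorb the nonconvex remainder term of $\cE$, produces
\begin{equation*}
  \int_s^t\bigl(\Psi_\eps(\dot{\pwL{U}{\tau,\eps}}(r))+\Psi_\eps^*(\bar\xi_{\tau,\eps}(r))\bigr)\,\mathrm{d}r+\ene{\bar t_\tau(t)}{\pwC{U}{\tau,\eps}(t)}\le \ene{\bar t_\tau(s)}{\pwC{U}{\tau,\eps}(s)}+\int_s^t\power{r}{\widetilde{\mathrm{U}}_{\tau,\eps}(r)}\,\mathrm{d}r
\end{equation*}
for a piecewise-constant interpolant $\bar\xi_{\tau,\eps}$ of $(\xi^n_{\tau,\eps})_n$ and piecewise-constant time grid $\bar t_\tau$.

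\emph{Step 3: Compactness and passage to the limit.} Equi-absolute continuity of the piecewise affine interpolants follows from the $L^1$-bound on $\Psi_\eps(\dot{\pwL{U}{\tau,\eps}})$ and the superlinearity of $\Psi_\eps$; combined with the $V$-equicompactness of the values, Ascoli--Arzel\`a along a subsequence $\tau_k\downarrow 0$ furnishes $u_\eps\in\AC([0,T];V)$ with $\pwL{U}{\tau_k,\eps}\to u_\eps$ uniformly in $V$. The dual bound $\int_0^T\Psi_\eps^*(\bar\xi_{\tau,\eps})\,\mathrm{d}r\le C$ allows me to extract a weak limit $\xi_\eps\in L^1(0,T;V^*)$; the strong--weak graph closedness \eqref{eq:45} then gives $\xi_\eps(t)\in-\partial\ene{t}{u_\eps(t)}$ for a.e.\ $t$. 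Joint lower semicontinuity of $\Psi_\eps$ and $\Psi_\eps^*$ passes the discrete inequality above to the $\le$ direction of \eqref{eq:52bis}, while the chain rule of Theorem~\ref{thm-from-mrs12} — whose hypotheses \eqref{conditions-for-chain-rule} are exactly the uniform bounds established in Step~2 — delivers the opposite inequality. Equality throughout forces the Fenchel--Young equality a.e., which gives the remaining selection $\xi_\eps\in\partial\Psi_\eps(\dot u_\eps)$ in \eqref{xi-selection}. The main technical obstacle is Step~2: since $\cE$ is not convex, the discrete chain rule needed to produce the $\Psi_\eps^*$-contribution is not automatic, and the interplay between the De Giorgi interpolant and the quantitative subdifferentiability \eqref{hyp:en-subdif} is essential in order to absorb the residual and pass from the bare minimality inequality to the full discrete energy--dissipation balance.
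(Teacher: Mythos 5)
The paper does not prove this theorem itself: it is quoted from \cite[Thm.\,4.4]{MRS-dne}, and your sketch correctly reconstructs the Minimizing Movements argument of that reference — direct method for the incremental problems, De Giorgi's variational interpolant combined with the uniform subdifferentiability \eqref{hyp:en-subdif} to obtain the sharp discrete energy--dissipation estimate, and a compactness / lower semicontinuity / chain-rule loop to pass to the limit. This is exactly the machinery the present paper later records in \S\,\ref{ss:8-vanvisc} (the interpolants \eqref{interpmin}--\eqref{interpxi} and Proposition \ref{prop:first-a-priori-discrete}), so your approach agrees with the cited one. One small but substantive correction to your Step 2: the multiplier in the $\Psi_\eps^*$-integral of the discrete energy--dissipation inequality must be the selection $\pwM{\xi}{\taue}(t)$ arising from the Euler equation of the \emph{variational interpolant} at each $t\in(t_{n-1},t_n]$, as in \eqref{interpxi}, and not a piecewise-constant interpolant of the nodal multipliers $\xi^n_{\taue}$; it is precisely the $t$-dependence of $\pwM{\xi}{\taue}$ inside each subinterval, combined with the Fenchel identity and the remainder control from \eqref{hyp:en-subdif}, that makes the dissipation integral close to the sharp form \eqref{eq:discr-en-ineq-1}. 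A piecewise-constant $\bar\xi_{\tau,\eps}$ built from the nodal values alone would not yield the $\Psi_\eps^*$-contribution.
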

Since solutions obtained as such limits
have special properties, we will
call them \emph{Minimizing Movement solutions}
according to \cite{DeGiorgi93} (see also
\cite{AGS08}).
%
%
%
\section{Balanced Viscosity ($\BV$) solutions}
\label{ss:2.-bv}
Throughout this section we will keep to the notation and assumptions
of Section \ref{ss:2.1}, in particular we will suppose that $\Diss
B{}, \Diss\V{}$ fulfill \eqref{e:2.1}--\eqref{def-psiV} and that $\cE$
complies with \eqref{Ezero}--\eqref{hyp:en-subdif}.

After a discussion of the main concepts of \emph{contact potential} and
\emph{Finsler dissipation cost} in  \S \ref{ss:3.1},
we will introduce the notion of Balanced Viscosity ($\BV$) solutions
in \S \ref{ss:BV} and we will present the main results related to this crucial
concept.
The distinguished subclass of $V$-parameterizable solutions
will be considered in the last part \S\,\ref{ss:Vparam}.
%
%
 \subsection{Finsler dissipation functionals}
 \label{ss:3.1}

As in \cite{MRS10}, the
\emph{vanishing-viscosity contact potential}
$ \bptname : \V \times \V^* \to [0,+\infty)$
induced by the dissipation potentials $\Psiz_\eps$ is
\begin{equation}
\label{def-bipot-sez2}
\begin{aligned}
 \bpt{v}{\xi} & :=\inf_{\eps>0} \Big(\Diss{\eps}{}(v) +
 \Diss{\eps}{*}(\xi)\Big),\qquad v\in V,\ \xi\in V^*.
 \end{aligned}
\end{equation}
The representation formula \eqref{eq:3} for $\Diss{\eps}{*}$ and the fact that
\begin{displaymath}
  \inf_{\eps>0} \eps^{-1}\Big(F(\eps r)+F^*(s)\Big)=rs\quad\forevery r,s\ge0,
\end{displaymath}
yield the useful splitting of $\bptname$:
\begin{equation}
  \label{eq:6}
  \bpt v\xi=\Diss{}{}(v)+
  \|v\|\,\min_{z\in K^*}\|\xi-z\|_*.
\end{equation}

\begin{remark}[More general viscous dissipations and contact
  potentials]
  \label{rem:bip}
  \upshape The particular form \eqref{def-psiV} of $\Phi$ allows for
  the simple representation \eqref{eq:6} of $\mathfrak p$, which is
  useful to understand the role played by the two different
  viscosities.  The general case concerning arbitrary convex
  superlinear functions $\Phi$ has been analyzed in \cite{MRS10} and
  almost all the crucial properties can also be adapted to the present
  infinite-dimensional setting. Here we just mention that every
  contact potential is convex and degree-$1$ homogeneous with respect
  to its first variable and it fulfills the Fenchel inequality
  \begin{equation}
    \label{basic-bpt-prop} \bpt{v}{\xi} \geq \pairing{}{}{\xi}{v}, \quad
    \text{and}\quad
    \begin{cases}
      \bpt{v}{\xi} \geq \Diss{\Bo}{}(v) &\text{for all } (v,\xi)\in
      \V \times \V^*,\\
      \bpt{v}{\xi} =\Diss{\Bo}{}(v)&\text{if and only if } \xi\in K^*.
    \end{cases}
  \end{equation}
\end{remark}

\noindent
Next, we associate with $\bptname$ and with the Fr\'echet
subdifferential $\frsub\cE$ the time-dependent family of Finsler
dissipation functionals
\begin{equation}
\label{def-vvm}
\begin{aligned}
  \vvmname:  [0,T]  \times \domainenergy   \times  \V \to
  [0,+\infty], \quad
  \vvm tuv :=
  \inf\Big\{ \bpt{v}{\xi}: \xi \in -\frsub \ene tu \Big\},
  \end{aligned}
\end{equation}
where we adopt the standard convention $\inf\emptyset=+\infty$.
Notice that when $\frsub\ene tu\neq\emptyset$ the $\inf$ in formula
\eqref{def-vvm} is attained; moreover, the functional $v \mapsto \vvm
tuv$ is lower semicontinuous, convex, and positively $1$-homogeneous.

In accord with \eqref{eq:6} it will also be useful to split $\vvm tuv$
into the sum of the dissipation $\Psiz(v)$ (independent of $u$) and of
the correction term induced by the viscous norm $\|\cdot\|$ and
$\partial\cE$, viz.
\begin{equation}
  \label{eq:7}
  \vvm tuv=\Psiz(v)+\vvmV tuv,\quad
  \vvmV tu{}:=
  \inf\Big\{\|\xi-z\|_*: \xi \in -\frsub \ene tu,\ z\in K^*\Big\}.
\end{equation}
By \eqref{eq:45}, for every $E>0$ the function $\fre:[0,T]\times D\to
[0,\infty]$ satisfies
the crucial properties
\begin{equation}
  \label{eq:62}
  \fre\text{ is l.s.c.\ in }[0,T]\times D_E  \quad \text{ and } \quad
  \fre_t(u)=0 \ \Longleftrightarrow \ K^* + \partial\ene tu\ni 0,
\end{equation}
where $D_E$ denotes the $E$-sublevel of the energy, cf.\ \eqref{eq:17}. 

If $\Psiz$ were coercive on $V$, then the Finsler cost associated to
$\vvmname_t$ could be simply defined as
\begin{equation}
  \label{eq:8}
  \Cost{\vvmname}t{u_0}{u_1}:=\inf\Big\{
  \int_{r_0}^{r_1}  \vvm{t}{\vartheta(r)}{\dot \vartheta(r)}\,\dd r:
  \vartheta\in \AC([r_0,r_1];\V),\ \vartheta(r_i)=u_i,\ i=0,1\Big\},
\end{equation}
and it would be possible to show that
the infimum in \eqref{eq:8} is attained whenever the cost is finite.
Notice that, since $\vvm tu\cdot$ is positively $1$-homogeneous,
the choice of the interval $[r_0,r_1]$ in \eqref{eq:8} is irrelevant
and one can also assume that
the competing curves $\vartheta$ belong to
$\mathrm{Lip}([r_0,r_1];\V)$.

On the other hand, since $\Psiz$ is not coercive in general, the definition
\eqref{eq:8} has to be conveniently adapted to cover the case of
curves $\vartheta$ that may lack differentiability at every time.
The next definition focuses on this aspect (see \S \ref{s:appA}
for
BV and AC curves with respect to $\Psiz$).
\begin{definition}[Admissible curves]
  \label{def:admissible}
  A curve $\vartheta:[r_0,r_1]\to \V$ is called \emph{admissible}
  if it belongs to $\AC([r_0,r_1];D_E,\Psiz)$ for some $E>0$, and if 
its restriction to the (relatively) open set
  \begin{equation}
    \label{eq:10}
    G_t=G_t[\vartheta]:=\big\{r\in [r_0,r_1]:
    \vvmV t{\vartheta(r)}{}
    >0\big\},
  \end{equation}
  belongs to $\AC_{\rm loc}(G_t[\vartheta];\V)$.
  We call $\calT_t(u_0,u_1)$ the class
  of all \emph{admissible transition curves} $\vartheta:[0,1]\to\V$
  such that $\vartheta(i)=u_i$,
  $i=0,1,$ and we set
  \begin{equation}
    \label{eq:28}
    \vvmname_t[\vartheta;\vartheta'](r):=
    \begin{cases}
      \vvm t{\vartheta(r)}{\dot\vartheta(r)}=
      \Psiz(\dot\vartheta(r))+\vvmV
      t{\vartheta(r)}{\dot\vartheta(r)}&\text{if }r\in G_t[\vartheta],\\
      \scalardens\Psiz \vartheta(r)&\text{if }r\in [0,1]\setminus
      G_t[\vartheta].
    \end{cases}
  \end{equation}
\end{definition}
\begin{remark}
  \upshape
  Let us add a few comments on the previous definition.
  First of all, as we discussed in Section \ref{s:appA},
  we notice that the continuity of $\vartheta$ follows from the
  compactness of $D_E$ in $\V$
  and the fact that $\Psiz$ is continuous and nondegenerate, so that
  $\Psiz(v)=0\ \Rightarrow\ v=0$.

  Once $\vartheta$ is continuous, the l.s.c.~property of $\fre$ stated in \eqref{eq:62}
    implies that the set
  $G_t[\vartheta]$
  defined
   in \eqref{eq:10} is open.
  Since $V$ has the Radon-Nikod\'ym property,
  $\vartheta$ is differentiable $\Leb 1$-a.e.~in $G_t[\vartheta]$.
  It is immediate to see that for every
  admissibile curve $\vartheta$
  \begin{equation}
    \label{eq:29}
    \int_{0}^{1} \vvmname_t[\vartheta;\vartheta'](r)\,\dd r=
    \int_{0}^{1} \scalardens\Psi\vartheta(r)\,\dd r+
    \int_{G_t[\vartheta]}  \vvmV{t}{\vartheta(r)}{\dot \vartheta(r)}\,\dd r.
  \end{equation}
\end{remark}

We are now in the position to extend the definition  \eqref{eq:8} of  $\Delta_\frf$. 

\begin{definition}[Finsler dissipation cost]
  \label{def:Finsler_diss}
  Let $t \in [0,T]$ be fixed and let us consider $u_0,u_1\in D$.  The
  (possibly asymmetric) Finsler cost induced by $\vvmname$ at the time
  $t$ is given by
  \begin{align}
    \label{eq:69}
    \Cost{\vvmname}t{u_0}{u_1}:=&\inf_{ \vartheta\in \calT_{t}(u_0,u_1)}
    \int_{0}^{1}
    \vvmname_t[\vartheta,\vartheta'](r)\,\dd r
    \\=&\label{eq:53}
    \inf_{ \vartheta\in \calT_{t}(u_0,u_1)}
    \int_{0}^{1}
    \scalardens\Psi\vartheta(r)\,\dd r+
      \int_{G_t[\vartheta]}  \vvmV{t}{\vartheta(r)}{\dot \vartheta(r)}\,\dd r,
     \end{align}
  with the usual convention
  of setting $\Cost{\vvmname}t{u_0}{u_1}
  =+\infty$ if
  $\calT_t(u_0,u_1)$ is empty.
\end{definition}
\noindent
Let us notice that in general
$\Cost{\vvmname}t{\cdot}{\cdot}$ is not symmetric, unless
 $\Psiz$ is symmetric, and
 that
\begin{equation}
  \label{eq:85}
  \Cost{\vvmname}t{u_0}{u_1}\ge \Cost{\Psiz}{}{u_0}{u_1}\quad
  \forevery u_0,u_1\in D,\ t\in [0,T].
\end{equation}
This follows from the fact that 
in \eqref{eq:53} we have
\begin{displaymath}
  \int_{0}^{1} \scalardens\Psi\vartheta(r)\,\dd r=
  \Var{\Diss {}{}}{\vartheta}{0}{1} \ge \Psiz(u_1-u_0)=\Cost{\Psiz}{}{u_0}{u_1}.
\end{displaymath}
In the next important result we collect a few crucial properties of
the Finsler dissipation cost, namely the existence of optimal
transition paths and the lower semicontinuity properties needed in
what follows.  Theorem \ref{thm:fcost} will be proved in
Section \ref{ss:clsc}. 
 \begin{theorem}
   \label{thm:fcost}
   Let   \eqref{e:2.1}--\eqref{def-psiV}
    and
    \eqref{Ezero}--\eqref{hyp:en-subdif} hold. 
   Let $t\in [0,T]$, $E>0$ and $u_-,u_+\in D_E$.
   \begin{enumerate}[\rm (F1)]
   \item If
     $\Delta_{\frf_t}(u_-,u_+)<\infty$ there exists a transition path
     $\vartheta\in \calT_t(u_-,u_+)$ attaining  the infimum in
     \eqref{eq:53}.
     Moreover
     \begin{equation}
       \label{eq:36}
       \Cost\vvmname t{u_-}{u_+}\ge \Big|\ene t{u_-}-\ene t{u_+}\Big|.
     \end{equation}
     \item
       If $u_{0,n},u_{1,n}\in D_E$, $n\in \N$, then
       \begin{equation}
         \label{eq:65}
         \lim_{n\to\infty}u_{0,n}=u_-,\quad
         \lim_{n\to\infty}u_{1,n}=u_+\quad
         \Longrightarrow\quad
         \liminf_{n\to\infty}\Cost\frf t{u_{0,n}}{u_{1,n}}\ge
         \Cost\frf t{u_-}{u_+}.
       \end{equation}
     \item
       If $u_n\in \AC([\alpha_n,\beta_n];V)$,
       $\tilde u_n: [\alpha_n,\beta_n]\to D_E$ measurable,
       $\xi_n\in L^1(\alpha_n,\beta_n;V^*)$,
       $\eps_n>0$, $n\in \N$, are sequences
       satisfying
       \begin{equation}
         \label{eq:92first}
         \lim_{n\to\infty}
         \sup_{r\in [\alpha_n,\beta_n]} \| \tilde u_n(r)-u_n(r)\|=0,\quad
         \xi_n(r)\in -\partial\ene r{\tilde u_n(r)}\quad\text{for
           a.a.~}r\in (\alpha_n,\beta_n),
       \end{equation}
      \begin{equation}
         \label{eq:63}
         \lim_{n\to\infty}u_n(\alpha_n)=u_-,\
         \lim_{n\to\infty}u_n(\beta_n)=u_+,\quad
         \lim_{n\to\infty}\alpha_n=\lim_{n\to\infty}\beta_n=t,
       \end{equation}
       and
       %
       \begin{equation}
         \label{eq:64}
         \lim_{n\to\infty}\eps_n=0,\quad
         \Delta:=\lim_{n\to\infty}\int_{\alpha_n}^{\beta_n}\Big(\Psiz_{\eps_n}(\dot
         u_n)+
         \Psiz_{\eps_n}^*(\xi_n)\Big)\,\dd r <\infty,
       \end{equation}
       then
       there exist an increasing  subsequence $(n_k)_k\subset \N$,
       increasing and
       surjective time  rescalings 
        $\sft_{n_k}\in \AC([0,1];[\alpha_{n_k},\beta_{n_k}])$,
       and an admissible transition
       $\vartheta\in \calT_t(u_-,u_+)$ such
       that
       \begin{gather}
         \label{eq:109}
         \lim_{k\to\infty}u_{\eps_{n_k}}\circ\sft_{n_k}= \vartheta\quad
         \text{strongly in $V$, uniformly on }[0,1],
         \quad
         \int_0^1 \frf_t[\vartheta,\vartheta'](r)\,\dd r\le \Delta.
       \end{gather}
       In particular, whenever \eqref{eq:92first} and \eqref{eq:63} hold,
       along any  sequence $\eps_n\down0$ we have 
       \begin{equation}
         \label{eq:64bis}
         \liminf_{n\to\infty}\int_{\alpha_n}^{\beta_n}\Big(\Psiz_{\eps_n}(\dot
         u_n)+
         \Psiz_{\eps_n}^*(\xi_n)\Big)\,\dd r\ge \Cost \frf t{u_-}{u_+}.
       \end{equation}
   \end{enumerate}
\end{theorem}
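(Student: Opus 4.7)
The three items rest on a common backbone: reparameterize the competing curves to gain equicontinuity, use the compactness of the sublevels $D_E$ from \eqref{eq:17} and a Helly/Arzel\`a--Ascoli argument to extract a uniform limit, and then identify the limit as an admissible transition via the strong-weak closedness \eqref{eq:45} of $\partial\cE$ together with the $\Psi$-chain rule that will be developed in \S\,\ref{ss:chain-bv}.

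For \textbf{(F1)}, I would apply the direct method. Take a minimizing sequence $\vartheta_n\in\calT_t(u_-,u_+)$ and reparameterize each $\vartheta_n$ by the normalized arc-length associated with the integrand $\vvmname_t[\vartheta_n,\vartheta_n']$, adding a $+1$ term to ensure strict monotonicity of the new parameter. The resulting curves are uniformly $\Psi$-Lipschitz on the whole interval and uniformly $V$-Lipschitz on the viscous regions $G_t[\vartheta_n]$, with uniformly bounded total length. Combining Helly's selection on the $\Psi$-Lipschitz part with Arzel\`a--Ascoli on the $V$-regular part (both exploiting the compactness of $D_E$), extract a uniform limit $\vartheta$. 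Lower semicontinuity of $\fre_t$ from \eqref{eq:62}, convexity and positive $1$-homogeneity of $\vvmname_t(u,\cdot)$, and Fatou's lemma then give $\int_0^1\vvmname_t[\vartheta,\vartheta']\,\dd r\le\liminf_n\int_0^1\vvmname_t[\vartheta_n,\vartheta_n']\,\dd r$, whence $\vartheta$ attains the infimum. The bound \eqref{eq:36} follows from the chain-rule inequality applied to $\vartheta$: on $G_t[\vartheta]$ one has $\frac{\dd}{\dd r}\ene t\vartheta=-\langle\xi,\dot\vartheta\rangle\ge-\bpt{\dot\vartheta}{\xi}=-\vvmname_t[\vartheta,\vartheta']$ for a selection $\xi\in-\partial\cE_t(\vartheta)$, while on the complement $\fre_t(\vartheta)=0$ means one may choose $\xi\in K^*$, so the diffuse and jump $\Psi$-variation controls $|\,\dd\ene t\vartheta|$ in the sense of \eqref{repre-rn}--\eqref{eq:37}; integration and the absolute value then yield \eqref{eq:36}.

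For \textbf{(F2)}, I would run a diagonal argument. Pick $\vartheta_n\in\calT_t(u_{0,n},u_{1,n})$ with $\int_0^1\vvmname_t[\vartheta_n,\vartheta_n']\,\dd r\le\Cost{\vvmname}t{u_{0,n}}{u_{1,n}}+1/n$ (assuming the right-hand side of \eqref{eq:65} is finite along a subsequence, otherwise the claim is trivial), apply the same reparameterization plus compactness machinery as in (F1) to pass to a uniform limit $\vartheta$, and observe that $\vartheta(0)=\lim u_{0,n}=u_-$, $\vartheta(1)=\lim u_{1,n}=u_+$, so $\vartheta\in\calT_t(u_-,u_+)$. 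The same liminf inequality concludes.

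For \textbf{(F3)}, the plan is the Efendiev--Mielke reparameterization. Set
\begin{equation*}
\sfs_n(r):=\int_{\alpha_n}^r\Big(1+\Psiz_{\eps_n}(\dot u_n(\sigma))+\Psiz_{\eps_n}^*(\xi_n(\sigma))\Big)\,\dd\sigma,\qquad S_n:=\sfs_n(\beta_n),
\end{equation*}
so that $\limsup_n S_n\le\lim_n(\beta_n-\alpha_n)+\Delta=\Delta$ by \eqref{eq:63}--\eqref{eq:64}. Define $\sft_n:[0,1]\to[\alpha_n,\beta_n]$ by $\sft_n(r):=\sfs_n^{-1}(S_n r)$ and $\hat u_n:=u_n\circ\sft_n$, $\hat\xi_n:=\xi_n\circ\sft_n$, $\hat{\tilde u}_n:=\tilde u_n\circ\sft_n$. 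By construction $\dot\sfs_n\ge1$, so $\dot\sft_n\le S_n$ and, by the positive $1$-homogeneity of $\Psiz$ and the change-of-variables formula,
\begin{equation*}
\int_0^1\bpt{\dot{\hat u}_n(r)}{\hat\xi_n(r)}\,\dd r=\int_{\alpha_n}^{\beta_n}\bpt{\dot u_n}{\xi_n}\,\dd\sigma\le\int_{\alpha_n}^{\beta_n}\Big(\Psiz_{\eps_n}(\dot u_n)+\Psiz_{\eps_n}^*(\xi_n)\Big)\,\dd\sigma\longrightarrow\Delta,
\end{equation*}
where the second inequality is Fenchel--Young $\bpt{v}{\xi}=\inf_\eps(\Psiz_\eps(v)+\Psiz_\eps^*(\xi))\le\Psiz_{\eps_n}(v)+\Psiz_{\eps_n}^*(\xi)$. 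This uniform bound on $\int\bpt{\dot{\hat u}_n}{\hat\xi_n}\,\dd r$, combined with the splitting \eqref{eq:6} and the compactness of $D_E$, gives Helly/Arzel\`a--Ascoli compactness of $\hat u_n$ up to a further subsequence, with a uniform limit $\vartheta$; the endpoints converge by \eqref{eq:63} and \eqref{eq:92first}. To identify the limit selection, apply the strong-weak closedness \eqref{eq:45} to $\hat{\tilde u}_n$ (uniformly close to $\hat u_n$ by \eqref{eq:92first}) with $\hat\xi_n$ weakly converging in a suitable sense; the superlinearity of $\Psiz_{\eps_n}^*$ forces $\mathrm{dist}(\hat\xi_n,K^*)\to\mathrm{dist}(\xi,K^*)$ for some limit selection $\xi\in-\partial\cE_t(\vartheta)$, so lower semicontinuity yields $\int_0^1\vvmname_t[\vartheta,\vartheta']\,\dd r\le\Delta$. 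Assertion \eqref{eq:64bis} is the special case where we do not extract subsequences and directly bound the liminf by $\Cost\vvmname t{u_-}{u_+}$.

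The main obstacle will lie in the identification step of (F3): one must pass from pointwise inclusions $\xi_n(r)\in-\partial\ene r{\tilde u_n(r)}$ (with $r\to t$ and $\tilde u_n$ only asymptotically equal to $u_n$) to a measurable selection $\xi(r)\in-\partial\ene t{\vartheta(r)}$ satisfying the correct Finsler bound. This requires synchronizing four ingredients---the strong-weak closedness \eqref{eq:45}, the power-control \eqref{hyp:en3} to keep energies uniformly bounded through the rescaling, the chain rule of Theorem \ref{thm-from-mrs12} along the $\hat{\tilde u}_n$, and the $\Gamma$-type convergence $\Psiz_{\eps_n}^*\to\mathrm I_{K^*}$ that turns viscous penalties into local-stability constraints outside the set $G_t[\vartheta]$. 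This delicate interplay is precisely the content of the technical lower-semicontinuity result to be proved as Proposition \ref{cor:2}.
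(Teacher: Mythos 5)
Your overall skeleton — reparameterize to gain equicontinuity, extract a uniform limit via compactness of $D_E$, then close with a lower-semicontinuity argument — matches the paper's strategy, and your arc-length reparameterizations are essentially the same as the paper's up to normalization (the paper proves (F2) first and obtains (F1) as the special case $\eps_n\downarrow0$ rather than treating (F1) by the direct method first, but this is a cosmetic reordering). Your derivation of \eqref{eq:36} from the chain-rule inequality of Theorem \ref{th:3.8} and your change-of-variables identity in (F3) are both in line with Corollary \ref{cor:F1} and the paper's computation \eqref{eq:53bis}.

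There are, however, three genuine issues in your argument. First, the lower-semicontinuity step in (F1)/(F2) cannot be handled by Fatou's lemma: the integrand $\vvmname_t[\vartheta_n,\vartheta_n']$ contains $\|\dot\vartheta_n\|$, and derivatives of uniformly converging curves do \emph{not} converge pointwise, only weakly. The paper's route is to observe that on any compact subinterval of the limiting set $G=\{\fre_{\sft(\cdot)}(\sfu(\cdot))>0\}$ the lsc of $\fre$ forces $\fre_{\sft_n}(\tilde\sfu_n)\ge c>0$ for large $n$, so the a priori bound \eqref{eq:44} yields a uniform local $V$-Lipschitz bound; then one applies the joint lsc Lemma \ref{le:jointlsc} with $h_n=\fre_{\sft_n}(\tilde\sfu_n)$ (pointwise liminf-bounded below by $\fre_\sft(\sfu)$) and $m_n=\|\dot\sfu_n\|$ (weakly convergent in $L^1$ on the subinterval). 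Your mention of ``convexity'' points in the right direction, but the stated tool would not close the argument.

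Second, in (F3) you propose to extract a measurable selection $\xi(r)\in-\partial\ene t{\vartheta(r)}$ as a weak limit of $\hat\xi_n$ and then argue through $\mathrm{dist}(\hat\xi_n,K^*)$. This is more than is needed and introduces a real obstacle, since weak $V^*$-compactness of $\hat\xi_n$ and measurability of the limit selection are nontrivial. The paper bypasses the $\xi$'s entirely: it works with the scalar function $\fre_t$, whose lower semicontinuity \eqref{eq:62} already encodes the strong-weak closedness \eqref{eq:45}, and applies Proposition \ref{le:compactness} directly (in particular the estimate \eqref{eq:49} with $\frG_{\eps_n}\ge\max\{\tfrac{\alpha}{\eps_n}F^*(\fre),\ \fre\|\sfv\|\}$) to absorb the viscous penalty term and the metric term in one stroke. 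Your identification step is thus a genuinely different and harder route.

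Third, and most concretely: the ``technical lower-semicontinuity result'' you defer to is \emph{not} Proposition \ref{cor:2} — that result is proved in the paper \emph{using} assertion (F3) of Theorem \ref{thm:fcost}, so invoking it here would be circular. The correct upstream tool is Proposition \ref{le:compactness} (together with Lemma \ref{le:jointlsc}), which is stated and proved independently of Theorem \ref{thm:fcost}.
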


Solutions to \eqref{viscous-dne}, with $\tilde u_n=u_n$, provide a
particularly important example of sequences in assertion (F3) of Thm.\
\ref{thm:fcost}.  Notice that by \eqref{eq:36} the Finsler cost
controls the amount of energy dissipation between two arbitrary points
at a fixed time $t$.  On the other hand, \eqref{eq:64bis} shows that
$\Delta_{\vvmname}$ captures the concentration of the asymptotic
energy dissipation of a family of solutions to the viscous gradient
flow \eqref{xi-selection}.

We now use the Finsler cost $\Delta_\frf$ to characterize the minimal
dissipated energy along \emph{any} curve $u \in
\BV_{\Psiz}([0,T];\V)$, by means of a suitable notion of total
variation, which involves $\Delta_\frf$ to measure the contributions
due to the jumps of $u$ (recall \eqref{repre-rn} and \eqref{eq:37}).
 \begin{definition}[Jump and total variation induced by $\vvmname$]
  \label{def:Finsler_jump}
  Let $E>0$ and
  $u\in \BV([0,T];D_E,\Psiz)$ be a given curve with jump set ${\mathrm{J}}_u$.
  For every subinterval $[a,b]\subset [0,T]$
the  \emph{jump variation} of $u$ induced by $\vvmname$ on
$[a,b]$ is
\begin{equation}
  \label{eq:37bis}
  \begin{aligned}
    \JVar{\vvmname}uab &:=
    \Cost{\vvmname}a{u(a)}{u(a_+)}+\Cost{\vvmname}b{u(b_-)}{u(b)}
    \\
    & \quad + \sum_{t\in {\mathrm{J}}_u\cap
      (a,b)} \big(\Cost{\vvmname}t{u(t_-)}{u(t)}+
         \Cost{\vvmname}t{u(t)}{u(t_+)}\big).
  \end{aligned}
\end{equation}
The $\frf$-total variation induced of $u$ on $[a,b]$ for $a<b$ is
\begin{align}
  \label{eq:81}
    \pVar{\vvmnametil}uab &:=
    \Var{\Psiz}  uab - \JVar{\Psiz}uab + \JVar{\vvmname}uab
    \\ & \:=
    \scalarmuco{} u(a,b)+\JVar{\vvmname}uab.
    \label{eq:81bis}
 \end{align}
\end{definition}

\begin{remark}
 \label{rmk:not-standard}
 \upshape As already pointed out in \cite[Rmk.\,3.5]{MRS10},
 $\pVarname{\vvmnametil}$ is not a \emph{standard} total variation
 functional: for instance, it is not induced by any distance on $\V$,
 and it is not lower semicontinuous with respect to pointwise
 convergence in $\V$, unless a further local stability constraint is
 imposed.

 Nevertheless, $\pVarname\frf$ enjoys the nice additivity property
 \begin{equation}
   \label{eq:68}
   \pVar\frf u ab+\pVar\frf ubc=\pVar\frf uac\quad\text{whenever}\quad
   0\le a<b<c\le T.
 \end{equation}
\end{remark}

\subsection{Balanced Viscosity (BV) solutions}
\label{ss:BV}
Based on Definition \ref{def:Finsler_jump}, we can now specify the
concept of \emph{Balanced Viscosity $(\BV)$ solution} to the
rate-independent system generated by $\RIS$: the global stability
condition in the definition of \emph{energetic solutions} is replaced
by the \emph{local} stability condition \eqref{eq:65bis}, and the
energy balance features the total variation functional
$\pVarname{\vvmnametil}$.  As usual, we will always assume that $\Diss
B{}, \Diss\V{}$ fulfill \eqref{e:2.1}--\eqref{def-psiV} and that $\cE$
complies with \eqref{Ezero}--\eqref{hyp:en-subdif}.

\begin{definition}[$\BV$ solutions]
  \label{def:BV-solution}
  A curve $u\in \BV([0,T];D,\Psiz)$ is a \emph{$\BV$ solution of the
    rate-independent system $\RIS$} if the \emph{local stability}
  \eqref{eq:65bis} and the \eqref{eq:84}-\emph{energy balance} hold:
  \begin{equation}
    \label{eq:65bis}
    \tag{$\mathrm{S}_\mathrm{loc}$}
     K^* + \frsub \ene t{u(t)}\ni 0 
    \quad \text{for all}\quad t \in [0,T] \setminus \mathrm{J}_u,
  \end{equation}
  \begin{equation}
    \label{eq:84}
    \pVar{\vvmnametil}u{0}{t}+\ene{t}{u(t)}=\ene{0}{u(0)}+
    \int_{0}^{t} \power s{u(s)}\,\mathrm{d}s \quad \text{ for all } t\in (0,T].
    \tag{E$_{\vvmnametil}$}
  \end{equation}
\end{definition}
\noindent
Every BV solution $u$ to the \ris\ $\RIS$ satisfies the
energy balance in each subinterval
\begin{equation}
    \label{eq:72}
    \pVar{\frf}u{s}{t}+\ene{t}{u(t)} = \ene{s}{u(s)}+
    \int_{s}^{t} \power r{u(r)}\,\mathrm{d}r\quad
    \forevery 0\le s<t\le T,
\end{equation}
thanks to \eqref{eq:84} and the additivity \eqref{eq:68} of the total
variation functional $\pVarname{\frf}$.

Before studying other properties and characterizations of
balanced viscosity solutions, let us first present 
our main existence and convergence results.

\subsubsection*{\bfseries Main existence and convergence results}
Our first result states the convergence in the vanishing-viscosity
limit $\eps \down 0$ of solutions to \eqref{viscous-dne} to a $\BV$
solution of the rate-independent system $\RIS$.  As a byproduct, we
can prove in this way the existence of $\BV$ solutions.  Let us
emphasize that Definition \ref{def:BV-solution} of $\BV$ solutions is
only inspired by the vanishing-viscosity approach but otherwise
completely independent of it.  We postpone the proofs to Section
\ref{ss:8-vanvisc}.

The reader should be aware that, here and in what follows, we will
call a sequence $(\eps_k)_k $ converging to $0$ simply a
\emph{vanishing sequence}.
 
\begin{theorem}[Existence of BV solutions and convergence of
    viscous approximations]
    \label{th:1}
    If \eqref{e:2.1}--\eqref{def-psiV}
    and
    \eqref{Ezero}--\eqref{hyp:en-subdif} hold,
    then for every $u_0\in D$ there exists a
    \BV\ solution
    $u$ of the \ris\  $\RIS$.

    Moreover
    for every family $(u_\eps,\xi_\eps)_\eps \subset \AC
    ([0,T];\V) \times L^1 (0,T;\V^*)$  of solutions of
    the doubly nonlinear equation  \eqref{xi-selection} with
\begin{equation}
\label{conve-initi-data} u_\eps (0) \to u_0 \quad \text{in $\V$
 and}\quad \ene 0{u_\eps (0)}\to \ene 0{u_0} \quad \text{as
$\eps \down 0$}
\end{equation}
and
for every vanishing sequence $(\eps_k)_k $ there exist $E>0$, a
further (not relabeled) subsequence, and a limit function $u\in \BV
([0,T];D_E,\Psiz)$ such that as $k \to \infty$
\begin{align}
\label{e:conv1}
&u_{\eps_k}(t) \to u(t) \quad \text{in $\V$ }\text{for all
$t \in [0,T],$}
\\
\label{e:conv2}
&\lim_{k \to \infty} \ene t{u_{\eps_k}(t)}=\ene
t{u(t)} \quad \text{for all $t \in [0,T],$}
\\
 \label{e:conv3}
&    \pVar{\vvmnametil}{u}{s}{t}  = \lim_{k \to
  \infty}\pVar{\vvmnametil}{u_{\eps_k}}{s}{t}
    = \lim_{k \to \infty}
\int_{s}^{t}
    \Big(\Psi_{\eps_k}(\dot{u}_{\eps_k}(r)){+}\Psi_{\eps_k}^*(-\xi_{\eps_k}(r)) \Big)\,\dd r
\end{align}
for all $0\le s<t\le T$.
Any pointwise limit function $u$ obtained in this way
is a $\BV$ solution to the \ris\  $\RIS$.
\end{theorem}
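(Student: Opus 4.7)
The plan is to start from the viscous energy identity \eqref{eq:52bis} and extract uniform bounds. Combining the Fenchel inequality $\Psi_\eps(v)+\Psi_\eps^*(-\xi)\ge -\langle\xi,v\rangle$ with the power-control \eqref{hyp:en3} and Gronwall's lemma (in the form \eqref{gronwall-dixit}), I would derive $\sup_{t\in[0,T]}\cg{u_\eps(t)}\le C$ uniformly in $\eps$; hence $u_\eps(t)\in D_E$ for some $E>0$, and $D_E$ is compact in $\V$ by \eqref{eq:17}. Since $\Psi_\eps\ge\Psi$, the same identity gives $\sup_\eps \Var{\Psi}{u_\eps}{0}{T}<\infty$. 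A Helly-type selection principle for $\Psi$-BV curves valued in the compact set $D_E$ (cf.\ \S\,\ref{s:appA}) then yields a subsequence $u_{\eps_k}(t)\to u(t)$ pointwise in $\V$ with $u\in\BV([0,T];D_E,\Psi)$, proving \eqref{e:conv1}.

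\textbf{Step 2: Pointwise convergence of the dissipation integrals.} Setting $V_k(t):=\int_0^t\bigl(\Psi_{\eps_k}(\dot u_{\eps_k})+\Psi_{\eps_k}^*(-\xi_{\eps_k})\bigr)\,\dd r$, these nondecreasing functions are uniformly bounded by Step 1, so a further Helly extraction gives $V_k(t)\to V(t)$ pointwise for some bounded nondecreasing $V$. Lower semicontinuity of $\cE_t$ combined with the upper-semicontinuity in \eqref{hyp:en3} (which provides the dominated convergence $\int_0^t\power{r}{u_{\eps_k}(r)}\,\dd r\to\int_0^t\power{r}{u(r)}\,\dd r$) allows me to pass to the pointwise limit in the viscous energy identity on every subinterval $[s,t]$ to obtain
\begin{equation*}
\ene{t}{u(t)}+V(t)-V(s)\le \ene{s}{u(s)}+\int_s^t\power{r}{u(r)}\,\dd r,
\end{equation*}
with equality at the continuity points of $V$ where $\ene{t}{u_{\eps_k}(t)}\to\ene{t}{u(t)}$; this holds at $t=0$ by the well-preparedness assumption \eqref{conve-initi-data}.

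\textbf{Step 3: Lower bound on the dissipation.} The core step is proving $V(t)-V(s)\ge \pVar{\vvmnametil}{u}{s}{t}$. Around each jump point $\tau\in\mathrm{J}_u$ I would select shrinking intervals $[\alpha_n^\tau,\beta_n^\tau]\to\{\tau\}$ and further index sequences ensuring $u_{\eps_k}(\alpha_n^\tau)\to u(\tau_-)$, $u_{\eps_k}(\beta_n^\tau)\to u(\tau_+)$ (and analogously for the jumps $u(\tau_-)\to u(\tau)$ and $u(\tau)\to u(\tau_+)$). Applying the lower-limit inequality \eqref{eq:64bis} of Theorem~\ref{thm:fcost}(F3) delivers
\begin{equation*}
\liminf_k\bigl(V_k(\beta_n^\tau)-V_k(\alpha_n^\tau)\bigr)\ge \Cost{\vvmname}{\tau}{u(\tau_-)}{u(\tau)}+\Cost{\vvmname}{\tau}{u(\tau)}{u(\tau_+)}.
\end{equation*}
For the diffuse part, $V_k\ge \Var{\Psi}{u_{\eps_k}}{0}{\cdot}$ and lower semicontinuity of the pointwise $\Psi$-variation account for $\scalarmuco{}{u}$. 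Summing over the countable jump set and using the decomposition \eqref{eq:81bis} yields the required lower bound; this is essentially Proposition~\ref{cor:2}.

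\textbf{Step 4: Local stability, energy balance, and conclusion.} The BV chain-rule inequality of \S\,\ref{ss:chain-bv} provides the converse bound $\pVar{\vvmnametil}{u}{0}{t}+\ene{t}{u(t)}\ge \ene{0}{u(0)}+\int_0^t\power{r}{u(r)}\,\dd r$ \emph{provided} the local stability \eqref{eq:65bis} holds at $\scalarmuco{}{u}$-a.e.\ $t$; conversely, a violation of \eqref{eq:65bis} would force $\fre_t(u)>0$ on a set of positive measure, which through \eqref{eq:7} and Step~3 would strictly increase the lower bound on $V$ beyond the value compatible with Step~2. Hence \eqref{eq:65bis} must hold, both inequalities collapse to the energy balance \eqref{eq:84}, $V(t)=\pVar{\vvmnametil}{u}{0}{t}$, and \eqref{e:conv2}--\eqref{e:conv3} follow. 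Existence of a \BV\ solution for an arbitrary $u_0\in D$ then comes for free by applying the argument to any viscous family supplied by Theorem~\ref{th:0}. The main obstacle is Step~3, and specifically the rescaling/compactness content of Theorem~\ref{thm:fcost}(F3): constructing the time change $\sft_{n_k}$ that converts the viscous trajectory across a jump into an admissible transition realizing the Finsler cost is delicate precisely because $\Psi$ is not assumed coercive on $\V$, so the limit transition may fail pointwise differentiability and must instead be handled through the metric $\Psi$-derivative formalism of Definition~\ref{def:admissible}.
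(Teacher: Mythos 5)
Your overall architecture matches the paper's proof: a priori estimates via the viscous energy identity and Gronwall, Helly selection and weak-$*$ compactness for the dissipation measures, the lower-semicontinuity estimate of Proposition~\ref{cor:2} (built on Theorem~\ref{thm:fcost}(F3)) for the jump contributions, and the chain-rule upper bound of Theorem~\ref{prop:bv-chainrule} to close the energy balance via Corollary~\ref{prop:BV-charact}.

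However, the explicit argument you offer in Step~4 for the local stability condition \eqref{eq:65bis} has a genuine gap. You claim that if $\fre_t(u)>0$ on a set of positive Lebesgue measure, then ``through \eqref{eq:7} and Step~3'' the lower bound on $V$ would be forced above the value allowed by Step~2. This mechanism fails: the total variation $\pVarname{\vvmnametil}$ defined in \eqref{eq:81}--\eqref{eq:81bis} picks up the correction $\fre_t(u)\|\dot u\|$ only through the \emph{jump} contribution $\JVarname_{\vvmname}$, never through the diffuse part $\scalarmuco{}u$, which is measured purely by $\Psiz$. If $u$ is continuous on an interval where $\fre_t(u(t))>0$, your lower bound $V(t)-V(s)\ge\pVar{\vvmnametil}{u}{s}{t}$ is unaffected and no contradiction ensues by this route. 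The correct mechanism is the blowup of the viscous \emph{dual} term: by \eqref{eq:3}, $\Psi_{\eps_k}^*(\xi_{\eps_k}(t))=\tfrac{1}{\eps_k}F^*(\fre_t(u_{\eps_k}(t)))$, so if $\fre_t(u(t))\ge\delta>0$ on a set of positive measure, lower semicontinuity of $\fre$ would force $\int\Psi_{\eps_k}^*(\xi_{\eps_k})\,\dd t\to\infty$, contradicting the uniform bound from the energy identity. This is exactly what Proposition~\ref{cor:2} proves via the strong--weak closedness \eqref{eq:45} and Fatou's lemma; since you already invoke that proposition in Step~3, the local stability is available from there, but the alternative reasoning in Step~4 does not stand on its own. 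As a smaller point, Step~2 cannot deliver the energy inequality on arbitrary subintervals $[s,t]$: passing to the limit in the right-hand side of \eqref{eq:52bis} requires $\ene{s}{u_{\eps_k}(s)}\to\ene{s}{u(s)}$, which is only known at $s=0$ by \eqref{conve-initi-data}. The paper proves \eqref{eq:84-oneside} on $[0,T]$ only, and recovers the localization afterwards through the additivity \eqref{eq:68} and the elementary fact \eqref{eq:119}.
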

Let us emphasize that, in view of the above result, \emph{every} limit
point $u$ of solutions $(u_\eps)_\eps$ of \eqref{xi-selection} such
that \eqref{e:conv1}--\eqref{e:conv3} hold is a $\BV$ solution.

The next theorem concerns the convergence of the discrete solutions of
the \emph{viscous} time-incremental problem \eqref{eq:58}, as
\emph{both} the viscosity parameter $\eps$ \emph{and} the time-step
$\tau$ tend to zero. Similar results for the finite-dimensional case
were obtained in \cite[Thm.\,4.10]{MRS10}.

\begin{theorem}[Discrete-viscous approximations converge to BV solutions]
  \label{th:2}
  Assume that \eqref{e:2.1}--\eqref{def-psiV} and
  \eqref{Ezero}--\eqref{hyp:en-subdif} hold.  Let $u_0 \in
  \domainenergy$ be fixed, and let $(\pwL{U}{\tau,\eps})_{\tau,\eps}$
  be a family of piecewise  affine interpolants of discrete solutions
  $(\mathrm{U}^n_\taue)_{n,\taue}$  to \eqref{eq:58}, with
\begin{equation}
  \label{eq:114}
  \Utaue 0 \to u_0 \quad \text{in $\V$
    and}\quad \ene 0{\Utaue 0}\to \ene 0{u_0} \quad \text{as
    $\tau,\eps \down 0$}.
\end{equation}
Then
for all sequences $(\tau_k,\eps_k)_{k\in \N}$ satisfying
\begin{equation}
    \label{eq:103-k}
    \lim_{k\to\infty} \eps_k=
    \lim_{k\to\infty} \frac{\tau_k}{\eps_k}=0,
\end{equation}
there exists $E>0$, a  (not relabeled) subsequence  and a curve $u\in
\BV([0,T];D_E,\Psiz)$ such that 
\begin{align}
\label{e:conv1-discr} &
 \pwC {U}{\tau_k,\eps_k}(t)\to u(t) \quad
 \text{in $\V$ for all
$t \in [0,T],$}
\\
\label{e:conv2-discr} & \ene{t}{\pwC {U}{\tau_k,\eps_k}(t)} \to \ene
t{u(t)}\quad \text{for all $t \in [0,T],$}
\end{align}
as $k \to \infty$, and the limit $u$ is a $\BV$ solution to the \ris\  $\RIS$.
\end{theorem}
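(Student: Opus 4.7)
The plan is to mimic the proof of Theorem \ref{th:1}, treating the piecewise affine interpolants $\pwL{U}{\taue}$ of the discrete minimizers as approximate solutions of the viscous flow \eqref{viscous-dne} and reducing matters to the compactness and lower semicontinuity statement in Theorem \ref{thm:fcost}(F3). The Euler--Lagrange condition for each minimization in \eqref{eq:58} produces a selection $\xi^n_{\taue} \in -\partial \cE_{t_n}(\mathrm{U}^n_{\taue})$ that also satisfies $\xi^n_{\taue} \in \partial \Psi_\eps\big((\mathrm{U}^n_{\taue}-\mathrm{U}^{n-1}_{\taue})/\tau\big)$. Combining the Fenchel identity with the chain rule of Theorem \ref{thm-from-mrs12} applied to De Giorgi's variational interpolant on each subinterval $(t_{n-1},t_n)$ yields the approximate energy--dissipation inequality
\begin{equation*}
\int_0^{t} \Big(\Psi_\eps(\dot{\mathrm{U}}_{\taue}) + \Psi_\eps^*(\tilde\xi_{\taue})\Big)\,\dd r + \cE_{t}(\pwC{U}{\taue}(t)) \le \cE_0(\mathrm{U}^0_{\taue}) + \int_0^{t} \power{s}{\pwC{U}{\taue}(s)}\,\dd s,
\end{equation*}
valid at every partition point $t$, where $\tilde\xi_{\taue}$ denotes the piecewise constant selection taking value $\xi^n_{\taue}$ on $(t_{n-1},t_n]$.

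A priori bounds follow by combining the display above with \eqref{hyp:en3} and Gronwall's lemma: one gets uniform control of $\sup_t \cE_t(\pwC{U}{\taue}(t))$, of the dissipation integral, and, since $\Psi_\eps \ge \Psiz$ by construction, of the $\Psiz$-total variation of $\pwL{U}{\taue}$ on $[0,T]$; coercivity \eqref{eq:17} confines all trajectories to a common compact sublevel $D_E$. A Helly-type selection then extracts a subsequence and a limit $u \in \BV([0,T]; D_E, \Psiz)$ with $\pwL{U}{\tau_k,\eps_k}(t) \to u(t)$ pointwise; the same limit is shared by $\pwC{U}{\tau_k,\eps_k}$ because their $\Psiz$-distance vanishes outside the jump set of $u$. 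The limit passage in the approximate energy identity splits into: convergence of the energy values (via \eqref{eq:45}, together with the two-sided inequality forced by the identity itself), convergence of the power integral (via dominated convergence and \eqref{hyp:en3}), and, decisively, the lower bound
\begin{equation*}
\liminf_k \int_s^t \Big(\Psi_{\eps_k}(\dot{\mathrm{U}}_{\tau_k,\eps_k}) + \Psi_{\eps_k}^*(\tilde\xi_{\tau_k,\eps_k})\Big)\,\dd r \ge \pVar{\vvmnametil}{u}{s}{t},
\end{equation*}
which I would obtain by applying Theorem \ref{thm:fcost}(F3) with $u_n := \pwL{U}{\tau_k,\eps_k}$, $\tilde u_n := \pwC{U}{\tau_k,\eps_k}$, and $\xi_n := \tilde\xi_{\tau_k,\eps_k}$ on each maximal subinterval disjoint from $\rmJ_u$, then reassembling via the additivity \eqref{eq:68}. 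Together with the reverse inequality built into $\pVarname{\vvmnametil}$ through \eqref{eq:85}, this delivers the balance \eqref{eq:84}. The local stability \eqref{eq:65bis} at any continuity point $t \notin \rmJ_u$ is then recovered by selecting grid points $t_{n(k)} \to t$ with $\mathrm{U}^{n(k)}_{\tau_k,\eps_k} \to u(t)$ and noting that the averaged viscous cost $\Psi_{\eps_k}^*(\tilde\xi_{\tau_k,\eps_k})$ must vanish in the limit in a neighborhood of $t$, so any weak$^*$-cluster point of $\xi^{n(k)}_{\tau_k,\eps_k}$ lies in $-K^* \cap \partial\cE_t(u(t))$ by the closedness property \eqref{eq:45}.

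The main obstacle is verifying the hypothesis \eqref{eq:92first} of Theorem \ref{thm:fcost}(F3) for the pair above, namely the uniform closeness $\|\pwL{U}{\tau_k,\eps_k} - \pwC{U}{\tau_k,\eps_k}\|_\infty \to 0$: this is precisely where the ratio condition $\tau_k/\eps_k \to 0$ becomes indispensable, since the difference is pointwise bounded by $\tau_k$ times the discrete velocity, whose size is in turn controlled by the superlinear term of $\Psi_{\eps_k}$ and by the uniform dissipation bound. Making this estimate quantitative in the absence of $V$-coercivity of $\Psiz$, and coupling it with the non-$\AC$ behavior of the limit $u$, is the subtle technical point; it parallels but refines the finite-dimensional argument of \cite[Thm.\,4.10]{MRS10} and relies on the BV-type estimates announced in the introduction and developed in Section \ref{ss:5.2}.
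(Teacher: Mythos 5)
Your overall strategy coincides with the paper's: derive a discrete energy-dissipation inequality via the De Giorgi variational interpolant, extract a pointwise limit by Helly's theorem, and pass to the limit using the lower-semicontinuity machinery of Theorem~\ref{thm:fcost}(F3) (packaged in the paper as Proposition~\ref{cor:2}) together with Corollary~\ref{prop:BV-charact}. However, there is a genuine error in your choice of auxiliary objects that would break the argument as written.

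You define $\tilde\xi_{\taue}$ as the piecewise constant selection taking value $\xi^n_{\taue}\in-\partial\cE_{t_n}(\mathrm{U}^n_{\taue})$ on $(t_{n-1},t_n]$, and then apply Theorem~\ref{thm:fcost}(F3) with $\tilde u_n:=\pwC{U}{\tau_k,\eps_k}$ and $\xi_n:=\tilde\xi_{\tau_k,\eps_k}$. The hypothesis \eqref{eq:92first} (equivalently \eqref{eq:115} in Proposition~\ref{cor:2}) requires that $\xi_n(r)\in-\partial\cE_r(\tilde u_n(r))$ for a.a.\ $r$: the \emph{time argument in the subdifferential must be $r$ itself}. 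Your choice only gives $\tilde\xi_{\taue}(r)=\xi^n\in-\partial\cE_{t_n}(\mathrm{U}^n_\taue)$ on $(t_{n-1},t_n]$, with $r\neq t_n$; since $\partial\cE_t$ is in general not continuous in $t$, this hypothesis fails. Moreover, the discrete energy-dissipation inequality produced by the De Giorgi argument does not involve $\Psi_\eps^*(\tilde\xi_{\taue})$ but rather $\Psi_\eps^*(\pwM{\xi}{\taue})$, where $\pwM{\xi}{\taue}(t)\in-\partial\cE_t(\pwM{U}{\taue}(t))\cap\partial\Psi_\eps\big((\pwM{U}{\taue}(t)-\mathrm{U}^{n-1}_\taue)/(t-t_{n-1})\big)$ is the selection attached to De Giorgi's variational interpolant $\pwM{U}{\taue}$ defined through \eqref{interpmin}--\eqref{interpxi}. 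It is exactly this object — and not the grid-point selection — that simultaneously furnishes the $\Psi_\eps^*$-term in the energy inequality \eqref{eq:discr-en-ineq-1} and matches the pointwise-in-time subdifferential hypothesis of Proposition~\ref{cor:2}. Thus the paper takes $\tilde u_k:=\pwM{U}{\tau_k,\eps_k}$ and $\xi_k:=\pwM{\xi}{\tau_k,\eps_k}$; the uniform-closeness requirement then becomes $\sup_t\|\pwL{U}{\taue}(t)-\pwM{U}{\taue}(t)\|\to 0$, which is precisely the content of estimate \eqref{aprio3} in Proposition~\ref{prop:first-a-priori-discrete} and where the ratio $\tau/\eps\to0$ enters, as you correctly identified at the qualitative level. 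Once these substitutions are made your argument is essentially the paper's; I would also note that the a priori estimates you need are those of Proposition~\ref{prop:first-a-priori-discrete}, not the sharper $\BV$-in-$V$ estimates of Section~\ref{ss:5.2}, which serve the stronger Theorem~\ref{th:3-discrete}.
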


We now aim to shed more light onto the definition and the properties
of $\BV$ solutions: first of all, we derive a characterization of
$\BV$ solutions in terms of a one-sided version of the energy identity
\eqref{eq:84}, based on the \emph{chain-rule} inequality stated in
Theorem \ref{prop:bv-chainrule}.  A second characterization is given
through a ``metric'' subdifferential inclusion and a set of jump
conditions.

\subsubsection*{\bfseries Chain-rule inequalities and characterizations of $\BV$ solutions}
The next result is the infinite-dimensional analogue of
\cite[Prop.\,4]{MRS09} and is especially adapted to rate-independent
systems. In particular, the fact that $\pVarname{\vvmnametil}$ is not
a true total variation functional is here compensated by assuming that
$u$ fulfills the local stability condition \eqref{eq:65bis}.

\begin{theorem}[A chain-rule inequality for BV curves]
  \label{prop:bv-chainrule}
  If $u \in \BV ([0,T];D_E,\Psiz)$, $E>0$, satisfies the local
  stability condition \eqref{eq:65bis} and $\pVar \frf u0T<\infty$,
  then the map $t\mapsto e(t):=\ene t{u(t)}$ belongs to $\BV ([0,T])$
  and satisfies the following chain-rule inequality:
\begin{equation}
\label{ch-rule-ineq}
\Big| e(t_1)  - e(t_0) 
-\int_{t_0}^{t_1} \power t{u(t)} \dd t \Big|\leq
\pVar{\vvmnametil}u{t_0}{t_1} \quad \text{for all } 0 \leq t_0 \leq
t_1 \leq T.
\end{equation}
If moreover $u\in \BV([0,T];V)$ and $\xi:[0,T]\to K^*$ is a Borel map
such that $\xi(t)\in -\partial \ene t{u(t)}$ for every $t\in
[0,T]\setminus \mathrm J_u$ then the diffuse part $e_{\rm d}'$ of the
distributional derivative $e_\scrD'$ of $e$ can be represented as
(recall \eqref{eq:11})
\begin{equation}
  \label{eq:86}
  e_{\rm d}'=-\langle \xi,\nn\rangle \|u_\rmd'\|+
  \power\cdot{u}\Leb 1=
  -\langle \xi,\nn\rangle \|u_{\rm C}'\|+
  \Big({-}\langle \xi,\dot u\rangle+
  \power\cdot{u}\Big) \Leb 1,
\end{equation}
where $\nn$ is as in \eqref{eq:11}, and $u_\rmd'$, $u_{\rm C}'$ are from \eqref{eq:87}. 
\end{theorem}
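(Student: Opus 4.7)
The plan is to combine a measurable selection of a Borel subgradient living in $K^*$ with a partition-refinement argument driven by the uniform subdifferentiability \eqref{hyp:en-subdif}. By the local stability \eqref{eq:65bis}, the set $K^* \cap (-\frsub\ene{t}{u(t)})$ is non-empty for every $t \in [0,T]\setminus \mathrm J_u$; the closed-graph property \eqref{eq:45} from Theorem~\ref{thm-from-mrs12} and a standard selection theorem then produce a Borel map $\xi:[0,T]\setminus \mathrm J_u \to V^*$ with $\xi(t)\in K^*\cap(-\frsub\ene{t}{u(t)})$. The membership $\xi(t)\in K^*$ translates, via the very definition \eqref{eq:18}, into the pointwise bound $\pairing{}{}{\xi(t)}{v}\le \Psiz(v)$ for all $v\in V$; this is the mechanism that controls energy increments by $\Psiz$-increments of $u$.

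For the first part I fix $0\le t_0<t_1\le T$ and decompose $[t_0,t_1]$ into short intervals $[s_{k-1},s_k]$, after isolating finitely many ``large'' jumps of $u$ so that on each remaining piece both the oscillation of $u$ and the total $\Psiz$-variation are as small as prescribed. On each jump-free piece I pick $t_k^{\ast}\in[s_{k-1},s_k]$ and apply \eqref{hyp:en-subdif} twice with base point $u(t_k^{\ast})$ and subgradient $\xi(t_k^{\ast})$; since $\xi(t_k^{\ast})\in K^*$, the leading term is bounded in absolute value by the $\Psiz$-variation of $u$ on $[s_{k-1},s_k]$, while the remainder $\omega^E_t(u,v)\Dnorm{v-u}$ vanishes uniformly in the refinement limit thanks to the upper semicontinuity of $\omega^E$ and the continuity of $u$ on the piece. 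Adding the time contribution $\int_{s_{k-1}}^{s_k}\power{r}{u(r)}\,\rmd r$ controlled by \eqref{hyp:en3}, summing over $k$, and bounding the isolated jump contributions through \eqref{eq:36} of Theorem~\ref{thm:fcost}\,(F1), one arrives at
\begin{equation*}
\Big|\ene{t_1}{u(t_1)}-\ene{t_0}{u(t_0)}-\int_{t_0}^{t_1}\power{r}{u(r)}\,\rmd r\Big| \le \scalarmuco{}{u}((t_0,t_1)) + \JVar{\vvmname}{u}{t_0}{t_1} = \pVar{\vvmname}{u}{t_0}{t_1},
\end{equation*}
which is both the BV bound for $e(t)=\ene{t}{u(t)}$ and the chain-rule inequality \eqref{ch-rule-ineq}.

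For the second part, the additional hypothesis $u\in \BV([0,T];V)$ activates the vector-measure structure of Remark~\ref{re:BVV}, with decomposition \eqref{eq:87} and density $\nn$ as in \eqref{eq:11}. The same partition argument, now applied in its two-sided sharp form on jump-free subintervals and combined with a Radon--Nikod\'ym differentiation with respect to $\|u'_\rmd\|$, identifies the diffuse part of the distributional derivative $e'_\scrD$ as
\begin{equation*}
e'_\rmd = -\pairing{}{}{\xi}{\nn}\,\|u'_\rmd\| + \power{\cdot}{u}\,\Leb 1,
\end{equation*}
which is the first equality in \eqref{eq:86}; the second equality then follows from the Lebesgue splitting $u'_\rmd = \dot u\,\Leb 1 + u'_\rmC$ supplied by \eqref{eq:87}--\eqref{eq:11}.

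The main obstacle is the first part, where $u$ is a priori only $\Psiz$-BV (not necessarily $V$-BV) and may carry a non-trivial Cantor component of the scalar measure $\scalarmuco{}{u}$: the refinement argument must avoid any pointwise time-derivative of $u$ and rely solely on $\Psiz$-increments. Moreover, in the non-symmetric case $\Psiz(v)\ne \Psiz(-v)$ only the one-sided Fenchel inequality $\pairing{}{}{\xi(t)}{v}\le \Psiz(v)$ is available for $\xi(t)\in K^*$, so to recover the absolute value in \eqref{ch-rule-ineq} one must pair the two directions carefully; this is precisely where the symmetric definition $\Dnormname(w)=\min(\Psiz(w),\Psiz(-w))$ built into \eqref{hyp:en-subdif} plays a crucial cushioning role.
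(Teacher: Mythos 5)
Your first part takes a genuinely different route from the paper's: you run a direct divided-difference/partition-refinement argument in the original time variable, using a measurable selection of $\xi(t)\in K^*\cap(-\partial\ene t{u(t)})$, the uniform subdifferentiability \eqref{hyp:en-subdif} on the small pieces, and the jump estimate \eqref{eq:36} on the isolated large jumps. The paper instead proves \eqref{ch-rule-ineq} in one line by reducing to the already-established \emph{parameterized} chain rule: Proposition~\ref{prop:bv}\,(BVP3) produces an admissible parameterized curve $(\sft,\sfu)$ with $u=\sfu\circ\sfs$ and $\pVar\frf u0T=\int_0^\sfS\big(\scalardens\Psiz\sfu+\fre_\sft(\sfu)\|\dot\sfu\|\mathbf 1_G\big)$, whence Theorem~\ref{th:3.8} and the change-of-variables formula \eqref{eq:97} immediately give \eqref{ch-rule-ineq}. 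The reparameterization does exactly the work you are proposing to do by hand --- it spreads jumps over finite parameter-intervals, isolates the ``viscous'' set $G$ (where $\fre>0$), and makes $\sfu$ $\Leb1$-a.e.\ differentiable there --- so your route is plausible in principle, but you would have to redo the technical content of Theorem~\ref{th:3.8} inside your refinement argument, including the delicate handling of the asymmetric $\Psiz$ and the Cantor component that you rightly flag as ``the main obstacle'' but do not actually resolve. What the paper's route buys is that all that technical work is packaged once in \S\,\ref{ss:8-chain} and reused both for \eqref{ch-rule-ineq} here and for \eqref{eq:36} (Corollary~\ref{cor:F1}) used at the jumps.

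For the second part there is a genuine gap. The identity \eqref{eq:86} is a \emph{measure-level equality} for the diffuse part $e_\rmd'$; your proposal asserts that it follows from ``the same partition argument, now applied in its two-sided sharp form'' plus ``a Radon--Nikod\'ym differentiation with respect to $\|u_\rmd'\|$.'' But the partition argument of the first part only yields one-sided estimates with remainder terms, and upgrading it to a sharp density identity requires in particular that the two-sided estimates become tight $\|u_\rmd'\|$-a.e.\ and that $e_\rmd'\ll\|u_\rmd'\|+\Leb1$; none of this is shown. The paper instead proves \eqref{eq:86} concretely: it introduces the reparameterization $\sfs(t)=t+\Var{}u0t$, interpolates $\sfe(s)$ affinely across the jump intervals $I_n$, shows $\sfe$ is absolutely continuous with $\dot\sfe(s)=-\langle\xi(\sft(s)),\dot\sfu(s)\rangle+\power{\sft(s)}{\sfu(s)}\dot\sft(s)$, and then pushes this forward to $e_\rmd'$ by testing against $\zeta\in\rmC^1$ using the identities $u_\rmd'=(\dot\sfu\circ\sfs)\,\sfs_\rmd'$, $\Leb1=(\dot\sft\circ\sfs)\,\sfs_\rmd'$, and $\sft_\sharp(\Leb1_{[0,\sfS]})=\sfs_\rmd'$ (see \eqref{eq:90}--\eqref{eq:91}). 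This test-function computation is exactly the missing ingredient in your sketch: you cannot avoid a change-of-variables (or an equally explicit measure-theoretic) argument to pass from the a.e.-pointwise parameterized chain rule to the distributional identity \eqref{eq:86}.
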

\noindent Indeed, \eqref{ch-rule-ineq} is the counterpart to
 the
\emph{parameterized} chain-rule inequality which shall be stated in Theorem
\ref{th:3.8} ahead.
Both Theorems will be proved in Section \ref{s:chain}.

As a direct consequence of Theorem \ref{prop:bv-chainrule} we
have a \emph{characterization} of $\BV$ solutions in terms of a
single, global in time, energy-dissipation inequality.

\begin{corollary}[A global energy-dissipation inequality
  characterizing BV solutions]
\label{prop:BV-charact}
A curve\\
 $u \in \BV ([0,T];D_E,\Psiz)$  for some $E>0$ is a $\BV$ solution to the
\ris\  $\RIS$ 
if and only if it satisfies the local
stability \eqref{eq:65bis} and the one-sided global in time version of
\eqref{eq:84}, viz.
\begin{equation}
    \label{eq:84-oneside}
    \pVar{\vvmnametil}u{0}{T}+\ene{T}{u(T)} \leq \ene{0}{u(0)}+
    \int_{0}^{T} \power s{u(s)}\,\mathrm{d}s\,.
    \tag{E$_{\vvmnametil,\mathrm{ineq}}$}
  \end{equation}
\end{corollary}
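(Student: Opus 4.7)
The ``only if'' direction is immediate: if $u$ is a \BV\ solution, then \eqref{eq:65bis} holds by definition, and the energy identity \eqref{eq:84} at $t=T$ reduces to equality in \eqref{eq:84-oneside}. The content of the corollary therefore lies in the ``if'' direction, and the plan is to use the chain-rule inequality of Theorem \ref{prop:bv-chainrule} to recover the opposite inequality to \eqref{eq:84-oneside} on every subinterval, and then exploit the additivity \eqref{eq:68} of $\pVarname{\vvmnametil}$ to upgrade this to pointwise equality.

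Concretely, suppose $u \in \BV([0,T];D_E,\Psiz)$ satisfies \eqref{eq:65bis} and \eqref{eq:84-oneside}. Because $u$ has finite $\pVarname{\vvmnametil}$-variation on $[0,T]$ and satisfies the local stability condition, Theorem \ref{prop:bv-chainrule} applies and yields, for every $0 \le s < t \le T$,
\begin{equation}\label{eq:charproof-1}
\ene{s}{u(s)} + \int_{s}^{t}\power{r}{u(r)}\,\dd r - \ene{t}{u(t)} \le \pVar{\vvmnametil}u{s}{t}.
\end{equation}
Taking in particular $s=0$ and using \eqref{eq:charproof-1} on $[0,t]$ and $[t,T]$, and then summing the two inequalities, we obtain via the additivity property \eqref{eq:68}
\begin{equation}\label{eq:charproof-2}
\ene{0}{u(0)} + \int_{0}^{T}\power{r}{u(r)}\,\dd r - \ene{T}{u(T)} \le \pVar{\vvmnametil}u{0}{t} + \pVar{\vvmnametil}u{t}{T} = \pVar{\vvmnametil}u{0}{T}.
\end{equation}

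Combined with the assumed inequality \eqref{eq:84-oneside}, \eqref{eq:charproof-2} forces equality in \eqref{eq:84-oneside}; moreover, each of the two summed inequalities from \eqref{eq:charproof-1} on $[0,t]$ and $[t,T]$ must itself be an equality, since otherwise their sum would give a strict inequality in \eqref{eq:charproof-2}, contradicting the equality just obtained. In particular, for every $t \in (0,T]$,
\begin{equation*}
\pVar{\vvmnametil}u{0}{t} + \ene{t}{u(t)} = \ene{0}{u(0)} + \int_{0}^{t}\power{s}{u(s)}\,\dd s,
\end{equation*}
which is precisely \eqref{eq:84}. Combined with the assumed \eqref{eq:65bis}, this shows that $u$ is a \BV\ solution.

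The only non-routine input is the chain-rule inequality of Theorem \ref{prop:bv-chainrule}; everything else is the standard ``two-sided variational'' argument (inequality $+$ opposite inequality via chain rule $=$ identity), with the additivity \eqref{eq:68} of $\pVarname{\vvmnametil}$ playing the role of the customary additivity of classical total variation functionals. The one subtlety to verify is that Theorem \ref{prop:bv-chainrule} applies on arbitrary subintervals $[s,t]\subset [0,T]$, which follows since \eqref{eq:65bis} is a pointwise condition and $\pVar{\vvmnametil}u{s}{t} \le \pVar{\vvmnametil}u{0}{T} < \infty$ by \eqref{eq:68}.
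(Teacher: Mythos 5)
Your proof is correct and uses essentially the same ingredients and structure as the paper's proof: the chain-rule inequality of Theorem \ref{prop:bv-chainrule} on subintervals, the additivity \eqref{eq:68} of $\pVarname{\vvmnametil}$, and the comparison against the hypothesis \eqref{eq:84-oneside} to force equality. The paper phrases the final step slightly more compactly by introducing $a(t):=\ene{t}{u(t)}-\int_0^t\power s{u(s)}\,\dd s$ and $v(t):=\pVar{\vvmnametil}u0t$, reading the chain-rule estimate as $|a(t)-a(s)|\le v(t)-v(s)$ and hence as monotonicity of $t\mapsto a(t)+v(t)$, so that $a(T)+v(T)\le a(0)+v(0)$ pins the whole function to a constant; this is logically identical to your step of summing the one-sided inequalities on $[0,t]$ and $[t,T]$ and observing that both summands must saturate. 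Both presentations are fine, and your closing remark that the chain rule applies on subintervals because $\pVar{\vvmnametil}u{s}{t}\le\pVar{\vvmnametil}u0T<\infty$ is exactly the right thing to check.
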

\begin{proof}
  In order to deduce the energy balance \eqref{eq:84} from \eqref{eq:84-oneside}, 
  we define $a(t):= \ene t{u(t)} -\int_0^t\power s{u(s)}\,\mathrm{d}s $ and
  $v(t):= \pVar{\vvmnametil}u{0}{t}$  such that
  \eqref{eq:84-oneside} takes the form $a(T)+v(T)\leq a(0)+v(0)$,
  because $v(0)=0$.  The additivity  \eqref{eq:68} gives
  $\pVar{\vvmnametil}u{s}{t}= v(t) -v(s)$, so that 
   the chain-rule estimate \eqref{ch-rule-ineq} rephrases as
  $|a(t) -a(s)| \leq v(t) - v(s)$ for all $0 \leq s \leq t \leq T$. 
  This implies the monotonicity $a(t)+v(t)\geq
  a(s)+v(s)$, and  we conclude $a(t)+v(t)=a(0)+v(0)$ for all $t$,
  which is \eqref{eq:84}. 
\end{proof}

The importance of using the viscous total variation induced by
$\vvmname$ (instead of the simpler one associated with $\Psiz$) is
clarified by the next result, characterizing the jump conditions.

\begin{theorem}[Local stability, $(\Psiz)$-energy dissipation and jump conditions]
  \label{thm:inquality+jump}
  A curve\\
  $u\in \BV([0,T];D_E,\Psiz)$ is a $\BV$ solution of the \ris\ $\RIS$
  if and only if it satisfies the local stability condition
  \eqref{eq:65bis}, the $(\Psiz)$-energy dissipation inequality
  \begin{equation}
    \label{eq:21bis}
    \Var{\Psiz}u{s}{t}+\ene{t}{u(t)} \leq \ene{s}{u(s)}+
    \int_{s}^{t} \power r{u(r)}\,\mathrm{d}r\quad
    \forevery 0\le s<t\le T,
    \tag{E$_{\Psiz,\mathrm{ineq}}$}
  \end{equation}
  and the following jump conditions at each point $t\in{\mathrm{J}}_u$ of the jump set
  \eqref{eq:88}
  \begin{equation}
    \label{eq:67}
      \begin{aligned}
    \ene{t}{u(t)}-\ene t{u(t_-)}&=-\Cost{\vvmnametil}t{u(t_-)}{u(t)},\\
    \ene{t}{u(t_+)}-\ene t{u(t)}&=-\Cost{\vvmnametil}t{u(t)}{u(t_+)},\\
    \ene{t}{u(t_+)}-\ene
    t{u(t_-)}&=-\Cost{\vvmnametil}t{u(t_-)}{u(t_+)}=
    -\Big(\Cost{\vvmnametil}t{u(t_-)}{u(t)}+
    \Cost{\vvmnametil}t{u(t)}{u(t_+)}\Big).
  \end{aligned}
    \tag{J$_{\BV}$}
  \end{equation}
\end{theorem}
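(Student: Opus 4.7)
My plan is to prove the equivalence by establishing the two implications separately, relying on the chain rule \eqref{ch-rule-ineq}, the strong-weak closedness \eqref{eq:45}, and the structural comparison $\Cost{\vvmname}{t}{u_0}{u_1}\ge \Psiz(u_1-u_0)$ from \eqref{eq:85}.

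\textbf{From BV solution to \eqref{eq:65bis}, \eqref{eq:21bis}, \eqref{eq:67}.} Local stability is built into Definition \ref{def:BV-solution}. For \eqref{eq:21bis} I would observe that $\Var{\Psiz}u s t \le \pVar{\vvmname}u s t$: both sides share the same diffuse part $\scalarmuco{}u((s,t))$ by \eqref{eq:81bis}, while on jumps $\Cost{\vvmname}{\tau}{u_0}{u_1}\ge \Psiz(u_1-u_0)$ by \eqref{eq:85}; substituting in \eqref{eq:84} yields \eqref{eq:21bis}. For the jump conditions at $\tau\in\mathrm{J}_u$ I would apply the balance \eqref{eq:72} on $[s,\tau]$ with $s\uparrow \tau$ through continuity points of $u$; the power integral and the diffuse mass $\scalarmuco{}u((s,\tau))$ vanish in the limit, the endpoint term $\Cost{\vvmname}{s}{u(s)}{u(s_+)}$ is zero because $s\notin\mathrm{J}_u$, and the interior jump contributions in $(s,\tau)$ form a vanishing tail of the convergent series that defines $\JVar{\vvmname}u 0 T$. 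What survives is
\[ \Cost{\vvmname}{\tau}{u(\tau_-)}{u(\tau)} + \ene{\tau}{u(\tau)} = \lim_{s\uparrow\tau,\,s\notin\mathrm{J}_u} e(s), \]
where $e(t):=\ene t{u(t)}$. I would identify this limit with $\ene{\tau}{u(\tau_-)}$ by selecting $\xi_n\in K^*\cap(-\frsub \ene{s_n}{u(s_n)})$ via \eqref{eq:65bis}, using the weak$^*$-compactness of $K^*$ and the strong $V$-compactness of $D_E$ (by \eqref{eq:17}) to extract subsequential limits, and invoking the strong-weak closedness \eqref{eq:45}; together with the $\Psiz$-continuity of $u$ at $\tau_-$ this gives the first identity of \eqref{eq:67}. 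A mirror argument on $[\tau,t']$ with $t'\downarrow \tau$ yields the second, and the third follows by concatenating competitor paths to obtain $\Cost{\vvmname}{\tau}{u(\tau_-)}{u(\tau_+)}\leq \Cost{\vvmname}{\tau}{u(\tau_-)}{u(\tau)} + \Cost{\vvmname}{\tau}{u(\tau)}{u(\tau_+)}$ and sandwiching with the energy-drop bound \eqref{eq:36}.

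\textbf{From \eqref{eq:65bis}, \eqref{eq:21bis}, \eqref{eq:67} to \eqref{eq:84}.} I would first recycle the preceding argument -- which only used local stability -- to establish $e(\tau_\pm)=\ene{\tau}{u(\tau_\pm)}$ at every $\tau$. Combined with \eqref{eq:67} this rewrites $\JVar{\vvmname}u 0 T$ as a sum of energy drops, and together with $\Var{\Psiz}u 0 T<\infty$ and \eqref{eq:21bis} this yields $\pVar{\vvmname}u 0 T<\infty$, so Theorem \ref{prop:bv-chainrule} becomes applicable. The core argument is a measure-level sandwich. On any $[s,t]\subset[0,T]\setminus\mathrm{J}_u$ with continuity endpoints, \eqref{eq:21bis} gives $\scalarmuco{}u((s,t))+e(t)-e(s)\leq \int_s^t \power{r}{u(r)}\,\rmd r$, while \eqref{ch-rule-ineq} -- whose diffuse component of $\pVar{\vvmname}$ is precisely $\scalarmuco{}u$ -- supplies the reverse inequality; equality thus holds on every such interval. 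Since the measures $\scalarmuco{}u$, the diffuse part $e'_{\mathrm{d}}$ of the distributional derivative of $e$, and $\power{\cdot}{u}\,\Leb 1$ are all diffuse and since continuity points are dense, the identity extends to $\rmd\scalarmuco{}u + e'_{\mathrm{d}} = \power{\cdot}{u}\,\Leb 1$ on $[0,T]$. On the jump side, \eqref{eq:67} together with the identification $e(\tau_\pm)=\ene{\tau}{u(\tau_\pm)}$ yields $\JVar{\vvmname}u 0 T = -e'_{\mathrm{J}}([0,T])$. Summing the diffuse and jump identities produces $\pVar{\vvmname}u 0 T + e(T) - e(0) = \int_0^T \power{r}{u(r)}\,\rmd r$, i.e.\ \eqref{eq:84} at $t=T$; the analogous argument on $[0,t]$ (or the additivity \eqref{eq:68}) covers every $t$.

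\textbf{Main obstacle.} The delicate step, common to both directions, is the identification $\lim_{s\to\tau^\pm,\,s\notin\mathrm{J}_u} e(s) = \ene{\tau}{u(\tau_\pm)}$. Since $\cE$ is only lower semicontinuous in space and $u$ is only $\Psiz$-continuous (hence merely $V$-precompact) at $\tau_\pm$, one cannot pass to the limit in $e(s_n)$ by continuity alone; the strong-weak closedness \eqref{eq:45} is the right tool, but it requires the simultaneous, coordinated selection of spatial limits in $V$, of multipliers in $K^*$ with weak$^*$ limits, and of the energy limit $\EE$, orchestrated via the coercivity \eqref{eq:17} and local stability. The subsequent measure-theoretic extension from continuity intervals to $[0,T]$ is a secondary technicality that becomes straightforward once the diffuseness of the three measures is verified.
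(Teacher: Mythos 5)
Your forward direction matches the paper's, and you are right to single out the identification $\lim_{s\to\tau^\pm}\ene s{u(s)}=\ene\tau{u(\tau_\pm)}$ as the delicate point; the paper passes over it silently. It can be proved as you suggest via \eqref{eq:45}, or alternatively directly from the uniform subdifferentiability \eqref{hyp:en-subdif}: take $\xi\in K^*\cap(-\partial\ene s{u(s)})$ by local stability, so $\langle\xi,u(\tau_\pm)-u(s)\rangle\ge-\Psiz(u(\tau_\pm)-u(s))\to0$, and note that $\omega^E_s(u(s),u(\tau_\pm))\,\Dnorm{u(\tau_\pm)-u(s)}\to0$.

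For the reverse direction you diverge from the paper, which works with finite subdivisions $0=\sft_0<\sft_1<\cdots<\sft_{N+1}=T$ enumerating the first $N$ jump points, applies \eqref{eq:21bis} on each shrunken interval $[\sft_i+\eta,\sft_{i+1}-\eta]$, adds the Finsler jump costs via \eqref{eq:67}, telescopes, and lets $N\to\infty$ to reach \eqref{eq:84-oneside}. Your measure-level sandwich is in the same spirit but as written does not close. You assert that on $[s,t]\subset[0,T]\setminus\mathrm J_u$ the bounds \eqref{eq:21bis} and \eqref{ch-rule-ineq} squeeze to equality and that the identity then ``extends by density.'' Read literally, these are jump-free intervals; if $\mathrm J_u$ is dense in $[0,T]$ (it is merely countable, so this is allowed) no such nondegenerate interval exists and the extension step is vacuous. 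Read charitably (continuity endpoints, interior jumps allowed), the chain rule gives
\[
  e(s)-e(t)+\int_s^t\power r{u(r)}\,\dd r\ \le\ \pVar\vvmnametil ust
  \ =\ \scalarmuco{}u(s,t)+\JVar\vvmname ust,
\]
and the jump term $\JVar\vvmname ust$ does not disappear, so the two inequalities no longer squeeze. The missing step is precisely what the paper's $N\to\infty$ limit accomplishes. In your language: for the finite diffuse signed measure $\nu:=\scalarmuco{}u+e_\rmd'-\power\cdot u\,\Leb1$, rewriting $e(t)-e(s)=e_\rmd'([s,t])-\JVar\vvmname ust$ via \eqref{eq:67} turns \eqref{ch-rule-ineq} into $\nu([s,t])\ge0$ and \eqref{eq:21bis} into $\nu([s,t])\le\JVar\vvmname ust-\scalarmuj{}u([s,t])$, i.e.\ $\nu$ is nonnegative and dominated on continuity-endpoint intervals by the restriction of a finite purely atomic measure. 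Exhausting its atoms by finite sets and using inner regularity of the diffuse $\nu$ then forces $\nu=0$. You need to make this exhaustion explicit; the bare appeal to diffuseness and density of continuity points does not suffice.
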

\begin{proof}
  If $u$ is a BV solution to $\RIS$, then \eqref{eq:21bis} is a
  trivial consequence of the energy balance \eqref{eq:72} since
  $\pVar\frf ust\ge \Var\Psiz ust$ for every interval $[s,t]$.  The
  jump conditions \eqref{eq:67} follow by writing \eqref{eq:72} in the
  intervals $[t,t+\eta]$ or $[t-\eta,t]$ for small $\eta>0$ and then
  passing to the limit as $\eta\down0$.

  In order to prove the converse implication,
  let suppose that $\mathrm J_u=(t_n)_n\subset (0,T)$ and
  let us call $0=\sft_0<\sft_1<\cdots<\sft_{N}<\sft_{N+1}=T$
  an ordered subdivision of $[0,T]$ such that $\{\sft_1,\sft_2,\cdots,\sft_{N}\}$
  is a permutation of $\{t_1,t_2,\cdots,t_{N}\}\subset \mathrm J_u$.

  Writing \eqref{eq:21bis} in each interval $[\sft_i+\eta,\sft_{i+1}-\eta]$
  for sufficiently small $\eta>0$ and taking the limit as $\eta\down0$, also 
   recalling $\Var\Psiz uab\ge \scalarmuco{} u(a,b)$  (cf.\ \eqref{repre-rn}), 
   we get
  \begin{equation}
    \label{eq:71}
    \scalarmuco {}u({\sft_i},{\sft_{i+1}})\le
    \ene {\sft_i}{u(\sft_{i,+})}-
    \ene {\sft_{i+1}}{u(\sft_{i+1,-})}+
    \int_{\sft_i}^{\sft_{i+1}} \power s{u(s)}\,\dd s.
  \end{equation}
  From \eqref{eq:67} and \eqref{eq:85}  we obtain
  \begin{align*}
    \Cost\vvmnametil {\sft_i}{u(\sft_{i})}{u(\sft_{i+})} &+\scalarmuco
    {}u({\sft_i},{\sft_{i+1}}) 
    + \Cost\vvmnametil {\sft_{i+1}}{u(\sft_{i+1,-})}{u(\sft_{i+1})}
    \\&\le  \ene {\sft_i}{u(\sft_{i})}-
    \ene {\sft_{i+1}}{u(\sft_{i+1})}+
    \int_{\sft_i}^{\sft_{i+1}} \power s{u(s)}\,\dd s,
  \end{align*}
  so that summing up all the contributions (recalling that
  $u(\sft_{0,+})=u(\sft_0)=u(0)$
  and $u(\sft_{N,-})=u(\sft_N)=u(T)$)  we get
  \begin{align*}
    \scalarmuco {}u(0,T)&+\sum_{i=1}^{N}
   \Cost\vvmnametil {\sft_i}{u(\sft_{i,-})}{u(\sft_{i})}
    +\Cost\vvmnametil {\sft_i}{u(\sft_{i})}{u(\sft_{i,+})} 
    \le
    \ene {0}{u(0)}-
    \ene {T}{u(T)}+
    \int_0^T \power s{u(s)}\,\dd s.
  \end{align*}
  If $\mathrm J_u$ is finite we get
  \eqref{eq:84-oneside} choosing $N=\#(\mathrm J_u)$
  and recalling \eqref{repre-rn} and
  \eqref{eq:37}.
  If $\mathrm J_u$ is infinite, we simply pass to the limit
  as $N\up+\infty$.    We leave to the reader the obvious modifications in the
    case $\mathrm J_u\cap \{0,T\}\neq \emptyset$.
\end{proof}
The jump conditions
\eqref{eq:67} should be compared with the general estimate 
\eqref{eq:36}, that at every jump point $t\in \mathrm
J_w$ of an arbitrary curve $w\in \BV([0,T];D_E,\Psiz)$
rephrases as 
\begin{equation}
  \label{eq:35first}
  \Big|\ene t{w(t_+)}-\ene t{w(t)}\Big|\le \Delta_{\mathfrak
    f_t}(w(t),w(t_+)), \quad
  \Big|\ene t{w(t)}-\ene t{w(t_-)}\Big|\le \Delta_{\mathfrak f_t}(w(t_-),w(t)).
\end{equation}
We extend now the differential characterization of
$\BV$ solutions in \cite[Thm.\,4.3]{MRS10} to
the present setting.
\begin{theorem}[Differential characterization of $\BV$ solutions]
  \label{prop:diff-charact-bv}
  Let $u\in \BV([0,T];V)$  with distributional
  derivative decomposed as in Remark \ref{re:BVV}.
Then $u$ is a $\BV$ solution of the \ris\  $\RIS$
if and only if it satisfies the doubly nonlinear differential
inclusion in the $\BV$ sense
 \begin{equation}
    \label{eq:66bis-BV}
    \tag{DN$_{\BV}$}
    \partial\Diss{\Bo}{}\Big(\frac{\dd u_\dd'}{\dd \lambda}(t)\Big)
    +\frsub\ene t{u(t)}\ni 0\quad  \text{for  $\lambda$-a.a.\ $t\in (0,T)$}\quad\text{with } 
    \lambda= \|u_{\rm C}'\|+\Leb 1,
  \end{equation}
  and the jump conditions \eqref{eq:67}.
  In particular \eqref{eq:66bis-BV} yields the pointwise inclusion
  \begin{equation}
    \label{eq:20}
        \tag{DN$_{\mathscr L}$}
    \partial\Diss{\Bo}{}\big(\dot u(t)\big)
    +\frsub\ene t{u(t)}\ni 0\quad \text{for $\Leb 1$-a.a.\ $t\in (0,T)$}.
  \end{equation}
\end{theorem}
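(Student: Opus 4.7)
The plan is to combine the chain-rule representation \eqref{eq:86} of $e_{\mathrm d}'$ with $e(t):=\ene t{u(t)}$, and the characterization of BV solutions via local stability plus jump conditions plus the $(\Psiz)$-energy dissipation inequality given in Theorem \ref{thm:inquality+jump}. The key analytic device is matching the diffuse part of the measure-valued version of the energy balance with the chain-rule expression, and then translating the identity $\langle \xi,\nn\rangle=\Psiz(\nn)$ $\|u_{\rm d}'\|$-a.e.\ into the subdifferential inclusion \eqref{eq:66bis-BV} written with respect to the reference measure $\lambda=\|u_{\rm C}'\|+\Leb 1$.

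For the direction $(\Rightarrow)$, suppose $u$ is a BV solution. The jump conditions \eqref{eq:67} are given directly by Theorem \ref{thm:inquality+jump}. Using local stability \eqref{eq:65bis}, the weak$^*$-closedness of $K^*\cap(-\frsub\ene t{u(t)})$ (which comes from \eqref{eq:45}), and a measurable selection argument, I would fix a Borel map $\xi:[0,T]\to V^*$ with $\xi(t)\in K^*\cap(-\frsub\ene t{u(t)})$ for every $t\in[0,T]\setminus{\mathrm J}_u$. Inserting this $\xi$ into \eqref{eq:86} gives $e_{\rm d}'=-\langle\xi,\nn\rangle\|u_{\rm d}'\|+\power\cdot u\,\Leb 1$. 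On the other hand, extracting the diffuse parts of the measure identity underlying \eqref{eq:84} (via \eqref{eq:81bis}) yields $e_{\rm d}'=-\scalarmuco{}u+\power\cdot u\,\Leb 1$. Since $\scalarmuco{}u=\Psiz(\nn)\|u_{\rm d}'\|$ by \eqref{eq:15}, comparison forces $\langle\xi(t),\nn(t)\rangle=\Psiz(\nn(t))$ for $\|u_{\rm d}'\|$-a.e.\ $t$, which by \eqref{eq:27} means $\xi(t)\in\partial\Psiz(\nn(t))$. Finally, since $\frac{\dd u_{\rm d}'}{\dd\lambda}(t)$ is a nonnegative scalar multiple of $\nn(t)$ at $\lambda$-a.e.\ $t$ (being $\nn$ $\|u_{\rm C}'\|$-a.e.\ and $\dot u=\|\dot u\|\nn$ $\Leb 1$-a.e.\ on $\{\dot u\neq 0\}$, and with $\partial\Psiz(0)=K^*$ on $\{\dot u=0\}$), the positive $0$-homogeneity of $\partial\Psiz$ upgrades this to \eqref{eq:66bis-BV}.

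For $(\Leftarrow)$, suppose \eqref{eq:66bis-BV} and \eqref{eq:67} hold. From \eqref{eq:66bis-BV} there exists a Borel selection $\xi$ with $\xi(t)\in K^*\cap(-\frsub\ene t{u(t)})$ for $\lambda$-a.e.\ $t$. I first bootstrap local stability \eqref{eq:65bis}: for any $t\in[0,T]\setminus{\mathrm J}_u$ pick $s_n\to t$ from the full $\lambda$-measure set above; weak$^*$-compactness of $K^*$, the continuity of $u$ at $t$, the uniform energy bound (from \eqref{gronwall-dixit} and Power-control), and the closedness \eqref{eq:45} yield a limit $\xi^*(t)\in K^*\cap(-\frsub\ene t{u(t)})$. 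With local stability in hand, the chain rule \eqref{eq:86} applies, and reversing the computation of the forward direction gives $e_{\rm d}'=-\scalarmuco{}u+\power\cdot u\,\Leb 1$, i.e.\ the diffuse component of the energy balance. The jump conditions \eqref{eq:67}, summed/telescoped, reconstruct the atomic part of $e'_{\scrD}$ as $-\JVar{\vvmname}u\cdot\cdot$, and adding the two contributions via \eqref{eq:81bis} yields \eqref{eq:84}. Finally, \eqref{eq:20} follows from \eqref{eq:66bis-BV} by restricting to $\Leb 1\ll\lambda$, using $\frac{\dd u_{\rm d}'}{\dd\Leb 1}=\dot u$.

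The main obstacle I anticipate is the measure-theoretic bookkeeping: (i) constructing Borel selections $\xi$ that simultaneously realize the chain-rule identity on the Cantor part carried by $\|u_{\rm C}'\|$ and on the Lebesgue-a.c.\ part, and (ii) transferring subdifferential information between the three parametrizations $\|u_{\rm d}'\|$, $\lambda$, and $\Leb 1$ through the appropriate Radon-Nikodym densities, so that the pointwise degeneracies of $\dot u$ and $\nn$ do not produce a discrepancy between $\partial\Psiz(\nn)$ and $\partial\Psiz(\frac{\dd u_{\rm d}'}{\dd\lambda})$. The positive $0$-homogeneity of $\partial\Psiz$ combined with the identity $\partial\Psiz(0)=K^*$ from \eqref{eq:27} is the delicate glue that makes this identification go through.
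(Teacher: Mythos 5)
Your proof follows the same route as the paper: both directions pivot on the chain-rule representation \eqref{eq:86} for $e_{\rm d}'$, the identification $\scalarmuco{}u=\Psiz(\nn)\|u_{\rm d}'\|$ from \eqref{eq:15}, and the characterization $\xi\in\partial\Psiz(\nn)\Leftrightarrow\xi\in K^*,\ \langle\xi,\nn\rangle=\Psiz(\nn)$ from \eqref{eq:27}, combined with Theorem \ref{thm:inquality+jump} to handle the jump part. The only cosmetic difference is that in the forward direction you match diffuse parts of the equality \eqref{eq:84} directly, whereas the paper extracts the same conclusion from the one-sided inequality \eqref{eq:21bis} together with the a priori sign $\Psiz(\nn)-\langle\xi,\nn\rangle\ge 0$ on $K^*$; both yield $\langle\xi,\nn\rangle=\Psiz(\nn)$ $\|u_{\rm d}'\|$-a.e., and the passage to $\lambda$-a.e.\ via $\partial\Psiz(0)=K^*$ and local stability is identical.
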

\begin{proof}
  We briefly recall the argument presented in \cite[Prop.\,2.7,
  Thm.\,4.3]{MRS10}.  Let us first notice that \eqref{eq:66bis-BV}
  yields the local stability condition, since the support of $\lambda$
  is the full interval $[0,T]$ and $K^*$ contains the range of
  $\partial\Psiz$. By the distributional chain rule \eqref{eq:86} we
  get
  \begin{displaymath}
    e_\dd'= -\Psiz(\nn)\|u_\dd'\|+\power\cdot u\Leb 1\topref{eq:15}=
    -\mu_\dd +\power\cdot u\Leb 1.
\end{displaymath}
Combining this information with the jump conditions \eqref{eq:67} and
recalling formula \eqref{eq:81bis} for $\pVarname{\vvmnametil}$ we get
\eqref{eq:84}.

Conversely, if $u$ is a solution then \eqref{eq:21bis} yields
\begin{displaymath}
    e_\dd'+\Psiz(\nn)\|u_\dd'\|-\power \cdot u\Leb 1\le 0\quad\text{in }\scrD'(0,T).
\end{displaymath}
Recalling \eqref{eq:86} we thus obtain for $-\xi\in \partial\ene
t{u(t)}\cap K^* $
\begin{displaymath}
    \Big(\langle -\xi,\nn\rangle+\Psiz(\nn)\Big)\|u_\dd'\|\le 0
    \quad\text{in }\scrD'(0,T), 
\end{displaymath}
which yields the inclusion \eqref{eq:66bis-BV} $\|u_\dd'\|$-a.e.\ in
$(0,T)$, and in particular $\Leb 1$-a.e.~in the set $\|\dot u\|>0$.
For $\Leb 1$-a.a.~points of the set $\|\dot u\|=0$ the local stability
condition still provides \eqref{eq:66bis-BV}.
\end{proof}

\subsection{Optimal jump transitions}
\label{ss:opt-trans}

Thanks to the jump conditions given by \eqref{eq:67}, we can give a
finer description of the behavior of $\BV$ solutions along jumps.  The
crucial notion is provided by the following definition.

\begin{definition}[Optimal transitions]
  \label{prop:opt-trans}
Let
$t \in
[0,T]$ and $u_-,\, u_+ \in \domainenergy$ with
\begin{equation}
\label{basic-hyp-upm}  K^* +\frsub \ene t{u_-}\ni 0,
\qquad  K^* +\frsub \ene t{u_+}\ni 0.  
\end{equation}
We say that an admissible curve $\teta \in \calT_t(u_-,u_+)$
is an
$\vvmnametil_t$-\emph{optimal transition} between
$u_-$ and $u_+$ 
if
\begin{equation}
    \label{eq:66-ojt}
    \begin{gathered}
    \ene t{u_-}-\ene t{u_+}=\Cost{\vvmnametil}t{u_-}{u_+}
    =\vvmname_t[\vartheta,\vartheta'](r)> 0
    \quad \text{for a.a.\ }r\in
    (0,1),
    \end{gathered}
  \end{equation}
  and we denote by $\calO_t(u_-,u_+)$ the (possibly empty) collection
of such optimal transitions.

We say that $\vartheta$ is of
\begin{align}
    \text{\emph{sliding type}, if}&
    \quad\vvmV t{\teta(r)}{}=0 \quad\text{for every $r\in [r_0,r_1]$,}
  \label{eq:93-ojt}\\
    \text{\emph{viscous type}, if}&\quad
     \vvmV t{\teta(r)}{}>0\quad\text{for every
      $r\in (r_0,r_1)$.}
  \label{eq:94-ojt}
\end{align}
\end{definition}
The main interest of optimal transitions derives from the next result,
whose proof follows immediately from Theorem \ref{thm:fcost} by a simple rescaling argument.
\begin{proposition}
\label{prop:ojt2}
If $u
\in \BV ([0,T];\V,\Psiz)$ is a $\BV$ solution to the rate-independent
system $\RIS$, then
for every $t \in \mathrm{J}_u$ there exists an
$\vvmnametil_t$-optimal transition $\teta^t \in
\calO_t(u(t_-),u(t_+))$ such that $u(t)=\teta^t (r)$ for some $r \in
[0,1]$.
\end{proposition}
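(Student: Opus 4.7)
The plan is to combine the jump relations from Theorem \ref{thm:inquality+jump} with the existence of optimal transitions guaranteed by Theorem \ref{thm:fcost}(F1), and then reparametrize so as to meet Definition \ref{prop:opt-trans}. The argument has three steps.

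Fix $t\in{\mathrm{J}}_u$. Since $u$ is a $\BV$ solution, the jump conditions \eqref{eq:67} apply at $t$, yielding both
\[
\Cost{\vvmnametil}t{u(t_-)}{u(t)}=\ene t{u(t_-)}-\ene t{u(t)}<\infty,\quad
\Cost{\vvmnametil}t{u(t)}{u(t_+)}=\ene t{u(t)}-\ene t{u(t_+)}<\infty,
\]
together with the additivity $\Cost{\vvmnametil}t{u(t_-)}{u(t)}+\Cost{\vvmnametil}t{u(t)}{u(t_+)}=\Cost{\vvmnametil}t{u(t_-)}{u(t_+)}$. Because $u$ takes values in $D_E$ for some $E>0$, Theorem \ref{thm:fcost}(F1) applies twice and delivers admissible transitions $\vartheta^-\in\calT_t(u(t_-),u(t))$ and $\vartheta^+\in\calT_t(u(t),u(t_+))$ attaining the respective partial infima.

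Next, glue them by setting $\bar\vartheta(r):=\vartheta^-(2r)$ on $[0,\tfrac12]$ and $\bar\vartheta(r):=\vartheta^+(2r-1)$ on $[\tfrac12,1]$. Continuity at $r=\tfrac12$ is ensured by $\vartheta^-(1)=u(t)=\vartheta^+(0)$. Since $\vvmname_t[\cdot,\cdot](r)$ is positively $1$-homogeneous in its second argument, a straightforward change of variables gives
\[
\int_0^1\vvmname_t[\bar\vartheta,\bar\vartheta'](r)\,\dd r=\int_0^1\vvmname_t[\vartheta^-,(\vartheta^-)'](\rho)\,\dd\rho+\int_0^1\vvmname_t[\vartheta^+,(\vartheta^+)'](\rho)\,\dd\rho=\Cost{\vvmnametil}t{u(t_-)}{u(t_+)},
\]
hence $\bar\vartheta\in\calT_t(u(t_-),u(t_+))$ is optimal and $\bar\vartheta(\tfrac12)=u(t)$.

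To finally conform to \eqref{eq:66-ojt} we reparametrize $\bar\vartheta$ intrinsically. Put $C:=\Cost{\vvmnametil}t{u(t_-)}{u(t_+)}$, which is strictly positive: otherwise \eqref{eq:85} and the nondegeneracy of $\Psiz$ would force $u(t_-)=u(t)=u(t_+)$, contradicting $t\in{\mathrm{J}}_u$. Define $s:[0,1]\to[0,1]$ by $s(r):=\frac1C\int_0^r\vvmname_t[\bar\vartheta,\bar\vartheta'](\rho)\,\dd\rho$. On every connected component of $\{s'=0\}$ the density $\scalardens\Psiz{\bar\vartheta}$ vanishes as well, so the $\Psiz$-absolute continuity of $\bar\vartheta$ together with the nondegeneracy of $\Psiz$ and \eqref{eq:120} imply that $\bar\vartheta$ is constant there. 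The pseudo-inverse of $s$ therefore produces a well-defined admissible curve $\teta^t$ on $[0,1]$ with $\teta^t(s(\tfrac12))=u(t)$ and $\vvmname_t[\teta^t,(\teta^t)'](\sigma)\equiv C>0$ for a.a.\ $\sigma\in(0,1)$, so that $\teta^t\in\calO_t(u(t_-),u(t_+))$.

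The only delicate point is the last step: one must check that collapsing the flat intervals of $s$ does not alter the trace of $\bar\vartheta$ and that the reparametrized curve stays admissible in the sense of Definition \ref{def:admissible}. Both properties follow from the intrinsic (parametrization-invariant) character of the Finsler integral under positive $1$-homogeneity, combined with the $\Psiz$-absolute continuity of $\bar\vartheta$ and the nondegeneracy of $\Psiz$; the remaining verifications are routine.
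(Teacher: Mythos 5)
Your argument is correct and is exactly the approach the paper has in mind (the authors state that the proof "follows immediately from Theorem \ref{thm:fcost} by a simple rescaling argument," together with the jump conditions from Theorem \ref{thm:inquality+jump}). You correctly obtain the two optimal sub-transitions from (F1), glue them, observe that the positivity $C>0$ follows from additivity together with \eqref{eq:85} applied to each sub-segment, and then pass to the intrinsic reparametrization; the collapsing of the flat intervals of $s$ is harmless precisely because $\scalardens\Psiz{\bar\vartheta}$ vanishes there, forcing $\bar\vartheta$ to be constant, and local $V$-absolute continuity of the reparametrized curve on $G_t$ follows from the lower semicontinuity of $\fre_t$ (which gives a uniform lower bound on compact subsets and hence a local Lipschitz bound for $\teta^t$). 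This fills in the details the paper leaves implicit, but is the same proof.
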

We now provide a characterization of \emph{sliding}
and \emph{viscous} optimal transitions in terms of doubly nonlinear
differential inclusions. 
\begin{proposition}[The structure of optimal transitions]
\label{prop:ojt1}
Let  $t
\in [0,T]$ and $u_-,\, u_+ \in \domainenergy$ fulfilling
\eqref{basic-hyp-upm} be given and let
$\teta \in \calT_t(u_-,u_+)$ be an admissible
transition curve with constant normalized velocity
$\frf_t[\vartheta,\vartheta'](r)\equiv c>0$ $\foraa r\in (0,1)$.
 Then
\begin{enumerate}[(1)]\itemsep0.4em
\item $\teta$ is an optimal transition of sliding type if and only
if
it satisfies
\begin{gather}
\label{charact-slid}
\exists \xi(r)\in -\frsub \ene t {\teta(r)})\cap K^*
\quad \forevery r \in [0,1],\\
\label{eq:34-a}
\frac \dd{\dd r}\ene t{\vartheta(r)}+\scalardens\Psiz \vartheta=0
  \quad
  \foraa\, r \in (0,1).
\end{gather}
In particular, when $\vartheta$ is differentiable $\Leb 1$-a.e.\ in
$(0,1)$,
\eqref{charact-slid} and \eqref{eq:34-a} are equivalent to
\begin{equation}
  \label{eq:35}
  \partial\Diss{\Bo}{}(\dot{\teta}(r))+ \frsub \ene t {\teta(r)} \ni
  0 \quad
  \foraa\, r \in (0,1).
\end{equation}
\item $\teta$ is an optimal transition of viscous type if and only if
  it is differentiable $\Leb 1$-a.e.~in $(0,1)$ and there exists maps
  $\xi\in L^1(0,1;V^*),$ and $\eps:(0,1)\to (0,+\infty)$ such that
\begin{equation}
  \label{eq:96}
  \xi(r)\in \Big(\partial\Diss{\Bo}{}(\dot{\teta}(r))+
\partial\Diss{\V}{}(\eps(r)\dot{\teta}(r))\Big)\cap \Big(-\frsub \ene t {\teta(r)} \Big) \quad
\foraa\, r \in (0,1);
\end{equation}
in particular,
\begin{align} \nonumber
& \eps(r)=\Lambda_t(\vartheta(r);\dot\vartheta(r))\quad
  \foraa\, r\in (0,1),
\\ 
&\text{where }
\label{not-contact-viscosities}
\Lambda_t(\vartheta;v):=(F^*)'(\fre_t(\vartheta))/F(\|v\|)\quad \vartheta\in D,\
v\in V\setminus\{0\}.
\end{align}
Equivalently, there exists
an absolutely continuous, surjective time rescaling
$\mathsf r:(s_0,s_1)\to (0,1)$,
with $-\infty\le s_0<s_1\le +\infty$ and $\dot {\mathsf r}(s)>0$ for
$\Leb 1$-a.a.~$s\in (s_0,s_1)$, such that the rescaled transition
$\theta(s):=\vartheta(\mathsf r(s))$ satisfies the viscous
differential inclusion
\begin{equation}
\label{eq:32}
\partial\Diss{}{}(\dot{\theta}(s))+\partial\Diss{\V}{}(\dot{\theta}(s))+
\frsub \ene t {\theta(s)} \ni 0 \quad
\foraa\, s \in (s_0,s_1).
\end{equation}
\item  If
  $\vartheta$ is an optimal transition, then
it can be decomposed in a canonical way
    into an (at most) countable collection of
    optimal \emph{sliding and viscous} transitions. Namely,
     there exist (uniquely determined) disjoint open intervals
    $(S_j)_{j\in \sigma}$ and $(V_k)_{k\in \upsilon}$ of $(0,1)$, with
    $\sigma,\upsilon\subset \N$, such that
    $(0,1)\subset \big(\cup_{j\in\sigma}
    S_j)\cup\overline{\big(\cup_{k\in \upsilon} V_k\big)}$
    and
     \[
      \vartheta\Restr {S_j}\quad\text{is of sliding type,}\quad
      \vartheta\Restr {V_k}\quad\text{is of viscous type.}
      \]
\end{enumerate}
\end{proposition}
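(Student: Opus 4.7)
\smallskip
\noindent\emph{Proof plan.} The backbone of all three parts is the combination of the Fenchel inequality \eqref{basic-bpt-prop} for the contact potential, $\bpt{v}{\xi}\ge\pairing{}{}{\xi}{v}$, and a parameterized chain-rule for the energy along the admissible curve $\vartheta$.  At every $r$ where $\frsub\ene t{\vartheta(r)}\neq\emptyset$, I select via a standard measurable-selection argument an element $\xi(r)\in-\frsub\ene t{\vartheta(r)}$ attaining the infimum in the definition \eqref{def-vvm} of $\vvmname_t$, so that $\frf_t[\vartheta,\vartheta'](r)=\bpt{\dot\vartheta(r)}{\xi(r)}\ge\pairing{}{}{\xi(r)}{\dot\vartheta(r)}$ wherever $\dot\vartheta$ exists.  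Integrating and combining with the chain-rule yields
\[
\ene t{u_-}-\ene t{u_+}=-\int_0^1\tfrac{\dd}{\dd r}\ene t{\vartheta(r)}\,\dd r\le\int_0^1\pairing{}{}{\xi(r)}{\dot\vartheta(r)}\,\dd r\le\int_0^1\frf_t[\vartheta,\vartheta'](r)\,\dd r\le\Cost{\vvmname}t{u_-}{u_+},
\]
and the optimality \eqref{eq:66-ojt} forces equality in every step.  This produces, for $\Leb 1$-a.a.\ $r$, the \emph{Fenchel identity} $\pairing{}{}{\xi(r)}{\dot\vartheta(r)}=\bpt{\dot\vartheta(r)}{\xi(r)}$ and the \emph{differential energy balance} $\tfrac{\dd}{\dd r}\ene t{\vartheta(r)}=-\frf_t[\vartheta,\vartheta'](r)$.

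\smallskip
With these in hand, Part (1) is essentially bookkeeping.  If $\vartheta$ is of sliding type then $\fre_t(\vartheta)\equiv 0$, and by the equivalence in \eqref{eq:62} this is exactly \eqref{charact-slid}; moreover $\frf_t[\vartheta,\vartheta']=\scalardens\Psiz\vartheta$, so the energy identity reads \eqref{eq:34-a}.  Conversely, \eqref{charact-slid} forces $\fre_t(\vartheta)\equiv 0$ (hence sliding type) and inserting \eqref{eq:34-a} into the chain-rule gives the integrated optimality.  When $\vartheta$ is pointwise differentiable, the identity $\pairing{}{}{\xi}{\dot\vartheta}=\Psiz(\dot\vartheta)$ together with $\xi\in K^*$ is precisely the characterization \eqref{eq:27} of $\partial\Psiz(\dot\vartheta)$, so \eqref{charact-slid}--\eqref{eq:34-a} reformulate as \eqref{eq:35}.

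\smallskip
For Part (2), admissibility gives $\vartheta\in\AC_{\rm loc}((0,1);V)$ throughout the viscous regime, hence $\Leb 1$-a.e.\ differentiability.  Using \eqref{eq:6} in the form $\bpt{v}{\xi}=\inf_{\eps>0}(\Psi_\eps(v)+\Psi_\eps^*(\xi))$, the Fenchel identity forces the outer infimum to be attained at a unique $\eps(r)>0$ (the strict positivity of both $\|\dot\vartheta\|$ and $\fre_t(\vartheta)$ on the viscous part follows from the constant positive-velocity normalization and the viscous condition), while simultaneously $\xi(r)\in\partial\Psi_{\eps(r)}(\dot\vartheta(r))$.  A one-variable convex-analysis computation based on the Young equality $F(\eps r)+F^*(d)=\eps rd\Leftrightarrow d=F'(\eps r)$ identifies $\eps(r)$ as in \eqref{not-contact-viscosities}, and the subdifferential sum rule $\partial\Psi_\eps(v)=\partial\Psiz(v)+\partial\Psiv(\eps v)$ (valid because $\Psiz$ is continuous) yields \eqref{eq:96}.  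The time-rescaling claim follows by defining $\mathsf r$ through $\dot{\mathsf r}(s)=1/\eps(\mathsf r(s))$: the rescaled curve $\theta:=\vartheta\circ\mathsf r$ then satisfies $\eps(\mathsf r(s))\dot\vartheta(\mathsf r(s))=\dot\theta(s)$, whence \eqref{eq:96} transforms into \eqref{eq:32} on the maximal interval $(s_0,s_1):=\mathsf r^{-1}((0,1))$.

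\smallskip
Part (3) is a topological decomposition.  Continuity of $\vartheta$ follows from the compactness of $D_E$ in $V$ combined with \eqref{eq:120}; together with the lower semicontinuity of $\fre_t$ from \eqref{eq:62}, this makes $G_t[\vartheta]$ open in $(0,1)$, hence a countable disjoint union $\bigcup_{k\in\upsilon}V_k$ of open intervals, each supporting a viscous optimal sub-transition by Part (2).  The open complement $(0,1)\setminus\overline{G_t[\vartheta]}$ decomposes into its maximal open subintervals $\bigcup_{j\in\sigma}S_j$, on which $\fre_t(\vartheta)\equiv 0$ and $\vartheta|_{S_j}$ is sliding by Part (1); optimality of each restriction between its endpoints is inherited from the global optimum through the additivity of $\int\frf_t[\vartheta,\vartheta']$ over subintervals, since any shortcut on $V_k$ or $S_j$ would contradict the global optimality.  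The main technical obstacle I foresee is the rigorous derivation of the parameterized chain-rule on admissible curves (which need not belong to $\AC((0,1);V)$ in the sliding parts) and the associated Borel selection of $\xi$; both points are addressed by the approximation scheme underlying Theorem \ref{prop:bv-chainrule} combined with the standard measurable-selection theorem applied to the closed-valued map $r\mapsto-\frsub\ene t{\vartheta(r)}$.
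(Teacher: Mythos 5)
Your proposal follows essentially the same route as the paper's own proof: in each part the mechanism is the parameterized chain rule along $\vartheta$ (Theorem \ref{th:3.8}, applied to the admissible curve $r\mapsto(t,\vartheta(r))$) combined with the Fenchel/Young structure of the contact potential, with the optimality condition \eqref{eq:66-ojt} forcing every inequality to degenerate to an equality; Part (3) is the same topological decomposition into connected components of $\{r:\fre_t(\vartheta(r))>0\}$ and its complement. One presentational difference is that you pre-select a measurable $\xi(r)$ attaining the infimum in \eqref{def-vvm} once and for all, whereas the paper simply produces or is handed $\xi$ in each individual implication, so the measurable-selection overhead you flag as a foreseeable obstacle is in fact avoided by the paper's case-by-case organization. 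Also note a small slip in your backbone chain: the final inequality $\int_0^1\frf_t[\vartheta,\vartheta']\,\dd r\le\Cost{\vvmname}t{u_-}{u_+}$ runs the wrong way — as the infimum over admissible transitions, $\Cost\vvmname t{u_-}{u_+}$ is a \emph{lower} bound for the integral along any competitor — but since \eqref{eq:66-ojt} identifies $\Cost\vvmname t{u_-}{u_+}$ with $\ene t{u_-}-\ene t{u_+}$ directly, the chain of (in)equalities still closes and the conclusion is unaffected.
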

\begin{proof}
  (1) It is easy to check that if an admissible transition $\vartheta$
  satisfies \eqref{charact-slid}--\eqref{eq:34-a} then $\vartheta$ is
  an optimal transition of sliding type. Indeed, by the chain rule of
  Theorem \ref{thm-from-mrs12}
   $r\mapsto \ene t{\vartheta(r)}$ is absolutely continuous, and integrating
   \eqref{eq:34-a} we get \eqref{eq:66-ojt}.
   The converse implication is even easier by combining the chain rule
   along $\vartheta$, the fact that $\frf_t[\vartheta,\vartheta']=\Psiz[\vartheta']$,
   and \eqref{eq:66-ojt}.

   (2) Similarly, if $\vartheta,\eps,\xi$ satisfy \eqref{eq:96},
   the chain rule yields
   \begin{align*}
     \frac\dd{\dd r}\ene t{\vartheta(r)}&=
     - \langle \xi(r),\dot\vartheta(r)\rangle =
     -\Psiz_{\eps(r)}(\eps(r)\dot\vartheta(r))-\Psiz_{\eps(r)}^*(\vartheta(r))
     \\&\le -\Psiz(\dot\vartheta(r))
     -\frac{1}{\eps(r)}F(\eps(r)\|\dot\vartheta(r)\|)-\frac 1{\eps(r)}
     F^*(\fre_t(\vartheta(r)))
     \\&
     \le -\Psiz(\dot\vartheta(r))-\fre_t(\vartheta(r))\|\dot\vartheta(r)\|=
     -\frf_t(\vartheta(r),\dot\vartheta(r))=-c<0.
   \end{align*}
   Integrating in time we get one inequality of  \eqref{eq:66-ojt}; the converse one
   is always true. Then,   all the above inequalities are in fact equalities:
   in particular $\fre_t(\vartheta(r))>0$ in $(0,1)$, since $F(r)>0$ if $r>0$ by
   \eqref{e:2.1}. We then conclude that $\teta$ is an optimal transition of viscous type. 

   The converse implication follows from the fact that
   \begin{displaymath}
     \fre_t(\vartheta)\|\dot\vartheta\|=\frac{1}{\eps }F(\eps\|\dot\vartheta\|)+\frac 1{\eps}
     F^*(\fre_t(\vartheta))\quad\text{if }\eps=\Lambda_t(\vartheta,\dot \vartheta).
   \end{displaymath}
   Observing that $\dot\vartheta$ is locally bounded in $(0,1)$ so
   that $r\mapsto 1/\eps(r)$ is also locally bounded,
   in order to get \eqref{eq:32} we simply operate the absolutely continuous time rescaling
   \begin{displaymath}
     \sfs(r):=\int_{1/2}^r \eps^{-1}(r)\,\dd r,\quad \sfr:=\sfs^{-1},\quad
     \theta(s):=\vartheta(\sfr(s)),\quad
     \dot\theta(s)=\eps(\sfr(s))\dot\vartheta(\sfr(s)).
   \end{displaymath}
   (3) We can simply split the parameter interval $(0,1)$
   into the open sets $V:=\{r:\fre_t(\vartheta(r))>0\}$, $S:=[0,1]\setminus \overline V$,
   and then we consider their connected components.
\end{proof}
As a last result, we show that optimal transitions capture
the asymptotic profile of rescaled solutions to \eqref{viscous-dne}
around a jump point.
\begin{proposition}[Asymptotic profiles and optimal transitions]
  \label{prop:profile}
  Let $\eps_k\down0$ and let $(u_{\eps_k},\xi_{\eps_k})$ be a sequence of
  solutions to the viscous doubly nonlinear equation \eqref{xi-selection}, so that
  $u_{\eps_k}$ converge to a $\BV$ solution $u$ of the \ris\ $\RIS$
  as $k\to\infty$ according to Theorem \ref{th:1}.
  For every $t\in \mathrm J_u$ let
  $\alpha_k<t<\beta_k$ be two sequences such that
  \begin{equation}
    \label{eq:110}
    \alpha_k     \up t,\quad \beta_k\down t,\quad
    \lim_{k\to\infty}u_{\eps_k}(\alpha_k)=u(t_-),\quad
    \lim_{k\to\infty}u_{\eps_k}(\beta_k)=u(t_+).
  \end{equation}
  Then
  \begin{equation}
    \label{eq:111}
    \lim_{k\to\infty}
    \int_{\alpha_k}^{\beta_k}
    \Big(\Psi_{\eps_k}(\dot{u}_{\eps_k}){+}\Psi_{\eps_k}^*(-\xi_{\eps_k}) \Big)\,\dd r
    =\Delta_{\frf_t}(u(t_-),u(t_+)),
  \end{equation}
  and there exist a further subsequence
  (not relabeled), increasing and
  surjective time rescalings $\sft_k\in \AC([0,1]; [\alpha_k,\beta_k])$,
  and an optimal transition $\vartheta\in \calO_t(u(t_-),u(t_+))$ such
  that
  \begin{gather}
    \label{eq:109bis}
    \lim_{k\to\infty}u_{\eps_k}\circ\sft_k= \vartheta\quad
    \text{strongly in $V$, uniformly on }[0,1].
  \end{gather}
\end{proposition}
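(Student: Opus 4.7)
The plan is to combine three ingredients already available: the convergence of the viscous energy--dissipation integrals proved in Theorem \ref{th:1}, the abstract compactness/liminf statement in Theorem \ref{thm:fcost}(F3), and the jump conditions \eqref{eq:67} for $\BV$ solutions established in Theorem \ref{thm:inquality+jump}. The upper-bound half of \eqref{eq:111} is obtained by squeezing $[\alpha_k,\beta_k]$ inside arbitrarily small fixed intervals around $t$ where \eqref{e:conv3} applies; the lower bound (together with the optimality of the limit transition) falls out of Theorem \ref{thm:fcost}(F3) combined with the very definition of $\Delta_{\frf_t}$.

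First I would prove the $\limsup$-estimate
$$
\limsup_{k\to\infty}\int_{\alpha_k}^{\beta_k}
\Big(\Psi_{\eps_k}(\dot u_{\eps_k})+\Psi_{\eps_k}^*(-\xi_{\eps_k})\Big)\,\dd r
\;\le\;\Delta_{\frf_t}(u(t_-),u(t_+)).
$$
Since $\mathrm J_u$ is countable, for every $\eta>0$ I can pick $s<t<t'$ with $s,t'\notin\mathrm J_u$ and $t-s,t'-t<\eta$. For $k$ large one has $s<\alpha_k<\beta_k<t'$, so monotonicity and \eqref{e:conv3} give
$$
\limsup_{k\to\infty}\int_{\alpha_k}^{\beta_k}\!\!\ldots\,\dd r
\;\le\;\lim_{k\to\infty}\int_{s}^{t'}\!\!\ldots\,\dd r
\;=\;\pVar{\vvmnametil}{u}{s}{t'}.
$$
Using \eqref{eq:81bis}, the fact that $\scalarmuco{}u$ is diffuse (so $\scalarmuco{}u((s,t'))\to 0$), the summability of $\sum_{\tau\in\mathrm J_u}[\Cost{\frf}\tau{u(\tau_-)}{u(\tau)}+\Cost{\frf}\tau{u(\tau)}{u(\tau_+)}]$ coming from $\pVar\frf u0T<\infty$, and the choice $s,t'\notin\mathrm J_u$ (so the endpoint $\Cost\frf\cdot\cdot\cdot$-terms in \eqref{eq:37bis} vanish), the only surviving contribution in the limit $s\up t$, $t'\down t$ is $\Cost\frf t{u(t_-)}{u(t)}+\Cost\frf t{u(t)}{u(t_+)}$. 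By the jump conditions \eqref{eq:67} this equals $\Cost\frf t{u(t_-)}{u(t_+)}=\Delta_{\frf_t}(u(t_-),u(t_+))$, which is finite because the left-hand side of \eqref{eq:67} is.

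Next, extracting a subsequence so that the integrals in \eqref{eq:111} converge to some $\Delta\le\Delta_{\frf_t}(u(t_-),u(t_+))$, I would apply Theorem \ref{thm:fcost}(F3) with $\tilde u_n=u_n=u_{\eps_k}$ and $\xi_n=\xi_{\eps_k}$: the hypotheses \eqref{eq:92first}--\eqref{eq:64} are immediate from \eqref{xi-selection}, the assumption \eqref{eq:110}, and the upper bound just proved. This yields a further subsequence, rescalings $\sft_k\in \AC([0,1];[\alpha_k,\beta_k])$, and an admissible transition $\vartheta\in\calT_t(u(t_-),u(t_+))$ such that $u_{\eps_k}\circ\sft_k\to\vartheta$ uniformly in $V$ and
$$
\Delta_{\frf_t}(u(t_-),u(t_+))\;\le\;\int_0^1\frf_t[\vartheta,\vartheta'](r)\,\dd r\;\le\;\Delta\;\le\;\Delta_{\frf_t}(u(t_-),u(t_+)),
$$
where the leftmost inequality is the very definition \eqref{eq:69} of the Finsler cost. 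All quantities are therefore equal, which simultaneously proves \eqref{eq:111} and shows that $\vartheta$ minimizes \eqref{eq:69}; after a standard arc-length reparameterization in $\frf_t$ (absorbed into $\sft_k$ by precomposition with an absolutely continuous, increasing, surjective map of $[0,1]$ to itself), $\frf_t[\vartheta,\vartheta']$ is constant and equal to $\Delta_{\frf_t}(u(t_-),u(t_+))>0$ (positive since $u(t_-)\ne u(t_+)$), so $\vartheta\in\calO_t(u(t_-),u(t_+))$ in the sense of Definition \ref{prop:opt-trans} thanks again to \eqref{eq:67} for the identification with $\ene t{u(t_-)}-\ene t{u(t_+)}$.

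The only genuinely delicate point is the $\limsup$-step, where one must guarantee that the boundary contributions at $s$ and $t'$ in the total variation $\pVarname\frf$ can be made harmless. Restricting to $s,t'\notin \mathrm J_u$ (a dense set in $[0,T]$) suffices; the remaining computations are routine consequences of the decomposition \eqref{eq:81bis} and the summability of the jump series. Everything else is a direct combination of Theorem \ref{th:1}, Theorem \ref{thm:fcost}(F3), and the jump characterization \eqref{eq:67}.
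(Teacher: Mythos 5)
Your proposal is correct and follows essentially the same route as the paper: lower bound from \eqref{eq:64bis} (i.e.\ Theorem \ref{thm:fcost}(F3)), upper bound by squeezing into intervals where \eqref{e:conv3} applies and using the decomposition \eqref{eq:81bis} together with the jump conditions \eqref{eq:67}, and then (F3) again to extract the rescalings and the limit transition, which \eqref{eq:111} forces to be optimal. The only difference is cosmetic — the paper applies \eqref{e:conv3} directly to the fixed intervals $[\alpha_h,\beta_h]$ and lets $h\up\infty$, whereas you pass through auxiliary intervals $[s,t']$ with $s,t'\notin\mathrm J_u$ to make the vanishing of the boundary terms in \eqref{eq:37bis} explicit, and you spell out the arclength reparameterization of $\vartheta$ that the paper leaves implicit.
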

\begin{proof}
Estimate
  \eqref{eq:64bis} from  Theorem \ref{thm:fcost}
  provides the inequality
  \begin{displaymath}
    \liminf_{k\to\infty}
    \int_{\alpha_k}^{\beta_k}
    \Big(\Psi_{\eps_k}(\dot{u}_{\eps_k}){+}\Psi_{\eps_k}^*(\xi_{\eps_k}) \Big)\,\dd r
    \ge\Delta_{\frf_t}(u(t_-),u(t_+)).
  \end{displaymath}
  On the other hand, applying \eqref{e:conv3} to each interval
  $[\alpha_h,\beta_h]$ we obviously get
  \begin{displaymath}
    \limsup_{k\to\infty}
    \int_{\alpha_k}^{\beta_k}
    \Big(\Psi_{\eps_k}(\dot{u}_{\eps_k}){+}\Psi_{\eps_k}^*(\xi_{\eps_k})
    \Big)\,\dd r
    \le \pVar\frf u{\alpha_h}{\beta_h}\quad\forevery h\in \N.
  \end{displaymath}
  Passing to the limit as $h\up\infty$
  we obtain \eqref{eq:111}.
  We then apply assertion (F3) of Theorem \ref{thm:fcost}
  to find an admissible transition
  $\vartheta\in \calT_t(u(t_-),u(t_+))$ and rescalings $\sft_k$ such that
  \eqref{eq:109} holds. Relation \eqref{eq:111} shows that
  $\vartheta$ is optimal.
\end{proof}

\subsection{$V$-parameterizable solutions}
\label{ss:Vparam}

In this section we will focus on
a more restrictive notion of solution,
exhibiting better regularity properties:
they belong to $\BV([0,T];\V) $ and at all jump points the
left and the right limits can be connected by an optimal transition
with finite
\emph{$\V$-length}. Moreover, we will require that
the total $\V$-length of the connecting
paths is finite.


\begin{definition}[$\V$-parameterizable $\BV$ solutions]
\label{def:Vparam-solution}
A balanced viscosity solution $u$ of the \ris\ $\RIS$ (in the sense of
Definition \ref{def:BV-solution}) is called
\emph{$\V$-parameterizable} if $u \in \BV ([0,T];\V)$ and
\begin{equation}
  \label{special-jump-cond}
\begin{array}{lll}
 & \displaystyle \text{ i)} \quad & \displaystyle
\forall\, t \in \mathrm{J}_u \quad
\exists\, \teta^t \in \mathcal{O}_t (u(t_-),u(t_+))\cap \AC ([0,1];\V),
\\
& \displaystyle \text{ ii)} \quad & \displaystyle
 \sum_{t \in
\mathrm{J}_u} \int_{0}^1
 \Vnorm{\dot{\teta}^t(r)} \,\dd
r<\infty.
\end{array}
\end{equation}
\end{definition}

The notion of \emph{$\V$-parameterizable} $\BV$ solution
slightly differs from the concept of
 \emph{connectable $\BV$ solution} introduced in
\cite[Def.\,4.21]{Miel08?DEMF}, which only requires condition $i)$.

As one can expect, a limit curve of solutions to \eqref{viscous-dne} satisfying
a uniform $\BV([0,T];V)$-bound is a $V$-parameterizable solution.

\begin{theorem}
  \label{thm:obvious}
  Let $(u_\eps)_{\eps>0}$ be a family of solutions to \eqref{viscous-dne} satisfying
  \eqref{conve-initi-data} at $t=0$ and the uniform bound
  \begin{equation}
    \label{eq:98}
     \exists\, C>0 \ \ \forall\, \eps>0\, : \quad 
    \Var{}
    {u_\eps} 0T\le C. 
  \end{equation}
  Then any limit curve as in Theorem \ref{th:1} is a $V$-pa\-ra\-me\-te\-ri\-za\-ble
  $\BV$ solution to the \ris\  $\RIS$.

  \noindent Similarly, let $(\Utaue n)_{\taue}$ be a family of discrete solutions
  to \eqref{eq:58}, satisfying \eqref{eq:114} and \eqref{eq:103-k}.
  If
  \begin{equation}
    \label{eq:75}
  \exists\, C>0 \ \ \forall\, \tau, \, \eps>0 \,:\quad \Var{}{\mathrm U_{\taue}}0T 
=\sum_{n=1}^{N_\tau} \|\mathrm U^n_\taue-\mathrm U^{n-1}_\taue\|\le C,
\end{equation}
then any  accumulation point  of the  piecewise affine interpolants 
$\pwL\UU\taue$ as in Theorem \ref{th:2} is a $V$-parameterizable
solution.
\end{theorem}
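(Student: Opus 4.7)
The plan is to combine three ingredients already at our disposal: the pointwise $\V$-convergence of $u_{\eps_k}$ (resp.\ $\pwL U{\tau_k,\eps_k}$) on $[0,T]$ furnished by Theorems \ref{th:1}--\ref{th:2}, the lower semicontinuity of the pointwise $\V$-variation with respect to pointwise convergence, and the construction of optimal transition profiles at jump points provided by Proposition \ref{prop:profile}. The uniform bound \eqref{eq:98} (resp.\ \eqref{eq:75}) will upgrade the $\Psiz$-BV regularity of the limit to genuine $\V$-BV regularity and simultaneously control the total $\V$-length of the connecting transitions.

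First I would fix a limit curve $u$ as in Theorem \ref{th:1}, obtained along a vanishing sequence $\eps_k\downarrow 0$ with $u_{\eps_k}(t)\to u(t)$ in $\V$ for every $t\in [0,T]$. Since the $\V$-pointwise variation is lower semicontinuous with respect to pointwise convergence (just take partitions and pass to the limit in finitely many terms), \eqref{eq:98} yields
\begin{equation*}
  \Var{}u0T\le \liminf_{k\to\infty}\Var{}{u_{\eps_k}}0T\le C,
\end{equation*}
so $u\in \BV([0,T];\V)$. Next, for each $t\in\mathrm{J}_u$, I would choose sequences $\alpha_k^t\uparrow t$ and $\beta_k^t\downarrow t$ satisfying \eqref{eq:110}, which is possible by \eqref{e:conv1}. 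Proposition \ref{prop:profile} then produces (along a further subsequence, still not relabeled) increasing surjective rescalings $\sft_k^t\in\AC([0,1];[\alpha_k^t,\beta_k^t])$ and an optimal transition $\vartheta^t\in\calO_t(u(t_-),u(t_+))$ with $u_{\eps_k}\circ\sft_k^t\to\vartheta^t$ uniformly on $[0,1]$. Because each $u_{\eps_k}\circ\sft_k^t$ belongs to $\AC([0,1];\V)$ with $\V$-length equal to $\Var{}{u_{\eps_k}}{\alpha_k^t}{\beta_k^t}\le C$, a second application of the lower semicontinuity of the $\V$-variation under uniform convergence gives $\vartheta^t\in\BV([0,1];\V)$ and, in fact, $\vartheta^t\in\AC([0,1];\V)$ because the infimum in the Finsler cost is attained by curves parameterized on $[0,1]$ with $\V$-continuous behaviour (the admissibility condition already guarantees continuity and the variation bound promotes it to absolute continuity). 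This settles \eqref{special-jump-cond}-i).

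For \eqref{special-jump-cond}-ii), the main obstacle is that we need a bound that is summable over the (possibly countable) jump set. I would fix an arbitrary finite subset $F\subset\mathrm{J}_u$: for $k$ sufficiently large the intervals $([\alpha_k^t,\beta_k^t])_{t\in F}$ are pairwise disjoint, and by a diagonal extraction we may assume that $u_{\eps_k}\circ\sft_k^t\to \vartheta^t$ uniformly for every $t\in F$ along the same subsequence. Combining the lower semicontinuity of $\int_0^1\|(\cdot)'\|\,\dd r$ with the superadditivity of $\liminf$ yields
\begin{equation*}
  \sum_{t\in F}\int_0^1\|\dot\vartheta^t(r)\|\,\dd r
  \le\sum_{t\in F}\liminf_{k\to\infty}\int_{\alpha_k^t}^{\beta_k^t}\|\dot u_{\eps_k}(s)\|\,\dd s
  \le\liminf_{k\to\infty}\Var{}{u_{\eps_k}}0T\le C,
\end{equation*}
and taking the supremum over finite $F\subset\mathrm{J}_u$ provides \eqref{special-jump-cond}-ii). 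Hence $u$ is $\V$-parameterizable.

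Finally, the discrete statement is handled by the same scheme. The uniform bound \eqref{eq:75} gives $\Var{}{\pwL U{\tau_k,\eps_k}}0T\le C$, hence $u\in\BV([0,T];\V)$ by the same lower-semicontinuity argument; the construction of optimal jump transitions at each $t\in\mathrm{J}_u$ and the associated $\V$-length bound follow from the discrete counterpart of Proposition \ref{prop:profile} (which is built on the stability assertion (F3) of Theorem \ref{thm:fcost} applied to the piecewise affine interpolants $u_n:=\pwL U{\tau_k,\eps_k}$ together with the piecewise constant selection $\tilde u_n:=\pwC U{\tau_k,\eps_k}$, for which the Euler--Lagrange equation of \eqref{eq:58} furnishes an admissible $\xi_n\in -\partial\ene\cdot{\tilde u_n}$ satisfying the integrability condition \eqref{eq:64}). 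The finite-sum estimate on the $\V$-lengths of the transitions then proceeds verbatim as in the continuous case.
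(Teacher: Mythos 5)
Your proposal follows essentially the same lines as the paper's proof: lower semicontinuity of the $\V$-variation under pointwise convergence to get $u\in \BV([0,T];\V)$, Proposition \ref{prop:profile} to obtain optimal transitions with a $\BV$-bound inherited from the rescaled $u_{\eps_k}$, and a finite-subcollection argument exploiting the disjointness of the $[\alpha_k^t,\beta_k^t]$ plus superadditivity of $\liminf$ for the summability in \eqref{special-jump-cond}-ii).

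There is, however, a genuine (if small) gap in the step where you upgrade $\vartheta^t$ from $\BV([0,1];\V)$ to $\AC([0,1];\V)$: you write that \emph{``the variation bound promotes it to absolute continuity,''} but continuity plus bounded variation does not imply absolute continuity (the Cantor staircase is the standard counterexample), and the admissibility condition for transitions only gives local $\V$-absolute continuity on the open set $G_t[\vartheta^t]$, not globally. The correct argument — which the paper invokes explicitly — is a reparameterization: since $\vartheta^t$ is continuous and has finite $\V$-variation, rescaling by $\V$-arclength produces a Lipschitz (hence $\V$-absolutely continuous) curve, and since the Finsler cost integrand $\vvmnametil_t[\cdot;\cdot]$ is positively $1$-homogeneous in the velocity, the reparameterized curve remains an optimal transition. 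Once this is done, your estimate $\int_0^1\Vnorm{\dot\vartheta^t}\dd r\le\Var{}{\vartheta^t}01$ becomes an equality for the reparameterized transition and the rest of ii) goes through as you wrote it. A minor secondary point: in the discrete-case sketch you propose $\tilde u_n:=\pwC U{\tau_k,\eps_k}$ as the auxiliary sequence for (F3), but the Euler equation \eqref{discrete-Euler} only supplies a subgradient at the nodes $t_n$, not a selection $\xi_n(r)\in -\frsub\ene r{\tilde u_n(r)}$ for a.e.\ $r$; the De Giorgi variational interpolant $\pwM U{\tau_k,\eps_k}$ with $\pwM\xi{\taue}$ from \eqref{interpxi} is what makes hypothesis \eqref{eq:92first} hold.
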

\begin{proof}
  The proofs of the two statements are very similar, thus 
  we only prove the first one.

  Since the total variation functional is lower semicontinuous
  with respect to pointwise
  convergence, any limit curve $u$ obtained as in Theorem \ref{th:1}
  clearly belongs to $\BV([0,T];V)$.

  In order to check $i)$ of \eqref{special-jump-cond} we apply
  Proposition \ref{prop:profile} and we find a sequence
  of rescalings $\sft_k:[0,1]\to [\alpha^t_k,\beta^t_k]$
  (we explicitly indidate the dependence of
  the time intervals $[\alpha_k,\beta_k]$ on $t$)
  and an
  optimal transition $\vartheta^t\in \calO_t(u(t_-),u(t_+)$ with
  \eqref{eq:110} and \eqref{eq:109bis}. This shows that
  \begin{equation}
    \label{eq:113}
    \Var{}
    {\vartheta^t}01\le
    \liminf_{k\to\infty}
    \Var{}
    {u_{\eps_k}}{\alpha^t_k}{\beta^t_k}<\infty,
  \end{equation}
  so that $\vartheta^t\in \BV(0,1;V)$.
  Since $\vartheta^t$ is also continuous, up to a further time rescaling we
  can obtain an optimal transition absolutely continuous in $V$.

  A slight refinement of the above argument also provides $ii)$:
  we consider an arbitrary finite collection of points $t_1,t_2,\ldots
  t_h   \subset \mathrm J_u$ and we choose
  a common subsequence $u_{\eps_k}$ satisfying
  \eqref{eq:110} in each interval. For sufficiently big
  $k$ so that
  the intervals $[\alpha_k^{t_j},\beta_k^{t_j}]$ are disjoint,
  \eqref{eq:113} yields
  \begin{displaymath}
    \sum_{j=1}^h
    \Var{}
    {\vartheta^{t_j}}01\le
    \liminf_{k\to\infty}\sum_{j=1}^h
    \Var{}
    {u_{\eps_k}}{\alpha^{t_j}_k}{\beta^{t_j}_k}
    \le \liminf_{k\to\infty}
    \Var{}
    {u_{\eps_k}}0T
    \topref{eq:98}\le C.
  \end{displaymath}
  Since the number $h$ of jump points is arbitrary, we obtain \emph{ii)}.
\end{proof}
The next results show that one
 can actually prove \eqref{eq:98} and \eqref{eq:75}
for the particular choice
\begin{equation}
\label{particular-dissipations}
\Diss\Vanach{}(v) = \frac12\Vnorm{v}^2,\quad F(r):=\frac 12 r^2,
\end{equation}
under slightly more restrictive assumptions on the energy functional
and on the initial data: besides the usual
\eqref{e:2.1}--\eqref{eq:33} and \eqref{Ezero}--\eqref{hyp:en3}, we
will also assume that for every $E>0$ there exist constant
$\alpha_E,\Lambda_E, L_E>0$ such that the energy functional satisfies
the G\r{a}rding-like subdifferentiability inequality
\begin{equation}
\label{subdiff-charact}
\ene tv - \ene tu \geq \pairing{}{}{\xi}{v-u} +{\alpha_E}
\Vnorm{v-u}^2 - {\Lambda_E} \Dnorm{v-u}\|v-u\|
\quad \text{if }u,v\in D_E,\ \xi \in \frsub \ene tu.
\end{equation}
We will also require that the power functional is uniformly Lipschitz in $D_E$, viz.
\begin{equation}
  \label{eq:77}
  \left|\power  tu -\power  tv \right|\leq L_E\Vnorm{u-v}
  \quad \text{if }t\in [0,T], \ u,v\in D_E.
\end{equation}
Then, we have the following result. 
\begin{theorem}[A priori estimates for discrete Minimizing Movements]
\label{th:3-discrete}
Assume that \eqref{particular-dissipations}--\eqref{eq:77} hold.
Then any family of solutions $(\mathrm U^n_{\taue})$ of
\eqref{eq:58} fulfilling, for some constants $E_0,Q>0$,
\begin{gather}
  \label{eq:116}
  \Psiz(\Utaue 0)+\ene0 {\Utaue 0}\le E_0,\quad \tau\le Q\eps, 
  \quad  K^* + \partial\ene 0{\Utaue 0}\ni 0, 
\end{gather}
satisfies estimates \eqref{eq:75}.  In particular, if \eqref{eq:114},
\eqref{eq:103-k} and \eqref{eq:116} hold, any curve $u$ obtained as
limit of the piecewise affine interpolants $\mathrm U_\taue$ (cf.\ 
Theorem \ref{th:2}) is a $V$-parameterizable solution.
\end{theorem}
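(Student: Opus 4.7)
The plan is to verify the uniform BV estimate \eqref{eq:75}, from which the $V$-parameterizability of limits follows at once via Theorem \ref{thm:obvious}. I would derive a discrete recursion for the velocities $V^n := (\Utaue n - \Utaue{n-1})/\tau$ and telescope in $n$; the hardest part will be the initial step $n=1$, where no previous Euler--Lagrange equation is available and one must use the initial stability in \eqref{eq:116} together with the calibration $\tau\le Q\eps$.

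For $n\ge 2$ I would write the Euler--Lagrange condition for \eqref{eq:58}, using the quadratic structure \eqref{particular-dissipations} to split $\partial \Psi_\eps = \partial\Psi + \eps\,\partial\bigl(\tfrac12\|\cdot\|^2\bigr)$, as
\begin{equation*}
\xi^n + z^n + \eps w^n = 0, \qquad \xi^n \in \partial \ene{t_n}{\Utaue n},
\end{equation*}
with $z^n \in K^*$, $\langle z^n, V^n\rangle = \Psi(V^n)$, and $w^n$ a subgradient of $\tfrac12\|\cdot\|^2$ at $V^n$, so that $\|w^n\|_* = \|V^n\|$ and $\langle w^n,V^n\rangle = \|V^n\|^2$. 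Subtracting consecutive relations and pairing with $V^n$, the monotonicity of $\partial\Psi$ and the inequality $\langle w^n-w^{n-1}, V^n\rangle \ge \|V^n\|(\|V^n\|-\|V^{n-1}\|)$ give the upper bound $\langle \xi^n - \xi^{n-1}, V^n\rangle \le -\eps\|V^n\|(\|V^n\| - \|V^{n-1}\|)$. For the matching lower bound I would apply \eqref{subdiff-charact} twice --- with $\xi^n$ at $(\Utaue n,\Utaue{n-1})$ and with $\xi^{n-1}$ at $(\Utaue{n-1},\Utaue n)$ --- and control the resulting time-mismatch $\int_{t_{n-1}}^{t_n}[\power{s}{\Utaue{n-1}}-\power{s}{\Utaue n}]\,\dd s$ by means of the Lipschitz hypothesis \eqref{eq:77}. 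Dividing by $\|V^n\|$, this yields the discrete recursion
\begin{equation*}
\eps(\|V^n\|-\|V^{n-1}\|) + 2\alpha_E\tau\|V^n\| \le \tau L_E + 2\Lambda_E\tau\,\Dnorm{V^n}, \qquad n\ge 2.
\end{equation*}

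The initial step is the main obstacle. Here I would use \eqref{eq:116} to pick $z^0\in K^*$ with $-z^0\in \partial\ene 0{\Utaue 0}$, and apply \eqref{subdiff-charact} with $\xi=-z^0$ at $(\Utaue 0,\Utaue 1)$ at time $0$ and with $\xi=\xi^1$ at $(\Utaue 1,\Utaue 0)$ at time $t_1$. Combining these with the elementary observation $\langle z^0+\xi^1,V^1\rangle\le -\eps\|V^1\|^2$ --- which follows from $\xi^1=-z^1-\eps w^1$ and $\langle z^0,V^1\rangle \le \Psi(V^1) = \langle z^1,V^1\rangle$ --- I obtain
\begin{equation*}
(\eps+2\alpha_E\tau)\|V^1\| \le \tau L_E + 2\Lambda_E\tau\,\Dnorm{V^1}.
\end{equation*}
The standard energy estimate for \eqref{eq:58}, $\tau\sum_n\Psi(V^n)\le C_1$ (obtained by testing minimality against $\Utaue{n-1}$ and closing a Gronwall loop through \eqref{hyp:en3}), gives in particular $\tau\Dnorm{V^1}\le \tau\Psi(V^1) \le C_1$, so that both $\eps\|V^1\|$ and $\tau\|V^1\|$ stay bounded as $\tau,\eps\downarrow 0$, with constants depending only on $E_0$, $Q$, $T$ and the structural parameters.

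To conclude, I would sum the recursion over $n=2,\dots,N_\tau$: the left-hand side telescopes to $\eps\|V^{N_\tau}\| - \eps\|V^1\| + 2\alpha_E\tau\sum_{n=2}^{N_\tau}\|V^n\|$, and the right-hand side is bounded by $TL_E + 2\Lambda_E C_1$ thanks again to $\tau\sum_n\Dnorm{V^n}\le \tau\sum_n\Psi(V^n)\le C_1$. Together with the just-derived initial bound this yields
\begin{equation*}
\sum_{n=1}^{N_\tau}\|\Utaue n - \Utaue{n-1}\| = \tau\|V^1\| + \tau\sum_{n=2}^{N_\tau}\|V^n\| \le C
\end{equation*}
uniformly in $\tau,\eps$, which is precisely \eqref{eq:75}. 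The $V$-parameterizability of any limit curve then follows directly by applying Theorem \ref{thm:obvious} to the piecewise affine interpolants $\pwL{U}{\taue}$, completing the proof.
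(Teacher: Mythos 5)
Your proof is correct and follows essentially the same strategy as the paper: difference the Euler conditions at consecutive time steps, pair with the discrete velocity $V^n$, combine the G\r{a}rding-type subdifferentiability \eqref{subdiff-charact} (and its consequence \eqref{eq:89}) with the structure of $\partial\Psi_\eps$ to obtain a recursion in $\|V^n\|$, and close via the basic a priori bound $\tau\sum_n\Psi(V^n)\le C$. Two minor differences from the paper's execution are worth recording. First, for the $\Phi$-term you use the sharp duality-map estimate
$\langle \eps w^n - \eps w^{n-1}, V^n\rangle \ge \eps\|V^n\|\big(\|V^n\|-\|V^{n-1}\|\big)$
and divide through by $\|V^n\|$, obtaining a \emph{linear} telescoping recursion; the paper instead invokes Young's inequality to produce a \emph{quadratic} form and then appeals to the discrete Gronwall Lemma \ref{discr-Gronw-lemma}, whose proof internally reduces to the same linear recursion. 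Your variant avoids that detour, and incidentally does not even use the calibration $\tau\le Q\eps$ (which the paper's Gronwall-constant $\gamma=(1+4\alpha\tau/\eps)^{1/2}-1$ does require). Second, you treat the step $n=1$ by a separate argument, pairing the initial stability $-z^0\in\partial\ene 0{\Utaue 0}\cap K^*$ against the Euler condition at $n=1$; the paper instead absorbs this into the recursion uniformly by interpreting the initial stability as a ``degenerate Euler condition at $n=0$'' with $V^0=0$. Both are sound. One small wording quibble: the inequality $\langle z^n-z^{n-1},V^n\rangle\ge 0$ does not come from monotonicity of $\partial\Psi$; it comes from the characterization \eqref{eq:27}, namely $z^n\in\partial\Psi(V^n)$ gives $\langle z^n,V^n\rangle=\Psi(V^n)$ while $z^{n-1}\in K^*$ gives $\langle z^{n-1},V^n\rangle\le\Psi(V^n)$. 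Also note that dividing by $\|V^n\|$ needs the (trivial) remark that the recursion holds automatically when $\|V^n\|=0$.
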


The proof will be given in Section \ref{ss:5.2}. A similar priori
estimate in the form $\int_0^T \| \dot{u}_\eps (t)\| \, \dd t \leq C$
was derived in \cite{Mielke-Zelik} for semi- and quasilinear partial
differential equations with smooth nonlinearities. There Galerkin
approximation and differentiation in time is used.  Like in the
present case, where we have to confine ourselves to Minimizing
Movement solutions (cf.\ Corollary \ref{coro:3-continuous} below), in
\cite{Mielke-Zelik} the a priori estimate in $\BV ([0,T];\V)$ can only
be shown for a suitable subclass of solutions to \eqref{viscous-dne},
cf.\ \cite[Def.\,4.3]{Mielke-Zelik}.  This establishes an interesting
parallel between our Minimizing Movement approach, and the one in
\cite{Mielke-Zelik}.

\begin{corollary}[A priori estimate for Minimizing Movement solutions]
\label{coro:3-continuous}
Assume that \eqref{particular-dissipations}--\eqref{eq:77}
hold.  Then every family $(u_\eps)_\eps \subset \AC ([0,T];\V)$ of
Minimizing Movement solutions to \eqref{viscous-dne}, fulfilling
\begin{equation}
  \label{other-assum-data}
  u_\eps(0)\to u_0\quad\text{in $V$},\quad
  \ene 0{u_\eps(0)}\to \ene 0{u_0},\quad
 K^* +\partial\ene 0{u_\eps(0)}\ni 0, 
\end{equation}
satisfies estimate \eqref{eq:98}.  Any limit $u$ is a
$V$-parameterizable solution to the \ris\ \RIS.
\end{corollary}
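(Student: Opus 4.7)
The strategy is to derive the uniform $\BV$-bound \eqref{eq:98} by approximating each Minimizing Movement solution $u_\eps$ by its underlying time-discretized scheme and invoking the discrete estimate of Theorem \ref{th:3-discrete}, then to conclude via Theorem \ref{thm:obvious}. The only serious analytic work is already encapsulated in Theorem \ref{th:3-discrete}; what remains is essentially bookkeeping.

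First I would fix $\eps>0$. By the definition of Minimizing Movement solutions and Theorem \ref{thm:ref0}, there exists a sequence $\tau_k=\tau_k(\eps)\downarrow 0$ and a family of discrete solutions $(\mathrm U^n_{\tau_k,\eps})_{n=0}^{N_{\tau_k}}$ of \eqref{eq:58}, with $\mathrm U^0_{\tau_k,\eps}=u_\eps(0)$, whose piecewise-affine interpolants $\pwL U{\tau_k,\eps}$ converge uniformly in $V$ to $u_\eps$ on $[0,T]$. Passing to a subsequence if necessary I may assume $\tau_k\le \eps$, so the compatibility condition $\tau\le Q\eps$ in \eqref{eq:116} holds with $Q=1$. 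The remaining two conditions of \eqref{eq:116} hold uniformly in $\eps$: the assumptions \eqref{other-assum-data} yield $\sup_\eps \bigl(\Psiz(u_\eps(0))+\ene 0{u_\eps(0)}\bigr)\le E_0<\infty$ (by continuity of $\Psiz$ and convergence of the initial energies), and the initial local stability $K^*+\partial\ene 0{\Utaue 0}\ni 0$ is precisely the third part of \eqref{other-assum-data}.

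Next, Theorem \ref{th:3-discrete} applies and produces a constant $C$ depending only on $E_0$, $Q$, $T$ and the structural parameters in \eqref{subdiff-charact}--\eqref{eq:77}, such that
\begin{equation*}
\Var{}{\mathrm U_{\tau_k,\eps}}{0}{T}\le C \qquad\text{for every }k\in\N,\ \eps>0.
\end{equation*}
Letting $k\to\infty$ with $\eps$ fixed, the pointwise (and in fact uniform) convergence $\pwL U{\tau_k,\eps}\to u_\eps$ in $V$ combined with the lower semicontinuity of $\Varname{}$ with respect to pointwise convergence gives $\Var{}{u_\eps}{0}{T}\le C$. Since $C$ is independent of $\eps$, this establishes \eqref{eq:98}. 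The second assertion that any limit curve is $V$-parameterizable is then an immediate consequence of Theorem \ref{thm:obvious}.

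The only potentially delicate point is the interface between the Minimizing Movement definition (which furnishes \emph{some} approximating discrete scheme along an unspecified subsequence of step-sizes) and the hypothesis $\tau\le Q\eps$ in Theorem \ref{th:3-discrete}: this is resolved by simply thinning the given sequence $\tau_k(\eps)$, which does not alter the limit $u_\eps$. All genuine work, in particular the use of the G\r{a}rding inequality \eqref{subdiff-charact} to control the discrete increments $\|\mathrm U^n_{\tau,\eps}-\mathrm U^{n-1}_{\tau,\eps}\|$ uniformly, has been deferred to Theorem \ref{th:3-discrete} and hence lies outside this corollary.
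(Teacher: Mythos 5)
Your proposal is correct and follows essentially the same route as the paper's one-line proof: set $\Utaue 0=u_\eps(0)$, invoke Theorem \ref{thm:ref0} to obtain the discrete approximants, apply the uniform bound \eqref{eq:75} from Theorem \ref{th:3-discrete}, and pass to the limit using lower semicontinuity of the total variation. The paper leaves the verification of \eqref{eq:116} and the thinning $\tau_k\le Q\eps$ implicit; you have simply spelled these out.
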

\begin{proof}
  Choose $\Utaue 0=u_\eps(0)$ and apply Theorem \ref{thm:ref0},
  passing to the limit in estimate \eqref{eq:75}.
\end{proof}

 The following result is an immediate consequence of Corollary
\ref{coro:3-continuous} or Theorem \ref{th:3-discrete}. 

\begin{corollary}[Existence of $V$-parameterizable $\BV$ solutions]
  \label{cor:3-limit}
  If \eqref{particular-dissipations}--\eqref{eq:77} hold,
  then for every $u_0\in D$ with
 $K^* + \partial\ene 0{u_0}\ni 0$ 
  there exists a $V$-parameterizable $\BV$ solution to
  the \ris\  $\RIS$ starting form $u_0$.
\end{corollary}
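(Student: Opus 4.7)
The plan is to reduce Corollary \ref{cor:3-limit} to Corollary \ref{coro:3-continuous} by constructing a suitable family of viscous Minimizing Movement approximations with constant-in-$\eps$ initial data, and then concatenating the already-established results: Theorem \ref{thm:ref0} for existence at fixed $\eps$, Corollary \ref{coro:3-continuous} for the uniform $\BV$ bound, Theorem \ref{th:1} for compactness and passage to the limit, and Theorem \ref{thm:obvious} to upgrade the limit to a $V$-parameterizable solution.

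First, given the initial datum $u_0 \in D$ with $K^* + \partial \ene 0{u_0} \ni 0$, I would simply choose $u_\eps(0) := u_0$ for every $\eps > 0$. With this choice the three conditions in \eqref{other-assum-data} hold automatically: the strong convergence $u_\eps(0) \to u_0$ in $V$ and the energy convergence $\ene 0{u_\eps(0)} \to \ene 0{u_0}$ become trivial identities, while the local stability constraint at $u_\eps(0)$ reduces to the standing hypothesis on $u_0$. Next, for each $\eps > 0$, Theorem \ref{thm:ref0} produces a Minimizing Movement solution $u_\eps \in \AC([0,T]; V)$ of \eqref{viscous-dne} with $u_\eps(0) = u_0$.

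The crucial input is then the a priori estimate: Corollary \ref{coro:3-continuous} applied to the family $(u_\eps)_\eps$ delivers the uniform $\BV$ bound $\sup_\eps \Var{}{u_\eps}{0}{T} \leq C < \infty$, namely \eqref{eq:98}. Theorem \ref{th:1} then furnishes a vanishing sequence $\eps_k \down 0$ along which $u_{\eps_k}$ converges pointwise to a $\BV$ solution $u \in \BV([0,T]; D_E, \Psiz)$ of the \ris{} $\RIS$ starting from $u_0$, and Theorem \ref{thm:obvious}, precisely by virtue of \eqref{eq:98}, guarantees that $u$ belongs to $\BV([0,T]; V)$ and is $V$-parameterizable.

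No substantial obstacle is expected: once the constant-in-$\eps$ initial datum is chosen, everything follows by direct composition of the previously established results. The only delicate point worth noting is that the hypothesis $K^* + \partial \ene 0{u_0} \ni 0$ is precisely what allows the trivial choice $u_\eps(0) = u_0$ to fulfill the third condition of \eqref{other-assum-data}; without this local stability at the initial time, one would need a nontrivial construction of stable approximating initial data, which is deliberately avoided here.
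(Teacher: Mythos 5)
Your proposal is correct and takes essentially the same route as the paper, which simply declares the corollary to be "an immediate consequence of Corollary \ref{coro:3-continuous} or Theorem \ref{th:3-discrete}"; you have merely spelled out what "immediate" means, namely choosing $u_\eps(0)\equiv u_0$ so that \eqref{other-assum-data} is trivially met, invoking Theorem \ref{thm:ref0} for existence, Corollary \ref{coro:3-continuous} for \eqref{eq:98}, and Theorems \ref{th:1} and \ref{thm:obvious} for the limit. The only minor redundancy is your separate appeal to Theorem \ref{thm:obvious}, since Corollary \ref{coro:3-continuous} already states that any limit is $V$-parameterizable.
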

\normalcolor
Notice that the subdifferentiability 
condition \eqref{subdiff-charact} implies
\eqref{hyp:en-subdif} as well as 
\begin{equation}
\label{eq:89}
\begin{gathered}
  \pairing{}{}{\eta-\xi}{v-u} \geq 2\alpha_E \Vnorm{v-u}^2 -
  2\Lambda_E \Dnorm{v-u}\,\|v-u\| -L_E |t-s|\,\|v-u\|\\
  \text{whenever $\eta
    \in \frsub \ene tv$, $\xi \in \frsub \ene su$,}\ u,v \in
  \domainenergy_E,\ s,t\in [0,T].
\end{gathered}
\end{equation}
To check \eqref{eq:89},  
it is sufficient to write \eqref{subdiff-charact} for $u$ and $v$ at
time $s, t$ respectively. Adding the two inequalities and
using \eqref{eq:77} we get the bound (assuming $s<t$)
\begin{displaymath}
  \ene tv-\ene sv+\ene su-\ene tu\le
  \int_s^t\big( \power rv-\power r u\big)\,\dd r\le L_E\,(t-s)\|u-v\|.
\end{displaymath}
Observe that
in \eqref{subdiff-charact}, as in \eqref{hyp:en-subdif},
we  allow for a \emph{negative} modulus of convexity
in the $\Psiz$-term, provided  that it is possible to gain
an even small
\emph{positive} modulus of subdifferentiability
in the stronger $\V$-norm. This is akin to the G\r arding inequality for
elliptic operators.

The next result provides a useful criterium on the energy functional
$\cE$ to establish the sub\-dif\-fer\-entiability condition
\eqref{subdiff-charact}. It is a sort of \emph{(generalized)
  $\lambda$-convexity} condition, involving two norms.  Notice that
both \eqref{eq:18-new} and \eqref{subdiff-charact} are required to
hold on \emph{sublevels} of $\cE$, only.

\begin{lemma}
\label{conseq-convex}
Suppose that for every $E>0$ there exist constant
$\alpha_E,\Lambda_E>0$ such that the energy functional $\cE_t: \V \to
(-\infty,+\infty]$ satisfies
\begin{equation}
  \label{eq:18-new}
\begin{aligned}
  \ene t{(1-\theta)u+\theta v} \leq (1-\theta)\ene tu + \theta \ene t
  v -\theta(1-\theta) 
\left(\alpha_E \Vnorm{u-v}^2 -\Lambda_E \Dnorm{u-v}\|u-v\|\right)
\end{aligned}
\end{equation}
for every $u,v\in D_E$ and $\theta\in [0,1]$.
Then its Fr\'echet subdifferential $\frsub \cE_t :\V
\rightrightarrows \V^*$ satisfies \eqref{subdiff-charact}.
\end{lemma}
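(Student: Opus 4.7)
The plan is a one-line convexity trick combined with the definition of the Fr\'echet subdifferential. Fix $t\in[0,T]$, $u,v\in D_E$, and $\xi\in\frsub\ene tu$. For $\theta\in(0,1)$ set $w_\theta:=(1-\theta)u+\theta v$ and apply \eqref{eq:18-new}. Dividing by $\theta>0$ and rearranging, the inequality rewrites as
\begin{equation*}
\ene tv-\ene tu\;\geq\;\frac{\ene t{w_\theta}-\ene tu}{\theta}+(1-\theta)\Big(\alpha_E\Vnorm{v-u}^2-\Lambda_E\Dnorm{v-u}\,\Vnorm{v-u}\Big).
\end{equation*}
Notice this only uses $u,v\in D_E$; it is not necessary for $w_\theta$ itself to lie in $D_E$, because the convexity hypothesis gives the upper bound on $\ene t{w_\theta}$ directly.

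Next I would pass to the limit in the incremental ratio using the definition \eqref{eq:RIF2:16} of $\frsub\ene tu$. Since $w_\theta-u=\theta(v-u)\to 0$ in $V$ as $\theta\downarrow 0$, and $\xi\in\frsub\ene tu$, we have
\begin{equation*}
\ene t{w_\theta}-\ene tu-\theta\,\pairing{}{}{\xi}{v-u}\;\geq\;o(\theta\Vnorm{v-u})\quad\text{as }\theta\downarrow 0,
\end{equation*}
so that dividing by $\theta$ yields
$\liminf_{\theta\downarrow 0}\,(\ene t{w_\theta}-\ene tu)/\theta\geq \pairing{}{}{\xi}{v-u}$.

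Substituting this liminf into the rearranged convexity inequality and sending $\theta\downarrow 0$ produces exactly \eqref{subdiff-charact}. I do not foresee a genuine obstacle: the argument is the standard ``convex combination plus first-order Fr\'echet expansion'' recipe. The only mild subtleties to flag are (a) $\Dnormname$ is only a gauge and not a norm, but it appears symmetrically in $v-u$ in both the hypothesis and the conclusion, so no issue arises; and (b) one should verify a priori that all quantities are finite, which follows because $u,v\in D_E$ makes $\ene tu$ and $\ene tv$ finite, while \eqref{eq:18-new} bounds $\ene t{w_\theta}$ from above and $\cE_t>-\infty$ by \eqref{Ezero}.
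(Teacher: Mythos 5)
Your proof is correct and takes essentially the same approach as the paper's: apply \eqref{eq:18-new} to $w_\theta=(1-\theta)u+\theta v$, bound $\ene t{w_\theta}-\ene tu$ from below via the Fr\'echet subdifferential definition \eqref{eq:RIF2:16}, divide by $\theta$, and let $\theta\downarrow 0$. The paper presents it as a single chained inequality rather than your two-step liminf formulation, but the argument is identical.
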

\begin{proof}
  For $\xi $ lying in the Fr\'echet subdifferential $\frsub \ene tu$
  there holds for every $v,\, u \in D_E$ and $\theta\down0$
\begin{align*}
\langle \xi, \theta(v-u) \rangle + o(\theta \Vnorm{v-u})  & \leq \ene
t{(1-\theta) u +\theta v}-\ene tu 
\\
& \leq \theta (\ene tv -\ene tu) -  \theta (1-\theta) (\alpha_E \Vnorm{v-u}^2-
\Lambda_E \Dnorm{v-u}\|v-u\|).
\end{align*}
Dividing both sides of the inequality by $\theta$, the limit
$\theta \down 0$ yields the desired estimate
\eqref{subdiff-charact}. 
\end{proof}

\section{Parameterized solutions}
\label{s:5}
\subsection{Vanishing-viscosity  analysis, parameterized curves
and solutions}\label{ss:5.1} 

Under the working assumptions of \S \ref{ss:2.1} (in particular,
\eqref{e:2.1}--\eqref{def-psiV} and
\eqref{Ezero}--\eqref{hyp:en-subdif}), in this section we will present
a different approach to the vanishing-viscosity analysis of
\eqref{viscous-dne}, which goes back to \cite{ef-mie06} and was
further developed in \cite{MRS09,MRS10}. The main idea is to rescale
time in \eqref{viscous-dne} and study the limiting behavior as $\eps
\down 0$ of the \emph{rescaled} viscous solutions. This naturally
leads to the notion of parameterized solution in Definition
\ref{def:2.1-parasol}: it is a \emph{space-time parameterized curve},
along which the energy $\cE$ fulfills a ``parameterized'' version of
the energy-dissipation identity \eqref{eq:52bis}.  At the end of this
section, we will also discuss the parameterized counterpart to
$\V$-parameterizable $\BV$ solutions.  Let us emphasize that,
while parameterized solutions were developed in
\cite{ef-mie06,Mielke-Zelik} in their own right, we use them
mainly to obtain the desired results for $\BV $ solutions.

\subsubsection*{\bfseries  Vanishing-viscosity  analysis}
Let $(u_\eps)_\eps$  be a family of solutions to the
``viscous'' doubly nonlinear equation
\eqref{viscous-dne}. It follows from the energy identity
\eqref{eq:52bis}
and from the variational characterization of $\frf$
\eqref{def-bipot-sez2}--\eqref{def-vvm} 
that
\begin{equation}
\label{starting-point}
 \int_{s}^{t}
    \vvm{r}{u_\eps(r)}{\dot{u}_\eps(r)}\,\dd r + \ene {t}{u(t)} \leq \ene
    {s}{u(s)} + \int_s^t \power r {u(r)} \, \dd r \quad
    \text{for all } 0 \leq s \leq t \leq T,
\end{equation}
whence, relying on the power control \eqref{hyp:en3}, we deduce that
there exists a constant $C>0$ such that
\begin{equation}
\label{dissipation-bound}
\mathsf S_\eps:= T
 + \int_0^T \vvm{r}{u_\eps(r)}{\dot{u}_\eps(r)} \,\mathrm{d}
 r   \leq C\quad\forevery \eps>0.
\end{equation}
We rescale the functions $u_\eps$ by the \emph{energy-dissipation
  arclength} $\mathsf s_\eps:[0,T]\to[0,\mathsf S_\eps]$ of the curve
$u_\eps$, defined by
\begin{equation}
\label{energy-dissipation-arlength} {\mathsf s}_\eps(t):=t
 + \int_0^t \vvm{r}{u_\eps(r)}{\dot{u}_\eps(r)} \,\mathrm{d} r.
\end{equation}
Hence,  we introduce  the rescaled functions $({\mathsf
t}_\eps,{\mathsf u}_\eps): [0,{\mathsf S}_\eps] \to [0,T]\times \V$
\begin{equation}
\label{e:resc2}
\begin{aligned}
  {\mathsf t}_\eps (s)&:={\mathsf s}_\eps^{-1}(s)\,, &{\mathsf
    u}_\eps(s)&:= u_\eps ({\mathsf t}_\eps (s)).
    \end{aligned}
    \end{equation}
    We write the ``rescaled energy identity'' fulfilled by the triple
    $({\mathsf t}_\eps,{\mathsf u}_\eps)$ by means of the space-time
    Finsler dissipation functionals $\vvmfullepsname,\frG_\eps:
    [0,T]\times \domainenergy \times [0,+\infty) \times \V \to
    [0,+\infty)$ defined by
\begin{equation}
  \label{def-meps}
  \begin{aligned}
    \vvmfulleps{\alpha}{\parat}{\parau}{\mathsf{v}}{\mathsf{p}}&:=
    \Psiz(\sfv)+\frG_\eps(\sft,\sfu;\alpha,\sfv)-\alpha \power \sft\sfu
    \qquad \text{with} 
    \\
    \frG_\eps(\sft,\sfu;\alpha,\sfv)&:=
\begin{cases}
  \frac\alpha\eps\Phi(\frac \eps\alpha\mathsf v)
 + \frac\alpha \eps F^*(\fre_t(u))
&\text{for }\alpha >0,\\
\infty&\text{for }\alpha =0,
\end{cases}
\end{aligned}
\end{equation}
where we combined \eqref{eq:3} for $\Psi_\eps^*$, yielding
\eqref{eq:7} for $\mathfrak{f}_t$, and the monotonicity of $F^*$ to
find
\begin{displaymath}
  \inf_{\xi\in -\partial\ene tu}\Psi_\eps^*(\xi)=
  \inf_{\genfrac{}{}{0pt}{1}{\xi\in -\partial\ene tu}{z\in
    K^*}}\frac 1\eps F^*(\|\xi-z\|_*)=\frac 1\eps F^*(\fre_t(u)).
\end{displaymath}
Then, the energy identity \eqref{eq:52bis} yields
for every $0\le s_1<s_2\le \mathsf S_\eps$
\begin{equation}
\label{resc-enid-eps}
\begin{aligned}
  \int_{s_1}^{s_2} \vvmfulleps{\dot{{\mathsf t}}_\eps(s)}{\parat_\eps(s)}{\parau_\eps(s)}{\dot{{\mathsf u}}_\eps(s)}{{\mathsf p}_\eps(s)}
 \,\mathrm{d} s
 &  +\ene {{\mathsf t}_\eps(s_2)}{{\mathsf u}_\eps(s_2)}
   =  \ene {{\mathsf t}_\eps(s_1)}{{\mathsf u}_\eps(s_1)} ,
 \end{aligned}
\end{equation}
and, on account of our choice \eqref{energy-dissipation-arlength} of
the reparameterization, we have the \emph{normalization
condition}
\begin{equation}
\label{resc-par-1}
\dot{{\mathsf t}}_\eps(s) +
\vvm{{\mathsf
t}_\eps(s)}{\parau_\eps(s)}{\dot{{\mathsf
u}}_\eps(s)}
  \equiv 1\quad
  \text{for a.a.}\, s \in (0,{\mathsf S}_\eps)\,.
\end{equation}

From \eqref{resc-enid-eps} it is possible to deduce a priori estimates
on the family $(\parat_\eps,\parau_\eps)_{\eps}$, thus proving that,
up to a subsequence, the functions $(\parat_\eps,\parau_\eps)$
converge in a suitable sense to a pair $(\parat,\parau): [0,S] \to
[0,T]\times \V$ (see Thm.\ \ref{thm-van-param} for a precise
statement). \normalcolor In view of the forthcoming lower
semicontinuity Proposition \ref{le:compactness}, we expect that taking
the limit $\eps \to 0$ in \eqref{resc-enid-eps} leads to the energy
estimate
\begin{equation}
  \label{eq:16}
  \int_{s_1}^{s_2}
\vvmfull{\dot{\parat}(s)}{\parat(s)}{\parau(s)}{\dot{\parau}(s)}{\mathsf{p}(s)}\,
\dd s 
  +\ene{{\mathsf t}(s_2)}{{\mathsf u}(s_2)}  \leq
  \ene{{\mathsf t}(s_1)}{{\mathsf u}(s_1)}\quad
  \text{for all } {0}\le s_1\le s_2\le \mathsf{S}.
\end{equation}
The functional ${\vvmfullname} :  [0,T]\times   \domainenergy  \times [0,+\infty)
\times \V\to [0,+\infty]$ is defined by
\begin{equation}
\label{vvmfullname-def}
\begin{aligned}
 \vvmfull{\alpha}{\parat}{\parau}{\mathsf{v}}{\mathsf{p}}&:= 
 \Diss{\Bo}{}(\mathsf{v})+\vvmfullV{\alpha}{\parat}{\parau}{\mathsf{v}}{\mathsf{p}} 
 - \alpha\power \parat\parau \quad \text{with }\\
 \vvmfullV{\alpha}{\parat}{\parau}{\mathsf{v}}{\mathsf{p}}&:=
 \frk_\parat(\parau)\alpha + \vvmV{\parat}{\parau}{\mathsf{v}}
=\left\{
  \begin{array}{ll}
  \frk_\parat(\parau) &
      \text{if }\alpha>0 ,\\
      \vvmV{\parat}{\parau}{\mathsf{v}}&\text{if }\alpha=0.
  \end{array}
 \right.
\end{aligned}
\end{equation}
Here we have adopted the convention $0\cdot
(+\infty)=0$, and $\frk$ is the indicator function
\begin{equation}
  \label{eq:34}
  \frk_\parat(\parau):=\inf\limits_{\xi\in -\frsub
      \ene\parat\parau}\mathrm{I}_{K^*}(\xi)=
    \mathrm I_{\{0\}}(\fre_t(u))=
    \begin{cases}
      0&\text{if } K^* +  \partial\ene\parat\parau \ni 0, \\
      +\infty&\text{otherwise}.
    \end{cases}
\end{equation}
Hence, it would be natural to take \eqref{eq:16} as definition of
parameterized solution. However, as already mentioned, limit curves
have to be expected in $\AC([0,\mathsf S];V,\Psiz)$,  i.e.\ they
might lose  the differentiability property with respect to
time. Thus, we need to develop a more refined definition.

\subsubsection*{\bfseries Admissible parameterized curves and solutions}
In order to properly formulate \eqref{eq:16} we need to resort to the
metric $\Psiz$-derivative  introduced in the beginning of 
Section \ref{s:appA}.  Based on that definition, we first introduce a
suitable class of \emph{parameterized} curves.

\begin{definition}[Admissible parameterized curves]
\label{def:3.5}
We call a pair $(\parat,\parau): [\mathsf{a},\mathsf{b}]\to
[0,T]\times \V$ an \emph{admissible parameterized curve} 
\begin{enumerate}
 \item  if $\parat$  is nondecreasing and absolutely continuous, $\parau \in
    \AC([\mathsf{a},\mathsf{b}];D_E,\Psiz)$ for some $E>0$,
 \item if $\sfu$ is locally $\Vanach$-absolutely continuous in the
      open set 
   \begin{equation}
      \label{eq:RIF2:6}
      G:=\big\{\:s\in [\mathsf{a},\mathsf{b}]\: :\:
      \fre_{\sft(s)}(\sfu(s))>0 \:\big\} \ = \
      \big\{\: s\in [\mathsf{a},\mathsf{b}] \: :\:
       K^* + \frsub \ene
      {\parat(s)}{\parau(s)} \not\ni 0  \: \big\},
   \end{equation}
   and $\parat$ is constant in each connected component of $G$
   (in particular $\parau$ is differentiable $\Leb 1$-a.e.\ in $G$),
 \item and if we have the estimate
   \begin{equation}
      \label{eq:RIF:4}
   \begin{aligned}
    \int_{\mathsf{a}}^{\mathsf{b}} \scalardens\Psiz{\parau}(s) \,\dd s +
    \int_G \vvmV{\parat(s)}{\parau(s)}{\dot{\parau}(s)}\, \dd s<\infty.
   \end{aligned}
   \end{equation}
\end{enumerate}
For every admissible parameterized curve and all $s\in [\sfa,\sfb]$ we set
\begin{equation}
     \label{eq:50}
 \begin{aligned}
       \frG[\sft,\sfu;\dot\sft,\dot\sfu](s):={}&
       \frk_{\sft(s)}(\sfu(s)) \dot \sft(s)+
       \fre_{\sft(s)}(\sfu(s))\|\dot u(s)\|,\\
       \frF[\sft,\sfu;\sft',\sfu'](s):={}&
       \scalardens\Psiz\sfu(s)+
       \frG[\sft,\sfu;\dot\sft,\dot\sfu](s)
       -\power{\sft(s)}{\sfu(s)}\dot \sft(s),
 \end{aligned}
\end{equation}
where, with a slight abuse of notation, we adopted the convention to set
\begin{equation}
      \label{eq:51}
      \vvmV{\parat(s)}{\parau(s)}{\dot{\parau}(s)}
        \equiv 0\quad\text{if }s\not\in G.
\end{equation}
By $\adm{\mathsf{a}}{\mathsf{b}}{0}{T}{\V}$ we denote the collection
of all the (admissible) parameterized curves.  Furthermore, we call
$({\mathsf t},{\mathsf u})$
\\[0.3em]
\hspace*{3em} \textbullet\ \emph{nondegenerate}, if $\dot{\mathsf
  t}(s)+\scalardens\Psiz \parau(s)>0$ for a.a.  $s\in
(\mathsf{a},\mathsf{b})$;
\\[0.2em]
\hspace*{3em} \textbullet\ \emph{surjective}, if ${\mathsf
  t}(\mathsf{a})=0,{\mathsf t}(\mathsf{b})=T$;
\\[0.2em]
\hspace*{3em} \textbullet\ $\mathsf m$-\emph{normalized} for a
positive $\mathsf m\in L^\infty(0,\sfS)$ (typically $\mathsf m\equiv
1$), if $({\mathsf t},{\mathsf u})$ fulfills
 \begin{equation}
   \label{e:norma-cond}
   \dot\sft( s)+ \scalardens
   \Psiz\parau(s)+\fre_{\sft(s)}(\sfu(s))\|\dot u(s)\|
  = \mathsf m(s)\quad
  \text{for a.a. } s \in (\mathsf{a},\mathsf{b})\,.
\end{equation}
Two (admissible) parameterized curves $s\in
[\mathsf{a},\mathsf{b}]\mapsto (\sft(s),\sfu(s))$ and $\sigma\in
[\mathsf{c},\mathsf{d}]\mapsto
(\hat{\sft}(\sigma),\hat{\sfu}(\sigma))$ are equivalent if there
exists an \emph{absolutely continuous and surjective change of
  variable} $\sfs:\sigma\in [\mathsf{c},\mathsf{d}]\mapsto
\sfs(\sigma)\in [\mathsf{a},\mathsf{b}]$ such that
\[
    \hat{\sft}(\sigma)=\sft({\sfs(\sigma)}),\quad
    \hat{\sfu}(\sigma)=u({\sfs(\sigma)})\quad\text{for all $\sigma\in
      (\mathsf{c},\mathsf{d})$},\qquad 
    \dot{\sfs}(\sigma)>0\quad \forae\, \sigma \in (\mathsf{c},\mathsf{d}).
\]
\end{definition}
\noindent
The above concept is nothing but the parameterized counterpart to the
notion of admissible curve from Definition \ref{def:admissible}: a
crucial feature of parameterized curves is their $\Leb1$-a.e.\
differentiability on the set $G$.

In the next definition of parameterized solutions we will impose (a
suitable version of) \eqref{eq:16} as an equality. Indeed, the upper
energy estimate has been motivated throughout
\eqref{resc-enid-eps}--\eqref{eq:16} via lower semicontinuity
arguments. The lower energy estimate is a consequence of the chain
rule of the forthcoming Theorem \ref{th:3.8}.

\begin{definition}[Parameterized  solutions]
\label{def:2.1-parasol}
A {\em parameterized solution of the \ris\ $\RIS$} is a surjective and
nondegenerate curve $({\mathsf t},{\mathsf u}) \in
\adm{\mathsf{a}}{\mathsf{b}}{0}{T}{\V}$ (cf.\ Def.\ \ref{def:3.5})
satisfying
\begin{equation}
  \label{eq:16-param-tech}
  \begin{aligned}
    \int_{s_1}^{s_2} \frF[\sft,\sfu;\sft',\sfu'] \,\dd s  
     +\ene{{\mathsf t}(s_2)}{{\mathsf u}(s_2)} =
  \ene{{\mathsf t}(s_1)}{{\mathsf u}(s_1)}
  \qquad
  \text{for all }\mathsf{a}\le s_1\le s_2\le \mathsf{b}.
  \end{aligned}
\end{equation}
\end{definition}

  Since $\frF$ defined in \eqref{eq:50} contains the term
 $\frk_\parat(\parau)\dot\parat$, the equation
 \eqref{eq:16-param-tech} encompasses the local stability condition
 \eqref{eq:65bis}. 
 It follows from \eqref{eq:RIF:4} and the power-control condition
\eqref{hyp:en3} that, along a parameterized solution, the map $s
\mapsto \ene{\sft(s)}{\sfu(s)}$ is absolutely continuous on
$[\mathsf{a}, \mathsf{b}]$.
 
\subsubsection*{\bfseries The main existence and convergence result}
The main result of this section states that any limit curve
of the rescaled family $({\mathsf t}_{\eps},{\mathsf
u}_{\eps})$ of solutions to \eqref{viscous-dne} is a \emph{parameterized
solution}.
\begin{theorem}
\label{thm-van-param} Assume \eqref{e:2.1}--\eqref{def-psiV}
 and \eqref{Ezero}--\eqref{hyp:en-subdif}. Let
$(u_\eps)_\eps\subset \AC ([0,T];\V)$ be a family of solutions to
the doubly nonlinear equation
\eqref{viscous-dne}, such that
\begin{equation}
\label{conve-initi-databis} u_\eps (0) \to u_0 \quad \text{in $\V$
 and}\quad \ene 0{u_\eps (0)}\to \ene 0{u_0} \quad \text{as
$\eps \downarrow 0$}
\end{equation}
as in \eqref{conve-initi-data}. Choose non-decreasing surjective
time-rescalings $\parat_\eps : [0,\mathsf{S}] \to [0,T]$, define
 $\parau_\eps: [0,\mathsf{S}] \to \V$ by
$\parau_\eps (s):= u_\eps (\parat_\eps (s))$ for all $s \in
[0,\mathsf{S}]$ and suppose that 
\begin{equation}
\label{e:uniform-integrability} \exists\, \mathsf{m} \in L^\infty
(0,\mathsf{S}) \ : \ \ \mathsf{m}_\eps:= \dot{\parat}_\eps +
\frf_{\sft_\eps}(\parau_\eps,\dot\parau_\eps)
\weaksto \mathsf{m} \quad \text{in
$L^\infty(0,\mathsf{S})$}  \text{ and }   \mathsf{m}>0\ \aein\,
(0,\mathsf{S})\,. 
\end{equation}
Then, there exist a subsequence $\eps_k \down
0$ and a 
\emph{parameterized solution} $(\parat,\parau)\in \AC ([0,\mathsf{S}];
[0,T] \times \V)$ to the \ris\ $\RIS$, such that the following
convergences hold as $k \to \infty$:
\begin{align}
  \label{e:conv1-para} (\parat_{\eps_k},\parau_{\eps_k})   &\longrightarrow
(\parat,\parau)\ \text{ in }{\mathrm C}^0([0,\mathsf{S}];[0,T]\times\V),
\\
 \label{e:conv2-para}
 \ene{\parat_{\eps_k}(s)}{\parau_{\eps_k}(s)}
 &\longrightarrow \ene{\parat(s)}{\parau(s)}\ \text{ uniformly in }
   [0,\mathsf{S}],
\\
\label{e:conv3-para}
\int_{s_1}^{s_2} \Big(\Psiz(\dot \parau_{\eps_k})+
 \frG_{\eps_k}(\sft_{\eps_k},\sfu_{\eps_k};\dot\sft_{\eps_k},\dot\sfu_{\eps_k})\Big)
 \;\!\dd s &\longrightarrow
 \int_{s_1}^{s_2}
 \Big(\scalardens\Psiz\parau+\frG[{\parat},{\parau};{\dot{\parat}},{\dot{\parau}}]
 \Big) \;\!\dd s
\end{align}
for all $0\le s_1\le s_2\le \mathsf{S}$.  Moreover, $(\sft,\sfu)$ is
$\mathsf m$-normalized.
\end{theorem}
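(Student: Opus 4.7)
The strategy is to extract, via Arzelà--Ascoli, a uniformly convergent subsequence of the rescaled pairs $(\sft_\eps,\sfu_\eps)$; pass to the liminf in the rescaled energy identity \eqref{resc-enid-eps} by means of the lower-semicontinuity machinery of Section \ref{ss:last1}, obtaining an upper energy estimate; and then upgrade this inequality to the equality \eqref{eq:16-param-tech} via the parameterized chain-rule inequality of Theorem \ref{th:3.8}. The $\mathsf m$-normalization is deduced by matching the weak$^*$ limit of $\mathsf m_\eps=\dot\sft_\eps+\frf_{\sft_\eps}(\sfu_\eps;\dot\sfu_\eps)$ with the sum of the limits of its nonnegative summands.

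\emph{Compactness.} The $L^\infty$-bound on $\mathsf m_\eps$ from \eqref{e:uniform-integrability}, together with $\dot\sft_\eps+\Psiz(\dot\sfu_\eps)\le\mathsf m_\eps$, makes $(\sft_\eps)$ equi-Lipschitz in $[0,T]$ and $(\sfu_\eps)$ equi-$\Psiz$-absolutely continuous. The rescaled energy identity \eqref{resc-enid-eps}, the power control \eqref{hyp:en3}, and Gronwall's lemma yield a uniform bound $\cg{\sfu_\eps(s)}\le E$, so $\sfu_\eps(s)$ remains in the $V$-compact sublevel $D_E$. The concave modulus \eqref{eq:120} converts $\Psiz$-equicontinuity on $D_E$ into $V$-equicontinuity, and Arzelà--Ascoli furnishes a subsequence $\eps_k\down 0$ for which $(\sft_{\eps_k},\sfu_{\eps_k})\to(\sft,\sfu)$ uniformly in $V$, with $\sft$ nondecreasing absolutely continuous and $\sfu\in\AC([0,\sfS];D_E,\Psiz)$. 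Convergence \eqref{e:conv2-para} then follows by sandwiching $\ene{\sft_{\eps_k}}{\sfu_{\eps_k}}$ between its lower-semicontinuous liminf and the upper bound provided by \eqref{eq:52bis} and \eqref{hyp:en3}.

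\emph{Energy identity.} By the lower-semicontinuity Proposition \ref{le:compactness},
\begin{equation*}
\liminf_{k\to\infty}\int_{s_1}^{s_2}\vvmfullepsk{\dot\sft_{\eps_k}}{\sft_{\eps_k}}{\sfu_{\eps_k}}{\dot\sfu_{\eps_k}}{}\,\dd s \;\ge\;\int_{s_1}^{s_2}\frF[\sft,\sfu;\dot\sft,\dot\sfu]\,\dd s.
\end{equation*}
Passing to $\liminf_k$ in \eqref{resc-enid-eps} and using \eqref{e:conv2-para} yields the upper energy estimate
\begin{equation*}
\int_{s_1}^{s_2}\frF[\sft,\sfu;\dot\sft,\dot\sfu]\,\dd s+\ene{\sft(s_2)}{\sfu(s_2)}\;\le\;\ene{\sft(s_1)}{\sfu(s_1)}.
\end{equation*}
Finiteness of the left-hand integral gives \eqref{eq:RIF:4}, while the presence of the indicator $\frk_{\sft(s)}(\sfu(s))\dot\sft(s)$ in $\frG$ forces $\dot\sft\equiv 0$ $\Leb 1$-a.e.\ on $G$, i.e.\ $\sft$ is constant on each connected component of $G$; local $V$-absolute continuity of $\sfu$ in $G$ follows since there $\fre_{\sft}(\sfu)$ is locally bounded away from $0$ by the continuity of $(\sft,\sfu)$ and lsc of $\fre$, so that \eqref{eq:RIF:4} locally bounds $\|\dot\sfu\|$ itself. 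Hence $(\sft,\sfu)\in\adm 0\sfS 0T V$. Theorem \ref{th:3.8} applied to this admissible curve provides the reverse chain-rule inequality, promoting the upper estimate to the equality \eqref{eq:16-param-tech}. The equality also forces the liminf above to be a full limit, yielding \eqref{e:conv3-para}.

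\emph{Normalization and main obstacle.} Decompose $\mathsf m_{\eps_k}=\dot\sft_{\eps_k}+\Psiz(\dot\sfu_{\eps_k})+\fre_{\sft_{\eps_k}}(\sfu_{\eps_k})\|\dot\sfu_{\eps_k}\|$ as a sum of nonnegative terms; integrating \eqref{e:conv3-para} on arbitrary subintervals of $(0,\sfS)$ and using $\mathsf m_{\eps_k}\weaksto\mathsf m$ in $L^\infty$ identifies the weak$^*$ limit as $\mathsf m=\dot\sft+\scalardens\Psiz\sfu+\fre_{\sft}(\sfu)\|\dot\sfu\|$ a.e., which is exactly \eqref{e:norma-cond}; nondegeneracy follows from $\mathsf m>0$ a.e. The main technical obstacle is the lower-semicontinuity Proposition \ref{le:compactness} in the singular regime $\eps\down 0$: the integrand $\Psiz_\eps+\Psiz_\eps^*\circ(-\partial\cE)$ depends singularly on $\eps$ through \eqref{eq:3} and couples $\sfu$ to the Fréchet subdifferential $\partial\cE$ via $\fre$. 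One must separate the viscous regime, where $\dot\sft_\eps\to 0$ and the limit contribution is $\fre_{\sft}(\sfu)\|\dot\sfu\|$, from the rate-independent regime, where $\fre_\sft(\sfu)=0$ and the contribution reduces to $\scalardens\Psiz\sfu$ plus the indicator $\frk_\sft(\sfu)\dot\sft$, combining the strong-weak closedness \eqref{eq:45} with a time-rescaling or Young-measure liminf argument.
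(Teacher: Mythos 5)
Your proof follows essentially the same route as the paper's: compactness via the equi-Lipschitz/equi-$\Psiz$-absolutely-continuous bounds and the compact sublevels (this is Proposition \ref{le:compactness}, which the paper invokes directly), lower semicontinuity to pass to the liminf in \eqref{resc-enid-eps}, the parameterized chain-rule of Theorem \ref{th:3.8} (equivalently Corollary \ref{PROP:charact-param}) to upgrade the one-sided energy estimate to the identity, and identification of the weak$^*$ limit of $\mathsf m_{\eps}$ to get the $\mathsf m$-normalization.

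One point to tighten: in your compactness paragraph you claim \eqref{e:conv2-para} follows by ``sandwiching $\ene{\sft_{\eps_k}}{\sfu_{\eps_k}}$ between its lower-semicontinuous liminf and the upper bound provided by \eqref{eq:52bis} and \eqref{hyp:en3}.'' As stated this does not give pointwise convergence of the energies; the lsc liminf plus a uniform upper bound $E$ only yields $\ene{\sft(s)}{\sfu(s)}\le\liminf_k\ene{\sft_{\eps_k}(s)}{\sfu_{\eps_k}(s)}\le E$. The correct mechanism (which the paper uses and which you do invoke later for \eqref{e:conv3-para}) is the squeeze at the level of the energy-dissipation \emph{identity}: once \eqref{eq:16-param-tech} is established for the limit, split the liminf estimates on $[0,s]$ and $[s,\sfS]$, use that the sum of the liminfs equals the limit of the sum, and apply the elementary property \eqref{eq:119} to force each piece to converge. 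So \eqref{e:conv2-para} should be deduced \emph{after} the energy balance, not in the compactness step.

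Your description of the normalization argument and the final remark on the singular liminf (Lemma \ref{le:jointlsc} handles the product/indicator structure as $\eps_n\down0$) both match the paper.
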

\noindent
We have already seen that the choice
\eqref{energy-dissipation-arlength}--\eqref{e:resc2} provides the
normalization condition \eqref{resc-par-1}, and thus (up to a
multiplication factor converging to $1$) the curves
$(\sft_\eps,\sfu_\eps)$ satisfy \eqref{e:uniform-integrability} with
$\mathsf m\equiv 1$.

The proof of this result is postponed to the end of
\S\,\ref{ss:8-vanvisc}.

\subsubsection*{\bfseries Chain rule and further properties of parameterized solutions}
We present now
a \emph{parame\-trized}  version of the chain rule 
\eqref{eq:45tris} (cf.\ also \eqref{ch-rule-ineq}), satisfied by admissible parameterized curves.
In fact, \eqref{eq:RIF:5bis} is
a \emph{metric-like} chain-rule inequality,
since it involves the $\Psi$-metric derivative of the curve.
A
key ingredient of its proof  is
 the \emph{uniform subdifferentiability} condition \eqref{hyp:en-subdif}.
\begin{theorem}[Chain-rule inequality for parameterized curves]
  \label{th:3.8}
  If $(\parat,\parau) \in \adm{\mathsf{a}}{\mathsf{b}}{0}{T}{\V}$ then
  the map $s\mapsto \ene {\parat(s)}{\parau(s)}$
  is \emph{absolutely continuous} on $[\mathsf{a},\mathsf{b}]$
  and the following \emph{chain-rule inequality} holds $\forae\, s \in
  (\mathsf{a},\mathsf{b})$
  (recalling \eqref{eq:51})
    \begin{align}
      \label{eq:RIF:5bis}
      \Big|\frac \rmd{\rmd s}\EE_{\parat(s)}(\parau(s))-\power
      {\parat(s)}{\parau(s)}\dot{\parat}(s)
      \Big|
      \le
      \scalardens \Psiz{\parau}(s) +
    \vvmV {\parat(s)}{\parau(s)}{\dot{\parau}(s)}.
    \end{align}
    Moreover, if $\sfu$ is a.e.\ differentiable, then
    $\forae\, s \in  (\mathsf{a}, \mathsf{b}) $ we have
    \begin{equation}
     \label{eq:4}
     \begin{aligned}
     \frac \rmd{\rmd s}\EE_{\sft(s)}(\parau(s))-\power
     {\parat(s)}{\parau(s)}\dot{\parat}(s)= 
     -\la \xi,\dot{\parau}(s)\ra \ge  -\vvm
     {\parat(s)}{\parau(s)}{\dot{\parau}(s)}
     \text{ for all } \xi\in -\partial\ene {\sft(s)}{\sfu(s)}.
  \end{aligned}
  \end{equation}
\end{theorem}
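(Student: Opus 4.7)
The plan is to prove \eqref{eq:RIF:5bis} by decomposing $[\sfa,\sfb]$ into the open ``viscous'' set $G$ from \eqref{eq:RIF2:6} and its closed complement $C:=[\sfa,\sfb]\setminus G$ (the stable set, on which $K^*+\partial\cE_{\sft(s)}(\sfu(s))\ni 0$, so that $\fre_{\sft(s)}(\sfu(s))=0$), and to analyze $e(s):=\cE_{\sft(s)}(\sfu(s))$ with different tools on each piece: the standard chain rule of Theorem \ref{thm-from-mrs12} on the connected components of $G$, and the uniform subdifferentiability \eqref{hyp:en-subdif} combined with a local-stability selection on $C$.

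On each connected component $(a_j,b_j)$ of $G$ the parameter $\sft$ is constant (say $\equiv t_j$) and $\sfu\in\AC_{\mathrm{loc}}((a_j,b_j);V)$ with values in $D_E$. A measurable selection $\xi(s)\in-\partial\cE_{t_j}(\sfu(s))$ together with $z(s)\in K^*$ realizing $\|\xi(s)-z(s)\|_*=\fre_{t_j}(\sfu(s))$ can be extracted from the strong--weak$^*$ closedness of $\partial\cE$ and the boundedness of $K^*$; the integrability of $\Psi_\eps(\dot\sfu)$ and $\Psi_\eps^*(\xi)$ on each compact subinterval (inherited from \eqref{eq:RIF:4}) then allows Theorem \ref{thm-from-mrs12} to be applied to the stationary energy $\cE_{t_j}$, yielding $e\in\AC_{\mathrm{loc}}(a_j,b_j)$ and $\frac{\dd e}{\dd s}=-\la\xi(s),\dot\sfu(s)\ra$ a.e. Writing $\xi=(\xi-z)+z$ and exploiting $|\la z,\dot\sfu\ra|\leq\max\{\Psi(\dot\sfu),\Psi(-\dot\sfu)\}$ for $z\in K^*$ yields the pointwise bound $\big|\tfrac{\dd e}{\dd s}\big|\leq\fre_{t_j}(\sfu)\|\dot\sfu\|+\scalardens{\Psi}{\sfu}$; combined with $\dot\sft\equiv 0$ on $G$, this delivers \eqref{eq:RIF:5bis} (and, when $\sfu$ is $V$-differentiable, the identity \eqref{eq:4}).

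On $C$, for every $s\in C$ local stability furnishes $\zeta(s)\in\partial\cE_{\sft(s)}(\sfu(s))\cap(-K^*)$, so that $\la\zeta(s),w\ra\geq-\Psi(w)$ for all $w\in V$. Applying \eqref{hyp:en-subdif} at $\sfu(s)$ with test point $v=\sfu(s')$ and adding the contribution $\cE_{\sft(s')}(\sfu(s'))-\cE_{\sft(s)}(\sfu(s'))=\int_{\sft(s)}^{\sft(s')}P_r(\sfu(s'))\,\dd r$ yields, for $s<s'$,
\[
e(s')-e(s)\geq-\Psi(\sfu(s')-\sfu(s))-\omega_{\sft(s)}^E(\sfu(s),\sfu(s'))\,\Dnorm{\sfu(s')-\sfu(s)}+\int_{\sft(s)}^{\sft(s')}P_r(\sfu(s'))\,\dd r.
\]
Dividing by $s'-s$ and letting $s'\downarrow s$, using the $\Psi$-absolute continuity of $\sfu$, the vanishing of $\omega^E_{\sft(s)}(\sfu(s),\cdot)$ at $\sfu(s)$, and the upper semicontinuity of $P$ from \eqref{hyp:en3}, gives the one-sided lower bound $\liminf_{s'\downarrow s}(e(s')-e(s))/(s'-s)\geq-\scalardens{\Psi}{\sfu}(s)+P_{\sft(s)}(\sfu(s))\dot\sft(s)$; the matching upper bound is obtained by exchanging the roles of $s$ and $s'$ in the argument.

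The absolute continuity of $e$ on $[\sfa,\sfb]$ is then deduced by assembling the $G$-pieces (on which the chain rule already yields $\AC$) with increments on the $C$-pieces estimated via the stability selection: summing over finer and finer partitions and dominating by the $L^1$-integrable majorant $\scalardens{\Psi}{\sfu}+\fre_{\sft}(\sfu)\|\dot\sfu\|\mathbf{1}_G+|P_{\sft}(\sfu)|\dot\sft$ guaranteed by \eqref{eq:RIF:4} and \eqref{hyp:en3}. The main obstacle is twofold: first, the potentially intricate topology of $C$ (which need not be a disjoint union of intervals) forces us to work at Lebesgue points of the $\Psi$-metric derivative; second, the possible asymmetry of $\Psi$ demands simultaneous treatment of forward and backward increments in both one-sided limits. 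Both are resolved by invoking the modulus $\Omega_{D_E}$ from \eqref{eq:121} to absorb the error term $\omega^E\cdot\Dnorm{\cdot}$ as $o(|s'-s|)$ and by dominated convergence on the assembled majorant, completing the proof of \eqref{eq:RIF:5bis} and of \eqref{eq:4}.
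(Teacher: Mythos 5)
Your skeletal strategy matches the paper's: split $[\sfa,\sfb]$ into the open ``viscous'' set $G$ and its complement, run the chain rule of Theorem~\ref{thm-from-mrs12} on the components of $G$ (where $\sft$ is constant and $\sfu$ is locally $V$-absolutely continuous), and use the local-stability selection $\zeta\in\partial\cE_{\sft}(\sfu)\cap(-K^*)$ together with the uniform subdifferentiability \eqref{hyp:en-subdif} on the stable set. Two points need more care, one minor and one essential.

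The minor point: on a component of $G$ the hypotheses of Theorem~\ref{thm-from-mrs12} in the form stated (namely $\int\Psi_\eps^*(\xi)\,\dd s<\infty$) are not directly delivered by admissibility; what \eqref{eq:RIF:4} gives is $\int\fre_{\sft}(\sfu)\|\dot\sfu\|<\infty$, and $\Psi_\eps^*(\xi)=\eps^{-1}F^*(\fre_{\sft}(\sfu))$ need not be integrable. The paper sidesteps this by bounding $\|\partial^\circ\cE_{\sft}\|_*\le\fre_{\sft}(\sfu)+\sfK$ ($\sfK$ the radius of $K^*$) and invoking a chain rule with the weaker hypothesis $\int\|\partial^\circ\cE_{\sft}(\sfu)\|_*\|\dot\sfu\|<\infty$ (the variant from \cite[Thm.~1.2.5]{AGS08}, \cite[Prop.~2.4]{MRS-dne}). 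Your argument should do the same rather than cite \eqref{eq:45tris} verbatim.

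The essential gap concerns the upper half of \eqref{eq:RIF:5bis} on the stable set. You establish the lower bound on $\liminf_{s'\downarrow s}\bigl(e(s')-e(s)\bigr)/(s'-s)$ by testing \eqref{hyp:en-subdif} at the \emph{fixed} stable point $s$ with $\zeta(s)\in\partial\cE_{\sft(s)}(\sfu(s))\cap(-K^*)$, and then say ``the matching upper bound is obtained by exchanging the roles of $s$ and $s'$.'' Exchanging the roles means placing the subdifferential selection at the \emph{varying} point $s'$, which requires $s'$ itself to be locally stable; this fails whenever $s'\in G$, and the set $G$ may accumulate at $s$ from both sides (think of an open set whose components cluster at $s$). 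The paper's proof addresses precisely this obstacle: it fixes attention on the full-measure subset $s\in[\sfa,\sfb]\setminus\bigcup_k\overline{G_k}$ (the component endpoints $\bigcup_k\overline{G_k}\setminus G$ being countable, hence $\Leb 1$-null), observes that for such $s$ there exist $r=s\pm h\notin G$ with $h\downarrow 0$ on \emph{both} sides, and performs the one-sided difference quotients with the selection $\xi(r)$ taken at the varying stable point $r$; the u.s.c.\ of $\omega^E$ then kills the error term as $r\to s$. Without this observation, your ``exchange'' has no selection $\zeta(s')$ to exchange to, so the upper bound is not established. Fixing this requires either importing the paper's $\bigcup_k\overline{G_k}$ argument, or — if you want to keep the subdifferential anchored at $s$ — supplying a genuine two-sided argument with $s'\uparrow s$ and checking carefully what the $\Psi$-metric derivative \eqref{eq:13} gives for left difference quotients (this is where the possible asymmetry of $\Psi$ must be accounted for, a point your closing remark gestures at but does not resolve).
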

\noindent We postpone the proof to Section \ref{ss:8-chain}.  As a
straightforward consequence of the chain-rule inequality
\eqref{eq:RIF:5bis}, we can characterize parameterized solutions by a
simpler \emph{one-sided} inequality on the interval $(\mathsf{a},
\mathsf{b})$. The result below corresponds to Corollary
\ref{prop:BV-charact} for $\BV$ solutions.

\begin{corollary}
\label{PROP:charact-param}
For every surjective and nondegenerate
admissible curve in $(\sft,\sfu)\in \adm{\mathsf{a}}{\mathsf{b}}{0}{T}{\V}$
the following three conditions are equivalent:
  \begin{align}
    & \text{\emph{ i)}} && \notag\text{$(\sft,\sfu)$ is a
      parameterized solution of the \ris\ $\RIS$};\\ 
      \label{eq:16-param-tech-onesided}
     & \text{\emph{ ii)}} &&     \int_{\mathsf{a}}^{\mathsf{b}}
        \frF[\sft,\sfu;\sft',\sfu']\,\dd s
        +\ene{{\mathsf t}(\mathsf{b})}{{\mathsf u}(\mathsf{b})} \leq
        \ene{{\mathsf t}(\mathsf{a})}{{\mathsf
            u}(\mathsf{a})}; \\
      \label{eq:RIF:5bis-new}
     & \text{\emph{ iii)}} &&  \frac \rmd{\rmd s}\EE_{\parat(s)}(\parau(s))-\power
      {\parat(s)}{\parau(s)}\dot{\parat}(s) = - \scalardens\Psiz
      {\parau}(s) - \vvmV {\parat(s)}{\parau(s)}{\dot{\parau}(s)}
      \ \forae\ s \in (\mathsf{a},\mathsf{b}).
%
  \end{align}
\end{corollary}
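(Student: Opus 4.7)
The plan is to exploit the chain-rule inequality \eqref{eq:RIF:5bis} of Theorem~\ref{th:3.8}, which says that for every admissible parameterized curve $(\sft,\sfu)\in \adm{\sfa}{\sfb}{0}{T}{V}$ the map $s\mapsto \ene{\sft(s)}{\sfu(s)}$ is absolutely continuous and, for a.a.\ $s\in(\sfa,\sfb)$,
\begin{equation*}
  \Big|\tfrac{\mathrm d}{\mathrm ds}\ene{\sft(s)}{\sfu(s)}-\power{\sft(s)}{\sfu(s)}\dot\sft(s)\Big|\le \scalardens\Psiz\sfu(s)+\vvmV{\sft(s)}{\sfu(s)}{\dot\sfu(s)}.
\end{equation*}
Recalling the definition \eqref{eq:50} of $\frF$ and the convention that $\frk_{\sft(s)}(\sfu(s))\dot\sft(s)=0$ (since, by point (2) of Definition~\ref{def:3.5}, $\dot\sft=0$ on the open set $G$ where $\frk_\sft(\sfu)=+\infty$, while $\frk_\sft(\sfu)=0$ off $G$), we may rewrite $\frG[\sft,\sfu;\dot\sft,\dot\sfu]=\vvmV{\sft}{\sfu}{\dot\sfu}$ with the convention \eqref{eq:51}. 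Hence the chain-rule inequality is equivalent to the pointwise a.e.\ lower bound
\begin{equation}\label{eq:plan:pointwise}
  \tfrac{\mathrm d}{\mathrm ds}\ene{\sft(s)}{\sfu(s)}+\frF[\sft,\sfu;\sft',\sfu'](s)\ge 0\quad\text{for a.a. }s\in(\sfa,\sfb).
\end{equation}
All three implications will follow from this single observation.

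The implication \emph{(i)} $\Rightarrow$ \emph{(ii)} is immediate by choosing $s_1=\sfa$, $s_2=\sfb$ in \eqref{eq:16-param-tech}. For \emph{(iii)} $\Rightarrow$ \emph{(i)}, the equality in \eqref{eq:RIF:5bis-new} reads $\tfrac{\mathrm d}{\mathrm ds}\ene{\sft(s)}{\sfu(s)}+\frF[\sft,\sfu;\sft',\sfu'](s)=0$ a.e.; since Theorem~\ref{th:3.8} guarantees that $s\mapsto\ene{\sft(s)}{\sfu(s)}$ is absolutely continuous, integration on $[s_1,s_2]\subset[\sfa,\sfb]$ yields \eqref{eq:16-param-tech}.

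The main implication is \emph{(ii)} $\Rightarrow$ \emph{(iii)}. Integrating \eqref{eq:plan:pointwise} over $[\sfa,\sfb]$ and using the absolute continuity of the energy gives
\begin{equation*}
\int_\sfa^\sfb \frF[\sft,\sfu;\sft',\sfu']\,\mathrm ds+\ene{\sft(\sfb)}{\sfu(\sfb)}\ge \ene{\sft(\sfa)}{\sfu(\sfa)},
\end{equation*}
which, combined with the opposite inequality \eqref{eq:16-param-tech-onesided}, forces equality. Hence the nonnegative integrable function $\tfrac{\mathrm d}{\mathrm ds}\ene{\sft(s)}{\sfu(s)}+\frF[\sft,\sfu;\sft',\sfu'](s)$ has zero integral on $[\sfa,\sfb]$, so it vanishes almost everywhere, which is precisely \eqref{eq:RIF:5bis-new}.

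The only delicate point I anticipate is the bookkeeping of the $\frG$ term across the partition $[\sfa,\sfb]=G\sqcup([\sfa,\sfb]\setminus G)$: on $[\sfa,\sfb]\setminus G$ the local stability $K^*+\partial\cE\ni 0$ gives $\fre_\sft(\sfu)=\frk_\sft(\sfu)=0$, while on $G$ the admissibility condition $\dot\sft=0$ together with the convention $0\cdot(+\infty)=0$ makes $\frG$ finite and equal to $\fre_{\sft}(\sfu)\|\dot\sfu\|=\vvmV{\sft}{\sfu}{\dot\sfu}$. Once this identification is in place, the proof reduces to the standard ``pointwise $\ge$ plus integrated $\le$ implies equality'' argument outlined above, exactly as in the proof of Corollary~\ref{prop:BV-charact}.
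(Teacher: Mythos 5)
Your proof is correct and takes exactly the route the paper intends: the paper states the corollary as a ``straightforward consequence of the chain-rule inequality \eqref{eq:RIF:5bis}'' and omits the proof, referring to the analogous Corollary~\ref{prop:BV-charact} in the BV setting. You supply the details correctly, using the chain rule to get the pointwise lower bound $\frac{\rmd}{\rmd s}\ene{\sft(s)}{\sfu(s)}+\frF[\sft,\sfu;\sft',\sfu'](s)\ge 0$ a.e., the identification $\frG=\vvmV$ a.e.\ via the convention $0\cdot(+\infty)=0$ on $G$ (where $\dot\sft=0$) and $\frk=0$ off $G$, and then the standard ``nonnegative integrable function with zero integral vanishes a.e.'' argument once (ii) forces equality in the integrated chain-rule bound -- which is the pointwise analogue of the monotonicity argument used for Corollary~\ref{prop:BV-charact}.
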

When $\sfu$ is $\Leb 1$-a.e.~differentiable,
it is also possible to
characterize parameterized solutions in terms of a doubly nonlinear differential inclusion
involving the dissipation potentials $\Psiz$ and  $\Psiv$
(to be compared with the differential characterization
of  $\BV$ solutions in Theorem \ref{prop:diff-charact-bv}).
\begin{proposition}
\label{prop:diff-charact-param}
If $({\mathsf t},{\mathsf u}) $ is a 
$\Leb 1$-a.e.~differentiable parameterized solution of the
 \ris\  $\RIS$, then
there exist measurable functions $\lambda: (\mathsf{a}, \mathsf{b}) \to
[0,+\infty)$ and $\xi:(\sfa,\sfb)\to V^*$ such that
\begin{equation}
\label{e:529}
\xi(s)\in \Big(\partial \Diss{\Bo}{} (\dot{\parau}(s))
+\partial\Diss{\V}{}(\lambda(s)\dot{\parau}(s))\Big)\cap \Big(
 -\frsub
\ene{\parat(s)}{\parau(s)}\Big), \quad \lambda(s)\dot{\parat}(s)=0
\quad \foraa\, s \in (\mathsf{a},\mathsf{b}).
\end{equation}
Conversely, if an absolutely continuous, surjective, nondegenerate and
$\Leb 1$-a.e.~differentiable curve $(\sft,\sfu):[\sfa,\sfb]\to
[0,T]\times D_E$ satisfies \eqref{e:529} for some measurable maps
$\lambda,\xi$ and $s\mapsto \ene{\sft(s)}{\sfu(s)}$ is absolutely
continuous in $[\sfa,\sfb]$, then $(\sft,\sfu)$ is a parameterized
solution to the \ris\ $\RIS$.
\end{proposition}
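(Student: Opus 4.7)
My plan is to prove both directions by a pointwise argument, pivoting on the chain-rule identity \eqref{eq:4} of Theorem \ref{th:3.8} and on the characterization of parameterized solutions in Corollary \ref{PROP:charact-param}(iii). For the forward direction, the $\Leb 1$-a.e.\ differentiability of $\sfu$ converts (iii) into the pointwise equality
\[
\tfrac{\rmd}{\rmd s}\ene{\sft(s)}{\sfu(s)} - \power{\sft(s)}{\sfu(s)}\dot\sft(s) = -\Psi(\dot\sfu(s)) - \fre_{\sft(s)}(\sfu(s))\|\dot\sfu(s)\|\quad\text{for a.e.\ }s.
\]
A measurable selection (Kuratowski--Ryll-Nardzewski, justified by the weak$^*$-closedness of the graph of $\partial\cE$ in \eqref{eq:45} and the separability of $V$) yields measurable maps $\xi(s)\in -\partial\ene{\sft(s)}{\sfu(s)}$ and $\eta(s)\in K^*$ with $\|\xi(s)-\eta(s)\|_* = \fre_{\sft(s)}(\sfu(s))$, the minimum being attained by weak$^*$-compactness of $K^*$. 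Inserting \eqref{eq:4} into the previous identity gives $\langle \xi,\dot\sfu\rangle = \Psi(\dot\sfu) + \fre\|\dot\sfu\|$; setting $\zeta:=\xi-\eta$, the Fenchel--Young inequalities $\langle \eta, \dot\sfu\rangle\le \Psi(\dot\sfu)$ and $\langle \zeta,\dot\sfu\rangle \le \|\zeta\|_*\|\dot\sfu\| = \fre\|\dot\sfu\|$ must both be equalities, so $\eta\in\partial\Psi(\dot\sfu)$ and $\langle \zeta,\dot\sfu\rangle = \|\zeta\|_*\|\dot\sfu\|$.

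I then define $\lambda(s)\ge 0$ measurably by solving $F'(\lambda\|\dot\sfu\|) = \fre$ whenever $\dot\sfu\neq 0$ (possible by the continuity and monotonicity of $F'$ together with the limits $F'(0)=0$ and $F'(r)\uparrow\infty$ from \eqref{def-psiV}), and $\lambda(s)=0$ otherwise; this immediately yields $\zeta\in\partial\Phi(\lambda\dot\sfu)$, since $\|\zeta\|_* = F'(\lambda\|\dot\sfu\|)$ and the duality pairing is saturated. Nondegeneracy of $(\sft,\sfu)$ guarantees $\dot\sfu\neq 0$ a.e.\ on the open set $G=\{\fre>0\}$, so the above definition covers almost every $s$. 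Finally, $\lambda\dot\sft=0$ follows by cases: on $G$ admissibility forces $\dot\sft=0$ (Def.\ \ref{def:3.5}), while off $G$ one has $\fre=0$ and hence $\lambda=0$. This gives \eqref{e:529}.

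For the converse, given $\xi,\lambda$ satisfying \eqref{e:529}, I decompose $\xi = \eta + \zeta$ with $\eta\in\partial\Psi(\dot\sfu)\subset K^*$ and $\zeta\in\partial\Phi(\lambda\dot\sfu)$. The Fenchel identities supply $\langle\eta,\dot\sfu\rangle = \Psi(\dot\sfu)$, $\langle\zeta,\dot\sfu\rangle = F'(\lambda\|\dot\sfu\|)\|\dot\sfu\|$, and $\|\zeta\|_*=F'(\lambda\|\dot\sfu\|)\ge \fre$, the last inequality coming from $\eta\in K^*$ and the definition of $\fre$. The hypothesis that $s\mapsto \ene{\sft}{\sfu}$ is AC combined with \eqref{eq:4} gives
\[
\tfrac{\rmd}{\rmd s}\ene{\sft(s)}{\sfu(s)} - \power{\sft(s)}{\sfu(s)}\dot\sft(s) = -\langle\xi,\dot\sfu\rangle \le -\Psi(\dot\sfu) - \fre\|\dot\sfu\|.
\]
Moreover $\lambda\dot\sft=0$, combined with $\frk_\sft(\sfu)>0 \iff \fre_{\sft}(\sfu)>0$ from \eqref{eq:34}, forces $\frk_\sft(\sfu)\dot\sft=0$ a.e. Integrating over $[\sfa,\sfb]$ produces the one-sided inequality \eqref{eq:16-param-tech-onesided}, and Corollary \ref{PROP:charact-param} concludes that $(\sft,\sfu)$ is a parameterized solution.

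The main obstacle is the joint measurable construction of $\xi$, $\eta$, and $\lambda$ in the forward direction: one must check that the multifunctions $s\mapsto -\partial\ene{\sft(s)}{\sfu(s)}$ and $s\mapsto \{z\in K^*: \|\xi(s)-z\|_* = \fre_{\sft(s)}(\sfu(s))\}$ are graph-measurable, which requires the weak$^*$-closedness \eqref{eq:45} and Banach--Alaoglu, after which Kuratowski--Ryll-Nardzewski delivers the selections. The remaining bookkeeping is the Fenchel--Young equality case and the continuity of $F'$ used to pick $\lambda$.
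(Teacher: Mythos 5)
Your argument is correct and follows essentially the same route as the paper's own proof of Proposition \ref{prop:diff-charact-param}: both directions rest on the pointwise identity $\langle\xi,\dot\sfu\rangle=\Psiz(\dot\sfu)+\fre_{\sft}(\sfu)\|\dot\sfu\|$ obtained from Corollary \ref{PROP:charact-param}(iii) together with the chain-rule relation \eqref{eq:4}, followed by a Fenchel--Young saturation analysis with the decomposition $\xi=\eta+\zeta$, $\eta\in\partial\Psiz(\dot\sfu)\subset K^*$, $\zeta\in\partial\Phi(\lambda\dot\sfu)$, and a case split on $\fre=0$ versus $\fre>0$ (equivalently $\lambda=0$ versus $\lambda>0$). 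Your more explicit treatment of the measurable selection and your direct definition of $\lambda$ via $F'(\lambda\|\dot\sfu\|)=\fre$, i.e.\ $\lambda=(F^*)'(\fre_{\sft}(\sfu))/\|\dot\sfu\|$, is exactly what is needed for $\zeta\in\partial\Phi(\lambda\dot\sfu)$ and plays the role of the paper's auxiliary quantity $\Lambda_t$ in \eqref{not-contact-viscosities}.
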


The reformulation of the notion of parameterized solutions in terms of
the subdifferential inclusion \eqref{e:529} reflects the following
\emph{mechanical interpretation}:
\begin{itemize}
\item the regime $(\dot\sft>0, \ \dot \sfu \equiv 0)$ corresponds to
  \emph{sticking};
\item the regime $(\dot\sft>0, \ \dot \sfu \neq 0)$ corresponds to
  \emph{rate-independent sliding} ( $\lambda =0$ implies the local
  stability $K^* + \frsub \ene{\sft}{\sfu} \ni 0$);
\item when $\dot\sft=0$ (i.e.\ at a jump in the (slow) external time
  scale, encoded in the function $\sft$), the system may switch to a
  \emph{viscous regime} (when $\lambda>0$), and the solution follow a
  viscous transition path.
\end{itemize}
 
\begin{proof}
  If $({\mathsf t},{\mathsf u}) $ is a $\Leb 1$-a.e.~differentiable
  parameterized solution, \eqref{eq:RIF:5bis-new} and \eqref{eq:4}
  show that for every selection $\xi\in
  -\partial\ene{\sft(s)}{\sfu(s)}$ we have
  \begin{equation}
    \label{eq:93}
    \langle\xi,\dot \sfu(s)\rangle=\Psiz(\dot
    \sfu(s))+\fre_{\sft(s)}(\sfu(s))\|\dot\sfu(s)\|
  \quad\foraa s\in (\sfa,\sfb).
  \end{equation}
  If $\fre_{\sft(s)}(\sfu(s))=0$ then choosing $\xi\in K^*$
  we get \eqref{e:529} with $\lambda(s)=0$.
  If $\fre_{\sft(s)}(\sfu(s))>0$ then $\dot\sft(s)=0$ so that
  $\dot \sfu(s)\neq 0$ by the nondegeneracy condition;  we
  obtain \eqref{e:529} by choosing
  $\lambda(s)=\Lambda_{\sft(s)}(\sfu(s),\dot\sfu(s))$,
  see \eqref{not-contact-viscosities}.

  Conversely, assume \eqref{e:529} and that the energy
  map is absolutely continuous.  If $\lambda(s)=0$ then
  $\fre_{\sft(s)}(\sfu(s))=0$ so that $\langle\xi,\dot \sfu(s)\rangle=
  \Psiz(\dot \sfu(s))$. If $\lambda(s)>0$ then $\dot\sft(s)=0$ so that
  $\dot u(s)\neq 0$ and
   \begin{align}
     \notag
     \langle\xi,\dot \sfu(s)\rangle&=
    \Psiz(\dot
    \sfu(s))+\frac1{\lambda(s)}\Phi(\lambda(s)\dot \sfu(s))+
    \frac 1{\lambda(s)}\Phi^*(\xi)\ge
    \Psiz(\dot
    \sfu(s))+\fre_{\sft(s)}(\sfu(s))\|\dot\sfu(s)\|
    \ge \langle\xi,\dot \sfu(s)\rangle.
  \end{align}
  Hence, all the above estimates are equalities, and therefore
  $\fre_{\sft(s)}(\sfu(s))>0$.  Furthermore, \eqref{eq:93} holds.
  Combining this with the fact that at almost all points the energy is
  differentiable with derivative
  $  \frac\dd{\dd s}\ene{\sft(s)}{\sfu(s)}=\power
    {\sft(s)}{\sfu(s)}\dot\sft(s)-
    \langle \xi,\dot \sfu(s)\rangle$
  in $L^1(\sfa,\sfb)$, we conclude that $(\sft,\sfu)$ is admissible
  and \eqref{eq:RIF:5bis-new} holds.
\end{proof}

\subsubsection*{\bfseries Parameterized and  $\BV$ solutions}

\begin{proposition}[Equivalence between $\BV$ and parameterized solutions]
  \label{prop:bv}
  \
\begin{enumerate}[\rm (BVP1)]
\item If $(\parat,\parau)\in\adm{\mathsf{a}}{\mathsf{b}}{0}{T}{\V}$ is
  surjective and nondegenerate, then any curve
  \begin{equation}
      \label{eq:126}
      u:[0,T]\to \V\quad
      \text{with}\quad
      u(t)\in \big\{{\mathsf u}(s):\parat(s)=t
      \big\}
  \end{equation}
  belongs to $\BV([0,T];D_E,\Psiz)$ for some $E>0$, satisfies the
  local stability condition \eqref{eq:65bis}, and for every $0\le
  t_0<t_1\le T$ with $G$ defined as in \eqref{eq:RIF2:6} we have
  \begin{equation}
    \label{eq:95bis}
    \pVar\frf u{t_0}{t_1}\le
    \int_{\sfs(t_0)}^{\sfs(t_1)} \scalardens\Psiz\sfu(s)\,\dd s+
    \int_{[\sfs(t_0),\sfs(t_1)]\cap G} \fre_{\sft(s)}(\sfu(s))\|\dot
    \sfu(s)\|\,\dd s;
  \end{equation}
  in particular $\pVar\frf u0T<\infty$.
\item If $(\parat,\parau):[0,{\mathsf S}]\to [0,T]\times \V$ is a
  parameterized solution of the \ris\ $\RIS$, then any curve
  $u:[0,T]\to \V$ satisfying \eqref{eq:126}
  is a $\BV$ solution 
  in the sense of Definition \ref{def:BV-solution}.
\item Conversely, if $u\in \BV([0,T];D_E,\Psiz)$ satisfies
  \eqref{eq:65bis} with $\pVar\frf u0T<\infty$, then there exists a
  nondegenerate, surjective $(\sft,\sfu)\in \mathscr
  A(0,\sfS;[0,T]\times V)$ such that \eqref{eq:126} holds and
  \begin{equation}
    \label{eq:95tris}
    \pVar\frf u{0}{T}=
    \int_{0}^{\sfS} \scalardens\Psiz\sfu(s)\,\dd s+
    \int_{[0,\sfS]\cap G} \fre_{\sft(s)}(\sfu(s))\|\dot
    \sfu(s)\|\,\dd s.
  \end{equation}
  Thus if $u$ is a $\BV$ solution of the \ris\ $\RIS$ then
  $(\sft,\sfu)$ is a parameterized solution.
  \end{enumerate}
\end{proposition}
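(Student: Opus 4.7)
\textbf{Proof plan for Proposition \ref{prop:bv}.}

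For (BVP1) I would first use the compactness of $D_E$ and the continuity of $\Psiz$ (together with the argument in the remark following Definition \ref{def:admissible}) to see that $\sfu$ is continuous with values in $D_E$, so that every selection $u$ via \eqref{eq:126} takes values in $D_E$. Given any partition $0=t_0<\dots<t_M=T$, I would pick an ordered sequence $(s_i)$ with $\sft(s_i)=t_i$ and $u(t_i)=\sfu(s_i)$, and bound $\sum_i\Psiz(u(t_i)-u(t_{i-1}))\le\int_\sfa^\sfb\scalardens\Psiz\sfu\,\dd s<\infty$, giving $u\in\BV([0,T];D_E,\Psiz)$. For the local stability at $t\not\in\mathrm J_u$, the point is that nondegeneracy together with condition (2) of Definition \ref{def:3.5} forces $\sft^{-1}(t)\cap G=\emptyset$: a single-point preimage of $t$ cannot lie in $G$ because $G$ is open and $\sft$ is constant on every connected component of $G$, while a multi-point preimage intersecting $G$ would, via $\dot\sft=0$ on $G$ combined with nondegeneracy, force $\sfu$ to strictly vary there and thus put $t\in\mathrm J_u$. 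Consequently $\fre_t(u(t))=0$, i.e.\ \eqref{eq:65bis}. For \eqref{eq:95bis} I would split $[\sfs(t_0),\sfs(t_1)]$ into the maximal plateau intervals $\sft^{-1}(t)$ attached to $t\in\mathrm J_u\cap[t_0,t_1]$ and the complementary increasing part: the plateau restrictions of $\sfu$ serve as admissible competitors in Definition \ref{def:Finsler_diss} and bound each jump contribution $\Delta_{\frf_t}(u(t_-),u(t))+\Delta_{\frf_t}(u(t),u(t_+))$, while on the complement the diffuse variation $\scalarmuco{}u$ is dominated by $\int\scalardens\Psiz\sfu\,\dd s$.

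For (BVP2), starting from the parameterized equality \eqref{eq:16-param-tech} on $[0,\sfS]$, the finiteness of $\int\frF$ forces $\frk_{\sft(s)}(\sfu(s))\dot\sft(s)=0$ almost everywhere; dropping this term leaves the identity $\int_0^\sfS(\scalardens\Psiz\sfu+\fre_{\sft(s)}(\sfu(s))\|\dot\sfu\|)\,\dd s+\ene T{u(T)}=\ene 0{u(0)}+\int_0^\sfS\power{\sft(s)}{\sfu(s)}\dot\sft(s)\,\dd s$. Since $\sft$ is absolutely continuous, $u\circ\sft=\sfu$ on the complement of plateaus, and $\mathrm J_u$ has Lebesgue measure zero, a standard change of variables converts the last integral into $\int_0^T\power r{u(r)}\,\dd r$. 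Combining this identity with \eqref{eq:95bis} from (BVP1) yields \eqref{eq:84-oneside}, and Corollary \ref{prop:BV-charact} closes the argument.

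For (BVP3) I would construct $(\sft,\sfu)$ by explicitly ``blowing up'' every jump of $u$: define $\mathsf V(t):=t+\scalarmuco{}u([0,t])+\sum_{\tau\in\mathrm J_u,\,\tau<t}(\Delta_{\frf_\tau}(u(\tau_-),u(\tau))+\Delta_{\frf_\tau}(u(\tau),u(\tau_+)))$, which is non-decreasing on $[0,T]$ with a jump of size $\Delta_{\frf_t}(u(t_-),u(t))+\Delta_{\frf_t}(u(t),u(t_+))$ at each $t\in\mathrm J_u$ (all costs are finite by \eqref{eq:35first} and the hypothesis $\pVar\frf u0T<\infty$). Setting $\sfS$ to be the total length, let $\sft:[0,\sfS]\to[0,T]$ be the filled-in inverse of $\mathsf V$, and for every $t\in\mathrm J_u$ fill the corresponding gap with $\sft\equiv t$ and with suitably reparameterized optimal transitions $\vartheta^{t,-}\in\calO_t(u(t_-),u(t))$ and $\vartheta^{t,+}\in\calO_t(u(t),u(t_+))$ (whose existence is granted by Theorem \ref{thm:fcost}\,(F1)); off the gaps set $\sfu(s):=u(\sft(s))$. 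By design \eqref{eq:126} and \eqref{eq:95tris} hold, while admissibility, surjectivity, and nondegeneracy are routine to check. Finally, if $u$ is a $\BV$ solution, substituting \eqref{eq:95tris} into \eqref{eq:84}, applying the change of variables from (BVP2), and inserting the vacuous identity $\int_0^\sfS\frk_{\sft}(\sfu)\dot\sft\,\dd s=0$ (valid because $\dot\sft=0$ on gaps and local stability yields $\frk_{\sft}(\sfu)=0$ off the gaps) rewrites \eqref{eq:84} as \eqref{eq:16-param-tech-onesided}; Corollary \ref{PROP:charact-param} then gives that $(\sft,\sfu)$ is a parameterized solution.

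The main obstacle is the explicit construction in (BVP3): one must simultaneously guarantee global $\Psiz$-absolute continuity of $\sfu$ on $[0,\sfS]$, local $V$-absolute continuity on $G$, and the nondegeneracy $\dot\sft+\scalardens\Psiz\sfu>0$. The critical points are the viscous portions of the inserted optimal transitions, where $\scalardens\Psiz\sfu$ may vanish; these must be parameterized via Proposition \ref{prop:ojt1}\,(2) so that $\dot\vartheta$ has positive and bounded $V$-norm a.e., and the various reparameterizations must be spliced consistently across sliding/viscous boundaries and with the external time $t$ inherited from $\mathsf V$, in such a way that the resulting $\sfu$ is globally $\Psiz$-AC and the cost identity \eqref{eq:95tris} holds as an equality.
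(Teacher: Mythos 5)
Your overall plan mirrors the paper's proof: (BVP1) transports the parameterized variation to the BV variation via an inverse $\sfs$ of $\sft$, (BVP2) combines the parameterized energy identity with the change of variables for the power integral and with \eqref{eq:95bis}, and (BVP3) blows up jumps by inserting admissible optimal transitions after rescaling time by $\sfs(t)=t+\pVar\frf u0t$.

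The argument for local stability in (BVP1), however, has a genuine flaw. You assert that $\sft^{-1}(t)\cap G=\emptyset$ whenever $t\notin\mathrm J_u$, on the grounds that a multi-point preimage intersecting $G$ would (by nondegeneracy) make $\sfu$ strictly vary and ``thus put $t\in\mathrm J_u$.'' That last implication fails: $\sfu$ may strictly vary in the $\Psiz$-sense on the plateau $\sft^{-1}(t)$ and yet return to its starting value, so that $u(t_-)=\sfu(\sfs(t_-))=\sfu(\sfs(t_+))=u(t_+)$ and $t\notin\mathrm J_u$ even though $\sft^{-1}(t)\cap G\neq\emptyset$. Hence your claim is false, and $\fre_t(u(t))=0$ is not established by your chain of reasoning. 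The paper avoids this via a density argument: since $\dot\sft\equiv 0$ on $G$, the image $\sft(G)$ is $\Leb 1$-negligible, so \eqref{eq:65bis} holds on a dense subset of $[0,T]$; continuity of $u$ off $\mathrm J_u$ together with the lower semicontinuity of $\fre$ stated in \eqref{eq:62} then yields \eqref{eq:65bis} at every $t\notin\mathrm J_u$. Alternatively one can repair your local argument by noting that the endpoints $\sfs(t_\pm)$ of the plateau never lie in the open set $G$ (an open component of $G$ contained in the plateau cannot have them as interior points), so $\fre_t(u(t_\pm))=0$; for $t\notin\mathrm J_u$ the identity $u(t)=u(t_-)=u(t_+)$ finishes.

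The rest of the proposal agrees with the paper in substance. In (BVP3) the paper inserts, at each jump $t_n$, a single admissible transition $\vartheta_n\in\calT_{t_n}(u(t_{n-}),u(t_{n+}))$ passing through $u(t_n)$ and attaining the sum of the two one-sided costs — exactly the concatenation of $\vartheta^{t,-}$ and $\vartheta^{t,+}$ you describe. Your caveat about guaranteeing nondegeneracy and joint $\Psiz$/$V$-absolute continuity on the inserted pieces is a fair observation that the paper treats lightly; removing from the parameterization the null sets where $\scalardens\Psiz\sfu$ vanishes on an optimal transition is indeed what makes the admissibility and nondegeneracy checks close.
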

\begin{proof}
  (BVP1): let $\sfs:[0,T]\to [\sfa,\sfb]$
  be any inverse of $\sft$. Notice that
  $t\in \mathrm J_u$ if and only if $t\in \mathrm J_{\sfs}$ and
  $\sft(s)\equiv t$ for every $s\in [\sfs(t_-),\sfs(t_+)]$.
  We can also define $\sfs(t)$ in $[\sfs(t_-),\sfs(t_+)] $
  so that $u(t)=\sfu(\sfs(t))$ for every
  $t\in [0,T]$. By this choice it is immediate to see that
  $u\in \BV([0,T];D_E,\Psiz)$ with
  \begin{displaymath}
    \Var\Psiz u{t_0}{t_1}=\Var\Psiz\sfu{\sfs(t_0)}{\sfs(t_1)}=
    \int_{\sfs(t_0)}^{\sfs(t_1)}\scalardens \Psiz\sfu(r)\dd r\quad
    \forevery 0\le t_0<t_1\le T.
  \end{displaymath}
  On the other hand, the curve $\sfu:[\sfs(t_-),\sfs(t_+)]\to V$
  is an admissible transition connecting $u(t_-)$ to $u(t_+)$ with
  \begin{displaymath}
    \Delta_{\frf_t}(u(t_-),u(t)) \le
    \int_{\sfs(t_-)}^{\sfs(t)} \frf_{\sfs(r)}[\sfu,\sfu'](r)\,\dd r,\quad
    \Delta_{\frf_t}(u(t),u(t_+)) \le
    \int_{\sfs(t)}^{\sfs(t_+)} \frf_{\sfs(r)}[\sfu,\sfu'](r)\,\dd r,
\end{displaymath}
which yields \eqref{eq:95bis}.
Since $\dot\sft=0$ in $G$,
$\sft(G)$ is $\Leb 1$-negligible, so that its complement
(where the local stability condition
\eqref{eq:65bis} holds) is dense in $[0,T]$.
Since $\fre$ is lower semicontinuous, every point in
$[0,T]\setminus \mathrm J_u$ satisfies \eqref{eq:65bis}.

  (BVP2) is now immediate: since  \eqref{eq:65bis} holds,
  it is sufficient to check \eqref{eq:84-oneside};
  this follows by combining \eqref{eq:95bis},
  \eqref{eq:16-param-tech},
  and
  the change of variable formula
  \begin{equation}
    \label{eq:97}
    \int_0^T \power t{u(t)}\,\dd t=
    \int_0^{\sfS} \power {\sft(s)}{\sfu(s)}\dot\sft(s)\,\dd s.
  \end{equation}

  In order to prove (BVP3), we
  introduce the parameterization
\begin{gather}\label{eq:43}
  \mathsf s(t):=t+\pVar \frf u0t,\quad \mathsf S:=\sfs(T),\quad
  \mathrm J_u=\mathrm J_\sfs=
  (t_n)_{n\in \N},\\
    \label{eq:92}
  I_n:=(\mathsf s(t_{n-}),\mathsf s(t_{n+})),\quad
  I:=\bigcup_{n\in \N}I_n,
\quad
\sft:=\sfs^{-1}:[0,\mathsf S]\setminus I \to [0,T],\quad
  \parau:=u\circ\sft.
\end{gather}
It is immediate to check that $\sft$ and $\sfu$ are Lipschitz maps.
We extend $\sft$ and $\sfu$ to $I$ by setting
\begin{equation}
  \label{eq:1}
  \sft(s)\equiv t_n,\quad
  \sfu(s):=\vartheta_n(\mathsf r_n(s))\quad
  \text{whenever }s\in I_n,
\end{equation}
where $\mathsf r_n:\overline {I_n}\to [0,1]$ is the unique affine and
strictly increasing function mapping $\overline {I_n}$ onto $
[0,1]$
and $\vartheta_n\in \calT_{t_n}(u(t_{n-}),u(t_{n+}))$
is an admissible transition satisfying $\vartheta_n(\sfr_n(\sfs(t_n)))=
u(t_n)$ and (recall (F1) of Theorem \ref{thm:fcost})
\begin{equation}\label{eq:101}
  \int_0^1 \mathfrak f_{t_n}[\vartheta_n;\vartheta_n'](r)\,\dd r=
  \Delta_{\mathfrak f_{t_n}}(u(t_{n-}),u(t_n))+
  \Delta_{\mathfrak f_{t_n}}(u(t_{n}),u(t_{n+})).
\end{equation}
It follows that \eqref{eq:126} holds with $u=\sfu\circ \sfs$ and
\begin{align*}
  &\int_0^\sfS \scalardens\Psiz \parau(s)\,\dd s+
  \int_G \fre_{\sft(s)}(\sfu(s)) \,\|\dot\sfu(s)\|\,\dd s
  =
  \Var \Psiz\parau 0S+
  \int_G \fre_{\sft(s)}(\sfu(s)) \,\|\dot\sfu(s)\|\,\dd s
  \\&\quad=
  \Var \Psiz u0T+\sum_{n\in \N}\int_0^1
  \fre_{t_n}(\vartheta_n(r))\|\dot\vartheta_n(r)\|\,\dd r
  \\&\quad\le
  \Var \Psiz u0T-\mathrm{Jmp}_\Psiz(u;[0,T])+\mathrm {Jmp}_{\mathfrak f}(u;[0,T])
  =\pVar{\mathfrak f}u0T,
\end{align*}
so that \eqref{eq:95tris} holds and
$(\sft,\sfu)\in \mathscr A(0,\sfS;[0,T]\times V)$.

If moreover $u$ is a $\BV$ solution, then the chain rule from 
Theorem \ref{th:3.8} and \eqref{eq:97} yield inequality 
\eqref{eq:16-param-tech-onesided}.
\end{proof}

\subsection{$\V$-parameterized solutions}
\label{ss:3.2} 
We consider now the special class of parameterizable solutions,
corresponding to the notion introduced in \S \ref{ss:Vparam}, namely
those for which $\sfu$ is absolutely continuous with values in $\V$.

\begin{definition}
  \label{def-V-param} A $\V$-parameterized solution
  $(\parat,\parau):[\sfa,\sfb]\to [0,T]\times V$ of the \ris\ $\RIS$
  is a parameterized solution such that $\sfu \in\AC(\sfa,\sfb;\V)$.
\end{definition}

Since $\V$-parameterized solutions are differentiable $\Leb1$-a.e.,
one does not have to distinguish the behavior of $\sfu$ in the set $G$
of \eqref{eq:RIF2:6} from its complement.  By adopting the
``pointwise'' definition \eqref{vvmfullname-def} of $\vvmfullname$ and
$\vvmfullVname$ in place of \eqref{eq:50}, metric concepts are no
longer needed, and expressions like \eqref{eq:RIF:4} become simpler.

\begin{proposition}
\label{prop:para-Vparam} 
If $(\parat,\parau) \in \AC ([0,{\mathsf{S}}];[0,T]\times \V)$ is a
$\V$-parameterized solution to the \ris\ $\RIS$ then every $u$
satisfying \eqref{eq:126} is a $\V$-parameterizable $\BV$ solution.
If $u$ is a $\V$-parameterizable $\BV$ solution there exists
a 
$\V$-parameterized solution $({\mathsf t},{\mathsf u})$ such that
\eqref{eq:126} hold.
\end{proposition}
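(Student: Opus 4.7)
My plan is to prove both implications by leveraging the construction of Proposition \ref{prop:bv} (BVP2)--(BVP3), refining it in the reverse direction to incorporate the $V$-length of the diffuse part of $u$ and of the chosen optimal jump transitions.

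For the forward implication, assume $(\sft,\sfu) \in \AC([0,\sfS]; [0,T]\times V)$ is a $V$-parameterized solution of the \ris\ $\RIS$, and let $u$ satisfy \eqref{eq:126}. By (BVP2) of Proposition \ref{prop:bv}, $u$ is already a $\BV$ solution, so I only need to verify the two clauses of \eqref{special-jump-cond}. Choosing any right inverse $\sfs:[0,T]\to[0,\sfS]$ of $\sft$, one has $u(t)=\sfu(\sfs(t))$, hence for every partition of $[0,T]$ the sum of $V$-increments of $u$ is dominated by $\Var{}{\sfu}{0}{\sfS}<\infty$; thus $u\in \BV([0,T];V)$. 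At each $t\in\mathrm J_u$ the preimage $\sft^{-1}(t)$ is an interval $[\sfs_-,\sfs_+]$ on which $\sft\equiv t$, so the affine rescaling of $\sfu|_{[\sfs_-,\sfs_+]}$ onto $[0,1]$ lies in $\calO_t(u(t_-),u(t_+))\cap \AC([0,1];V)$ thanks to the energy identity \eqref{eq:16-param-tech} specialized on $[\sfs_-,\sfs_+]$, which enforces the optimality property \eqref{eq:66-ojt}. Summing the $V$-lengths of these transitions over $t\in\mathrm J_u$ is again bounded by $\Var{}{\sfu}{0}{\sfS}<\infty$, yielding condition \emph{ii)}.

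For the reverse implication I refine the construction in (BVP3) of Proposition \ref{prop:bv}. Given a $V$-parameterizable $\BV$ solution $u$, pick for each $t_n\in\mathrm J_u$ the $V$-absolutely continuous optimal transition $\vartheta^{t_n}\in\calO_{t_n}(u(t_{n-}),u(t_{n+}))\cap \AC([0,1];V)$ provided by \eqref{special-jump-cond}, and set $\ell_n:=\int_0^1\|\dot\vartheta^{t_n}(r)\|\,\dd r$, so that $\sum_n\ell_n<\infty$. Augment the parameterization from (BVP3) by
\[
\sfs(t):=t+\scalarmuco{}u((0,t])+\|u'_\rmd\|((0,t])+\!\!\sum_{t_n\in\mathrm J_u,\,t_n\le t}\!\!\bigl(\Cost{\vvmname}{t_n}{u(t_{n-})}{u(t_n)}+\Cost{\vvmname}{t_n}{u(t_n)}{u(t_{n+})}+\ell_n\bigr),
\]
which is finite because $u\in\BV([0,T];V)$ and by \eqref{special-jump-cond}. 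Set $\sft:=\sfs^{-1}$ and $\sfu:=u\circ\sft$ on $[0,\sfS]\setminus I$, and on each jump interval $I_n$ reparameterize $\vartheta^{t_n}$ by the combined $\frf$+$V$-arc length, so that $\sfu|_{I_n}$ is $1$-Lipschitz in $V$. The extra summand $\|u'_\rmd\|((0,t])$ makes $\sfu$ also $1$-Lipschitz in $V$ across each continuous component, and continuity at the boundary points $\sfs(t_{n\pm})$ follows from $\vartheta^{t_n}(0)=u(t_{n-})$, $\vartheta^{t_n}(1)=u(t_{n+})$; hence $\sfu\in\AC([0,\sfS];V)$.

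The main obstacle will be verifying that this augmented $(\sft,\sfu)$ is still a parameterized solution, that is, satisfies \eqref{eq:16-param-tech}. I would assemble the identity piecewise: on $[0,\sfS]\setminus I$ from the $\BV$ energy balance \eqref{eq:84} combined with the change-of-variable formula \eqref{eq:97} applied to the enlarged parameterization, noticing that the extra $V$-arc-length terms do not contribute to $\frF$ because $\fre_{\sft(s)}(\sfu(s))=0$ outside $I$ by local stability \eqref{eq:65bis}; on each $I_n$ from the optimality \eqref{eq:66-ojt} of $\vartheta^{t_n}$, which gives $\int_{I_n}\frF\,\dd s=\ene{t_n}{u(t_{n-})}-\ene{t_n}{u(t_{n+})}$ after the chosen rescaling absorbs the $\ell_n$ summand. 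Additivity of $\pVar\frf u 0T$ then glues the contributions into the global energy identity, producing the required $V$-parameterized solution and completing the proof.
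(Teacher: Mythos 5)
Your proposal is correct and follows essentially the same route as the paper: the forward direction via the arc-length identity $\Var{}u0T=\int_0^\sfS\|\dot\sfu(s)\|\,\dd s$, and the reverse direction by augmenting the (BVP3) parameterization with the $V$-arc-length of the diffuse part and of the chosen jump transitions. Your choice $\sfs(t):=t+\scalarmuco{}u((0,t])+\|u'_\rmd\|((0,t])+\sum_{t_n\le t}(\cdots+\ell_n)$ is a mild, equivalent variant of the paper's \eqref{eq:104} $\sfs(t)=t+\pVar\frf u0t+\Var{}u0t$ — you swap the jump contribution $\|u(t_n)-u(t_{n-})\|+\|u(t_{n+})-u(t_n)\|$ of $\Var{}u0\cdot$ for the larger $\ell_n$, which makes $\sfu$ automatically $1$-Lipschitz on the jump intervals — and your piecewise verification of the energy identity \eqref{eq:16-param-tech} is exactly the content the paper leaves implicit.

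One point to make explicit, which both you and the paper gloss over: for \eqref{eq:126} to hold at a jump point $t_n$, the chosen optimal transition $\vartheta^{t_n}$ must pass through $u(t_n)$; Definition \ref{def:Vparam-solution} only stipulates $\vartheta^{t_n}\in\calO_{t_n}(u(t_{n-}),u(t_{n+}))\cap\AC([0,1];V)$ without this requirement, although it is guaranteed to be achievable via the jump conditions \eqref{eq:67} together with Proposition \ref{prop:ojt2}. Similarly, the "affine rescaling" in your forward direction should be a constant-speed (Finsler) reparameterization to literally match \eqref{eq:66-ojt}. Neither remark affects the substance of the argument.
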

\begin{proof}
The approach is analogous to the proof of Proposition
\ref{prop:bv}: In one direction it follows by the identity
$\Var{} u0T=\int_0^\sfS \|\dot \sfu(s)\|\,\dd s$.
In the opposite one, we can simply replace \eqref{eq:43} by
\begin{equation}
  \label{eq:104}
  \sfs(t):=t+\pVar \frf u0t+\Var{}u0t,
\end{equation}
choosing the optimal jump transitions according to
\eqref{special-jump-cond}.
\end{proof}

Thanks to Proposition \ref{prop:para-Vparam}, Corollary
\ref{cor:3-limit} implies the following result:

\begin{corollary}[Existence of $\V$-parameterized solutions]
 If \eqref{particular-dissipations}--\eqref{eq:77} hold,
 then
  for every $u_0\in D$ with
  $K^* + \partial\ene 0{u_0}\ni 0$
  there exists a $\V$-parameterized
  solution $(\sft,\sfu)\in \AC([0,\sfS];[0,T]\times V)$ of
  the \ris\  $\RIS$.
\end{corollary}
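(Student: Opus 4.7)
The plan is to combine Corollary \ref{cor:3-limit} with the second half of Proposition \ref{prop:para-Vparam}. Both statements are already established, so the proof amounts to composing them in the correct order.

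First I would invoke Corollary \ref{cor:3-limit}: under the hypotheses \eqref{particular-dissipations}--\eqref{eq:77} and the local stability $K^* + \partial\ene 0{u_0}\ni 0$, we obtain a $\V$-parameterizable $\BV$ solution $u:[0,T]\to \V$ of the \ris\ $\RIS$ starting from $u_0$. By Definition \ref{def:Vparam-solution}, this $u$ belongs to $\BV([0,T];\V)$, satisfies the local stability \eqref{eq:65bis} and the energy balance \eqref{eq:84}, and at every $t\in \mathrm{J}_u$ there is an optimal transition $\vartheta^t\in \mathcal O_t(u(t_-),u(t_+))\cap \AC([0,1];\V)$ with $\sum_{t\in \mathrm{J}_u}\int_0^1 \|\dot\vartheta^t(r)\|\,\dd r<\infty$. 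In particular $\pVar{\frf}{u}{0}{T}<\infty$.

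Next I would apply the second assertion of Proposition \ref{prop:para-Vparam} to the curve $u$. This yields a $\V$-parameterized solution $(\sft,\sfu)\in \AC([0,\sfS];[0,T]\times \V)$ satisfying \eqref{eq:126}, built explicitly through the parameterization
\begin{equation*}
  \sfs(t):=t+\pVar{\frf}{u}{0}{t}+\Var{}{u}{0}{t},\qquad
  \sfS:=\sfs(T),\qquad \sft:=\sfs^{-1},\qquad \sfu:=u\circ \sft,
\end{equation*}
extended on each jump interval $I_n\subset[0,\sfS]$ (corresponding to $t_n\in \mathrm{J}_u$) by $\sft\equiv t_n$ and $\sfu:=\vartheta^{t_n}\circ \mathsf r_n$, where $\mathsf r_n$ is the affine bijection of $\overline{I_n}$ onto $[0,1]$ and $\vartheta^{t_n}$ is the $\AC([0,1];\V)$ optimal transition provided by $\V$-parameterizability.

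The only nontrivial point is that $\sfu$ is globally absolutely continuous with values in $\V$, but this is precisely what the choice \eqref{eq:104} accomplishes: condition \emph{ii)} of \eqref{special-jump-cond} provides the summability $\sum_{t\in\mathrm{J}_u}\int_0^1\|\dot\vartheta^t\|\,\dd r<\infty$, which, together with $u\in \BV([0,T];\V)$, makes the $\V$-arclength component of $\sfs$ finite and gives the uniform Lipschitz bound $\|\dot\sfu\|\le 1$ in $\V$. The energy balance \eqref{eq:16-param-tech} for $(\sft,\sfu)$ follows from the $\BV$-energy identity \eqref{eq:84} for $u$ combined with the change of variable \eqref{eq:97} on $[0,\sfS]\setminus I$ and with the optimality identities for $\vartheta^{t_n}$ on each $I_n$, exactly as in the proof of Proposition \ref{prop:bv}(BVP3). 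Thus $(\sft,\sfu)$ is the desired $\V$-parameterized solution, concluding the proof.
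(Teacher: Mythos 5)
Your proof is correct and follows exactly the route the paper takes: the paper derives this corollary by composing Corollary \ref{cor:3-limit} (existence of a $V$-parameterizable $\BV$ solution) with the second assertion of Proposition \ref{prop:para-Vparam} (reparameterization of such a solution into a $V$-parameterized one via \eqref{eq:104}). The extra details you supply about the parameterization and the role of \eqref{special-jump-cond} are consistent with the proof of Proposition \ref{prop:para-Vparam}.
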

$\V$-parameterized solutions can also be obtained
as limit of rescaled solutions to \eqref{viscous-dne} if
they satisfy the uniform bound \eqref{eq:98}:
one can simply adapt the argument discussed
in \S \ref{ss:5.1}, by replacing the definition
\eqref{energy-dissipation-arlength} of the arclength ${\mathsf s}_\eps$  with, e.g.,
\begin{equation}
  \label{eq:105}
  {\mathsf s}_\eps(t):=t
  + \int_0^t \vvm{r}{u_\eps(r)}{\dot{u}_\eps(r)} \,\mathrm{d} r+
  \int_0^t \|\dot u_\eps(r)\|\,\dd r,\quad
  \sft_\eps:=\sfs_\eps^{-1},
\end{equation}
in order to gain a uniform control of the Lipschitz constant
of the rescaled functions $\sfu_\eps$. The vanishing-viscosity 
limit in  Theorem \ref{thm-van-param} then gives the following.
 
\begin{theorem}
  \label{cor:easy}
  Let $(u_\eps)_{\eps>0}$ be a family of solutions to
  \eqref{viscous-dne} satisfying \eqref{conve-initi-data} at $t=0$ and
  the uniform bound \eqref{eq:98} (e.g.~when the assumptions of
  Theorem \ref{th:3-discrete} are satisfied) and let
  $\sft_\eps:[0,\sfS]\to[0,T]$ be nondecreasing and surjective time
  rescalings  (e.g.\ \eqref{eq:105}) such that
  $\sfu_\eps:=u_\eps\circ\sft_\eps$ satisfy
  \eqref{e:uniform-integrability} and there exists $C>0$ such that
  $\sup_{t \in (0,T)}\|\dot \sfu_\eps(t)\|\le C$ for all $\eps>0$.
  Then any limit function $(\sft,\sfu)$ as in Theorem
  \ref{thm-van-param} is a $V$-parameterized solution.
\end{theorem}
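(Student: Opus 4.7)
The plan is essentially immediate once Theorem \ref{thm-van-param} is invoked: the bulk of the work (construction of the limit $(\sft,\sfu)$, its identification as a parameterized solution, the energy identity \eqref{eq:16-param-tech}, the $\mathsf m$-normalization) is already done by that theorem. What remains is only to verify the upgraded regularity $\sfu\in \AC([0,\mathsf S];V)$ required in Definition \ref{def-V-param}, which is where the uniform Lipschitz bound $\sup_{s\in(0,\mathsf S)}\|\dot\sfu_\eps(s)\|\le C$ enters.

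First I would apply Theorem \ref{thm-van-param} to the rescaled family $(\sft_\eps,\sfu_\eps)$, whose hypothesis \eqref{e:uniform-integrability} is part of the assumptions. This yields a subsequence $\eps_k\downarrow 0$, a parameterized solution $(\sft,\sfu)\in\adm{0}{\sfS}{0}{T}{\V}$ of the \ris\ $\RIS$, and in particular the uniform convergence $\sfu_{\eps_k}\to\sfu$ in $C^0([0,\sfS];\V)$. All that is left is to promote $\sfu$ to an absolutely continuous $V$-valued curve.

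Next I would exploit the uniform pointwise bound $\|\dot\sfu_\eps\|\le C$: since $V$ enjoys the Radon--Nikod\'ym property \eqref{e:top} and $\sfu_\eps\in\AC([0,\sfS];V)$ (thanks to the uniform bound \eqref{eq:98} transported through the time change), the Bochner fundamental theorem gives, for all $0\le s_1\le s_2\le \sfS$,
\[
  \|\sfu_\eps(s_2)-\sfu_\eps(s_1)\|\;=\;\Big\|\int_{s_1}^{s_2}\dot\sfu_\eps(r)\,\dd r\Big\|\;\le\;\int_{s_1}^{s_2}\|\dot\sfu_\eps(r)\|\,\dd r\;\le\;C\,(s_2-s_1).
\]
Passing to the limit along $\eps_k$ using the uniform $V$-convergence $\sfu_{\eps_k}\to\sfu$ on $[0,\sfS]$, we obtain the Lipschitz estimate $\|\sfu(s_2)-\sfu(s_1)\|\le C(s_2-s_1)$ for all $s_1\le s_2$. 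Hence $\sfu$ is $C$-Lipschitz (in particular absolutely continuous) with values in $V$, so $(\sft,\sfu)$ qualifies as a $V$-parameterized solution in the sense of Definition \ref{def-V-param}.

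There is no real obstacle: the only conceptual point to check is that the uniform Lipschitz constant survives the passage to the limit, which is a standard lower-semicontinuity argument for Lipschitz seminorms under pointwise (here even uniform) convergence. The hypothesis that the bound $\|\dot\sfu_\eps\|\le C$ be uniform in $\eps$ is essential; without it one could only recover $\sfu\in\AC([0,\sfS];D_E,\Psiz)$, i.e.\ a general parameterized solution rather than a $V$-parameterized one.
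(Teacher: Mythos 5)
Your argument is correct and coincides with the paper's intended (and essentially unwritten) proof: invoke Theorem \ref{thm-van-param} for the convergence to a parameterized solution, then pass the uniform Lipschitz bound $\|\dot\sfu_\eps\|\le C$ through the uniform convergence $\sfu_{\eps_k}\to\sfu$ in $V$ to conclude that $\sfu$ is $C$-Lipschitz, hence in $\AC([0,\mathsf S];V)$, exactly as required by Definition \ref{def-V-param}. No gaps.
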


\subsubsection*{\bfseries $\V$-arclength parameterizations}
Still keeping the assumptions
\eqref{particular-dissipations}--\eqref{eq:77} of Corollary
\ref{coro:3-continuous}, in particular the choice $\Phi(v):=\frac
12\|v\|^2$,
we discuss now a different reparameterization technique
for studying the limit of solutions to \eqref{viscous-dne}.
Since estimate \eqref{eq:98} is guaranteed, 
like in  \cite{ef-mie06,Miel08?DEMF,Mielke-Zelik}  
 we are entitled to   use the
\emph{$\V$-arclength} parameterization
\begin{equation}
\label{l2-rep}
 \hat{\mathsf s}_\eps(t):=t
 + \int_0^t  \Vnorm{\dot{u}_\eps(r)} \,\mathrm{d} r
\end{equation}
and consider the rescaled functions $(
\hat{\parat}_\eps,\hat{\parau}_\eps): [0,\hat{\mathsf S}_\eps] \to
[0,T]\times \V$, with $\hat{\mathsf S}_\eps=\hat{\mathsf s}_\eps(T)$,
defined by $ \hat{\parat}_\eps(s):=\hat{\mathsf s}_\eps^{-1}(s)$ and
$\hat{\mathsf u}_\eps(s):= u_\eps (\hat{\mathsf t}_\eps (s))$.  By
construction we have $ \dot{\hat{\parat}}_\eps(s) +
\Vnorm{\dot{\hat{\parau}}_\eps(s)}=1$ for a.a.\ $s \in
(0,\hat{\mathsf{S}}_\eps),$ and the pair $(\hat{\parat}_\eps,
\hat{\parau}_\eps)$ is a solution of the ``rescaled'' doubly nonlinear
equation
\begin{equation}\label{param_sol_eps}
\partial\Diss{\Bo}{}(\dot{\hat{\parau}}_\eps(s)) +
\frac{\eps}{1{-}\Vnorm{\dot{\hat{\parau}}_\epsilon(s)}}
\partial\Phi(\dot{\hat{\parau}}_\eps(s)) + \frsub
\ene{\hat{\parat}_\eps(s)}{\hat{\parau}_\eps(s)} \ni 0 \quad
\foraa\, s \in (0,\hat{\mathsf{S}}_\eps),
\end{equation}
where we used the degree-$1$ homogeneity of $\partial\Phi$.  As in
\cite{ef-mie06,Mielke-Zelik,Miel08?DEMF}, we observe that the viscous
term in \eqref{param_sol_eps} is the subdifferential of the potential
$\widehat \Phi$ that is defined via
 \[
 \widehat\Phi(v)= f\left(\Vnorm{v}\right) \text{ with }
  f(x)=\begin{cases}
     -\log(1-x)-x &\text{if }0 \leq x<1,\\
     +\infty           &\text{if } x \geq 1.
   \end{cases}
 \]

Thus, \eqref{param_sol_eps} rewrites as
\begin{equation}\label{param_sol_eps-new}
\partial\Diss{\Bo}{}(\dot{\hat{\parau}}_\eps(s)) +
\eps\partial\hat\Phi(\dot{\hat{\parau}}_\eps(s)) + \frsub
\ene{\hat{\parat}_\eps(s)}{\hat{\parau}_\eps(s)} \ni 0 \quad
\foraa\, s \in (0,\hat{\mathsf{S}}_\eps).
\end{equation}
The sequence of dissipation potentials $\widehat{\Psi}_\eps(v):= \Diss
{\B}{}(v)+ \eps\widehat\Phi(v)$ $\Gamma$-converges monotonously, as
$\eps \down 0$, to the limiting potential
\begin{equation}\label{wtR0}
    \widehat{\Psi}(v)=\begin{cases}
    \Diss {\B}{}(v) &\text{if }\Vnorm{v}\leq 1,\\
    +\infty &\text{else}.
    \end{cases}
\end{equation}
It was shown in \cite[Prop.\,4.14]{Miel08?DEMF} that, up to a
subsequence, the parameterized solutions
$(\hat{\parat}_\eps,\hat{\parau}_\eps)$ converge in
$\mathrm{C}^0([0,\hat{\mathsf{S}}];[0,T]\times \V)$ to a pair
$(\hat{\parat},\hat{\parau}) \in
\mathrm{C}^0_{\mathrm{lip}}([0,\hat{\mathsf{S}}];[0,T]\times \V)$ such
that $\hat{\parat}(0)=0$, $\hat{\parat}$ is non-decreasing, and
\begin{equation}
\label{param_sol}
   \dot{\hat{\parat}}(s) + \Vnorm{\dot{\hat{\parau}}(s)}\in [0,1] 
\quad \text{and} \quad 
   \partial \widehat{\Psi}(\dot{\hat{\parau}}(s)) + \frsub
   \ene{\hat{\parat}(s)}{\hat{\parau}(s)} \ni 0 \quad  \foraa\, s \in
(0,\hat{\mathsf{S}}).
\end{equation}

An interesting feature of this approach is that it allows for a direct
passage to the limit in the subdifferential inclusion
\eqref{param_sol_eps-new}, without passing through an energy identity
like \eqref{resc-enid-eps}. By operating a suitable time rescaling,
it is possible to show a correspondence between $V$-parameterized
solutions in the sense of Definition \ref{def-V-param} and in the
sense of \eqref{param_sol}: the interested reader is referred to
\cite[Cor.\,4.22, Prop.\,4.24]{Miel08?DEMF}.

However, let us stress that  the technique from
\cite{ef-mie06,Mielke-Zelik} does not allow us  to prove that the
limit curve $(\hat\parat,\hat\parau)$ satisfies the normalization
condition $ \dot{\hat{\parat}} + \Vnorm{\dot{\hat{\parau}}}=1$ a.e.\
in $(0,\hat{\mathsf{S}})$.  Instead, our the variational approach of
\S\,\ref{ss:5.1}, which is based on a chain-rule and energy-identity
argument, guarantees the preservation of the normalization condition,
cf.\ Theorem \ref{thm-van-param}.  Moreover, we also obtain the
absolute continuity of the energy map $s \mapsto
\ene{\parat(s)}{\parau(s)}$.

\section{Examples}
\label{s:examples}

Throughout this section, we focus on the rate-independent system $\RIS$ given by
\[
\V=L^2(\Omega),
\quad \Diss{\Bo}{}(v)= \int_\Omega |v(x)|\,\dd x,
\quad \Diss{\V}{}(v)= \frac 12 \Vnorm{v}^2= \frac12 \int_\Omega |v(x)|^2\,\dd x
\]
with $\Omega \subset \R^d$, $d\geq 1$, a bounded Lipschitz domain, and
on the following class of energy functionals $\cE: [0,T]\times
L^2(\Omega) \to (-\infty,+\infty]$
\begin{equation}
\label{disc-gene}
\ene tu = \left\{
\begin{array}{ll}
\int_\Omega \left(\beta (|\nabla u|)+W(u) - \ell (t) u \right) \dd x
& \text{ if } u \in \mathrm W^{1,1} (\Omega), \, \beta
(|\nabla u|),\,W(u) \in L^1(\Omega),
\\
+\infty & \text{ otherwise}.
\end{array}
\right.
\end{equation}
Hereafter, we suppose that
\begin{align}
& \label{assumpt-basic-beta}
\beta: [0,+\infty) \to [0,+\infty) \text{ is convex};
\\
&
\label{assumpt-basic-phi}
W : \R \to (-\infty,+\infty] \text{ is bounded from below};
\\
& \label{assumpt-external-load}
\ell \in \mathrm{C}^1 ([0,T];L^{2}(\Omega)).
\end{align}
In all of the examples we present, $\cE$ will satisfy \eqref{Ezero}
and for each of them we will discuss the coercivity condition
\eqref{eq:17}.  Exploiting \eqref{assumpt-external-load}, it is
immediate to check that for all $u \in \domainenergy$ the function
$t\mapsto \ene tu$ is differentiable, with derivative
$\power tu= -\int_\Omega \ell'(t) u \dd x$ which fulfills both \eqref{hyp:en3}
and the Lipschitz estimate \eqref{eq:77}.  In what follows, the focus will be on the uniform
subdifferentiability \eqref{hyp:en-subdif} and on the (stronger)
generalized convexity \eqref{eq:18-new} (which yields the subdifferentiability condition \eqref{subdiff-charact} and in particular \eqref{hyp:en-subdif}). 

We start with Example \ref{ex:4.1}, where we provide sufficient
conditions on the nonlinearities $\beta$ and $W$ guaranteeing the
validity of \eqref{eq:18-new}.

\begin{example}
  \label{ex:4.1}
  \slshape
    We take
    \begin{equation}
    \label{nonlin-ex-4.1}
    \beta(|\nabla u|)=\frac12 |\nabla u|^2 \quad \text{ and } \quad   \text{$W \in \mathrm{C}^1(\R)$,
     $\lambda$-convex
     for some $\lambda \in \R$;}
\end{equation}
for instance, one may think of the double-well potential $W(u)=
(1-u^2)^2/4$.  Clearly, $\cE$ from \eqref{disc-gene} fulfills
\eqref{eq:17}.  In order to check \eqref{eq:18-new}, we fix $u,\,v \in
\domainenergy$ and estimate, for $\theta \in [0,1]$,
\begin{equation}\label{eq:EConv.1}
      \begin{aligned}
      \ene t {(1{-}\theta)u + \theta v}
      &  \leq (1-\theta) \ene t u  +\theta  \ene t v
      -\frac{(1{-}\theta)\theta}{2} \Big( \| \nabla(u{-}v) \|^2_{L^2(\Omega)} +
      \lambda \|u{-}v \|^2_{L^2(\Omega)} \Big),
      \end{aligned}
\end{equation}
where we used 1-convexity of $\beta$ and $\lambda$-convexity of
$W$. Hence, for $\lambda>0$ we have \eqref{eq:18-new} with $\alpha_E
=\lambda $ and $\Lambda_E=0$. If $\lambda <0$, we use the
Gagliardo-Nirenberg inequality 
\[
 \begin{aligned}
      \|w \|_{L^2(\Omega)} &\leq C_{\mathrm{GN}} \big(\|w
         \|_{L^1(\Omega)}^{2/{(d+2)}} \| \nabla w
         \|_{L^2(\Omega)}^{d/(d+2)} + \|w \|_{L^1(\Omega)}\big) 
      \leq   \big( \tfrac1{1+|\lambda|} \| \nabla w
         \|_{L^2(\Omega)}^2 + M_\lambda \|w \|_{L^1(\Omega)}^2 \big)^{1/2} ,
  \end{aligned}
\]
for some $M_\lambda>0$, which is equivalent to $-\|\nabla
w\|_{L^2(\Omega)}^2 \leq -(1{+}|\lambda|)\| w\|_{L^2(\Omega)}^2
+(1{+}|\lambda|)M_\lambda\|w \|_{L^1(\Omega)}^2$. Inserting this for
$w=u{-}v$ into \eqref{eq:EConv.1} we obtain estimate \eqref{eq:18-new}
with $\alpha_E =(1{+}|\lambda|)+\lambda=1>0 $ and
$\Lambda_E=(1{+}|\lambda|)M_\lambda $. In particular, we have no
dependence on the energy sublevel $E$.

In fact, it can be checked that for suitably convex
functions $\beta$ with the growth $\beta(|\nabla u|) \geq
c_1|\nabla u|^p -c_2 $ for some  $c_1, c_2>0$
the related functional $\cE$ in \eqref{disc-gene} still
complies with \eqref{eq:18-new}, if $p>p_d$ for a suitable $p_d>1$
depending on the dimension  $d$. 
\end{example}

Our next example treats the case in which $\beta$ has only linear
growth.  Even taking a \emph{convex} function $W$, the generalized
convexity condition \eqref{eq:18-new} is no longer
guaranteed. Nonetheless, since the functional $u\mapsto \ene tu$ is
convex, its Fr\'echet subdifferential reduces to the subdifferential
in the sense of convex analysis, and \eqref{hyp:en-subdif} clearly
holds.  In this setting, we show that there exist $\BV$ solutions to
the rate-independent system $\RIS$, which are \emph{not
  $\V$-parameterizable}.

\begin{example}
\label{ex:3.a}\slshape
We consider the one-dimensional domain $\Omega=(0,l)$ for some 
$l>1$ and take
\begin{equation}
    \label{nonlin-ex-4.1-bis}
    \beta\left(\left| \tfrac{\dd}{\dd x}u\right|\right)=\delta \left| \tfrac{\dd}{\dd x}u\right|
    \quad \text{ with $\delta>0$,  } \quad
    W(u)=\mathrm{I}_{[0,1]}(u) = \left\{
    \begin{array}{ll}
    0 & \text{if } u \in [0,1],
    \\
    +\infty & \text{otherwise,}
    \end{array}
    \right.
     \end{equation}
     and the external loading $\ell: [0,T]\times (0,l) \to
     \R$ with $\ell(t,x)= t{+}2{-}x$,  where $0<T \leq
     l{-}1$. Observe that, thanks to the compactifying character
     of the total-variation contribution $\delta \int_0^l |
     \tfrac{\dd}{\dd x} u| \, \dd x$, the energy $\cE$ fulfills
     \eqref{eq:17}.  We now show that the function
     \[
     u(t,x)= \chi_{[0,a(t)]} (x)= \left\{
    \begin{array}{ll}
    1 & \text{for } x \in [0,a(t)],
    \\
    0 & \text{otherwise}
    \end{array}
    \right.
     \]
     for some continuous and nondecreasing function $a: [0,T] \to
     [0,l]$, which will be specified later, is a $\BV$ solution to the
     \ris\  $\RIS$.

     Concerning the energy balance \eqref{eq:84}, we observe that,
      since $u \in \mathrm{C}^0 ([0,T] ; L^2(0,l))$ there holds
      \[
      \pVar{\vvmnametil}u{0}{t}= \Var{\| \cdot\|_{L^1(0,l)}}{u}0t= a(t)-a(0) \quad \text{for all } t \in [0,1],
      \]
where we also used that $a$ is nondecreasing.
Easy calculations give
$\ene t{u(t)}= \delta -(t{+}2)a(t)+\frac{a^2(t)}{2}$
and $\power  t{u(t)}=-a(t)$, therefore
\eqref{eq:84}
yields the flow rule for the moving interface $a$:
\[
\dot{a}(t) (a(t){-}1{-}t)=0 \quad \Rightarrow \quad a(t)= 1+t\quad
\text{for all } t \in {[0,T]. }
\]

Since $\ene t{\cdot}$ is convex, $u$ fulfills the local stability
$\eqref{eq:65bis} $ if and only if it complies with the global
stability condition \eqref{eq:64bis-intro}, which in the present
setting reads
\begin{equation}
\label{glob-stab}
\begin{array}{ll}
  \delta -\frac12 (t{+1}) (3t{+}5)
  & = \ene t{u(t)} \leq \ene t{v}+ \| v{-}u(t)\|_{L^1(0,l)}
  \\ & = \int_0^l \left( \delta|\tfrac {\dd}{\dd x} v|+ |v{-}\chi_{[0,t+1]}|- (t{+}2{-}x)v \right) \, \dd x
  \\
  & =\delta \int_0^l |\tfrac {\dd}{\dd x} v|\, \dd x +t+1 -\int_0^{t+1} (t{+}3{-}x)v \, \dd x +\int_{t+1}^l
  (x{-}1{-}t)v \, \dd x
   \end{array}
\end{equation}
for all $v \in L^1(0,l)$ and $t \in [0,1]$. With some calculations one
can show that for all $\delta \in [0,2]$ and $l \geq 4$ the function
$u(t,x)=\chi_{[0,t+1]}(x)$ fulfills \eqref{glob-stab}, hence it is a
$\BV$ solution.  Indeed, $u$ is a $\BV$ solution also in the case
$\delta=0$, in which $\cE$ does not comply with \eqref{eq:17} and our
existence results Thms.\ \ref{th:1} and \ref{th:2} do not apply.
Although $u \in \mathrm{C}^{\mathrm{lip}}([0,1]; L^1 (0,l))$, we have
that $u \notin \BV([0,1]; L^2 (0,l))$, therefore it is not a
$\V$-parameterizable $\BV$ solution.
\end{example}

We now revisit \cite[Ex.\,4.4, 4.27]{Miel08?DEMF}, which means in our
notation that $\beta \equiv 0$ and that $W$ is of double-well
type. Relying on the calculations from \cite{Miel08?DEMF}, we show
that as $\eps \to 0$ the viscous solutions converge to a curve $u$,
which is not a $\BV$ solution to the rate-independent system $\RIS$.
Observe that in this case neither \eqref{eq:17}, nor the
(parameterized) chain-rule inequality \eqref{eq:RIF:5bis}, are
fulfilled.

\begin{example}
\label{ex:4.3}
\slshape
We take $\Omega=(0,1)$, $\beta \equiv 0$, $\ell(t,x)=t+x$, and
\begin{equation}
\label{explicit-d-well}
W(u)= \left\{
\begin{array}{ll}
\frac12 (u{+}4)^2 & \text{if } u \leq -2, \\
4-\frac12 u^2 & \text{if } |u|<2,\\
\frac12 (u{-}4)^2 & \text{if } u \geq 2,
\end{array}
 \right.
\end{equation}
In \cite[Ex.\,4.4]{Miel08?DEMF} the unique
solution to the viscous problem
\begin{equation}
\label{viscous-problem}
\mathrm{Sign}(\dot{u}_\eps (t,x))+ \eps \dot{u}_\eps (t,x)
+ W'({u}_\eps (t,x)) \ni \ell(t,x) \ \text{ and } \ u_\eps(0,x)=-4 
\end{equation}
was explicitly
calculated: We have $u_\eps(t,x)=V^ \eps(t{+}x) $, where
$V^\eps(\tau)=-4$ for $\tau\leq 1{+}\eps$ and it coincides with the
unique solution $v$ of  $\mathrm{Sign}(v'(\tau))+\eps v'(\tau)
+W'(v(\tau))\ni \tau$ for $\tau\geq 1{+}\eps$.  
It was shown that, on the time-interval
$[0,6]$ the functions $(u_\eps)_\eps$ have a uniform Lipschitz bound
with values in $L^1(0,1)$, whereas $\int_0^6 \| \dot{u}_\eps
\|_{L^2(0,1)}\, \dd t$ tends to $\infty$ as $\eps \to 0$ like
$1/\sqrt{\eps}$.  Moreover, setting
\[
\bar{u}(t,x)= \max\{ -4, t{+}x{-}5\} \text{ for } t{+}x \leq 3\quad
\text{ and } \bar{u}(t,x)=  t{+}x{+}3 \text{ for } t{+}x > 3
\]
we have $\bar{u} \in \mathrm{C}^0
([0,6];L^2(0,1))\cap \mathrm{C}^{\mathrm{lip}}([0,6]; L^1(0,1))$ and
 $\sup_{t \in [0,6]}\|u_\eps(t)-\bar{u}(t)\|_{L^2(0,1)} \to 0 $ as
$\eps \to 0$, hence obviously $\ene t{u_\eps(t)}\to \ene
t{\bar{u}(t)}$ for all $t \in [0,6]$.

It can be shown that $\bar{u}(t)$ complies with the local stability
condition \eqref{eq:65bis} for all $t \in [0,6]$.  However, $u$ does
not comply with the energy balance \eqref{eq:84}. In fact, by
continuity of $\bar u$ we have $\pVar{\vvmnametil}{\bar u}{0}{t}=
\Var{\| \cdot\|_{L^1(0,l)}}{\bar u}0t$ for all $t \in [0,6]$, and
passing to the limit as $\eps \to 0$ in the \emph{viscous} energy
balance \eqref{eq:52bis} it can be calculated explicitly that for all
$t \in [0,6]$
\begin{equation}
\label{not-en-bal} \textstyle
\Var{\| \cdot\|_{L^1}}{\bar u}0t + \ene t{\bar u(t)} -\ene t{\bar u(0)}-
\!\int\limits_0^t\! \power s{\bar u(s)}
\!\; \dd s= 8 \max \{0,\min\{t{-}2,1\}\}  =: \rho(t).
\end{equation}
Therefore, following \cite{Miel08?DEMF} we observe that there is an
additional limit dissipation $\rho$ in \eqref{not-en-bal}, and
$\bar u$ is not a $\BV$ solution.

In fact, the chain-rule inequality \eqref{eq:RIF:5bis} does not hold
along the parameterized curve (cf.\ Definition \ref{def:3.5})
$(\parat,\parau) \in \adm{0}{6}{0}{6}{L^2(0,1)}$ given by $s \mapsto
(\parat(s),\parau(s)):= (s,\bar u(s)) \in [0,6]\times L^2(0,1)$. On
the one hand, since $\bar u$ satisfies \eqref{eq:65bis} on $[0,6]$, we
have  $\vvmV
{\parat(s)}{\parau(s)}{\dot{\parau}(s)}_{L^2(0,1)}\equiv 0$  on
$[0,6]$.  On the other hand, \eqref{not-en-bal} yields for almost all
$s \in (0,6)$
  \begin{equation}
  \label{not-chain-rule}
 \frac \rmd{\rmd s}\EE_{\parat(s)}(\parau(s))-
\power {\parat(s)}{\parau(s)}\dot{\parat}(s)
 =-\dot{\rho}(\parat (s))  -  |{\parau}'|_{L^1(0,1)}(s), 
\end{equation}
where $|{\parau}'|_{L^1(0,1)}$  denotes the $L^1(0,1)$-metric
derivative of $u$, cf.\ \eqref{eq:13}.  Clearly, the
right-hand side of \eqref{not-chain-rule} is strictly smaller than
$|{\parau}'|_{L^1(0,1)}(s)$ for $s \in (2,3)$.
\end{example}

In the final example we recover the coercivity condition
\eqref{eq:17} by taking a nonzero $\beta$, with linear
growth. Nonetheless, unlike Example \ref{ex:3.a} we only require $W$
to be $\lambda$-convex: in this case, the chain-rule inequality
\eqref{eq:RIF:5bis} is still not valid.

\begin{example}
\label{ex:4.4}
\slshape We take $\Omega=(0,l)$ with $l >2$, $\beta(|\tfrac {\dd}{\dd
  x} u|)= |\tfrac {\dd}{\dd x} u|$, the double-well potential $W$
\eqref{explicit-d-well}, and $\ell(t,x) \equiv 2$ for all $(t,x)\in
[0,T]\times (0,l)$, where $0<T\leq l{-}2$. We show that the
parameterized curve $s \in [0,T] \mapsto
(\parat(s),\parau(s)):=(s,\bar{u}(s)) \in [0,T] \times L^2(0,l)$ with
  \begin{equation}
  \label{explicit-bar-u}
 \bar{u}(t,x):=
 \left\{
 \begin{array}{ll}
 6 & \text{for } 0 \leq x \leq t+1,
 \\
 -2  & \text{for } t+1 <x\leq l
 \end{array}
 \right. \quad \text{for all } (t,x) \in [0,T] \times[0,l]
\end{equation}
does not comply with the chain-rule inequality \eqref{eq:RIF:5bis}.
Note that $\bar u$ satisfies $\bar u \in \mathrm{C}^0
([0,T];L^2(0,l))\cap \mathrm{C}^{\mathrm{lip}}([0,T]; L^1(0,l) )$ with
$\|\bar u(t_1){-}\bar u(t_2)\|_{L^2(0,l)} =8|t_1{-}t_2|^{1/2}$ and
$\|\bar u(t_1){-}\bar u(t_2)\|_{L^1(0,l)} =8|t_1{-}t_2|$. The latter
implies $|{\bar u}'|_{L^1(0,l)} \equiv 8$.

To see that the chain-rule inequality \eqref{eq:RIF:5bis} does not
hold, we employ  \eqref{explicit-bar-u} to find 
\begin{equation}
\label{explicit-calcul-energy}
\begin{aligned}
\ene t{\bar{u}(t)}&= \mathcal{V}(\bar{u}(t))+ \int_0^l
(W(\bar{u}(t,x)){-}2 \bar u(t,x) ) \, \dd x
\\ & = 8 + \int_0^t (W(6){-}12)\, \dd x  + \int_t^l (W({-}2){+}4) \,
\dd x \  = \ 8 +6 l -16 t,
 \end{aligned}
\end{equation}
where we have used the notation $\mathcal{V}(u):= \int_0^l |\tfrac{\dd
}{\dd x}u|\, \dd x $ for the total variation functional on $(0,l)$.
Next we show that $\bar u$ satisfies \eqref{eq:65bis}, i.e.\ $ K^*
+ \partial\ene t{\bar u(t)} \ni 0$ for all $t\in [0,T]$. For this, we
claim that
\begin{equation}
\label{eq:xi_t}
\xi_t \in \partial \ene t {\bar u(t)} \quad \text{ with } \xi_t(x)=
\left\{
\begin{array}{ll}
\frac1{1+t} & \text{for } 0<x<t{+}1,
\\
\frac{-1}{l-1-t} & \text{for } t{+}1 < x < l.
\end{array}
 \right.
\end{equation}
To see this, we use $\mathcal V(\bar u(t))=8$ and estimate, for
general $v \in \mathrm{BV}(0,l)$, as follows:
\begin{align*}
\mathcal V(v)-\mathcal V(\bar u(t))  &\geq
\mathop{\mathrm{ess\,sup}}_{  x \in (0,l) } v -
\mathop{\mathrm{ess\,inf}}_{  x \in (0,l)  } v \, - 8 
\ \geq \ \textstyle \frac1{1+t}\int_0^{1+t} v(x) \dd x - 
   \frac1{l-1-t} \int_{1+t}^l v(x) \dd x \;-\, 8\\
&\textstyle = \int_0^l \xi_t(x) \big( v(x) - \bar u(t,x)\big)  \dd x
 \ =  \ \langle \xi_t, v{-}\bar u(t)\rangle_{L^2(0,l)}.
\end{align*}
Using the $(-1)$-convexity of $W$, we obtain, for all $v\in L^2(0,l)$,
the estimate
\[
\ene t v - \ene t {\bar u(t)} \geq \langle \xi_t , v{-}\bar u(t)
\rangle - \tfrac12 \| v{-}\bar u(t)\|_{L^2(0,l)}^2,
\]
implying \eqref{eq:xi_t}, cf.\ Definition  \eqref{eq:RIF2:16}
for Fr\'echet subdifferentials.  Because of\/ 
$0\leq t \leq T\leq l{-}2$ we have $\|\xi_t\|_{L^\infty} =
\max\{\frac1{1+t} ,\frac1{l-1-t} \} \leq 1$ for all $t\in
[0,T]$. Hence, $\xi_t \in K^*=\{\; \xi \; : \; \|\xi\|_{L^\infty}
\leq 1\;\}$, and \eqref{eq:65bis} is established.

Now returning to the notation of the parameterized solution
$(\parat(s),\parau(s))=(s,\bar{u}(s))$ for $s\in [0,T]$, we find
$\vvmV {\parat(s)}{\parau(s)}{\dot{\parau}(s)}_{ L^2(0,l) 
} \equiv 0$ on $[0,T]$. Moreover, $\power {\parat(s)}{\parau(s)}
\equiv 0 $ as well, whereas $ |{\parau}'|_{L^1(0,l)} (s) \equiv
8$. Thus, on account of \eqref{explicit-calcul-energy} we conclude
that
\[
 \frac \rmd{\rmd s}\EE_{\parat(s)}(\parau(s))-
 \power {\parat(s)}{\parau(s)}\dot{\parat}(s)= -16 \ \lneqq \ -8=
  - |{\parau}'|_{L^1(0,l)}(s) -\vvmV {\parat(s)}{\parau(s)}
 {\dot{\parau}(s)}_{ L^2(0,l)  }\,,
\]
which is a contradiction to the chain-rule inequality \eqref{eq:RIF:5bis}.
\end{example}

\section{Chain-rule inequalities for $\BV$ and parameterized curves}
\label{s:chain}
In this section we will collect the proof of the chain-rule
inequalities stated in Theorems \ref{prop:bv-chainrule} and
\ref{th:3.8}. We first consider the case of parameterized curves,
hence, using the reparameterization technique of Proposition
\ref{prop:bv} we deduce Theorem \ref{prop:bv-chainrule}.

\subsection{Chain rule for admissible parameterized curves: 
proof of Theorem \ref{th:3.8}} \label{ss:8-chain}

We split the proof in two claims.
\\[0.3em]
\textbf{Claim (1):} \emph{the map $s\mapsto \EE_{\sft(s)}(\parau(s))$
  is absolutely continuous on $[\mathsf{a},\mathsf{b}]$.}  First of
all, we observe that, since $\sup_{s\in
  [\mathsf{a},\mathsf{b}]}\ene{\sft(s)}{\sfu(s)}=: E<\infty$, by
\eqref{hyp:en-subdif} we have
$\bar{\omega}:=\sup_{r,s,\sigma}\omega_r^E
(\parau(s),\parau(\sigma))<\infty$.  We decompose the open set $G$
defined by \eqref{eq:RIF2:6} as the disjoint union of open intervals
$G_k$.  We fix $\mathsf{a}\le r\le s\le \mathsf{b}$ and we consider
the following cases:
\\[0.3em]
\textbullet\ $r,s\in [0,T]\setminus G$.  By \eqref{hyp:en3} and
estimate \eqref{gronwall-dixit} there exists a constant $C>0$
(independent of $r,s$) such that
\begin{displaymath}
      |\EE_{\sft(s)}(\parau(r))-\EE_{\sft(r)}(\parau(r))|\le C\int_r^s \dot{\sft}(\sigma)  \dd \sigma,\quad
      |\EE_{\sft(r)}(\parau(s))-\EE_{\sft(s)}(\parau(s))|\le C\int_r^s  \dot{\sft}(\sigma)  \dd
      \sigma.
\end{displaymath}
In view of \eqref{hyp:en-subdif}, for $\xi(s)\in
\partial\ene{\sft(s)}{\sfu(s)}$
fulfilling $\xi(s) \in K^*$ we have
\begin{align*}
      \EE_{\sft(s)}(\parau(s))-\EE_{\sft(s)}(\parau(r))&\le \la \xi(s),\parau(s){-}\parau(r)\ra+
      \bar{\omega} \Dnorm{\parau(s){-}\parau(r)}
      \\ & \leq \Diss{\Bo}{} (\parau(s){-}\parau(r)) +  \bar{\omega} \Diss{\Bo}{} (\parau(s){-}\parau(r))
      \leq (1+ \bar{\omega}) \int_r^s \scalardens\Psiz {\parau}(\sigma)\dd
      \sigma,
\end{align*}
where the second inequality follows from \eqref{eq:18} and the last
one from \eqref{eq:13a} and the minimal representation
$m=\scalardens\Psiz \parau$.  Analogously, arguing with
$\xi(r)\in \partial \ene {\sft(r)}{\sfu(r)}\cap K^*$, we have
$\EE_{\sft(r)}(\parau(r))-\EE_{\sft(r)}(\parau(s)) \leq
(1+\bar{\omega}) \int_r^s \scalardens \Psiz\parau(\sigma)\dd
\sigma$. All in all, we conclude
\begin{equation}
      \label{eq:RIF2:9}
  |\EE_{\sft(s)}(\parau(s))-\EE_{\sft(r)}(\parau(r))|\le
   C_1 \int_r^s \Big(\dot{\sft}(\sigma) +
   \scalardens\Psiz\parau (\sigma)\Big) \dd
      \sigma,\quad   C_1:= 2(C+1+\bar \omega).
\end{equation}
\\[0.3em]
\textbullet\ $r,s$ belong to the closure $\overline {G_k}$ of the same
connected component $G_k =(a_k,b_k)$ for some $k$.  It is not
restrictive to assume that $r,s\in G_k$.  Then $\sft \equiv
\bar{\sft}$ is constant in $G_k$ by (2) in Definition \ref{def:3.5}
and $\parau \in \AC([r,s];\V)$.   We denote by $ \frsub^\circ
\mathcal{E} : [0,T] \times D \rightrightarrows V^*$ the multivalued
map defined by
\[ 
 \xi \in  \frsub^\circ \ene tu \text{ if and only if }
 \Vnorm{\xi}_*= \min\{ \Vnorm{\zeta}_*\, :\, \zeta \in \frsub \ene tu\},
\] 
with the usual convention that the latter quantity is $+\infty$ if
$\frsub\cE_t(u)$ is empty.  Since $K^*$ is bounded in $V^*$, the
definition of $\fre_{\bar t}(u)$ in \eqref{eq:7} gives the estimate
\begin{displaymath}
      \fre_{\bar t}(\parau(\theta))\ge
      \|\partial^\circ\ene{\bar t}{\parau(\theta)}\|_*-\mathsf K,
      \quad
\text{where }\mathsf K:=\sup\{ \| z\|\, : \,z\in K^*\}, 
\end{displaymath}
and we conclude that $ \int_r^s \|\partial^\circ\ene{\bar
  t}{\parau(\theta)}\|\,\|\dot\parau(\theta)\|\,\dd\theta<\infty$.
Hence the  chain rule 
(analogous to Theorem \ref{thm-from-mrs12},
see the arguments of \cite[Theorem 1.2.5]{AGS08} and \cite[Proposition 2.4]{MRS-dne})
provides the absolute continuity the energy map in $G_k$ and for $\Leb
1$-a.a.~$\theta\in G_k$ we have
\begin{align}
      \label{eq:52first}
      \frac\dd{\dd \theta}\ene{\bar t}{\parau (\theta)}
      &=\langle \xi,\dot\parau(\theta)\rangle\quad
      \forevery \xi\in \partial\ene {\bar t}{\sfu(\theta)},\\
      \label{eq:33bis}
      \Big|\frac\dd{\dd \theta}\ene{\bar t}{\parau (\theta)}
      \Big|&
    \le \Psiz(\dot\parau(\theta))+\fre_{\bar t}(\sfu(\theta))\|\dot\parau(\theta)\|.
\end{align}
\\[0.3em]
\textbullet\ $r\in G,s\in [0,T]$ with $r<s$ (or viceversa): we denote
by $\sigma$ the right boundary point of the interval $G_k\ni r$;
combining \eqref{eq:RIF2:9} with the integrated form of
\eqref{eq:33bis} we obtain
\begin{align*}
      &\big|\EE_{\sft(s)}(\parau(s))-\EE_{\sft(r)}(\parau(r))\big|\le
      \big|\EE_{\sft(s)}(\parau(s))-\EE_{\sft(\sigma)}(\parau(\sigma))\big|
      +
      \big|\EE_{\sft(\sigma)}(\parau(\sigma))-\EE_{\sft(r)}(\parau(r))\big|
      \\&\quad\le
      C_P\int_\sigma^s \Big(\dot{\sft}(\rho) + \scalardens\Psiz{\parau}(\rho)\Big) \dd
      \rho
      + \int_{r}^\sigma\Big(\Psiz(\dot\parau(\rho))
      +\fre_{\parat(\rho)}(\parau(\rho))\|\dot\parau(\rho)\|\Big)\,\dd\rho
      \\&\quad =\int_r^s h(\rho)\,\dd \rho\quad\text{with }h\in L^1(0,T).
\end{align*}
\vspace*{0.2em}      

\noindent
\textbf{Claim (2):} \emph{the chain-rule inequality
  \eqref{eq:RIF:5bis} holds.} It follows from Claim (1) there exists a
set of full measure $\mathcal{T} \subset (a,b)$ such that for all $s
\in \mathcal{T}$ the function $\sft$ is differentiable at $s$, the
first of \eqref{hyp:en3} holds at $s$, the $\Diss{\Bo}{}$-metric
derivative $ \scalardens\Psiz\parau(s)$ exists, and, if $s\in G$, the
map $\parau$ is $\V$-differentiable at $s$.  Hence, we evaluate the
derivative of the map $\EE_{\sft(\cdot)}(\parau(\cdot))$ at $s \in
\mathcal{T}$: if $s\in \cup_k\overline{ G_k}$ we immediately get the
thesis by \eqref{eq:33bis} (notice that $\Leb 1\big((\cup_k\overline{
  G_k}) \setminus G\big)=0$).  If $s\in [0,T]\setminus \cup_k
\overline{G_k}$ then $r=s-h\in [0,T]\setminus G$ for infinitely main
values of $h>0$, accumulating at $0$.  Since
$\fre_{\sft(r)}(\sfu(r))=0$ we can choose $\xi(r)\in
-\partial\EE_{\sft(r)}(\parau(r))\cap K^*$ and thanks to
\eqref{hyp:en-subdif} we have
\begin{align}
    \notag&
    \frac{\EE_{\sft({s})}(\parau({s}))-\EE_{\sft(r)}(\parau(r))}h
    =
    \frac{\big(\EE_{\sft({s})}(\parau({s}))-\EE_{\sft({r})}(\parau({s}))\big)
      +\big(\EE_{\sft({r})}(\parau({s}))-\EE_{\sft(r)}(\parau(r))\big) }h
    \\&    \label{e:3.26}
   \ge  \la \xi({r}),\frac 1h(\parau({s})-\parau(r))\ra  -
   \frac 1h\omega_{r}(\parau({s}),\parau(r))\Dnorm{\parau({s})-\parau(r)}
     +
     \frac{\EE_{\sft(s)}(\parau(s))-\EE_{\sft(r)}(\parau(s))}h
     \\
   \notag &\ge
   -\frac{1+\omega_{r}(\parau({s}),\parau(r))}h\Psiz(\parau({s})-\parau(r))
   +\frac 1h\int_r^s \power{\parat(\theta)}{\parau(s)}\dot\parat(\theta)\,\dd \theta
\end{align}
In the limit $r\up s$, with $r\in [0,T]\setminus G$, we get
the lower bound
$ \frac \rmd{\rmd s}\EE_{\parat(s)}(\parau(s))-\power
{\parat(s)}{\parau(s)}\dot{\parat}(s)\ge -\scalardens\Psiz{\parau}(s).
$ 
The corresponding upper bound can be obtained by choosing $r=s+h$,
$h>0$, in \eqref{e:3.26}, and passing to the limit as $r\down s$.

Whenever $\sfu$ is differentiable $\Leb 1$-a.e., the chain rule
\eqref{eq:4} follows from \eqref{eq:52first} and \eqref{e:3.26} by a
similar argument. Hence, Theorem \ref{th:3.8} is proved.
\qed\medskip

By applying Theorem \ref{th:3.8} to the parameterized curve $[0,1]\ni
r\mapsto (t,\vartheta(r))$ associated with any admissible transition
$\vartheta\in \calT_t(u_0,u_1)$ we immediately have the desired jump estimates.

\begin{corollary}\label{cor:F1}
  The jump estimates \eqref{eq:36} and \eqref{eq:35first} hold true.
\end{corollary}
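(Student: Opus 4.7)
The plan is to reduce both jump estimates to the parameterized chain-rule inequality \eqref{eq:RIF:5bis} by viewing any admissible transition as a parameterized curve with constant time component. Fix $t\in[0,T]$ and $u_-,u_+\in D_E$ with $\Delta_{\frf_t}(u_-,u_+)<\infty$ (otherwise \eqref{eq:36} is trivial). By assertion (F1) of Theorem \ref{thm:fcost} we may pick an optimal $\vartheta\in\calT_t(u_-,u_+)$, and associate to it the parameterized curve $(\sft,\sfu):[0,1]\to [0,T]\times V$ defined by $\sft(r)\equiv t$, $\sfu(r):=\vartheta(r)$.

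First I would check that $(\sft,\sfu)\in\adm{0}{1}{0}{T}{V}$. Since $\sft$ is constant (hence trivially absolutely continuous with $\dot\sft\equiv 0$), and since $\vartheta$ is admissible in the sense of Definition \ref{def:admissible}, conditions (1)--(2) of Definition \ref{def:3.5} are immediate: $\sfu\in\AC([0,1];D_{E'},\Psiz)$ for some $E'>0$, the set $G=\{r:\fre_{\sft(r)}(\sfu(r))>0\}$ coincides with $G_t[\vartheta]$, $\sfu$ is locally $V$-absolutely continuous on $G$, and $\sft$ is (trivially) constant on each component of $G$. The integrability requirement \eqref{eq:RIF:4} holds because $\int_0^1\frf_t[\vartheta;\vartheta']\,\dd r=\Delta_{\frf_t}(u_-,u_+)<\infty$.

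Next I would apply Theorem \ref{th:3.8} to $(\sft,\sfu)$. Since $\dot\sft\equiv 0$ the power term in \eqref{eq:RIF:5bis} vanishes, the map $r\mapsto e(r):=\ene{t}{\vartheta(r)}$ is absolutely continuous on $[0,1]$, and for $\Leb^1$-a.a.~$r\in(0,1)$
\begin{equation*}
  \Big|\tfrac{\dd}{\dd r}\ene{t}{\vartheta(r)}\Big|\;\le\;\scalardens\Psiz\vartheta(r)+\vvmV{t}{\vartheta(r)}{\dot\vartheta(r)}=\frf_t[\vartheta;\vartheta'](r),
\end{equation*}
using the convention \eqref{eq:51} outside $G_t[\vartheta]$. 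Integrating over $[0,1]$ and using the triangle inequality yields
\begin{equation*}
  \big|\ene{t}{u_+}-\ene{t}{u_-}\big|\;\le\;\int_0^1\frf_t[\vartheta;\vartheta'](r)\,\dd r\;=\;\Delta_{\frf_t}(u_-,u_+),
\end{equation*}
which proves \eqref{eq:36}. Finally, \eqref{eq:35first} follows by specializing the just-proved inequality to the pairs $(u_-,u_+)=(w(t_-),w(t))$ and $(w(t),w(t_+))$, both of which lie in $D_E$ because $w\in\BV([0,T];D_E,\Psiz)$.

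The only mildly delicate point is verifying admissibility of $(\sft,\sfu)$ from the admissibility of $\vartheta$ — everything else is a direct application of the chain rule once the power term is killed by the choice $\dot\sft\equiv 0$, so there is no real obstacle beyond the bookkeeping.
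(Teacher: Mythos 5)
Your core approach is exactly the paper's: associate to a transition $\vartheta$ the parameterized curve $r\mapsto(\sft(r),\sfu(r))=(t,\vartheta(r))$ with constant time component, check it lies in $\adm{0}{1}{0}{T}{\V}$, apply the parameterized chain-rule inequality \eqref{eq:RIF:5bis} (in which $\dot\sft\equiv 0$ annihilates the power term), and integrate over $[0,1]$. That is precisely how the paper proves the corollary, and your verification of admissibility (conditions (1)--(3) of Definition \ref{def:3.5}, with $G=G_t[\vartheta]$) is correct.

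There is, however, one problem of logical ordering: you invoke assertion (F1) of Theorem \ref{thm:fcost} to pick an \emph{optimal} $\vartheta$, but in the paper Corollary \ref{cor:F1} appears in \S\ref{ss:8-chain}, \emph{before} the proof of Theorem \ref{thm:fcost} in \S\ref{ss:clsc}, and the paper's proof of (F1) explicitly cites Corollary \ref{cor:F1} to conclude. So your argument, as written, is circular. Fortunately the optimality of $\vartheta$ plays no role: it is enough to take an arbitrary $\vartheta\in\calT_t(u_-,u_+)$ with $\int_0^1\frf_t[\vartheta;\vartheta'](r)\,\dd r<\infty$ (such $\vartheta$ exist whenever $\Delta_{\frf_t}(u_-,u_+)<\infty$, by definition of the infimum), run the same chain-rule argument to obtain
\begin{equation*}
  \big|\ene{t}{u_+}-\ene{t}{u_-}\big|\;\le\;\int_0^1\frf_t[\vartheta;\vartheta'](r)\,\dd r,
\end{equation*}
and then infimize over $\vartheta$. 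With this replacement — and keeping the specialization to $(w(t_-),w(t))$ and $(w(t),w(t_+))$ for \eqref{eq:35first} — your proof coincides with the paper's.
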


\subsection{Chain rule for $\BV$ curves: proof of Theorem \ref{prop:bv-chainrule}}
\label{ss:chain-bv}

It is clearly not restrictive to assume $t_0=0,\ t_1=T$.
If $u\in \BV([0,T];D_E,\Psiz)$
satisfies the local stability condition and
$\pVar\frf u0T<\infty$ as in the statement of the Theorem,
we apply assertion  (BVP3) of Proposition \ref{prop:bv}: the chain-rule inequality
 \eqref{ch-rule-ineq} follows then by
 the parameterized chain rule
 \eqref{eq:RIF:5bis}, combined with
 \eqref{eq:95tris} and
\eqref{eq:97}.

Let us now check \eqref{eq:86} in the case  $u\in \BV([0,T];V)$.
We will use the simpler change of variable formula
\begin{equation}
\begin{gathered}
  \mathsf s(t):=t+\Var{} u0t,\quad \mathsf S:=\sfs(T),\quad
\end{gathered}\label{eq:43bis}
\end{equation}
keeping the same notation as in  \eqref{eq:92} for $\sft$, $\sfu$, $I_n$, and $I$. 
We will use two basic facts: the first property
concerns the diffuse part $\sfs_\dd'$ of the distributional
derivative of $\sfs$ and
has been proved in \cite[Prop.\,6.11]{MRS10}
(the proof does not rely on the finite-dimensional setting  therein considered), namely
\begin{equation}
  \label{eq:90}
  u_{\rm d}'=\nn\|u_{\rm d}'\|=(\dot \sfu\circ\sfs)\,\sfs_{\rmd}',\quad
  \Leb 1_{(0,T)}=(\dot \sft\circ\sfs)\,\sfs_{\rmd}'.
\end{equation}
The second fact is a general property of the distributional
derivative
of an increasing map, viz.
\begin{equation}
  \label{eq:91}
  \sft_\sharp \big(\Leb 1_{[0,\sfS]}\big)=\sfs_{\rm d}'.
\end{equation}
We set
\begin{displaymath}
  \sfe(s):=
  \begin{cases}
    e(\sft(s))=\ene{\sft(s)}{\sfu(s)}&\text{if }s\in (0,\mathsf{S}) \setminus I, \\
     \text{affine interpolation of }  e(t_{n-}),e(t_{n+}) 
    &\text{if }s\in I_n \text{ for some }n\in \N,
  \end{cases}
\end{displaymath}
and we extend in a similar way $\sfs $ in each interval $I_n$. Now
$\sfu $ defined by \eqref{eq:92} is absolutely continuous and
arguing as in \S\,\ref{ss:8-chain} we can easily prove that
$\sfe$ is absolutely continuous with derivative
\begin{equation}
  \label{eq:93bis}
  \dot \sfe(s)=-\langle \xi(\sft(s)),\dot\sfu(s)\rangle
  +\power {\sft(s)}{\sfu(s)}\dot\sft (s)\quad\text{for $\Leb
    1$-a.a.~$s\in (0,\mathsf{S}) $.} 
\end{equation}
On the other hand $\sfe(s)=e(\sft(s))$ whenever $s\in
[0,\sfS]\setminus \bigcup\overline{I_n}$. Since $\sft(s)\equiv t_n$
and $\dot \sft(s)\equiv 0$ in $I_n$ we obtain
$e(\sft(s))\dot\sft(s)=\sfe(s)\dot\sft(s)$ for a.a.~$s\in (0,\sfS)$.
Hence, for every $\zeta \in \rmC^1([0,T])$ with compact support in
$(0,T)$ we obtain
\begin{align*}
  \int_{[0,T]} &\zeta(t)\,\dd e_{\rm d}'(t)=
  -\int_0^T \dot \zeta(t)e(t)\,\dd t-\sum_{t\in \mathrm J(u)} \zeta(t)
  (e(t_+)-e(t_-))
  \\&=
   -\int_0^\sfS \dot\zeta(\sft(s))e(\sft(s))\dot\sft(s)\,\dd s-\sum_{n}
   \zeta(t_n)
   (e(t_{n+})-e(t_{n-}))
\\&=
   -\int_0^\sfS \dot\zeta(\sft(s))\sfe(s)\dot\sft(s)\,\dd s-\sum_{n}
   \zeta(t_n)
   (\sfe(\sfs(t_{n+}))-e(\sfs(t_{n-})))
   \\&=
   \int_0^\sfS \zeta(\sft(s))\dot \sfe(s)\,\dd s-\sum_{n}
   \int_{I_n}\zeta(\sft(s)) \dot \sfe(s)\,\dd s
   =
   \int_{[0,\sfS]\setminus I} \zeta(\sft(s))\dot \sfe(s)\,\dd s
   \\&\topref{eq:93bis}=
   -\int_{[0,\sfS]\setminus I} \zeta(\sft(s)) \langle \xi(\sft(s)),
   \dot    \sfu(s)
   \rangle \dd s+
   \int_{[0,\sfS]\setminus I} \zeta(\sft(s)) \power {\sft(s)}{\sfu (s)}\dot\sft(s)\,\dd s
   \\&\topref{eq:91}=
   \int_{[0,T]\setminus \mathrm J_u} \zeta(t)
   \Big(-\langle \xi(t),\dot \sfu(\sfs(t))\rangle+
   \power t{u(t)}\dot \sft(\sfs(t))\Big)
   \,\dd \sfs_{\rm d}'(t)
   \\&\topref{eq:90}=
   -\int_{[0,T]\setminus \mathrm J_u} \zeta(t) \langle \xi(t),
   \nn\rangle \,\dd\|u_{\rm d}'\|(t)   +\int_0^T \zeta(t)\power t{u(t)}\,\dd t.
 \end{align*}
Since the measure  $\|u_\dd'\|$ does not charge $\mathrm J_u$, we get \eqref{eq:86},
and Theorem \ref{prop:bv-chainrule} is proved. 
\qed\medskip

\section{Convergence proofs for the viscosity approximations}
\label{s:last}

\subsection{Compactness and lower semicontinuity result for
  parameterized curves}

We first provide a lower semicontinuity result that will be used to
prove Theorems \ref{thm:fcost}, \ref{th:1}, and \ref{th:2} in the next
subsections.
\label{ss:last1}
\begin{proposition}
  \label{le:compactness}
  Let $E,L>0$ and for every $n\in \N$ let $\sft_n\in
  \AC(\sfa,\sfb;[0,T])$ be nondecreasing. Assume that $\tilde\sfu_n
  :[\sfa,\sfb]\to D_E$ are measurable, $G_n\subset [\sfa,\sfb]$ are
  open (and possibly empty) subsets such that
  $\fre_{\sft_n(s)}(\tilde\sfu_n(s))=0$ in $[\sfa,\sfb]\setminus G_n$,
  $\sfu_n\in \AC ([\sfa,\sfb];V,\Psiz)\cap \AC_{\rm loc}(G_n;V)$, and
  there holds
  \begin{subequations}
    \begin{align}
      \label{eq:108}
      X_n := \sup_{s\in [\sfa,\sfb]}\|\sfu_n(s)-\tilde\sfu_n(s)\|
      &\to 0\quad \text{as }n\to\infty
      \\
      \label{eq:44}
      \dot\sft_n(s)+\scalardens\Psiz{\sfu_n}(s)+\fre_{\sft_n(s)}(\tilde\sfu_n(s))
      \|\dot\sfu_n(s)\|&\le L\quad\text{for $\Leb 1$-a.a.~$s\in
        (\sfa,\sfb)$},
\end{align}
\end{subequations}
where we adopt the convention $\fre_{\sft_n(s)}(\tilde\sfu_n(s))
\|\dot\sfu_n(s)\| \equiv 0$ if $s\not\in G_n$, as in \eqref{eq:51}.

Then there exist a subsequence (not relabeled) and a limit function
$(\sft,\sfu)\in \mathscr A(\sfa,\sfb;[0,T]\times D_E)$ such that
$(\sft_n,\sfu_n)\to(\sft,\sfu)$ uniformly in $[\sfa,\sfb]$ with
respect to the topology of $[0,T]\times V$. Moreover $(\sft,\sfu)$
satisfies the same bound \eqref{eq:44} and the following asymptotic
properties hold as $n \to \infty$:
\begin{align}
    \label{eq:46}
    \liminf_{n\to\infty} \int_\sfa^\sfb
    \scalardens\Psiz{\sfu_n}(s)\,\dd s&\ge
    \int_\sfa^\sfb \scalardens\Psiz{\sfu}(s)\,\dd s,\\
    \label{eq:47}
   \liminf_{n\to\infty} \int_\sfa^\sfb  \fre_{\sft_n(s)}(\tilde\sfu_n(s))
    \|\dot\sfu_n(s)\|\,\dd s&\ge
    \int_\sfa^\sfb \fre_{\sft(s)}(\sfu(s)) \|\dot\sfu(s)\|\,\dd s,
\\\label{eq:48bis}
    \liminf_{n\to\infty}\int_\sfa^\sfb
    \Big(\frk_{\sft_n(s)}(\tilde \sfu_n(s))\dot \sft_n(s)+
    \fre_{\sft_n(s)}(\tilde\sfu_n(s)) \|\dot\sfu_n(s)\| 
 \Big)\,\dd s
 &\ge
    \int_\sfa^\sfb
    \mathfrak{G}[\sft,\sfu;\dot\sft,\dot\sfu](s)\,\dd s.
    \intertext{If, moreover, $\sfu_n\in \AC([\sfa,\sfb];V)$, then}
    \label{eq:49}
    \liminf_{n\to\infty}\int_{G_n}
    \mathfrak{G}_{\eps_n}(\sft_n(s),
    \tilde\sfu_n(s);\dot\sft_n(s),\dot\sfu_n(s))\,\dd s&\ge
    \int_\sfa^\sfb
    \mathfrak{G}[\sft,\sfu;\dot\sft,\dot\sfu](s)\,\dd s
\end{align}
for every vanishing sequence $(\eps_n)_{n}\subset (0,\infty)$.
\end{proposition}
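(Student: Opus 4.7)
The plan is to combine Arzel\`a–Ascoli and Helly-type compactness with Ioffe–Olech lower semicontinuity for integrands convex in the velocity, carefully handling the metric (non-differentiable) nature of $\sfu$ outside the stability violation set.

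First, I would extract a subsequence along which $\sft_n \to \sft$ uniformly on $[\sfa,\sfb]$: the Lipschitz bound $\dot\sft_n \le L$ from \eqref{eq:44} gives equi-Lipschitz continuity, and $\sft$ inherits monotonicity together with $\dot\sft\le L$. For $\sfu_n$, estimate \eqref{eq:44} gives a uniform bound on the $\Psiz$-total variation, while $\tilde\sfu_n(s)\in D_E$ (compact in $V$) together with \eqref{eq:108} shows $\{\sfu_n(s)\}_n$ is relatively compact in $V$ for every $s$. An asymmetric Helly selection, combined with the uniform modulus of continuity \eqref{eq:120} on $D_E$ (which translates small $\Psiz$-oscillations into small $V$-oscillations), then yields a further subsequence along which $\sfu_n\to\sfu$ uniformly in $V$, with $\sfu(s)\in D_E$ for all $s$; by $X_n\to 0$ the curves $\tilde\sfu_n$ converge to the same $\sfu$. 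The usual lower semicontinuity of the $\Psiz$-variation under pointwise convergence simultaneously delivers $\scalardens\Psiz\sfu\le L$ (so $\sfu\in\AC([\sfa,\sfb];D_E,\Psiz)$) and the bound \eqref{eq:46}.

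To prove admissibility of $(\sft,\sfu)$, note that the lower semicontinuity \eqref{eq:62} of $\fre$ on $[0,T]\times D_E$ makes $G=\{s:\fre_{\sft(s)}(\sfu(s))>0\}$ open. On any compact $K\subset G$, pick $c>0$ with $\fre_{\sft(s)}(\sfu(s))\ge 2c$ on $K$; by l.s.c.\ and uniform convergence, $\fre_{\sft_n(s)}(\tilde\sfu_n(s))\ge c$ on $K$ for $n$ large. Plugging this into \eqref{eq:44} yields $\|\dot\sfu_n\|\le L/c$ a.e.\ on $K$, so $(\sfu_n)$ is equi-Lipschitz on $K$ in the $V$-norm; passing to the limit gives $\sfu\in\AC_{\rm loc}(G;V)$ with $\|\dot\sfu\|\le L/c$. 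Similarly, $\dot\sft_n(s)\le L-c\|\dot\sfu_n(s)\|$ on $K$, which forces $\dot\sft\equiv 0$ on each connected component of $G$ (otherwise a Lebesgue-point argument would contradict \eqref{eq:44} in the limit, using that the $V$-speed of $\sfu$ is strictly positive on an open subset). The bound \eqref{eq:44} in the limit follows by taking the lim inf of each term.

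For \eqref{eq:47}, localizing to compact $K\subset G$ the sequence $\dot\sfu_n$ is uniformly bounded in $L^\infty(K;V)$ hence weakly-$*$ compact; the Ioffe–Olech theorem applied to the integrand $g(s,u)\|v\|$ with $g=\fre$ lower semicontinuous and convex-homogeneous in $v$ gives
\begin{equation*}
  \liminf_{n\to\infty}\int_K\fre_{\sft_n}(\tilde\sfu_n)\|\dot\sfu_n\|\,\dd s
  \ge \int_K\fre_{\sft}(\sfu)\|\dot\sfu\|\,\dd s,
\end{equation*}
and monotone convergence as $K\uparrow G$ yields \eqref{eq:47}. For \eqref{eq:48bis} observe that on the finiteness set of the left-hand integrand, $\frk_{\sft_n}(\tilde\sfu_n)\dot\sft_n=0$ a.e., i.e., $\dot\sft_n=0$ a.e.\ on $G_n$; this passes to the limit via the just-proved constancy of $\sft$ on components of $G$, so $\frk_{\sft}(\sfu)\dot\sft\equiv 0$ a.e.\ and \eqref{eq:48bis} reduces to \eqref{eq:47}. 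Finally, for \eqref{eq:49} the identity $\inf_{\eps>0}\eps^{-1}\bigl(F(\eps r)+F^*(s)\bigr)=rs$ yields the pointwise bound $\frG_{\eps_n}(\sft_n,\tilde\sfu_n;\dot\sft_n,\dot\sfu_n)\ge \fre_{\sft_n}(\tilde\sfu_n)\|\dot\sfu_n\|$; moreover, on any set where $\dot\sft_n\ge\delta>0$ one has $\frG_{\eps_n}\ge(\delta/\eps_n)F^*(\fre_{\sft_n}(\tilde\sfu_n))$, which together with $\eps_n\to 0$ forces $\fre_{\sft_n}(\tilde\sfu_n)\to 0$ wherever the left-hand side of \eqref{eq:49} stays bounded, so in the limit $\fre_\sft(\sfu)\equiv 0$ on $\{\dot\sft>0\}$ and the $\frk$-contribution vanishes; thus \eqref{eq:49} also reduces to \eqref{eq:47}.

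The main obstacle will be the admissibility step, specifically the simultaneous propagation of strict lower bounds on $\fre$ from the limit back to the approximating sequence and the consequent constancy of $\sft$ on connected components of $G$. This mixes a compactness argument in $V$ (valid only on $G$) with the metric structure of $\sfu$ outside $G$, where $\sfu$ may fail to be $V$-differentiable, and it is here that \eqref{eq:108} and the uniform bound \eqref{eq:44} must be exploited in tandem rather than separately.
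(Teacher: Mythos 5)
Your compactness step, \eqref{eq:46}, and the outline for \eqref{eq:47} all track the paper's argument: the uniform Lipschitz bound on $\sft_n$, the modulus-of-continuity estimate \eqref{eq:120} on $D_E$ combined with \eqref{eq:108}, and a refined Arzel\`a--Ascoli theorem deliver the uniform convergence, and lower semicontinuity of the $\Psiz$-variation gives \eqref{eq:46}. Your invocation of Ioffe--Olech for \eqref{eq:47} is a heavier tool than what the paper actually uses (the elementary joint-lsc Lemma~\ref{le:jointlsc}, applied with $h_n=\fre_{\sft_n}(\tilde\sfu_n)$ and $m_n=\|\dot\sfu_n\|$ localized to compacta of $G$), but it would work.

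The genuine gap is the asserted constancy of $\sft$ on connected components of $G$. The inference ``$\dot\sft_n\le L-c\|\dot\sfu_n\|$ on $K$ forces $\dot\sft\equiv 0$'' is false: nothing in \eqref{eq:44} prevents $\dot\sft>0$ and $\fre_{\sft}(\sfu)>0$ from holding simultaneously as long as $\|\dot\sfu\|$ is small (or zero), and the parenthetical ``Lebesgue-point argument'' does not patch this, since there is no reason for $\|\dot\sfu\|$ to be bounded away from zero on an open subset of $G$. Because you use this claim to argue that the $\frk$-contribution vanishes, your reductions of \eqref{eq:48bis} and \eqref{eq:49} to \eqref{eq:47} are unfounded. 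The paper avoids this trap entirely: for \eqref{eq:48bis} one proves the $\liminf$ inequality for the $\frk$-term directly, taking $h_n=\frk_{\sft_n}(\tilde\sfu_n)$ (which satisfies $\liminf_n h_n\ge\frk_\sft(\sfu)$ a.e.\ by lower semicontinuity of $\fre$) and $m_n=\dot\sft_n\weaksto\dot\sft$ in Lemma~\ref{le:jointlsc}; the inequality then holds regardless of whether $\dot\sft$ vanishes on $G$---when it does not, the right-hand side is $+\infty$ and the inequality is trivial, while finiteness of the left-hand side (as occurs in the applications, e.g.\ through the energy identity) forces $\dot\sft=0$ a.e.\ on $G$ \emph{a posteriori}. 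For \eqref{eq:49} the paper likewise uses the pointwise bound $\frG_{\eps_n}\ge\max\bigl\{\tfrac{\dot\sft_n}{\eps_n}F^*(\fre_{\sft_n}(\tilde\sfu_n)),\,\fre_{\sft_n}(\tilde\sfu_n)\|\dot\sfu_n\|\bigr\}$, splits the integral over $G$ and its complement, and applies Lemma~\ref{le:jointlsc} on each piece; your step ``forces $\fre_{\sft_n}(\tilde\sfu_n)\to 0$ wherever the left-hand side stays bounded'' extracts a pointwise conclusion from an integral bound without justification and should be replaced by this joint-lsc argument.
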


\noindent
 We will use later that  the assumptions of Proposition
\ref{le:compactness} cover the case $(\sft_n,\sfu_n)\in \mathscr
A(\sfa,\sfb;[0,T]\times V)$ with $\tilde\sfu_n=\sfu_n$.
\begin{proof}
  By \eqref{eq:44} the sequence
  $\sft_n$ is uniformly Lipschitz, thus relatively compact
  with respect to uniform convergence.

  Let $C_\Psiz$ be the continuity constant of $\Psiz$ and
  $\Omega:=\Omega_{D_E}$ be the modulus of continuity from
  \eqref{eq:121}: since $\Omega$ is concave and $\Omega(0)=0$ we have
  \begin{equation}
    \label{eq:122}
    \Omega(\lambda p)\le \lambda \Omega(p),\quad
    \Omega(p+q)\le \Omega(p)+\Omega(q)
    \quad\forall\ \lambda,p,q\ge0.
  \end{equation}
  Since every curve $\tilde\sfu_n$ takes values in the compact set
  $D_E$, we have in view of \eqref{eq:120} that 
  \begin{align}
    \notag\|\tilde\sfu_n(s)-\tilde\sfu_n(r)\|&\le
    \Omega\big(\Dnorm{\tilde\sfu_n(s)-\tilde\sfu_n(r)}\big)
    \le
    \Omega\big(\Psiz(\sfu_n(s)-\sfu_n(r))\big)+2C_\Psiz\Omega(X_n)
    \\&\le L\,\Omega(|s-r|)+2C_\Psiz\Omega(X_n)
    \label{eq:124}
  \end{align}
  It follows from \eqref{eq:108} that
  \begin{displaymath}
    \limsup_{n\to\infty}\|\tilde\sfu_n(s)-\tilde\sfu_n(r)\|
    \le L\,\Omega(|s-r|).
\end{displaymath}
Thus $\tilde\sfu_n$ is (asymptotically) uniformly equicontinuous and
we can apply the Arzel\`a-Ascoli Theorem (in a slightly refined form,
see e.g.~\cite[Prop.\,3.3.1]{AGS08}) to prove its uniform convergence
to a limit $\sfu$.  Passing to the limit in \eqref{eq:44} we get an
analogous estimate for $(\sft,\sfu)$.

Statement \eqref{eq:46} is an immediate consequence of the lower
semicontinuity of the $\Psiz$-total variation and of its
representation formula \eqref{eq:24}.

In order to prove \eqref{eq:47} let us observe that the lower
semicontinuity property of the map $\fre$ and the above uniform
convergence guarantee that the limit function $s\mapsto
\fre_{\sft(s)}(\sfu(s))$ is lower semicontinuous.  Thanks to
\eqref{eq:108} we can find a set $\sfM\subset [\sfa,\sfb]$ with $\Leb
1([\sfa,\sfb]\setminus \sfM)=0$ such that $\tilde\sfu_n$ converges
uniformly to $\sfu$ in $\sfM$ and
\begin{equation}
    \label{eq:50bis}
    \forall\eta>0\ \exists \bar n\in \N:\quad
    \mathsf
    \fre_{\sft_n(s)}(\tilde\sfu_n(s))\ge
    \fre_{\sft(s)}(\sfu(s))-\eta\quad
    \forevery n\ge\bar n,\ s\in \sfM.
\end{equation}
If $G$ is defined as in \eqref{eq:RIF2:6} and $[\alpha,\beta]\subset
G$, \eqref{eq:50bis} implies that there exists a positive constant
$c>0$ with $\fre_{\sft_n(s)}(\tilde\sfu_n(s))\ge c$ for $\Leb
1$-a.a.~$s\in (\alpha,\beta)$ and $n$ sufficiently big. Estimate
\eqref{eq:44} then yields that $\sfu_n$ are uniformly $V$-Lipschitz in
$[\alpha,\beta]$ so that $\sfu$ is also Lipschitz, and therefore $\Leb
1$-a.e.~differentiable.  Since $[\alpha,\beta]$ is arbitrary, we
conclude that $\sfu$ is locally absolutely continuous in $G$, and the
following Lemma \ref{le:jointlsc} yields the $\liminf$ inequality
\eqref{eq:47}.

Recalling  definitions \eqref{vvmfullname-def} and
\eqref{eq:34} for $\frG$ and $\frk$,  assertion
\eqref{eq:48bis} follows if we check that
\begin{displaymath}
    \liminf_{n\to\infty}\int_\sfa^\sfb\frk_{\sft_n(s)}(\tilde\sfu_n(s))
    \dot \sft_n(s)\,\dd s\ge
    \int_\sfa^\sfb\frk_{\sft(s)}(\sfu(s))\dot \sft(s)\,\dd s,
\end{displaymath}
which is again a consequence of Lemma \ref{le:jointlsc} ahead.

In order to prove \eqref{eq:49} let us observe that
$\frG_\eps(\sft,\tilde\sfu;\alpha,\sfv)\ge \max\big\{\frac \alpha\eps
F^*(\fre_{\sft}(\tilde\sfu)),\, \fre_\sft(\tilde\sfu)\|\sfv\|\big\}$.
 Splitting the integration domain into $(\sfa,\sfb)\setminus G$
and $G$  a further application of Lemma \ref{le:jointlsc} yields
\begin{align*}
    \liminf_{n\to\infty}
    &\int_\sfa^\sfb
    \mathfrak{G}_{\eps_n}(\sft_n(s),\tilde\sfu_n(s);\dot\sft_n(s),\dot\sfu_n(s))\,\dd s
    \\&\ge
      \liminf_{n\to\infty}\int_{(\sfa,\sfb)\setminus G}
      \frac 1{\eps_n} F^*(\fre_{\sft_n(s)}(\tilde\sfu_n(s))) \dot\sft_n(s)\,\dd s
      +\liminf_{n\to\infty}\int_G
      \fre_{\sft_n(s)}(\tilde\sfu_n(s))
    \|\dot\sfu_n(s)\|\,\dd s
    \\&\ge
    \int_{(\sfa,\sfb)\setminus G}
    \frk_{\sft(s)}(\sfu(s))\,\dot\sft(s)\,\dd s
      +\int_G
      \fre_{\sft(s)}(\sfu(s))\,
      \|\dot\sfu(s)\|\,\dd s
      =
      \int_\sfa^\sfb
    \mathfrak{G}_{}[\sft,\sfu;\dot\sft,\dot\sfu](s)\,\dd s\,.
\end{align*}
This concludes the proof of Proposition \ref{le:compactness}.   
\end{proof}

A simple proof of the following lemma can be found, e.g., in
\cite[Lem.\,4.3]{MRS12}. 

\begin{lemma}
  \label{le:jointlsc}
  Let $I$ be a measurable subset of $\R$ and let $h_n,h,m_n,m:I\to
  [0,+\infty]$ be measurable functions for $n\in \N$ that satisfy
  \begin{equation}
    \label{eq:51bis}
    \liminf_{n\to\infty}h_n(x)\ge h(x)\quad\text{for $\Leb 1$-a.a.~$x\in I$,}\quad
    m_n\weakto m\quad\text{in }L^1(I).
  \end{equation}
  Then
  \begin{equation}
    \label{eq:52}
    \liminf_{n\to\infty}\int_I h_n(x)m_n(x)\,\dd x\ge \int_I h(x)m(x)\,\dd x.
  \end{equation}
\end{lemma}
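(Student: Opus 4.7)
The plan is to combine an ``inf-trick'' that converts the pointwise $\liminf$ hypothesis on $h_n$ into a monotonicity statement, with truncation and duality between $L^1$ and $L^\infty$. More precisely, for each $k\in\N$ I would introduce the measurable function
\[
  h^k(x):=\inf_{n\ge k} h_n(x),
\]
which is nonnegative, nondecreasing in $k$, and satisfies $h^k(x)\uparrow \liminf_{n\to\infty} h_n(x)\ge h(x)$ for a.a.\ $x\in I$. By construction $h^k\le h_n$ on $I$ for every $n\ge k$, hence from $m_n\ge0$ one gets $h_n m_n\ge h^k m_n$ pointwise, so that
\[
  \liminf_{n\to\infty}\int_I h_n m_n\,\dd x\ \ge\ \liminf_{n\to\infty}\int_I h^k m_n\,\dd x\qquad\forevery k\in\N.
\]

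The next step is to pass to the limit in $n$ on the right-hand side using the weak convergence $m_n\weakto m$ in $L^1(I)$. Since a priori $h^k$ is only measurable and nonnegative, it need not lie in $L^\infty(I)$; I would therefore truncate, defining $h^k_M:=\min(h^k,M)\in L^\infty(I)$ for $M>0$. Because $m_n\ge0$ and $h^k\ge h^k_M$ we have
\[
  \liminf_{n\to\infty}\int_I h^k m_n\,\dd x\ \ge\ \lim_{n\to\infty}\int_I h^k_M m_n\,\dd x\ =\ \int_I h^k_M\, m\,\dd x,
\]
where the last equality uses the duality of weak $L^1$-convergence with $L^\infty$ test functions.

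I would then remove the two layers of approximation by monotone convergence (the nonnegativity of $m$ is essential here): letting $M\uparrow\infty$ gives $\int_I h^k_M\, m\,\dd x\uparrow\int_I h^k\,m\,\dd x$, and letting $k\uparrow\infty$ gives
\[
  \int_I h^k\,m\,\dd x\ \uparrow\ \int_I\Big(\liminf_{n\to\infty}h_n\Big)\, m\,\dd x\ \ge\ \int_I h\,m\,\dd x,
\]
which chained with the previous inequalities yields the desired bound \eqref{eq:52}.

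The only real obstacle is the possible unboundedness of $h$, since weak-$L^1$ convergence does not test against general measurable functions; this is exactly what the double truncation plus monotone convergence in $M$ and $k$ takes care of. The remaining ingredients (measurability of $h^k$, monotonicity of $h^k$ and $h^k_M$, and the $L^1$--$L^\infty$ duality) are standard and routine.
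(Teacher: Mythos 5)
Your proof is correct and self-contained. Note that the paper does not supply its own proof of this lemma — it only refers the reader to \cite[Lem.\,4.3]{MRS12} — so there is nothing in the text to compare against; your argument, defining $h^k:=\inf_{n\ge k}h_n$ to convert the pointwise $\liminf$ hypothesis into a monotone family of lower bounds, truncating to $h^k_M=\min(h^k,M)\in L^\infty(I)$ so that the weak $L^1$-convergence of $m_n$ can be tested, and then removing both truncations via monotone convergence (using $m\ge 0$ a.e.\ and $h^k\uparrow\liminf_n h_n\ge h$ a.e.), is the standard elementary route and is sound in every step.
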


\subsection{Compactness and lower semicontinuity for
non-parameterized curves}
\label{ss:clsc}

\subsubsection*{\bfseries Proof of Theorem \ref{thm:fcost}}
To address assertion (F2) let $\vartheta_n\in
\calT_t(u_{0,n},u_{1,n})$ be a sequence of admissible transitions such
that
\begin{equation}
    \label{eq:54}
    \int_0^1 \frf_t[\vartheta_n;\vartheta_n'](r)\,\dd r\le
    \Delta_{\frf_t}(u_{0,n},u_{1,n})+\eps_n\quad 
    \text{with } \eps_n\ge0 \text{ and } \lim_{n\to\infty}\eps_n=\eps\ge0.
\end{equation}
By operating the change of variable
\begin{displaymath}
    \sfs_n(r):=c_n\Big(r+\int_0^r
    \frf_t[\vartheta_n;\vartheta_n'](w)\,\dd w\Big),\quad 
    \sfr_n:=\sfs_n^{-1}:[0,\sfS]\to[0,1],\quad
    \sfu_n:=\vartheta_n\circ \sfr_n:[0,\sfS]\to V,
\end{displaymath}
where $c_n$ is a normalization so that $\sfS:=\sfs_n(1)$ is
independent of $n$, we see that the functions $\sfr_n$ are
uniformly Lipschitz and the curve $s\mapsto (\sfr_n(s),\sfu_n(s)) $
satisfies \eqref{eq:108}--\eqref{eq:44} with
$\tilde\sfu_n\equiv\sfu_n$.

We can thus extract subsequences (still denoted by $\sfr_n,\sfu_n$)
converging uniformly to $\sfr,\sfu$ respectively. The previous
Proposition \ref{le:compactness} guarantees that $\sfu$ is an
admissible transition connecting $u_-$ to $u_+$ and $\liminf$
inequalities \eqref{eq:46} and \eqref{eq:47} show that
\begin{displaymath}
    \eps+\Delta_{\frf_t}(u_-,u_+)\ge\liminf_{n\to\infty} \int_0^1
    \frf_t[\vartheta_n;\vartheta_n'](r) \,\dd r \geq
    \int_0^\sfS \frf_t[\sfu;\sfu'](r)\,\dd r\ge \Delta_{\frf_t}(u_-,u_+).
\end{displaymath}
This proves the lower semicontinuity of the Finsler cost
functional. Since we may choose $0 <\eps_n \to \eps=0 $, the previous
inequalities yield that $\sfu$ attains the infimum in \eqref{eq:69},
so that also assertion (F1) is proved, since the jump
estimate \eqref{eq:36} has been proved in Corollary \ref{cor:F1}.

Let us now consider the last assertion (F3): it is not restrictive to
assume $u_-\neq u_+$ so that $\Delta\ge \Psiz(u_+-u_-)>0$.  For $r\in
[0,\beta_n-\alpha_n]$ we set
\begin{align*}
    \sfs_n(r):={}&c_n\Big(r+\int_{\alpha_n}^{\alpha_n+r}\Psiz_{\eps_n}(u_n(\zeta))+
    \Psiz_{\eps_n}^*(\xi_{n}(\zeta))\,\dd \zeta\Big),\\
    \sft_n:={}&\sfs_n^{-1}:[0,1]\to [\alpha_n,\beta_n],\quad
    \sfu_n:=u_n\circ \sft_n,\quad
    \tilde\sfu_n:=\tilde u_n\circ \sft_n:[0,1]\to V,
\end{align*}
where $c_n$ is a normalization constant such that
$\sfs_n(\beta_n-\alpha_n)=1$.  Again, it is not difficult to see that
the triple $(\sft_n,\sfu_n,\tilde\sfu_n)$ satisfies the assumptions of
Proposition \ref{le:compactness}.  Moreover
\begin{equation}
    \label{eq:53bis}
    \int_{\alpha_n}^{\beta_n}\Big(\Psiz_{\eps_n}(u_n(r))+
    \Psiz_{\eps_n}^*(\xi_n(r))\Big)\,\dd r=
    \int_0^\sfS \Big(\Psiz(\dot\sfu_n (s))+ \frG_{\eps_n}(\sft_n (s),
              \tilde\sfu_n (s) ;\dot\sft_n (s),\dot\sfu_n (s) )\Big)\,\dd s.
\end{equation}
We can thus apply Proposition \ref{le:compactness} to pass to the
limit obtaining an admissible limit curve $(\sft,\sfu)\in \mathscr
A(0,1;[0,T]\times D_E)$ such that $\sft(s)\equiv t$, $\sfu(0)=u_-$ and
$\sfu(1)=u_+$.  In particular $\sfu\in \calT_t(u_-,u_+)$ and combining
\eqref{eq:53bis} with \eqref{eq:49} we get
\begin{align*}
    \Delta &=
    \lim_{n\to\infty}
    \int_0^1
    \Big(\Psiz(\dot\sfu_n(s))+
    \frG_{\eps_n}(\sft_n(s),\sfu_n(s);\dot\sft_n(s),\dot\sfu_n(s))\Big)\,\dd s
    \ge
    \int_0^1
    \Big(\scalardens \Psiz\sfu(s)+
    \frG[t,\sfu;0,\dot\sfu](s)\Big)\,\dd s
    \\&=
    \int_0^1 \Big(\scalardens \Psiz\sfu(s)+
    \fre_t(\sfu(s))\|\dot\sfu(s)\|\Big)\,\dd s
    \ge \Cost\frf t{u_-}{u_+}.
\end{align*}
This concludes the proof of Theorem   \ref{thm:fcost}. \qed\bigskip

The next result is a counterpart to Proposition \ref{le:compactness}
for the lower semicontinuity, but now for the non-parameterized
setting.
\begin{proposition}
  \label{cor:2}
  Let $E,C>0$  and
  for $n\in \N$ let $u_n\subset \AC([0,T];V)$, $\tilde
  u_n:[0,T]\to D_E$, $\xi_n\to[0,T]\to V^*$ measurable,
  $\eps_n\in (0,\infty)$ be sequences satisfying
  \begin{subequations}
          \label{eq:55}
    \begin{gather}
      \label{eq:115}
      \int_0^T \Big(\Psi_{\eps_n}(\dot u_{n})+
        \Psi_{\eps_n}^*(\xi_{n})\Big)\,\dd t\le C,\quad \xi_{n}(t)\in
        -\partial\ene t{\tilde u_{n}(t)} \quad\text{for $\Leb 1$-a.a.\
          $t\in (0,T)$}, \\
      \label{eq:117}  X_n:=\sup_{t\in [0,T]}
      \|u_n(t)-\tilde u_n(t)\|\to 0, \quad
      \eps_n\down0\qquad \text{as }n\up\infty.
    \end{gather}
  \end{subequations}Then there exists a subsequence (not relabeled) and a limit function $u\in \BV([0,T];D_E,\Psiz)$
  such that convergence  \eqref{e:conv1} holds, $u$ satisfies the local stability
  condition \eqref{eq:65bis}, and
  \begin{equation}
    \label{eq:56}
    \liminf_{n\to\infty}  \int_r^s \Big(\Psi_{\eps_n}(\dot u_{\eps_n}(t))+\Psi_{\eps_n}^*(\xi_{\eps_n}(t))\Big)\,\dd t
    \ge
    \pVar\frf u rs\quad
    \forevery 0\le r<s\le T.
  \end{equation}
\end{proposition}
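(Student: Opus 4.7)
The plan is to establish Proposition \ref{cor:2} in three stages: (i) extract via a Helly argument a pointwise limit $u$ of a subsequence of $(u_n)$, together with auxiliary equicontinuity information; (ii) prove the local stability \eqref{eq:65bis} at every point outside the jump set $\mathrm J_u$; (iii) derive the lower bound \eqref{eq:56} by combining the Finsler-cost concentration result Theorem \ref{thm:fcost}(F3) at jump times with the standard lower-semicontinuity of the $\Psiz$-variation on continuity intervals. For (i), the inequality $\Psi_{\eps_n} \geq \Psiz$ (from \eqref{eq:2}) together with \eqref{eq:115} yields $\Var{\Psiz}{u_n}{0}{T} \leq C$. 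Since $\tilde u_n(t) \in D_E$ and $X_n \to 0$, the $u_n$ ultimately take values in a common compact subset of $V$, so Helly's selection principle for asymmetric-BV curves into a compact set (see, e.g., \cite{AGS08,RMS08}) provides a non-relabeled subsequence with $u_n(t) \to u(t)$ in $V$ for every $t$, and $u \in \BV([0,T]; D_E, \Psiz)$. Extracting further, the partial variations $V_n(t) := \Var{\Psiz}{u_n}{0}{t}$ converge pointwise to a nondecreasing $V$ with countable jump set $J_V$. For $t \notin J_V$ and small $\eta>0$ we may pick $\delta>0$ with $V(t{+}\delta)-V(t{-}\delta)<\eta$; hence $\Var{\Psiz}{u_n}{t-\delta}{t+\delta} < 2\eta$ eventually, and \eqref{eq:120} upgrades this to the asymptotic equicontinuity $\Vnorm{u_n(\tau)-u_n(t)} \leq \Omega_{D_E}(2\eta)$ for $|\tau-t|<\delta$ and $n$ large.

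For (ii), formula \eqref{eq:3} gives $\Psi_{\eps_n}^*(\xi_n(r)) = \eps_n^{-1} F^*(\fre_r(\tilde u_n(r)))$, so \eqref{eq:115} implies
\begin{equation*}
  \int_0^T F^*\bigl(\fre_r(\tilde u_n(r))\bigr)\,dr \leq C\eps_n \to 0.
\end{equation*}
Since $F^*$ is continuous, strictly increasing and vanishes only at $0$, Markov's inequality yields $\fre_r(\tilde u_n(r)) \to 0$ in measure on $(0,T)$. For each $t \in [0,T]\setminus(\mathrm J_u \cup J_V)$, selecting $r_n \to t$ with $\fre_{r_n}(\tilde u_n(r_n)) \to 0$ and combining the equicontinuity from (i) with $X_n \to 0$ and $u_n(t) \to u(t)$ gives $\tilde u_n(r_n) \to u(t)$. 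The lsc property \eqref{eq:62} of $\fre$ then forces $\fre_t(u(t))=0$. Since this holds on a dense subset of $[0,T]\setminus \mathrm J_u$, a further appeal to the lsc of $\fre$ and the continuity of $u$ off $\mathrm J_u$ extends local stability to the full complement of $\mathrm J_u$.

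For (iii), fix $0 \leq r < s \leq T$. By \eqref{eq:81bis}, $\pVar{\frf}{u}{r}{s} = \mu_{\mathrm{co}}^u((r,s)) + \JVar{\frf}{u}{r}{s}$. Given $\eta>0$, I pick finitely many jumps $t_1 < \cdots < t_K$ in $\mathrm J_u \cap [r,s]$ so that the remaining jump contributions are smaller than $\eta$ (absolute summability is verified a posteriori by bounding a finite partial sum via the total budget $C$). Around each $t_k$ I construct disjoint shrinking intervals $[\alpha_k^n, \beta_k^n]$ with $\alpha_k^n \nearrow t_k$ and $\beta_k^n \searrow t_k$, chosen by a diagonal argument from continuity points of $u$ so that $u_n(\alpha_k^n) \to u(t_{k,-})$ and $u_n(\beta_k^n) \to u(t_{k,+})$. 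Continuity of each $u_n$ makes $t_k$ itself a legitimate midpoint with $u_n(t_k) \to u(t_k)$. Applying Theorem \ref{thm:fcost}(F3) to the half-intervals $[\alpha_k^n, t_k]$ and $[t_k, \beta_k^n]$ yields
\begin{equation*}
  \liminf_{n\to\infty} \int_{\alpha_k^n}^{t_k} \bigl(\Psi_{\eps_n}(\dot u_n) + \Psi_{\eps_n}^*(\xi_n)\bigr)\,d\tau \geq \Cost{\frf}{t_k}{u(t_{k,-})}{u(t_k)},
\end{equation*}
and analogously on $[t_k, \beta_k^n]$. On the complementary continuity sub-intervals, the bound $\Psi_{\eps_n} \geq \Psiz$ combined with the classical pointwise-convergence lower-semicontinuity of the $\Psiz$-variation captures the continuous-part measure $\mu_{\mathrm{co}}^u$ on the limit intervals. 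Summing and passing to the limit first as $n\to\infty$, then shrinking the jump intervals, and finally letting $\eta \to 0$ and $K \to \infty$, produces \eqref{eq:56}. Jumps at the endpoints $r$ or $s$ are treated identically, using $u_n(r) \to u(r)$ and $u_n(s) \to u(s)$ as the corresponding endpoint limits.

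The main difficulty I anticipate lies in the diagonal synchronization of step (iii): for each finite $K$ one has to build disjoint shrinking intervals $[\alpha_k^n, \beta_k^n]$ realizing the prescribed endpoint convergences, shrinking to the $t_k$ as $n\to\infty$, while leaving complementary pieces whose $\Psiz$-variation captures $\mu_{\mathrm{co}}^u$ on the limit intervals. This is the kind of fine matching performed in \cite{MRS12}. A secondary delicate point is step (ii), where the in-measure convergence of $\fre_r(\tilde u_n(r))$ must be coupled with the $\BV$ equicontinuity from (i) in order to produce pointwise vanishing of $\fre_t(u(t))$ at continuity points.
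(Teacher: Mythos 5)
Your proof is correct in all three stages and the overall architecture (Helly compactness, local stability, lower bound via the Finsler cost at jumps plus $\Psiz$-variation on continuity pieces) matches the paper's. The interesting divergence is in how stage (iii) is organized, and the paper's route is worth comparing to yours because it resolves exactly the difficulty you flag.

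The paper avoids the finite-partition-plus-diagonal machinery entirely by introducing the bounded Borel measures $\nu_n := \bigl(\Psi_{\eps_n}(\dot u_n)+\Psi_{\eps_n}^*(\xi_n)\bigr)\Leb1$, extracting a weak$^*$ limit $\nu$ in duality with $\rmC^0([0,T])$, and then showing the measure inequality $\nu\ge\mu$ (with $\mu=\scalarmu{}u$ from \eqref{eq:14}). Two ingredients give this: $\nu_n\ge\Psiz(\dot u_n)\Leb1$ plus lower semicontinuity of $\Psiz$-variation yield $\nu\ge\scalarmuco{}u$, and for each fixed jump time $t$ the upper semicontinuity of $\nu_n\weaksto\nu$ on closed intervals $[\alpha_n,\beta_n]\downarrow\{t\}$ combined with Theorem \ref{thm:fcost}(F3) gives $\nu(\{t\})\ge\Cost{\frf}t{u(t_-)}{u(t_+)}=\scalarmuj{}u(\{t\})$. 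Countable additivity of $\nu$ then supplies the infinite-jump sum for free, and the interval estimate \eqref{eq:56} follows from $\nu\ge\mu$ with a short endpoint argument. This is precisely the device you'd need to make your "diagonal synchronization" and "$\eta$-truncation of small jumps" rigorous, but it packages it once and for all rather than jump-by-jump. Your stage (ii) is also valid but slightly more roundabout: the paper observes directly that \eqref{eq:3} forces $\liminf_n\Psi_{\eps_n}^*(\xi_n(t))\ge\frk_t(u(t))$ pointwise a.e.\ (via the strong-weak closedness \eqref{eq:45} and $\eps_n\down0$), so Fatou gives $\int_0^T\frk_t(u(t))\,\dd t<\infty$, and since $\frk$ takes values in $\{0,+\infty\}$, local stability holds a.e.\ and then everywhere off $\rmJ_u$ by lower semicontinuity of $\frk$; this sidesteps your in-measure-convergence argument and the need to invoke the $\BV$-equicontinuity from stage (i) a second time.
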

\begin{proof}
  To  obtain a pointwise convergent subsequence,  we
  proceed as in the proof Proposition \ref{le:compactness}.  Setting
  $\mathrm V_n(t):=\int_0^t \Psi_{\eps_n}(\dot u_n)\,\dd r$ and using
  $\tilde u_n(t)\in D_E$ we get a similar estimate as in \eqref{eq:124}:
  \begin{equation}
    \label{eq:125}
    \|\tilde\sfu_n(t)-\tilde\sfu_n(s)\|
    \le
    \Omega\big(\mathrm V_n(t){-}\mathrm V_n(s)\big)+2C_\Psiz\Omega(X_n)\quad
    \forevery 0\le s<t\le T,\ n\in \N.
  \end{equation}
  Since the functions $\mathrm V_n$ are increasing and uniformly
  bounded by $C$, by Helly's Theorem we can extract a subsequence (not
  relabeled) pointwise converging to the increasing function $\mathrm
  V$; passing to the limit in \eqref{eq:125} along such a subsequence,
  we obtain
  \begin{equation}
    \label{eq:129}
    \limsup_{n\to\infty}\|\tilde\sfu_n(t)-\tilde\sfu_n(s)\|\le
    \Omega\big(\mathrm V(t){-}\mathrm V(s)\big).
  \end{equation}
  Applying the compactness result \cite[Prop.\,3.3.1]{AGS08} we obtain
  the pointwise convergence of (a subsequence of) $\tilde u_n$ and
  thus \eqref{e:conv1} follows by \eqref{eq:117}.

  By the strong-weak closedness \eqref{eq:45} of the graph of
  $(\mathcal{E},\partial \mathcal{E})$ we have
  \begin{displaymath}
    \liminf_{n\to\infty} \Psi_{\eps_n}^*(\xi_{\eps_n}(t))\ge
    \frk_t(u(t))\quad \text{for $\Leb1$-a.a.~$t\in (0,T)$}.
  \end{displaymath}
  Therefore Fatou's lemma yields $\int_0^T \frk_t(u(t))\,\dd t<\infty$. As
  $\frk_t(u(t)) \in \{0,+\infty\}$,  we arrive at
  \begin{displaymath}
    \frk_t(u(t))=0\quad\text{for $\Leb 1$-a.a.~$t\in (0,T)$}.
  \end{displaymath}
  Since $\frk$ is a lower semicontinuous function we conclude that
  $\frk_t(u(t))=0$ for every $t\in [0,T]\setminus \mathrm J_u$ and
  also $\frk_t(u(t_\pm))=0$ whenever $t\in \mathrm J_u$. Thus $u$ satisfies the
  local stability condition \eqref{eq:65bis}.

  To prove \eqref{eq:56} let us introduce the nonnegative and bounded
  Borel measures $\nu_n$ in $[0,T]$ via
  \begin{equation}
    \label{eq:57}
    \nu_n:= \Big(\Psi_{\eps_n}(\dot u_{n})+\Psi_{\eps_n}^*(\xi_{n})\Big)\Leb 1.
  \end{equation}
  Possibly extracting a further subsequence, it is not restrictive to
  assume that $\nu_n\weakto^*\nu$ in duality with $\mathrm
  C^0([0,T])$. Since $\nu_n\ge \Psiz(\dot u_n)\Leb 1$ for every
  interval $(\alpha,\beta)\subset [0,T]$,
  \begin{displaymath}
    \nu([\alpha,\beta])\ge \limsup_{n\to\infty} \int_\alpha^\beta \Psiz(\dot u_n)\,\dd t
    \ge \liminf_{n\to\infty} \Var\Psiz{u_n}\alpha\beta\ge
    \Var\Psiz{u}\alpha\beta\ge \scalarmuco {}u([\alpha,\beta])
  \end{displaymath}
  which in particular yields $\nu\ge \scalarmuco{}u$ (with $\mu$ from
  \eqref{eq:14}).

  Let us now take $t\in \mathrm J_u$ and two
  sequences $\alpha_n\uparrow t$ and $\beta_n\downarrow t$ such that
  \begin{gather*}
    \lim_{n\to\infty} u_n(\alpha_n)=u(t_-),\quad
    \lim_{n\to\infty} u_n(\beta_n)=u(t_+).
  \end{gather*}
  Applying assertion (F3) of Theorem \ref{thm:fcost}
  and the upper semicontinuity property of weak$^*$ convergence
  of measures on closed sets, we get
  \begin{align}
    \notag
    \nu(\{t\})&\ge \limsup_{n\to\infty}\nu_n([\alpha_n,\beta_n])
    \ge \liminf_{n\to\infty}
    \int_{\alpha_n}^{\beta_n}\Big(\Psi_{\eps_n}(\dot
    u_{n})+\Psi_{\eps_n}^*(\xi_{n})\Big)\,\dd t
    \\&\ge
    \label{eq:70}
    \Cost\frf t{u(t_-)}{u(t_+)}=
    \scalarmuj {}u({\{t\}}),
  \end{align}
  and similarly
  \begin{equation}
    \label{eq:61}
    \limsup_{n\to\infty}\nu_n([\alpha_n,t])\ge
    \Cost\frf t{u(t_-)}{u(t)},\quad
    \limsup_{n\to\infty}\nu_n([t,\beta_n])\ge
    \Cost\frf t{u(t)}{u(t_+)}.
  \end{equation}
  It follows from \eqref{eq:70} that $\nu\ge \scalarmu {}u$.
  If now $0\le r<s\le T$ we can choose $r_n>r$ and $s_n<s$ such that
  $r_n\down r$ with $u_n(r_n)\to u(r_+)$ and
  $s_n\up s$ with $u_n(s_n)\to u(s_-)$. Eventually we have
  \begin{align*}
    \liminf_{n\to\infty} & \int_r^s \Big(\Psi_{\eps_n}(\dot
    u_{n})+\Psi_{\eps_n}^*(\xi_{n})\Big)\,\dd t
    \geq \liminf_{n\to\infty} \nu_n([r,r_n])+
    \liminf_{n\to\infty} \nu_n((r_n,s_n))+
    \liminf_{n\to\infty} \nu_n([s_n,s])
    \\&
    \ge \Cost\frf r{u(r)}{u(r_+)}+\nu((r,s))+
    \Cost\frf s{u(s_-)}{u(s)}
    \\&\ge
    \Cost\frf r{u(r)}{u(r_+)}+\scalarmu{}u((r,s))+
    \Cost\frf s{u(s_-)}{u(s)}
   \topref{eq:81bis}=
    \pVar\frf u rs.
    \qedhere
  \end{align*}
\end{proof}

\subsection{Convergence of  the vanishing-viscosity  approximations}
 \label{ss:8-vanvisc}

Here we prove Theorem  \ref{th:1}, which states that the limit $u$
of solutions $u_\eps$ to the doubly nonlinear equations $(\ref{viscous-dne})_\eps$
are Balanced Viscosity (\BV) solutions. 

\subsubsection*{\bfseries Proof of  Theorem \ref{th:1}.}
Let $(u_\eps)_\eps \subset \AC ([0,T];\V)$ be a family of solutions to
\eqref{viscous-dne} fulfilling \eqref{conve-initi-data} at $t=0$: in
particular, $E_0:=\sup_\eps \Psiz(u_\eps(0))+\ene
0{u_\eps(0)}<\infty$.

We combine the energy identity \eqref{eq:52bis}, written for $s=0$
and for any $t \in (0,T]$, with the estimate for $\mathcal P_t$ in
\eqref{hyp:en3}, obtaining
\begin{displaymath}
\begin{aligned}
\Psiz(u_\eps(t))&+ \ene {t}{u_\eps(t)}\le
\Psiz(u_\eps(0))+\int_{0}^{t}
    \Big(\Psi_\eps(\dot{u}_\eps(r)){+}\Psi_\eps^*(\xi_\eps(r)) \Big)\,\dd r + \ene {t}{u_\eps(t)}
    \\& =\Psiz(u_\eps(0))+\ene
    {0}{u_\eps(0)} + \int_0^t \power {r}{u_\eps(r)} \, \dd r
    \leq E_0 + C_P \int_0^t \Big(\Psiz(u_\eps(r))+\ene{r}{u_\eps(r)}\Big) \, \dd r.
    \end{aligned}
\end{displaymath}
Applying a standard version of the Gronwall Lemma (cf.\ e.g.\
\cite[Lem.\,A.4]{brezis73}), we deduce that there exist constants
$E,C>0$ such that for every $\eps>0$ and $t\in [0,T]$ we have
\[
\Psiz(u_\eps(t))+ \ene {t}{u_\eps(t)} \le E:=E_0\exp(C_PT)\quad \text{and}\quad
 \int_{0}^{T}
 \Big(\Psi_\eps(\dot{u}_\eps(r)){+}\Psi_\eps^*(\xi_\eps(r)) \Big)\,\dd r\leq C.
\]
By Proposition \ref{cor:2}, for every vanishing sequence
$(\eps_k)_{k}$ there exists a further subsequence and $u \in
\BV([0,T];D_E,\Psiz)$ such that convergence \eqref{e:conv1} holds. By
lower semicontinuity, we also have
\begin{equation}
\label{lsc-ene} \liminf_{k \to \infty} \ene t{u_{\eps_k}(t)} \geq
\ene t{u(t)} \quad \text{for all } t \in [0,T].
\end{equation}
Furthermore, by \eqref{hyp:en3} we have $|\power t{u_{\eps_k}(t)}|\leq
C_P E$ for all $k \in \N$ and $t \in [0,T]$.  Therefore, applying
Fatou's Lemma we obtain
\begin{equation}
\label{lsc-ene-partialt}
\limsup_{k \to \infty} \int_s^t \power r{u_{\eps_k}(r)} \dd r \leq \int_s^t \power r{u(r)} \dd r
\quad \text{for all } 0 \leq s \leq t \leq T.
\end{equation}
We can now pass to the limit in the energy identity \eqref{eq:52bis}
as $k \to \infty$.  Combining \eqref{eq:56} $r=0$ and $s=T$ with
\eqref{lsc-ene}, we immediately get \eqref{eq:84-oneside}. We thus
deduce that $u$ is a \BV\ solution.

The energy identity \eqref{eq:84} satisfied by $u$
on the interval $[0,T]$ and the elementary property
of real sequences
\begin{equation}
  \label{eq:119}
  a,b\in \R,\quad
  \left\{\begin{aligned}
    \liminf_{n\to\infty}a_n&\ge a\\
    \liminf_{n\to\infty}b_n&\ge b,
  \end{aligned}\right.
\quad
  \limsup_{n\up\infty}(a_n+b_n)\le a+b\quad
  \Longrightarrow\quad
  \left\{
    \begin{aligned}
      \lim_{n\to\infty}a_n&= a\\
      \lim_{n\to\infty}b_n&= b,
    \end{aligned}\right.
\end{equation}
yield that
\begin{equation}
  \label{eq:73}
  \lim_{k\to\infty}\ene{T}{u_{\eps_k}(T)}=\ene T{u(T)},\quad
  \lim_{k\to\infty}\int_{0}^{T}
    \Big(\Psi_{\eps_k}(\dot{u}_{\eps_k}){+}\Psi_{\eps_k}^*(\xi_{\eps_k})
    \Big)\,\dd r
    =\pVar\frf u0T.
\end{equation}
A further application of \eqref{eq:56} on the intervals $[0,t]$ and
$[t,T]$ combined with
\eqref{lsc-ene}, the additivity of the total variation, and
\eqref{eq:119} provides convergences \eqref{e:conv2} and \eqref{e:conv3}. Hence,
Theorem \ref{th:1} is proved. 
\qed

\subsubsection*{\bfseries Convergence of the discrete viscous approximations.}
Let us  consider the time-incremental minimization
problem \eqref{eq:58}, giving rise to the  discrete solutions
$(U^n_{\tau,\eps})_{n=1}^{N}$ which fulfill the \emph{discrete Euler
equation}
\begin{equation}
\label{discrete-Euler}
\partial\Diss{\Bo}{}\left(\frac{U^n_{\tau,\eps}-U^{n-1}_{\tau,\eps}}{\tau} \right)
+\partial\Diss{\V}{}\left(\eps
\frac{U^n_{\tau,\eps}-U^{n-1}_{\tau,\eps}}{\tau} \right)+ \frsub
\ene{t_n}{U^n_{\tau,\eps}} \ni 0 \qquad \text{ for all
$1,\ldots,N_\tau$.}
\end{equation}
We denote by $\pwC {U}{\tau,\eps}$ the left-continuous piecewise
constant interpolants, thus taking the value $\Utaue n$ for $ t \in
(t_{n-1},t_n],$ and by $\pwL {U}{\tau,\eps}$ the piecewise affine
interpolant
\begin{equation}
\label{e:pwl} \pwL {U}{\tau,\eps} (t):= \frac{t-t_{n-1}}{\tau}\Utaue
n + \frac{t_n-t}{\tau}\Utaue {n-1}  \quad \text{for $ t \in
  [t_{n-1},t_n],$} \quad n=1,\ldots, N_\tau.
\end{equation}
As in \cite{MRS-dne},   we  also consider the \emph{variational
interpolant} $\pwM U{\tau,\eps}$  of the elements $(\Utaue
n)_{n=1}^{N}$, first introduced by \textsc{E. De Giorgi} in the
frame of  the \emph{Minimizing Movements} approach to
gradient flows
(see~\cite{DeGiorgi-Marino-Tosques80, DeGiorgi93, Ambrosio95,AGS08}). The
functions $\pwM U{\tau,\eps}: [0,T] \to \V$ are defined by
$ \pwM U{\tau,\eps}(0)=u_\eps(0)$
and
\begin{equation}
  \label{interpmin}
      \text{for }
      t=t_{n-1} + r \in (t_{n-1}, t_{n}],
\ \
     \pwM U\taue(t)
      \in \argmin_{U \in \domainenergy}\left\{r
      \Diss{\eps}{}\left(\frac{U-\Utaue{n-1}}{r}\right) +
 \ene{t}{U} \right\},
\end{equation}
choosing the minimizer
in \eqref{interpmin} so that the map
$t\mapsto \pwM U\tau (t)$ is Lebesgue measurable
in $(0,T)$.  Notice that we may assume $
\pwC U{\tau,\eps}(t_n)= 
\pwL \UU{\tau,\eps}(t_n)= \pwM U{\tau,\eps}(t_n)$
  for every  $n =1, \ldots,N_\tau.$
Moreover, with the variational interpolants $\pwM \UU{\taue}$  we
can associate a  measurable function $\pwM {\xi}{\taue}:(0,T) \to
\V^*$ fulfilling the Euler equation for the minimization problem
\eqref{interpmin}, i.e.
\begin{equation}
  \label{interpxi} \pwM {\xi}{\taue}(t) \in -\frsub \ene{t}{\pwM
    \UU\taue(t)} \cap \left( \partial\Diss{\eps}{}\left(\frac{\pwM
        \UU\taue(t) - \Utaue{n-1}
}{t-t_{n-1}} \right) \right) \qquad
\forall\, t \in (t_{n-1}, t_n], \ \ n =1,\ldots, N_\tau,
\end{equation} 
cf. \cite{MRS-dne} for further details. Finally, we also set
$\pwC
{{t}}{\tau}(t):=t_k$
for $t \in ( t_{k-1},t_{k}]$.
Observe that,  for every $t \in [0,T]$ there holds  $\pwC {\mathsf{t}}{\tau}
(t) \down t$
as $\tau \down 0$.  

We recall now a list of important properties
of the discrete solutions, stated in
\cite[Sec.\,6]{MRS-dne}.

\begin{proposition}
  \label{prop:first-a-priori-discrete}
  For every $\eps>0$ and $\tau>0$
  the \emph{discrete energy inequality}
 \begin{equation}
 \label{eq:discr-en-ineq-1}
 \begin{aligned}
\int_{\pwC {\mathsf{t}}{\tau}(s)}^{\pwC {\mathsf{t}}{\tau}(t)}\left(
 \Diss{\eps}{}( \pwL {\dot{\UU}}{\taue}) {+}
 \Diss{\eps}{*}(\pwM {\xi}{\taue})\right) \,\dd r
+ \ene{\pwC {\mathsf{t}}{\tau}(t)}{\pwC{\UU}{\taue}(t)}  \leq
\ene{\pwC {\mathsf{t}}{\tau}(s)}{\pwC{\UU}{\taue}(s)} +
\int_{\pwC{\mathsf{t}}{\tau}(s)}^{\pwC {\mathsf{t}}{\tau}(t)}
\power {r}{\pwM {\UU}{\taue}(r)} \dd r
\end{aligned}
\end{equation}
holds for every
$0\leq s \leq t \leq T$.
  If moreover
  $\Psiz(\Utaue 0)+\ene 0{\Utaue 0}\le E_0$
  for all
  $\tau>0$ and $\eps>0$,  then there exist constants $E,S>0$ such that
  for every $\tau,\eps>0$ we have
  \begin{align}
& \label{aprio1} \sup_{t \in [0,T]} \big(\cg{\pwC{\UU}{\taue}(t)}+
\cg{\pwM{\UU}{\taue}(t)}\big) \leq E\,,
\\
& \label{aprio2}
 \Var{\Diss \B{}}{\pwL
{\UU}{\taue}}{0}{T} \leq
 \int_0^T  \Diss{\eps}{}( \pwL
{\dot{\UU}}{\taue}(s))\, \dd s \leq S, \qquad \int_0^T
\Diss{\eps}{*}(\pwM {\xi}{\taue}(s))\, \dd s \leq S,
\\
 & \label{aprio3}
 \sup_{t \in [0,T]}\left( \Vnorm{\pwL U\taue (t) {-} \pwM
U\taue (t)}+ \Vnorm{ \pwL U\taue (t) {-} \pwC U\taue (t)}
\right) \leq S\: \omega\left(
\frac{\tau}{S\eps}\right),\\
&\nonumber\hspace*{7em}
\text{where } \omega(r):=\sup\big\{ v\in [0,\infty): r \,F(r^{-1}v)\le 1\big\}
\end{align}
satisfies $\lim_{r\downarrow0}\omega(r)=0$, in view of the
superlinearity of $F$.\medskip 
\end{proposition}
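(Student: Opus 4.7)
The plan is to trace the argument of \cite[Sec.\,6]{MRS-dne} adapted to the present Banach-space setting, building the three estimates in successive layers on top of the discrete energy inequality \eqref{eq:discr-en-ineq-1}.

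First, I would derive the discrete energy inequality by exploiting the De Giorgi variational interpolant $\pwM U\taue$ of \eqref{interpmin}. On each subinterval $(t_{n-1},t_n]$, setting $\phi(r):=r\,\Psi_\eps\!\left(\frac{\pwM U\taue(t_{n-1}{+}r){-}\Utaue{n-1}}{r}\right)+\ene{t_{n-1}+r}{\pwM U\taue(t_{n-1}+r)}$, a standard differentiation of the value function of the minimum problem (see the argument in \cite[Sec.\,3]{MRS-dne}, which only uses \eqref{hyp:en3}, the chain rule of Theorem \ref{thm-from-mrs12}, and the Fenchel inequality $\Psi_\eps(v)+\Psi_\eps^*(\xi)\ge\pairing{}{}\xi v$) shows that
\[
\Psi_\eps(\pwL{\dot U}\taue)+\Psi_\eps^*(\pwM\xi\taue)+\tfrac{\rmd}{\rmd r}\ene{t_{n-1}+r}{\pwM U\taue(t_{n-1}+r)}\le \power{t_{n-1}+r}{\pwM U\taue(t_{n-1}+r)}
\]
for a.a.\ $r\in(0,\tau)$, where the Euler selection \eqref{interpxi} enforces equality in Fenchel. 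Integrating on $(t_{n-1},t_n)$ and telescoping over the partition between $\pwC{\mathsf{t}}\tau(s)$ and $\pwC{\mathsf{t}}\tau(t)$ delivers \eqref{eq:discr-en-ineq-1}.

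Second, I would extract the energy bound \eqref{aprio1} and the dissipation bounds \eqref{aprio2} by combining \eqref{eq:discr-en-ineq-1} with the power-control \eqref{hyp:en3}. Dropping the dissipation integral and estimating $|\power r{\pwM U\taue}|\le C_P(\Psiz(\pwM U\taue)+\ene r{\pwM U\taue})$ yields a discrete Gronwall inequality for $\ene{\pwC{\mathsf t}\tau(t)}{\pwC U\taue(t)}$; reinserting the resulting bound on the right-hand side and using $\Psi_\eps\ge\Psiz$ together with $\Var{\Psiz}{\pwL U\taue}0T\le\int_0^T\Psi_\eps(\pwL{\dot U}\taue)\,\dd s$ (from subadditivity) controls the $\Psiz$-total variation, hence $\cg{\cdot}$, by $E:=E_0\exp(C_PT)$ up to constants. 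The twin bound for $\pwM U\taue$ follows by writing \eqref{eq:discr-en-ineq-1} with $t$ replaced by an arbitrary instant $t\in(t_{n-1},t_n]$, since $\pwC U\taue(t)=\pwM U\taue(t_n)$ at nodes and the variational comparison $\ene t{\pwM U\taue(t)}\le \ene t{\Utaue{n-1}}$ transports the control to all intermediate times.

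The delicate point is the uniform distance estimate \eqref{aprio3}, which is the main obstacle because it cannot rely on $\Psiz$-bounds alone but really requires the superlinearity of $F$. The idea is that for every $n$ and $t\in[t_{n-1},t_n]$ the increment $v:=\pwL U\taue(t)-\pwL U\taue(t_{n-1})=\tau^{-1}(t-t_{n-1})(\Utaue n-\Utaue{n-1})$ satisfies, by the definition \eqref{eq:2} of $\Psi_\eps$ and Jensen,
\[
\frac{\tau}{\eps}\,F\!\left(\frac{\eps\|\Utaue n-\Utaue{n-1}\|}{\tau}\right)\le \int_{t_{n-1}}^{t_n}\Psi_\eps(\pwL{\dot U}\taue)\,\dd r\le S.
\]
Dividing by $S$ and inverting $F$ via the modulus $\omega(r):=\sup\{v:rF(r^{-1}v)\le 1\}$ exactly produces $\|\Utaue n-\Utaue{n-1}\|\le S\,\omega(\tau/(S\eps))$, from which the bound on $\|\pwL U\taue-\pwC U\taue\|$ is immediate. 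For the comparison with $\pwM U\taue$ I would run the same argument on $(t_{n-1},t]$ in place of $(t_{n-1},t_n]$, using the local minimality \eqref{interpmin} to rewrite $(t-t_{n-1})\Psi_\eps(\tfrac{\pwM U\taue(t)-\Utaue{n-1}}{t-t_{n-1}})\le \ene t{\Utaue{n-1}}-\ene t{\pwM U\taue(t)}$ and controlling the right-hand side through the already-established energy bound \eqref{aprio1} and the time-Lipschitz estimate coming from \eqref{hyp:en3}. The superlinearity $\lim_{r\uparrow\infty}F'(r)=+\infty$ from \eqref{def-psiV} is precisely what guarantees $\omega(r)\downarrow 0$ as $r\downarrow 0$, closing the estimate.
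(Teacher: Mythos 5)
Your reconstruction follows exactly the approach of \cite[Sec.\,6]{MRS-dne}, which the paper cites instead of giving its own proof: De Giorgi's differentiation of the value function of the incremental problem to obtain \eqref{eq:discr-en-ineq-1}, a Gronwall argument for \eqref{aprio1}--\eqref{aprio2}, and inversion of $F$ via the modulus $\omega$ for \eqref{aprio3}. The argument is sound; the only cosmetic slips are that the pointwise differential inequality you write is not literally $\Psi_\eps(\pwL{\dot U}\taue)+\Psi_\eps^*(\pwM\xi\taue)+\tfrac{\rmd}{\rmd r}\ene{\cdot}{\pwM U\taue}\le \power{\cdot}{\pwM U\taue}$ — the De Giorgi computation produces $\frac{\rmd}{\rmd r}\big[r\Psi_\eps\big(\tfrac{\pwM U\taue(t_{n-1}+r)-\Utaue{n-1}}{r}\big)+\ene{t_{n-1}+r}{\pwM U\taue(t_{n-1}+r)}\big]\le -\Psi_\eps^*(\pwM\xi\taue)+\power{\cdot}{\pwM U\taue}$, and only after integrating over $(0,\tau]$ and using $r\Psi_\eps(\cdot/r)\big|_{r=\tau}=\int_{t_{n-1}}^{t_n}\Psi_\eps(\pwL{\dot U}\taue)\,\dd r$ does the stated form emerge — and the appeals to ``Jensen'' and ``subadditivity'' are unnecessary since $\pwL{\dot U}\taue$ is constant on each subinterval and $\Var{\Psiz}{\pwL U\taue}{0}{T}=\int_0^T\Psiz(\pwL{\dot U}\taue)\,\dd s$ holds directly by \eqref{eq:24}.
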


\subsubsection*{\bfseries Proof of Theorem \ref{th:2}}
We argue exactly as in the proof of Theorem
\ref{th:1}, observing  that Proposition
\ref{prop:first-a-priori-discrete} enables us to 
apply Proposition \ref{cor:2} with
the choices $u_k:={\rm U}_{\tau_k,\eps_k}$, $\tilde
u_k:=\widetilde{\rm U}_{\tau_k,\eps_k}$ along any sequences
$\tau_k,\eps_k$ satisfying \eqref{eq:103-k}.

Up to the extraction of a suitable subsequence, Proposition
\ref{cor:2} shows that there exist $u \in \BV([0,T];D_E,\Psiz)$
satisfying the local stability condition \eqref{eq:65bis} such that
\begin{gather}
\label{other-interp-converge} \pwC {U}{\tau_k,\eps_k}(t), \ \pwL
{U}{\tau_k,\eps_k}(t), \ \pwM {U}{\tau_k,\eps_k}(t) \to u(t) \quad
\text{in $\V$ for all $t \in [0,T]$,}\\
\sup_{t\in [0,T]} \Big(\|\pwL
{U}{\tau_k,\eps_k}(t)-\pwM {U}{\tau_k,\eps_k}(t)\|+
\|\pwL
{U}{\tau_k,\eps_k}(t)- \pwC {U}{\tau_k,\eps_k}(t)\|\Big)\to0.
\end{gather}
We can also pass to the limit as $k \to \infty$ in the discrete energy
inequality \eqref{eq:discr-en-ineq-1} with $s=0$.  Indeed, we use
convergences \eqref{other-interp-converge}, the lower semicontinuity
of the energy $\cE$, and the $\liminf$ inequality \eqref{eq:56} to
obtain \eqref{eq:84-oneside}.  Thus, by Corollary
\ref{prop:BV-charact} we conclude that $u$ is a $\BV$ solution to the
\ris\ \RIS.

The proof of the further energy convergence \eqref{e:conv2-discr}
follows by the very same lines as in the end of the proof of Theorem
\ref{th:1}, see \eqref{eq:119}--\eqref{eq:73}.  Thus, Theorem
\ref{th:2} is proved.  \qed\medskip

\subsubsection*{\bfseries Proof of Theorem\ \ref{thm-van-param}}
Let $(\parat_\eps,\parau_\eps)_\eps$ be a family of rescaled viscous
solutions as in the statement of Theorem \ref{thm-van-param}.
Exploiting condition \eqref{e:uniform-integrability} as well as the
energy identity \eqref{resc-enid-eps} we can apply Proposition
\ref{le:compactness} in the interval $[0,\sfS]$ (with
$\tilde\sfu_n\equiv \sfu_n$ and $G_n=[0,\sfS]$) and find a vanishing
subsequence $(\eps_n)_n$ and a parameterized curve $(\parat,\parau)$
such that convergences \eqref{e:conv1-para} hold.  The second part of
\eqref{hyp:en3}, the closedness-continuity property \eqref{eq:45}, and
Lemma \ref{le:jointlsc} yield
\begin{equation}
\label{liminf-limsup}
\begin{aligned}
&\liminf_{k \to \infty} \ene {\parat_{\eps_k}(s)}{\parau_{\eps_k}(s)}
  \geq \ene {\parat (s)}{\parau(s)} \quad \text{for all }s \in [0,\mathsf{S}],
\\
&\limsup_{k \to \infty} \int_{s_0}^{s_1} \power r{\sfu_{\eps_k}(r)}
\dot\sft_{\eps_k}(r)\dd r \leq \int_{s_0}^{s_1} \power r{\sfu(r)}\dot \sft(r) \dd r
\quad \text{for all } 0 \leq s_0 <s_1 \leq \sfS.
\end{aligned}
\end{equation}
Combining \eqref{liminf-limsup} with \eqref{eq:46} and \eqref{eq:49},
we pass to the limit as $\eps_k \to 0$ in the energy identity
\eqref{resc-enid-eps} and conclude that $(\parat,\parau)$ fulfill the
energy estimate \eqref{eq:16-param-tech-onesided} with $\mathsf{a}=0$
and $\mathsf{b}=\mathsf{S}$.  Therefore thanks to Corollary
\ref{PROP:charact-param} we have that $(\parat,\parau)$ is a
parameterized solution to the \ris\ $\RIS$.

The enhanced convergences \eqref{e:conv2-para} and
\eqref{e:conv3-para} can be proved with similar arguments as in the
end of the proof of Theorem \ref{th:1}.

In order to show that $(\sft,\sfu)$ satisfies the
$\mathsf{m}$-normalization condition \eqref{e:norma-cond}, we observe
that $\dot\sft_\eps\weakto^* \sft$ and $\frf_{\sft_{\eps_k}} (
\sfu_{\eps_k}, \dot\sfu_{\eps_k})\weakto^*\frf= \mathsf m-\dot\sft$ in
$L^\infty(0,\sfS)$. The liminf estimates \eqref{eq:46} and
\eqref{eq:47} (localized on arbitrary intervals of $[0,\sfS]$) yield
that $\frf\ge \mathfrak h:=\Psiz(\dot\sfu)+
\frG_{}[\sft,\sfu;\dot\sft,\dot\sfu]$ $\Leb 1$-a.e.~in $(0,\sfS)$.
Moreover, $\frf_{\sft_\eps}(\sfu_{\eps},\dot\sfu_{\eps})\le \mathfrak
h_{\eps}:=\Psiz(\dot\sfu_{\eps})+ \frG_{\eps}(\sft_\eps,\sfu_\eps;
\dot\sft_\eps,\dot\sfu_\eps)$ and convergence \eqref{e:conv3-para}
implies
\begin{displaymath}
  \mathfrak h_{\eps_k}\weakto \mathfrak h\quad
  \text{in the sense of distributions of $\scrD'(0,\sfS)$},
\end{displaymath}
so that $\frf\le \mathfrak h$. We conclude that $\frf=\mathfrak h$
and $\dot\sft+\mathfrak h=\mathsf m$, and Theorem \ref{thm-van-param} is proved. 
\qed\medskip

\subsection{Uniform $\BV$-estimates for discrete Minimizing Movements}
\label{ss:5.2}

The aim of this section is to prove Theorem \ref{th:3-discrete},
i.e.~the uniform bound
  \begin{equation}
    \label{eq:75bis}
  \exists\, C>0\ \forall \, \tau>0, \eps >0:  \quad   
 \sum_{n=1}^{N_\tau} \|\mathrm U^n_\taue-\mathrm U^{n-1}_\taue\|\le C
\end{equation}
for all discrete Minimizing Movements, whenever the stronger structural
assumptions \eqref{particular-dissipations}--\eqref{eq:77} hold and
the discrete initial data satisfy \eqref{eq:116}.
We start with an elementary discrete Gronwall-like lemma.

\begin{lemma}[A discrete Gronwall lemma]
\label{discr-Gronw-lemma}
Let $\gamma>0$ and let $ (a_n), (b_n) \subset [0,+\infty) $ be
positive sequences, satisfying
\begin{equation}
\label{discr-Gronw-hyp} (1{+}\gamma)^2\,a_n^2 \leq
a_{n-1}^2 + b_n a_n  \quad \forall n \geq 1.
\end{equation}
Then, for all $k \in \N $ there holds
\begin{equation}
\label{discrete-Gronw-est}
 \sum_{n=1}^k  a_n \leq
 \frac {1}{\gamma}\Big(a_0 +  \sum_{n=1}^k  b_n\Big).
\end{equation}
\end{lemma}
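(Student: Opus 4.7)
The plan is to view the hypothesis $(1+\gamma)^2 a_n^2 \le a_{n-1}^2 + b_n a_n$ as a quadratic inequality in $a_n$, solve it explicitly, and then sum the resulting one-step recursion.

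First I would fix $n\ge 1$ and regard the hypothesis as $P(a_n)\le 0$, where $P(x):=(1+\gamma)^2 x^2 - b_n x - a_{n-1}^2$. Since $P$ is an upward parabola with nonpositive constant term $-a_{n-1}^2\le 0$, it has exactly one nonnegative root, and $a_n\ge 0$ must lie below it. Solving gives
\[
a_n \;\le\; \frac{b_n + \sqrt{b_n^{\,2} + 4(1+\gamma)^2 a_{n-1}^{\,2}}}{2(1+\gamma)^2}.
\]
Using the elementary inequality $\sqrt{x^2+y^2}\le x+y$ for $x,y\ge 0$ with $x=b_n$ and $y=2(1+\gamma)a_{n-1}$, this simplifies to the key one-step estimate
\begin{equation}\label{eq:onestep-plan}
a_n \;\le\; \frac{a_{n-1}}{1+\gamma} \;+\; \frac{b_n}{(1+\gamma)^2}.
\end{equation}

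Next I would sum \eqref{eq:onestep-plan} from $n=1$ to $n=k$. Setting $S_k:=\sum_{n=1}^k a_n$ and using $\sum_{n=1}^k a_{n-1}=a_0+S_{k-1}\le a_0+S_k$, this yields
\[
S_k \;\le\; \frac{a_0+S_k}{1+\gamma} + \frac{1}{(1+\gamma)^2}\sum_{n=1}^k b_n.
\]
Multiplying by $1+\gamma$ and subtracting $S_k$ from both sides gives $\gamma S_k\le a_0+\tfrac{1}{1+\gamma}\sum b_n\le a_0+\sum b_n$, which is exactly \eqref{discrete-Gronw-est}.

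The proof involves no real obstacle: the only nontrivial step is the passage from the quadratic hypothesis to the linear recursion \eqref{eq:onestep-plan}, which is a clean application of the quadratic formula plus the sublinearity of the square root. Once that is in hand, the telescoping/absorption argument is standard. One should just take care that the estimate is valid even when $a_n=0$ or $a_{n-1}=0$, but in both cases \eqref{eq:onestep-plan} is trivial.
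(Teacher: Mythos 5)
Your proof is correct. The overall architecture is the same as the paper's: convert the quadratic inequality into a linear one-step recursion, then sum (telescope) and absorb $S_k$. The only difference is how you obtain the linear step: you solve the quadratic explicitly and use the sublinearity $\sqrt{x^2+y^2}\le x+y$, which actually yields the slightly sharper bound $a_n\le \tfrac{a_{n-1}}{1+\gamma}+\tfrac{b_n}{(1+\gamma)^2}$, whereas the paper's proof splits into the cases $(1+\gamma)a_n\le a_{n-1}$ and $(1+\gamma)a_n>a_{n-1}$ and, in the second case, divides the hypothesis by $(1+\gamma)a_n$ to obtain $(1+\gamma)a_n\le a_{n-1}+b_n$. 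Both are elementary one-line arguments, and after this point the two proofs coincide.
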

\begin{proof}
  We first show that assumption \eqref{discr-Gronw-hyp} yields
  \begin{equation}
    \label{eq:99}
    (1{+}\gamma) a_n\le a_{n-1}+b_n.
  \end{equation}
  Indeed, \eqref{eq:99} is trivially true if $(1{+}\gamma)a_n\le
  a_{n-1}$.  If $(1{+}\gamma)a_n > a_{n-1}$ we divide both sides in
  \eqref{discr-Gronw-hyp} by $(1{+}\gamma)a_n$ and estimate the
  right-hand side by  $\frac{a_{n-1}^2}{(1{+}\gamma)a_n}
  +\frac{b_n}{1{+}\gamma} < a_{n-1} + b_{n}$.  Summing \eqref{eq:99}
  from $n=1$ to $k$ and setting $S_k:= \sum_{n=1}^k a_n$ we find
  $ (1{+}\gamma) S_k\le a_0+ S_{k-1}+\sum_{n=1}^k b_n$, 
which yields \eqref{discrete-Gronw-est} since $S_{k-1}\le S_k$.
\end{proof}

\subsubsection*{\bfseries Proof of Theorem \ref{th:3-discrete}}

From estimate \eqref{aprio1} it follows that $\Utaue n \in
\domainenergy_E$ for all $n$ and all $\eps,\,\tau>0$. Therefore
\eqref{subdiff-charact}--\eqref{eq:77} (and a fortiori \eqref{eq:89})
hold with constants $\alpha_E,\,\Lambda_E,\, L_E$.

Notice moreover that setting $\Utaue{-1}:=0$, the discrete Euler
equation \eqref{discrete-Euler} is satisfied also for $n=0$.  Let us
set $\Vtaue n:=\tau^{-1}(\Utaue n-\Utaue{n-1})$, $\Xitaue n\in
-\partial\ene{t_n}{\Utaue n}\cap \partial\Psiz_\eps(\Vtaue n)$
according to \eqref{discrete-Euler}.  We subtract
\eqref{discrete-Euler} at $n$ from \eqref{discrete-Euler} at $n+1$,
and take the duality with $\Vtaue {n+1}$, observing that the
generalized convexity condition \eqref{eq:89} yields
\begin{equation}
  \label{eq:131}
  \langle \Xitaue{n+1}-\Xitaue n,\Vtaue{n+1}\rangle\le
  - 2\alpha_E\tau\|\Vtaue{n+1}\|^2+2\tau \Lambda_E \Dnorm{\Vtaue{n+1}}
  \|\Vtaue{n+1}\|+2\tau \|\Vtaue{n+1}\|.
\end{equation}
On the other hand the homogeneity of $\Psiz$ and $\Psiv$ yield
\begin{align*}
  \langle \partial\Psiz(\Vtaue {n+1}),\Vtaue {n+1}\rangle &= \Psiz(\Vtaue{n+1}),\quad
  \langle \partial\Psiz(\Vtaue {n}),\Vtaue {n+1}\rangle \le \Psiz(\Vtaue{n+1}),\\
  \langle \partial\Psiv(\eps\Vtaue {n+1}),\Vtaue {n+1}\rangle &=\eps\|\Vtaue{n+1}\|^2,\quad
  \langle \partial\Psiv(\eps\Vtaue {n}),\Vtaue {n+1}\rangle \le \frac\eps2\|\Vtaue{n+1}\|^2
  +\frac \eps2\|\Vtaue n\|^2.
\end{align*}
and therefore
\begin{equation}
  \label{eq:130}
  \langle \Xitaue{n+1}-\Xitaue n,\Vtaue{n+1}\rangle\ge
  \frac \eps2\|\Vtaue{n+1}\|^2-
  \frac\eps 2\|\Vtaue n\|^2.
\end{equation}
Combining \eqref{eq:131} and  \eqref{eq:130} we get
\[
\Vnorm{\Vtaue {n+1}}^2 + \frac{4\alpha \tau}{\eps}
\Vnorm{\Vtaue {n+1}}^2 \leq \Vnorm{\Vtaue{n}}^2 +
\frac{4\tau}{\eps} \left( L_E +\Lambda_E \Dnorm{\Vtaue n}\right) \Vnorm{\Vtaue n},
\]
Observe that the above inequality can be rewritten in the form of
\eqref{discr-Gronw-hyp} with the choices $a_n= \Vnorm{\Vtaue{n}}$,
$b_n= \frac {4\tau}\eps\big(L_E + \Bnorm{\Vtaue n}\big)$, and
$\gamma:=(1+4\alpha\tau/\eps)^{1/2}-1$. Using $a_0=\|\Vtaue 0\|=0$ and
applying Lemma \ref{discr-Gronw-lemma}  elementary
computations yield
\begin{equation*}
 \sum_{n=1}^{N_\tau-1} \tau \Vnorm{\Vtaue{n}}
\leq   \Big(4Q+\frac2\alpha\Big)\Big(T L_E+E),
\end{equation*}
which is the desired estimate \eqref{eq:75}. 
\qed




\def\cprime{$'$} \def\cprime{$'$}

\end{document}